\newcommand\del[1]{}
\newtheorem{theorem}{Theorem}[section]
\newtheorem{corollary}[theorem]{Corollary}
\newtheorem{proposition}[theorem]{Proposition}
\theoremstyle{definition}
\newtheorem{definition}[theorem]{Definition}
\newtheorem{lemma}[theorem]{Lemma}
\newtheorem{example}[theorem]{Example}
\newtheorem{remark}[theorem]{Remark}
\numberwithin{equation}{section}
\newcommand\R{{\mathbb{R}}}
\newcommand\C{{\mathbb{C}}}
\newcommand\pd{{\partial}}
\newcommand\Z{{\mathbb{Z}}}
\renewcommand\L{{\mathcal{L}}}
\newcommand{\hk}{\mathbin{\! \hbox{\vrule height0.3pt width5pt depth 0.2pt \vrule height5pt width0.4pt depth 0.2pt}}}
\renewcommand\o{\overline }
\newcommand\Rho{\mathcal{P}}
\newcommand\g{\mathfrak{g}}
\newcommand\X{\mathfrak{X}}
\begin{document}

\title{N$\ddot{\text{o}}$ether's Theorem}
\author{Jonathan Herman}
\begin{center}
\setlength{\parindent}{0pt}
\setlength{\parskip}{17pt}

\vspace*{10pt}

{\Large \bf{Noether's Theorem in Multisymplectic Geometry}\rm\\ 
\vspace{0.6cm}
\del{
\Large{by}
\vspace{-0.2cm}
}
\Large{Jonathan Herman}}
\\

{\it Department of Pure Mathematics, University of Waterloo}
\\

\tt{j3herman@uwaterloo.ca}
\vspace*{10pt}

\del{

{\small A research paper\\  
\vspace{0.1cm}
presented to the University of Waterloo\\
\vspace{0.1cm}in fulfilment of the\\
\vspace{0.1cm}research paper requirement for the degree of\\
\vspace{0.1cm}Master of Mathematics\\
\vspace{0.1cm}in\\
\vspace{0.1cm}Pure Mathematics

\vspace*{\fill}

Waterloo, Ontario, Canada, 2014

\copyright \  Jonathan Herman, 2014

}
}
\end{center}


 \del{

\cleardoublepage

 \pagenumbering{roman}
 \setcounter{page}{2}
 
\vspace*{0.5in}
\noindent
{   \setlength{\parindent}{0pt}
   \setlength{\parskip}{24pt}
   \setlength{\textwidth}{7in}

   {\sffamily\bfseries \index{copyright!author's declaration}
   AUTHOR'S DECLARATION}

   I hereby declare that I am the sole author of this research paper.  This is a true
   copy of the research paper, including any required final revisions, as accepted by
   my examiners.

   I understand that my research paper may be made electronically available to the
   public. 
   
   \vspace*{1.0in}
   
Jonathan Herman
}
}


\begin{abstract}

We extend Noether's theorem to the setting of multisymplectic geometry by exhibiting a correspondence between conserved quantities and continuous symmetries on a multi-Hamiltonian system. We show that a homotopy co-momentum map interacts with this correspondence in a way analogous to the moment map in symplectic geometry. 
\del{
Our symmetries and conserved quantities will constitute an $L_\infty$-algebra and we demonstrate how a co-momentum map restricts to an $L_\infty$-morphism between the two. Under certain quotients, we will show that there is actually an isomorphism of graded Lie algebras between the symmetries and conserved quantities.
}

We apply our results to generalize the theory of the classical momentum and position functions from the phase space of a given physical system to the multisymplectic phase space. We also apply our results to manifolds with a torsion-free $G_2$ structure.

\end{abstract}

\del{
\onehalfspacing
\cleardoublepage
\section*{Acknowledgements}
First and foremost, a very sincere thank you goes to my supervisor Dr. Spiro Karigiannis. I am extremely grateful for his patience and the tremendous amount of time he spent teaching and helping me this summer. I would also like to thank Dr. Shengda Hu, my second reader, for the very useful comments and corrections that he provided. I need to acknowledge two of my good friends; Janis Lazovksis and Cameron Williams. Janis is a LaTeX machine, and I am very appreciative of the time he spent helping me this summer. Cam was a great support this year, always there to help me work through any problem. I also would like to thank my family for their constant love and interest, it means very much to me. Last, but not least (I'd say least goes to Cameron Williams), a thank you goes to my cousin Matt Rappoport, for without our discussions some of the contents in this paper would not exist.
}


\del{

\cleardoublepage
\vspace*{70pt}
\begin{center}
\itshape OPTIONAL DEDICATION CAN GO HERE.
\end{center}

}

\tableofcontents

\pagenumbering{arabic}
\setcounter{page}{1}

\section{Introduction}
\del{
Noether's theorem says that for every symmetry on a classical Hamiltonian system, there is a corresponding conserved quantity. That is, the total energy of a conservative physical system in $\R^3$ is conserved because Hamilton's equations are invariant under time translation. Similarily, conservation of linear and angular momentum are consequences of the invariance of Hamilton's equations under space translations and rotations. In this paper, we replace the symplectic form with an arbitrary non-degenerate form (i.e. work on a multisymplectic manifold) and show how Noether's theorem generalizes. 
}

In this paper, we generalize concepts from Hamiltonian mechanics to multisymplectic geometry. In particular, we are interested in the relationship between continuous symmetries and conserved quantities on multi-Hamiltonian systems. Recall that  a symmetry on a symplectic Hamiltonian system $(M,\omega,H)$ is a symplectic vector field whose flow preserves the Hamiltonian. A conserved quantity is a function that is constant along the motions of the given physical system.  
\vspace{0.3cm}

Noether's theorem sets up a correspondence between conserved quantities and continuous symmetries. For example, under a conservative system in $\R^3$, the flows of each vector field coming from the Lie algebra action of $\mathfrak{so}(3)$ all leave the total energy, $H$, invariant. Thus, each of the infinitesimal generators from $\mathfrak{so}(3)$ are continuous symmetries.  An example of a conserved quantity would be the angular momentum of a particle moving in this physical system. The angular momentum (in a specified direction) is a function on $T^\ast\R^3$ that is constant on the motion. Noether's theorem sees conservation of angular momentum as a consequence of the rotational symmetry. 
\vspace{0.3cm}

In a multi-Hamiltonian system $(M,\omega,H)$, with $\omega\in\Omega^{n+1}(M)$ and $n\geq 1$, the Hamiltonian $H$ is now a `Hamiltonian form' of degree $n-1$. Here Hamiltonian means that there exists a unique vector field $X_H$ satisfying $X_H\hk\omega=-dH$. The flow of $X_H$ gives the dynamics of the given physical system. \del{  For example, if $M=T^\ast\R^3=\R^6$ and $\omega=\omega_0^2$, then one could take $H$ to be the electric two form $E$. It's true that a Hamiltonian vector field for $X_E$ is given by the motion in an electric field (TO DO). }
\vspace{0.3cm}

A notion of conserved quantity on a multisymplectic manifold was given in \cite{cq}. They defined three types; a differential form $\alpha$ is called a local, global, or strict conserved quantity if $\L_{X_H}\alpha$ is closed, exact or zero respectively. In our work we modify their definition by requiring that a conserved quantity $\alpha$ is also Hamiltonian, meaning that $X_\alpha\hk \omega=-d\alpha$ for some multivector field $X_\alpha$.  By adding in this requirement, we are then able to study how the extended `Poisson' bracket, $\{\cdot,\cdot\}$, from \cite{dropbox} interacts with the conserved quantities.  \del{The set of local, global and strict conserved quantities on a multi Hamiltonian system $(M,\omega,H)$ is denoted by $\mathcal{C}_{\text{loc}}(X_H)$, $\mathcal{C}(X_H)$ and $\mathcal{C}_{\text{str}}(X_H)$ respectively,}
\vspace{0.3cm}

We find that, analogous to the case of Hamiltonian mechanics, the Poisson bracket of two conserved quantities is always strictly conserved.
That is, \begin{proposition} Let $\alpha$ and $\beta$ be two (local, global or strict) conserved quantities on a multi-Hamiltonian system $(M,\omega, H)$. Then $\{\alpha,\beta\}$ is strictly conserved, meaning $\L_{X_H}\{\alpha,\beta\}=0$.
\end{proposition}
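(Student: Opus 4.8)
The plan is to push the Lie derivative $\L_{X_H}$ through the definition of the bracket, exploit the closedness of the multisymplectic form, and thereby reduce everything to the commutators $[X_H,X_\alpha]$ and $[X_H,X_\beta]$, each of which is annihilated by the conserved-quantity hypothesis. The key structural observation is that $\omega$ being closed forces $\L_{X_H}\omega=0$, which is what makes the whole argument collapse.

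First I would record the bracket from \cite{dropbox} in terms of the Hamiltonian multivector fields, writing $\{\alpha,\beta\} = (X_\alpha\wedge X_\beta)\hk\omega$ (up to the sign convention fixed there), where $X_\alpha\hk\omega = -d\alpha$ and $X_\beta\hk\omega=-d\beta$. Since $\omega$ is closed and $X_H\hk\omega=-dH$, Cartan's formula gives $\L_{X_H}\omega = d(X_H\hk\omega) = -d^2H = 0$.

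Next I would differentiate, using that $\L_{X_H}$ is a derivation both of contraction and of the wedge product of multivector fields:
\[
\L_{X_H}\big((X_\alpha\wedge X_\beta)\hk\omega\big) = \big([X_H,X_\alpha]\wedge X_\beta + X_\alpha\wedge[X_H,X_\beta]\big)\hk\omega + (X_\alpha\wedge X_\beta)\hk\L_{X_H}\omega ,
\]
where the final term vanishes. The crux is the Lie-derivative identity $\L_{X_H}(X_\alpha\hk\omega) = [X_H,X_\alpha]\hk\omega + X_\alpha\hk\L_{X_H}\omega$, which with $\L_{X_H}\omega=0$ yields
\[
[X_H,X_\alpha]\hk\omega = \L_{X_H}(X_\alpha\hk\omega) = \L_{X_H}(-d\alpha) = -d\,\L_{X_H}\alpha ,
\]
and likewise for $\beta$. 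Here I invoke the hypothesis: in each of the three cases $\L_{X_H}\alpha$ is closed (both exactness and vanishing imply closedness), so $[X_H,X_\alpha]\hk\omega = 0$ and $[X_H,X_\beta]\hk\omega=0$. Writing the two surviving terms as iterated contractions and commuting so that the vanishing contraction meets $\omega$ first then forces each to vanish, giving $\L_{X_H}\{\alpha,\beta\}=0$.

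The main obstacle I anticipate is the sign and grading bookkeeping: pinning down the bracket convention of \cite{dropbox}, the graded Leibniz rule for the Schouten-type action of $X_H$ on $X_\alpha\wedge X_\beta$, and the commutation rule $A\hk(B\hk\omega) = (-1)^{|A||B|}B\hk(A\hk\omega)$ needed to transport $[X_H,X_\alpha]\hk\omega=0$ past $X_\beta$. It is worth noting that non-degeneracy of $\omega$ is \emph{not} required: one never needs to pass from $[X_H,X_\alpha]\hk\omega=0$ to $[X_H,X_\alpha]=0$, since vanishing of the contraction as a form already suffices, and the weakest hypothesis (local conservation, giving only closedness) is exactly what the computation consumes.
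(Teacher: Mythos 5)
Your proposal is correct and is essentially the paper's own proof: the paper likewise writes $\{\alpha,\beta\}$ as an iterated contraction of $\omega$, commutes $\L_{X_H}$ past the contractions via equation (\ref{bracket hook}) (your Leibniz-rule step is exactly this identity for the degree-one field $X_H$), kills the error terms using $[X_H,X_\alpha]\hk\omega=-d\L_{X_H}\alpha=0$ (which is the paper's Lemma \ref{conserved interior}, rederived inline in your argument), and concludes from $\L_{X_H}\omega=0$ (Proposition \ref{preserve}). Your closing observations — that non-degeneracy is never used and that local conservation (closedness) is all the computation consumes — are also accurate and consistent with the paper's treatment.
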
 From this proposition we will show that the conserved quantities, modulo closed forms, constitute a graded Lie algebra. We will also show that when restricted to a certain subspace, namely the Lie $n$-algebra of observables (see definition \ref{Lie n observables}), the conserved quantities form an $L_\infty$-algebra.
\vspace{0.3cm}

Similarly, we will see that our continuous symmetries also generate a graded Lie algebra. As an extension from Hamiltonian mechanics, we define a symmetry to be a Hamiltonian multivector field with respect to which the Lie derivative of the Hamiltonian has a specific form. Just as for the conserved quantities, we have three types of continuous symmetry. Namely, a multivector field $X$ is a local, global, or strict symmetry on $(M,\omega,H)$ if $\L_X\omega=0$ and $\L_XH$ is closed, exact, or zero respectively. A generalization from Hamiltonian mechanics is \begin{proposition}Given any two (local, global, strict) continuous symmetries $X$ and $Y$, their Schouten bracket $[X,Y]$ is a continuous symmetry of the same type.\end{proposition}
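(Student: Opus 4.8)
The plan is to reduce everything to the generalized Cartan calculus for multivector fields, and in particular to the identity
\begin{equation*}
\L_{[X,Y]} \;=\; \L_X\L_Y \;-\; (-1)^{(|X|-1)(|Y|-1)}\,\L_Y\L_X,
\end{equation*}
which expresses the Lie derivative along a Schouten bracket as the graded commutator $[\L_X,\L_Y]$ of the two Lie derivatives acting on $\Omega^\bullet(M)$. Here $|X|,|Y|$ denote the degrees of the multivector fields, and the operator degrees on both sides agree, since $[X,Y]$ has degree $|X|+|Y|-1$. Granting this identity, each of the two defining conditions of a symmetry --- invariance of $\omega$ and the prescribed behaviour of $\L H$ --- can be checked by applying the two summands on the right-hand side separately.

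For the first condition, since $X$ and $Y$ are symmetries we have $\L_X\omega=\L_Y\omega=0$, so the identity gives immediately
\begin{equation*}
\L_{[X,Y]}\omega \;=\; \L_X(\L_Y\omega)\;-\;(-1)^{(|X|-1)(|Y|-1)}\,\L_Y(\L_X\omega)\;=\;0,
\end{equation*}
and thus $[X,Y]$ preserves $\omega$ regardless of the type. For the second condition I would first record the auxiliary fact that $\L_X$ graded-commutes with $d$, i.e. $\L_X d = \pm\, d\,\L_X$; this follows from $\L_X=[\iota_X,d]$ (with $\iota_X = X\hk(\cdot)$) together with $d^2=0$. Consequently $\L_X$ and $\L_Y$ each send closed forms to closed forms and exact forms to exact forms. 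Now I split into the three cases. In the strict case $\L_X H=\L_Y H=0$, so both summands of $\L_{[X,Y]}H$ vanish and $[X,Y]$ is strict. In the global case $\L_X H=d\mu_X$ and $\L_Y H=d\mu_Y$ are exact, whence $\L_X(\L_Y H)$ and $\L_Y(\L_X H)$ are each exact by the auxiliary fact, so their signed difference $\L_{[X,Y]}H$ is exact. In the local case $\L_X H$ and $\L_Y H$ are closed, and the identical argument with ``closed'' in place of ``exact'' shows $\L_{[X,Y]}H$ is closed. In all three cases $[X,Y]$ is a continuous symmetry of the same type as $X$ and $Y$.

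The main obstacle is the sign bookkeeping underlying the identity $\L_{[X,Y]}=[\L_X,\L_Y]$: one must fix a convention for $\L_X$ and $\iota_X$ on a $p$-vector field and verify that the graded commutator of two Lie derivatives is a Lie derivative along \emph{exactly} the Schouten bracket, rather than a scalar multiple of it. The cleanest way to pin this down is to take $\iota_{[X,Y]}=[\L_X,\iota_Y]$ as the definition of the Schouten bracket and derive the Lie-derivative identity from it. Once that normalization is in place, the case analysis above is routine, since every step uses only that each summand individually vanishes, is exact, or is closed; the precise values of the intervening signs never affect the conclusion.
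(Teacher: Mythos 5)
Your core argument --- writing $\L_{[X,Y]}$ as a graded commutator of $\L_X$ and $\L_Y$, observing that $d$ graded-commutes with each Lie derivative (so that closed forms go to closed forms and exact forms to exact forms), and then running the three-case analysis --- is exactly the paper's proof of Proposition \ref{symmetry super algebra}, which cites equations (\ref{dL}) and (\ref{L bracket}). Your closing remark about sign conventions is also correct and worth keeping: since every step of the case analysis uses only that each summand separately vanishes, is exact, or is closed, the convention-dependent sign in the commutator identity cannot affect the conclusion, so the discrepancy between your sign placement and that of (\ref{L bracket}) is harmless here.

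There is, however, one genuine mismatch with the paper's definition of a continuous symmetry. In Section 4.3 a (local, global, strict) continuous symmetry is by definition a \emph{Hamiltonian} multivector field, i.e.\ an element of $\X_{\text{Ham}}(M)$, meaning $X\hk\omega$ is \emph{exact}; the condition $\L_X\omega=0$, equivalently that $X\hk\omega$ is merely closed, is the strictly weaker requirement defining what the paper calls a \emph{weak} continuous symmetry. Your proof establishes only $\L_{[X,Y]}\omega=0$, so as written it shows that the bracket of two weak symmetries is a weak symmetry of the same type; it does not show that $[X,Y]$ lies in $\X_{\text{Ham}}(M)$, which is what closure of $\mathcal{S}_{\text{loc}}(H)$, $\mathcal{S}(H)$, $\mathcal{S}_{\text{str}}(H)$ requires. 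The missing step is the paper's Proposition \ref{graded Lie algebra of vector fields}: applying equation (\ref{interior equation}) to $\tau=\omega$ and using $d\omega=0$ together with the closedness of $X\hk\omega$ and $Y\hk\omega$, all terms but one drop out and
\begin{equation*}
[X,Y]\hk\omega=(-1)^{l}\,d\bigl(X\hk Y\hk\omega\bigr),
\end{equation*}
so the bracket of two symmetries (indeed, even of two weak symmetries) is automatically Hamiltonian. Adding this one observation closes the gap and makes your argument complete and identical in substance to the paper's.
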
 From this proposition we will see that the continuous symmetries, modulo elements in the kernel of $\omega$, form a graded Lie algebra. 
\vspace{0.3cm}

Our first generalization of Noether's theorem says that there is a correspondence between these notions of symmetry and conserved quantity on a multisymplectic manifold.

\begin{theorem}
If $\alpha$ is a (local or global) conserved quantity, then every corresponding Hamiltonian multivector field $X_\alpha$ is a (local or global) continuous symmetry. Conversely, if $X$ is a (local or global) continuous symmetry, then every corresponding Hamiltonian form is a (local or global) conserved quantity.
\end{theorem}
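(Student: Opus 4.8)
The plan is to reduce both implications to a single identity relating $\L_{X_H}\alpha$ to $\L_{X_\alpha}H$, and then read off the two directions from it. Throughout I write $\iota_X$ for the contraction $X\hk(\cdot)$, and I use the graded Cartan formula $\L_X = d\iota_X - (-1)^{|X|}\iota_X d$ for a multivector field $X$ of degree $|X|$ (which specializes to $\L_v = d\iota_v + \iota_v d$ when $v$ is an ordinary vector field). First I would dispose of the part of the symmetry condition that costs nothing: if $\alpha$ is Hamiltonian with $X_\alpha\hk\omega = -d\alpha$, then since $\omega$ is closed,
\[
\L_{X_\alpha}\omega = d\iota_{X_\alpha}\omega - (-1)^{|X_\alpha|}\iota_{X_\alpha}d\omega = d(X_\alpha\hk\omega) = -d(d\alpha) = 0 .
\]
Thus every Hamiltonian multivector field automatically preserves $\omega$, so in the forward direction only the condition on $\L_{X_\alpha}H$ remains.

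Next I would establish the master identity. Writing $\alpha$ as a $p$-form, so that $|X_\alpha| = n-p =: q$, I substitute $d\alpha = -\iota_{X_\alpha}\omega$ and $dH = -\iota_{X_H}\omega$ into the two Cartan expansions:
\[
\L_{X_H}\alpha = d\iota_{X_H}\alpha + \iota_{X_H}d\alpha = d\iota_{X_H}\alpha - \iota_{X_H}\iota_{X_\alpha}\omega,
\]
\[
\L_{X_\alpha}H = d\iota_{X_\alpha}H - (-1)^{q}\iota_{X_\alpha}dH = d\iota_{X_\alpha}H + (-1)^{q}\iota_{X_\alpha}\iota_{X_H}\omega .
\]
Using the graded commutation rule $\iota_{X_H}\iota_{X_\alpha} = (-1)^{q}\iota_{X_\alpha}\iota_{X_H}$ (the interior products with the one vector field $X_H$ and the $q$-vector $X_\alpha$ anticommute $q$ times), the two $\omega$-terms cancel upon adding, leaving
\[
\L_{X_H}\alpha + \L_{X_\alpha}H = d\bigl(\iota_{X_H}\alpha + \iota_{X_\alpha}H\bigr).
\]

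With this identity the theorem is immediate in both directions. If $\alpha$ is a local (resp. global) conserved quantity, then $\L_{X_H}\alpha$ is closed (resp. exact); since $d(\iota_{X_H}\alpha + \iota_{X_\alpha}H)$ is exact, the identity exhibits $\L_{X_\alpha}H$ as the sum of a closed (resp. exact) form and an exact form, hence closed (resp. exact), so $X_\alpha$ is a local (resp. global) symmetry — the condition $\L_{X_\alpha}\omega=0$ having been verified above. Conversely, if $X$ is a local (resp. global) symmetry and $\beta$ is any corresponding Hamiltonian form, i.e. $X\hk\omega = -d\beta$, then $X$ plays the role of $X_\beta$ and the same identity gives that $\L_{X_H}\beta$ differs from $-\L_X H$ by an exact form, hence is closed (resp. exact) because $\L_X H$ is; so $\beta$ is a conserved quantity of the corresponding type.

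I expect the only real friction to be bookkeeping rather than ideas: pinning down the sign in the graded Cartan formula and in $\iota_{X_H}\iota_{X_\alpha} = (-1)^{q}\iota_{X_\alpha}\iota_{X_H}$ so that the $\omega$-terms genuinely cancel, and checking that the degrees match ($\iota_{X_H}\alpha$ and $\iota_{X_\alpha}H$ are both $(p-1)$-forms, and $\L_{X_H}\alpha$, $\L_{X_\alpha}H$ are both $p$-forms). It is worth noting that the correction term $d(\iota_{X_H}\alpha + \iota_{X_\alpha}H)$ is precisely what prevents the \emph{strict} case from transferring: a strict conserved quantity ($\L_{X_H}\alpha = 0$) only forces $\L_{X_\alpha}H$ to be exact, not zero, which explains why the statement is restricted to the local and global cases.
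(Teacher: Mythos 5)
Your proof is correct and is essentially the paper's own argument: the paper derives its master identities (\ref{Noether 1})--(\ref{Noether 2}) by expanding the generalized Poisson bracket $\{\alpha,H\}$ in two ways and invoking its graded skew-symmetry, which is exactly your Cartan-formula computation with the anticommutation $X_H\hk X_\alpha\hk\omega=(-1)^{q}X_\alpha\hk X_H\hk\omega$ written without the bracket, and your preliminary check that $\L_{X_\alpha}\omega=0$ is the paper's Proposition \ref{preserve}. The only discrepancy is a sign: your identity $\L_{X_H}\alpha+\L_{X_\alpha}H=d(X_H\hk\alpha+X_\alpha\hk H)$ is the one actually consistent with the paper's stated conventions, whereas (\ref{Noether 1})--(\ref{Noether 2}) carry a relative minus sign traceable to the slip $\{\alpha,H\}=-X_H\hk X_\alpha\hk\omega$ (the definition with $|H|=2$ gives $+$), but this is immaterial for the theorem since closedness and exactness are insensitive to sign.
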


As in symplectic geometry, this correspondence is not one-to-one. Indeed, for a Hamiltonian form, any two of its corresponding Hamiltonian multivector fields differ by an element in the kernel of $\omega$. Conversely, any two Hamiltonian forms corresponding to a Hamiltonian multivector field differ by a closed form:

Let  \[\Omega_{\text{Ham}}(M)=\{\alpha\in\Omega^\bullet(M); d\alpha= X\hk\omega \text{ for some } X\in\Gamma(\Lambda^\bullet(TM))\}\] denote the graded vector space of Hamiltonian forms, and let $\widetilde\Omega_{\text{Ham}}(M)$ denote the quotient of $\Omega_{\text{Ham}}(M)$ by closed forms. Similarily, we let \[\X_{\text{Ham}}(M)=\{X\in\Gamma(\Lambda^\bullet(TM)); X\hk \omega \text{ is exact}\}\]  denote the graded vector space of multi Hamiltonian vector fields and $\widetilde\X_{\text{Ham}}(M)$ denote the quotient of $\X_{\text{Ham}}(M)$ by elements in the kernel of $\omega$. We will then show, as in \cite{dropbox}, that $\{\cdot,\cdot\}$ descends to a well defined graded Poisson bracket on $\widetilde\Omega_{\text{Ham}}(M)$. Then we show that

\begin{theorem}
There is a natural isomorphism of graded Lie algebras between $(\widetilde\X_{\text{Ham}}(M),[\cdot,\cdot])$ and $(\widetilde\Omega_{\text{Ham}}(M),\{\cdot,\cdot\})$.
\end{theorem}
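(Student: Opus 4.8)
The plan is to write down the isomorphism explicitly and then check that it intertwines the two brackets. Write $\iota_X := X\hk(\cdot)$, and define $\Phi\colon \widetilde\X_{\text{Ham}}(M) \to \widetilde\Omega_{\text{Ham}}(M)$ by $\Phi([X]) = [\alpha]$, where $\alpha$ is any Hamiltonian form with $d\alpha = X\hk\omega$; such an $\alpha$ exists precisely because $X\hk\omega$ is exact. First I would check well-definedness on both quotients: if $d\alpha = d\alpha' = X\hk\omega$ then $\alpha - \alpha'$ is closed, so $[\alpha]=[\alpha']$ in $\widetilde\Omega_{\text{Ham}}(M)$; and if $X - X' \in \ker\omega$ then $X\hk\omega = X'\hk\omega$, so the same $\alpha$ serves. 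The inverse $\Psi([\alpha]) = [X_\alpha]$, sending a Hamiltonian form to any of its Hamiltonian multivector fields, is well-defined by the dual argument (two choices of $X_\alpha$ differ by an element of $\ker\omega$, and altering $\alpha$ by a closed form leaves $d\alpha$ unchanged). Hence $\Phi$ and $\Psi$ are mutually inverse graded linear isomorphisms; the degree bookkeeping follows from the observation that a $p$-vector field yields a form of degree $n-p$ through $d\alpha = X\hk\omega$, so that under the natural shifted gradings a $p$-vector sits opposite an $(n-p)$-form, and this correspondence is degree-preserving.

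The substance of the theorem is that $\Phi$ is a homomorphism, i.e. $d\{\alpha,\beta\} = \pm[X_\alpha,X_\beta]\hk\omega$; granting this, $\{\alpha,\beta\}$ is exactly the Hamiltonian form attached to the Schouten bracket $[X_\alpha,X_\beta]$, so $\Psi\{\alpha,\beta\} = [X_\alpha, X_\beta] = [\Psi\alpha, \Psi\beta]$. To prove the identity I would work in the graded Cartan calculus for multivector fields, using the generalized Cartan formula $\L_X = \iota_X d - (-1)^p d\,\iota_X$ for a $p$-vector field $X$, together with the identity $\iota_{[X,Y]} = \L_X\iota_Y - (-1)^{(p-1)q}\iota_Y\L_X$ expressing the Schouten bracket as the graded commutator of $\L$ and $\iota$. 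Applying the latter to $\omega$ with $X = X_\alpha$ and $Y = X_\beta$, the second term vanishes because every Hamiltonian multivector field satisfies $\L_{X_\alpha}\omega = \iota_{X_\alpha}d\omega - (-1)^p d\,\iota_{X_\alpha}\omega = -(-1)^p d(d\alpha) = 0$, using $d\omega = 0$. What remains is $\iota_{[X_\alpha,X_\beta]}\omega = \L_{X_\alpha}(\iota_{X_\beta}\omega) = \L_{X_\alpha}(d\beta)$, and a second use of the Cartan formula rewrites this as $-(-1)^p d(\iota_{X_\alpha} d\beta) = -(-1)^p d(\iota_{X_\alpha}\iota_{X_\beta}\omega)$. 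Recognizing the expression inside $d$ as a sign times the bracket $\{\alpha,\beta\}$ of \cite{dropbox} then closes the argument.

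The main obstacle I anticipate is bookkeeping rather than conceptual: reconciling the sign and degree conventions of the Poisson bracket $\{\cdot,\cdot\}$ from \cite{dropbox} with the factor $-(-1)^p$ produced by the Cartan computation, and confirming that the gradings on $\widetilde\X_{\text{Ham}}(M)$ and $\widetilde\Omega_{\text{Ham}}(M)$ render both brackets degree-compatible — here the Schouten bracket of a $p$- and a $q$-vector being a $(p+q-1)$-vector matches the Poisson bracket lowering total form degree correspondingly, so under the grading sending a $p$-vector and an $(n-p)$-form to the common degree $1-p$ both brackets have degree zero. I would also confirm at the outset, following \cite{dropbox}, that $\{\cdot,\cdot\}$ genuinely descends to $\widetilde\Omega_{\text{Ham}}(M)$, since the statement presupposes it; granting that, the isomorphism is precisely the content developed above, and its naturality is manifest from the fact that $\Phi$ and $\Psi$ involve no auxiliary choices once one passes to the quotients.
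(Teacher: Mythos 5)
Your proposal is in substance the paper's own proof: Theorem \ref{la iso} establishes the same bijection between the quotients (stated in the direction $\alpha\mapsto X_\alpha$), with the same well-definedness and injectivity/surjectivity checks, and the bracket-intertwining identity you derive is exactly Lemma \ref{Poisson is Schouten}, which the paper obtains from equation (\ref{interior equation}) --- itself derivable from the Cartan formula (\ref{Lie}) and the commutator identity (\ref{bracket hook}) that you invoke, so even the computation is the same. The one thing to repair is your sign convention: you take $d\alpha = X\hk\omega$, opposite to the paper's $d\alpha = -X_\alpha\hk\omega$, and carried through consistently this yields $d\{\alpha,\beta\} = -[X_\alpha,X_\beta]\hk\omega$, so the map you wrote down intertwines the brackets only up to an overall minus sign (relatedly, your stated versions of the Cartan and commutator identities are not mutually consistent in any one convention, which is why your final sign $-(-1)^p$ does not match the correct $(-1)^{q+pq}$). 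This is precisely the bookkeeping you flagged, and it is harmless for the theorem: either adopt the paper's convention, under which $\alpha\mapsto X_\alpha$ is honestly bracket-preserving, or replace your $\Psi$ by $-\Psi$, which bilinearity of both brackets turns into a genuine graded Lie algebra isomorphism.
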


As a consequence of this theorem, we will then show that our symmetries and conserved quantities, after appropriate quotients, are in one-to-one correspondence. In particular, we let $\mathcal{C}_\text{loc}(X_H)$, $\mathcal{C}(X_H)$, $\mathcal{C}_\text{str}(X_H)$ denote the spaces of local, global, and strict conserved quantities respectively, and  $\widetilde{\mathcal{C}}_\text{loc}(X_H),\widetilde{\mathcal{C}}(X_H)$ and $\widetilde{\mathcal{C}}_{\text{str}}(X_H)$ their quotients by closed forms. Similarily, we let $\mathcal{S}_\text{loc}(H)$, $\mathcal{S}(H)$ and $\mathcal{S}_\text{str}(H)$ denote the space of local, global, and strict continuous symmetries respectively, and  $\widetilde{\mathcal{S}}_\text{loc}(H),\widetilde{\mathcal{S}}(H)$ and $\widetilde{\mathcal{S}}_\text{str}(H)$ their quotient by elements in the kernel of $\omega$. We obtain:
\begin{theorem}
There is an isomorphism of graded Lie algebras from $(\widetilde{\mathcal{S}}(H),[\cdot,\cdot])$ to $(\widetilde{\mathcal{C}}(X_H),\{\cdot,\cdot\})$ and from  $(\widetilde{\mathcal{S}}_{\text{loc}}(H),[\cdot,\cdot])$ to $(\widetilde{\mathcal{C}}_{\text{loc}}(X_H),\{\cdot,\cdot\})$. Moreover, there is an injective graded Lie algebra homomorphism from $(\widetilde{\mathcal{S}}_{\text{str}}(H),[\cdot,\cdot])$ to $(\widetilde{\mathcal{C}}(X_H),\{\cdot,\cdot\})$ and from   $(\widetilde{\mathcal{C}}_{\text{str}}(X_H),\{\cdot,\cdot\})$ to $(\widetilde{\mathcal{S}}(H),[\cdot,\cdot])$.
\end{theorem}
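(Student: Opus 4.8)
The plan is to realize all four maps as restrictions of the graded Lie algebra isomorphism $\Phi\colon \widetilde{\X}_{\text{Ham}}(M)\to\widetilde{\Omega}_{\text{Ham}}(M)$ furnished by the preceding theorem, where $\Phi[X]=[\alpha]$ whenever $X\hk\omega=-d\alpha$. First I would record that the symmetry and conserved-quantity spaces sit inside the two big quotients as graded Lie subalgebras. For the symmetries this uses the observation that any Hamiltonian multivector field automatically satisfies $\L_X\omega=0$: since $X\hk\omega$ is exact, $d(X\hk\omega)=0$, and $d\omega=0$, so $\L_X\omega=0$. Hence within $\widetilde{\X}_{\text{Ham}}(M)$ the local/global/strict symmetry conditions are cut out purely by requiring $\L_X H$ to be closed/exact/zero, and $\widetilde{\mathcal{S}}_{\text{loc}}(H)\supseteq\widetilde{\mathcal{S}}(H)\supseteq\widetilde{\mathcal{S}}_{\text{str}}(H)$ all live in $\widetilde{\X}_{\text{Ham}}(M)$; dually the $\widetilde{\mathcal{C}}$'s live in $\widetilde{\Omega}_{\text{Ham}}(M)$. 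Closure under brackets is exactly the two Propositions above (the Schouten bracket of symmetries is a symmetry of the same type, and the Poisson bracket of conserved quantities is strictly conserved, hence again Hamiltonian). Because $\Phi$ intertwines the Schouten and Poisson brackets, any restriction of $\Phi$ from a subalgebra into a subalgebra is automatically an injective graded Lie algebra homomorphism; only the identification of images will remain to be checked.

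The analytic heart is a single exactness congruence. Writing $X\hk\omega=-d\alpha$ and $X_H\hk\omega=-dH$, Cartan's formula gives $\L_{X_H}\alpha = X_H\hk d\alpha + d(X_H\hk\alpha) = -X_H\hk(X\hk\omega)+d(X_H\hk\alpha)$ and, analogously, $\L_X H = -X\hk(X_H\hk\omega)\pm d(X\hk H)$. Since $X_H\hk(X\hk\omega)$ and $X\hk(X_H\hk\omega)$ are contractions of $\omega$ against $X$ and $X_H$ that agree up to the Koszul reordering sign, the two leading terms coincide up to sign, so
\begin{equation*}
\L_X H \;=\; \pm\,\L_{X_H}\alpha \;+\; d\big(X\hk H \mp X_H\hk\alpha\big);
\end{equation*}
that is, $\L_X H$ and $\L_{X_H}\alpha$ always differ by an \emph{exact} form. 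I would first use this to confirm that membership descends to the quotients: replacing $X$ by $X+K$ with $K\hk\omega=0$ changes $\L_X H$ by $d(K\hk H)$, because $K\hk dH = -K\hk(X_H\hk\omega)=\pm X_H\hk(K\hk\omega)=0$; and replacing $\alpha$ by $\alpha+\gamma$ with $\gamma$ closed changes $\L_{X_H}\alpha$ by $d(X_H\hk\gamma)$. In both cases the change is exact, so ``$\L\,\cdot$ closed'' and ``$\L\,\cdot$ exact'' are representative-independent, whereas ``$\L\,\cdot=0$'' is \emph{not}. This already signals why the strict classes must be handled separately.

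With the congruence in hand, the four statements follow by tracking types through $\Phi$. For the two isomorphisms I argue by surjectivity of $\Phi$: given $[\alpha]\in\widetilde{\mathcal{C}}(X_H)$ choose $[X]$ with $\Phi[X]=[\alpha]$; then $\L_{X_H}\alpha$ exact forces $\L_X H$ exact, so $[X]\in\widetilde{\mathcal{S}}(H)$, and conversely, giving $\Phi\big(\widetilde{\mathcal{S}}(H)\big)=\widetilde{\mathcal{C}}(X_H)$; replacing ``exact'' by ``closed'' throughout yields $\Phi\big(\widetilde{\mathcal{S}}_{\text{loc}}(H)\big)=\widetilde{\mathcal{C}}_{\text{loc}}(X_H)$. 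For the last two maps the congruence provides only one inclusion: if $\L_X H=0$ (strict symmetry) then $\L_{X_H}\alpha = \pm\L_X H + d\eta = d\eta$ is merely exact, so $\Phi$ carries $\widetilde{\mathcal{S}}_{\text{str}}(H)$ injectively into $\widetilde{\mathcal{C}}(X_H)$ (not onto, and not into the strict classes); symmetrically, applying $\Phi^{-1}$ and the same congruence to a strict conserved quantity $\L_{X_H}\alpha=0$ gives $\L_X H = d\eta$ exact, so $\Phi^{-1}$ embeds $\widetilde{\mathcal{C}}_{\text{str}}(X_H)$ injectively into $\widetilde{\mathcal{S}}(H)$.

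I expect the main obstacle to be precisely the bookkeeping around strictness rather than any deep new computation. Once the congruence is established, the global$\leftrightarrow$global and local$\leftrightarrow$local isomorphisms are clean, but the failure of the exact correction term $d(X\hk H\mp X_H\hk\alpha)$ to vanish is exactly what breaks the strict$\leftrightarrow$strict correspondence and forces the asymmetric statement (strict symmetries landing only in \emph{global} conserved quantities, and strict conserved quantities only in \emph{global} symmetries). The delicate points to get right are therefore the sign in the congruence and the verification that strictness genuinely is not a quotient-invariant notion, so that the two strict statements can be claimed only as injective homomorphisms; everything else is inherited formally from the isomorphism $\Phi$ of the preceding theorem together with the two Propositions.
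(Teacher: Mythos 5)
Your proposal is correct and takes essentially the same route as the paper: the paper likewise obtains all four maps by restricting the master graded Lie algebra isomorphism $\alpha\mapsto X_\alpha$ between $\widetilde\Omega_{\mathrm{Ham}}(M)$ and $\widetilde\X_{\mathrm{Ham}}(M)$ (Theorem \ref{la iso}), combined with your exactness congruence (which is precisely equations (\ref{Noether 1})--(\ref{Noether 2}) underlying Theorem \ref{Noether theorem 1} and its corollary on the strict cases) and the bracket-closure results (Propositions \ref{Poisson is strictly conserved} and \ref{symmetry super algebra}). Your extra check that the closed/exact conditions are representative-independent on the quotients while strictness is not (with the sign in the congruence left as $\pm$, correctly) is a worthwhile detail the paper leaves implicit, but it does not constitute a different method.
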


Furthermore, we will show that under certain assumptions for a group action on $M$, a homotopy co-momentum map $(f)$ (see definition \ref{hmm}) gives rise to a whole family of conserved quantities and continuous symmetries. A group action on a multi-Hamiltonian system $(M,\omega,H)$ is called locally, globally, or strictly $H$ preserving if the Lie derivative of $H$ under each infinitesimal generator from $\g$ is closed, exact, or zero respectively. 

In \cite{cq} it was shown that if the group locally or globally preserves $H$, then for any $p\in\Rho_{\g,k}$, the $k$-th Lie kernel (see definition \ref{Lie kernel}), $f_k(p)$ is locally conserved, and if the group strictly preserves $H$ then $f_k(p)$ is globally conserved. We add to this result by showing that under the above assumptions $V_p$ is a local or global continuous symmetry. In particular, let $S_k=\{V_p; p\in\Rho_{\g,k}\}$ denote the infinitesimal generators coming from the Lie kernel. Then $S=\oplus S_k$ is a differential graded Lie algebra. Let $C_k=\{f_k(p) ; p\in\Rho_{\g,k}\}$ denote the image of the moment map. We set $C=\oplus C_k$ and show that $C\cap L_\infty(M,\omega)$ is an $L_\infty$-subalgebra of $L_\infty(M,\omega)$, the Lie $n$-algebra of observables. We then obtain

\begin{theorem}
For a (locally, globally or strictly) preserving $H$-action, a homotopy co-momentum map induces an $L_\infty$-morphism from $S$ to $C\cap L_\infty(M,\omega)$.
\end{theorem}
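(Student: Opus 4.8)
The plan is to build the required morphism directly from the homotopy co-momentum map, which by Definition~\ref{hmm} is \emph{itself} an $L_\infty$-morphism from the graded Lie algebra of Lie kernels $\Rho_\g=\bigoplus_k \Rho_{\g,k}$ into the Lie $n$-algebra of observables $L_\infty(M,\omega)$. Both objects in the statement are images of $\Rho_\g$: the infinitesimal action $V\colon p\mapsto V_p$ has image $S$, while the co-momentum components $f\colon p\mapsto f_k(p)$ have image $C$. The strategy is to present the desired map as the corestriction of $(f)$ to $C\cap L_\infty(M,\omega)$, precomposed with the inverse of $V$; that is, $\Psi=(f)\circ V^{-1}$ with linear part $V_p\mapsto f_k(p)$ and higher components inherited from the higher-arity part of $(f)$.

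Two verifications make this precise. First I would show that $V\colon\Rho_\g\to S$ is an isomorphism of differential graded Lie algebras: it is surjective by the definition of $S$; it intertwines the bracket on $\Rho_\g$ with the Schouten bracket by linearity of $p\mapsto V_p$ together with the Proposition stating that Schouten brackets of symmetries are symmetries; and it is compatible with the differentials through the structure of the Lie kernel (Definition~\ref{Lie kernel}). Second, I would check that the target corestricts correctly. Under the (locally, globally, or strictly) $H$-preserving hypothesis, each $f_k(p)$ is a conserved quantity by the result of \cite{cq}, so the image of $(f)$ lies in $C$; moreover the defining identity of $(f)$ gives $V_p\hk\omega=-d f_k(p)$, so $f_k(p)$ is Hamiltonian with Hamiltonian multivector field $V_p$ and hence lies in $L_\infty(M,\omega)$, exactly as in the first generalization of Noether's theorem above. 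Since $C\cap L_\infty(M,\omega)$ has already been shown to be an $L_\infty$-subalgebra, $\Psi$ is then a composite of an isomorphism of differential graded Lie algebras with an $L_\infty$-morphism, hence an $L_\infty$-morphism.

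The step I expect to be the main obstacle is the injectivity of $V$, and with it the well-definedness of $V^{-1}$ on $S$: if $V_p=0$ then $V_p\hk\omega=0=d f_k(p)$, so $f_k(p)$ is merely closed rather than zero, and one must argue that such classes are controlled under the standing hypotheses or else pass to the appropriate quotient. Should $V$ fail to be injective, the fallback is to verify the $L_\infty$-morphism relations for $\Psi$ by hand. There the binary level is supplied by the graded Lie algebra isomorphism theorem above, which identifies $[V_p,V_q]$ with $\{f_k(p),f_j(q)\}$; the genuine difficulty is then that the source $S$ is an honest differential graded Lie algebra, carrying only a unary and a binary bracket, whereas the target carries the full tower of higher brackets $l_m$ of $L_\infty(M,\omega)$. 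Matching these forces the higher morphism components, and one must show term by term that each multibracket on the observables is reproduced by the homotopy co-momentum descent identities of Definition~\ref{hmm}, with careful tracking of the Koszul signs and a check that every bracket remains inside $C\cap L_\infty(M,\omega)$.
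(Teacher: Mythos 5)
Your architecture is the same as the paper's: realize both $S$ and $C$ as images of the Lie kernel $\Rho_\g$, use the fact that the (weak) co-momentum map is by definition an $L_\infty$-morphism from $(\Rho_\g,\partial,[\cdot,\cdot])$ into the Hamiltonian forms (the paper's combination of equation (\ref{hcmm kernel}), Proposition \ref{Schouten is closed} and Theorem \ref{L infinity subalgebra}), invoke Proposition \ref{H action} from \cite{cq} to land in conserved quantities, and present the induced map as $V_p\mapsto f_k(p)$. The obstacle you flag is also the right one, and your fallback is exactly what the paper does: the body version of this theorem is stated on quotients, as a morphism from $\widetilde S$ (modulo $\ker\omega$) to $\widetilde C\cap\widehat L$ (modulo closed forms), where well-definedness of $[V_p]\mapsto[f_k(p)]$ needs no injectivity of $p\mapsto V_p$ at all: if $V_p-V_q\in\ker\omega$, then (\ref{hcmm kernel}) gives $d f_k(p)=d f_k(q)$, so the images agree modulo closed forms. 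No ``control'' of the kernel under the standing hypotheses is available or needed; passing to the quotient is the resolution.

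There is, however, a concrete error in your first verification step. The map $V\colon p\mapsto V_p$ does \emph{not} intertwine the brackets: by Proposition \ref{infinitesimal generator of Schouten}, $V_{[p,q]}=-[V_p,V_q]$, so $V$ is an \emph{anti}-homomorphism of graded Lie algebras, and your justification --- linearity plus ``Schouten brackets of symmetries are symmetries'' --- proves nothing of the sort: closure of $S$ under $[\cdot,\cdot]$ says nothing about how $V$ transforms brackets. Likewise, (\ref{hcmm kernel}) says the Hamiltonian multivector field of $f_k(p)$ is $\zeta(k)V_p$, not $V_p$. Consequently the composite $(f)\circ V^{-1}$ satisfies the binary $L_\infty$-relations only after these signs are absorbed into the morphism components; this is precisely what the paper tracks through the factor $(-1)^{k+l+kl}$ in Proposition \ref{difference is closed}. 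Finally, the worry in your fallback about matching ``the full tower of higher brackets'' is moot: the higher components $f_k$ of the co-momentum map are themselves the higher components of the desired $L_\infty$-morphism, and the identities they must satisfy are exactly equation (\ref{hcmm}) of Definition \ref{hmm}; no term-by-term reconstruction is required.
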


We finish by giving two applications of our results:

Recall that in symplectic geometry, any vector field on the given manifold induces a vector field on the phase space, called the corresponding classical momentum function. Moreover, any function on the base manifold pulls back to a function on the phase space giving the so called classical position functions. These momentum and position functions on the phase space satisfy specific commutation relations that form the bridging gap from classical to quantum mechanics (see chapter 5.4 of \cite{Marsden}). As an application of our results we will replace vector fields and functions with arbitrary multivector fields and differential forms to obtain `position and momentum forms'. We will show that these position and momentum satisfy higher order bracket relations generalizing the bracket relations in symplectic geometry.
 
 Lastly, we finish by applying our work to manifolds with $G_2$-structure. In particular, we extend Example 6.7 of \cite{ms} by obtaining a homotopy co-momentum map for a $T^2$ action on a torsion-free $G_2$-manifold.
\del{

This issue of only obtaining locally defined conserved quantities from a continuous symmetries is also present in Hamiltonian mechanics. It is solved if one only considers the symmetries and conserved quantities coming from a moment map. We find the same thing happens in multisymplectic when one considers the symmetries and conserved quantities coming from a multi moment map. Moreover, we will have to restrict the domain of the moment map to the Lie kernel. In \cite{cq} it was shown that if a group acts on a multi Hamiltonian system $(M,\omega,H)$  and locally (or globally) preserves $H$ then for a homotopy co-momentum map $f_k:\Lambda^k\to\Omega^{n-k}(M)$ we have that $f_k(p)$ is a local conserved quantity for each $p\in\Rho_{g,k}$.

Considering homotopy moment maps restricted to the Lie kernel we obtain

\begin{proposition}
A homotopy moment map restricts to an $L_\infty$-morphism from $(\Rho_\g,[\cdot,\cdot],\del)$ to $\widetilde L_\infty(M,\omega)$.
\end{proposition}

Next we consider how the algebraic structures of conserved quantities and continuous symmeries are related. 
We let $C$ denote the symmetries coming from the moment map and $S$ the conserved quantities.
\begin{proposition}
A homotopy moment map gives an isomorphism of graded Lie algebras from $C(X_H)\cap \widetilde C$ to $S(H)\cap \widetilde S$.
\end{proposition}

This proposition is actually a consequence of something more general. Namely,

\begin{theorem}
There is a graded Lie algebra isomorphism from $C(X_H)\cap\Omega_{\text{Ham}}(M)$ to $S(H)\cap\X_{\text{Ham}}(M)$.
\end{theorem}

In symplectic geometry, Noether's theorem says that the map $V_\xi\mapsto\mu(\xi)$ gives a one-to-one correspondence between symmetries and conserved quantities. Here $\mu(\xi)$ is the conserved quantity coming from the symmetry $V_\xi$. Hence we mimic this correspondence to say that a a symmetry is a vector field $X$ satisyftynig $X\hk\omega$ is exact. 

Since a 
By adding in this requirement, we can use the 'generalized Poisson bracket' defined in \ref{dropbox} and say more about the conserved quantity.

In symplectic geometry the conserved quantities form a Lie subalgebra of $(C^\infty(M),\{\cdot,\cdot\})$. The continuous symmetries form a Lie subalgebra of $(\Gamma(TM),[\cdot,\cdot])$. Noether's theorem says that  and we study how this generalizes to multisymplectic geometry.

says that for a differential form $\alpha$ is conserved was a different

In this paper we use multisymplectic geometry  to generalize notions of symmetry and conserved quantity  Hamiltonian mechanics as extends concepts from symplectic geometry 
To formulate Noether's theorem, we use the notion of  conserved quantity on a multisymplectic manifold defined in \cite{cq}. Intuitively, 

Moreover, in \cite{dropbox} a generalization of the Poisson bracket to multisymplectic systems was introduced, and we show how this bracket interacts with the aforementioned conserved quantities. 

We define our continuous symmetries to be multivector field. In order to setup a corrsepondence between  Moreover, in \cite{cq} a notion of conserved quantity on a multisymplectic manifold was defined. 

The work in this paper connects several results given in \cite{cq}, \cite{ms}, \cite{dropbox}.

In \cite{cq} a notion of conserved quantity on a multyismplectic manifold was introduced, generalizing conservation laws from Hamiltonian mechanics. In \cite{ms}, Madsen and Swann explained how one could define a moment map on a multi symplectic manifold so that its image was contained in a finite dimensional vector space, analogous to the scenario in symplectic geometry. In \cite{dropbox} they give a notion of Hamiltonian structures on multisymplectic manifolds.

In this paper, we aim to give a notion of symmetry on a multiysmplectic manifold and set up a correspondence generalizing Noethers theorem in Hamiltonian mechanics.  We work with a fixed multi-Hamiltonian system $(M,\omega,H)$ where $\omega$ is $n$-plectic and $H\in\Omega^{n-1}_{\text{Ham}}(M)$ is Hamiltonian. We modify the definition of a conserved quantity given in \cite{cq}, by adding the requirement that a conserved quantity must be Hamiltonian. This allows us to make use of some of the structures developed in \cite{dropbox}. In this paper we build on these structures to give a notion of symmetry on a multisymplectic manifold. In particular, we will build on these structures to give a notion of symmetry on a multisymplectic manifold. Moreover, we show that these generalized Hamiltonian structures interact nicely with a homotopy co-moment  map. In particular,

\begin{proposition}
$\{\alpha,\beta\}$ is strictly conserved.
\end{proposition}

We then adopt  ideas from \cite{ms} by replacing the Chevalley-Eilenberg complex with a sub complex containing only Lie kernels. By doing this, we make a connection with symplectic geometry, that the image of each element in the Lie algebra under a moment map is a conserved quantity. The fact that each element of the Lie kernel gets mapped to a conserved quantity is one of the main ideas in \cite{cq}. In this paper we show that the infinitesimal generator of each element in the Lie kernel is a symmetry and its corresponding conserved quantity is that obtained in \cite{cq}.

That is, we will investigate how a momentum map converts symmetries to conserved quantities and vice versa.  Similar to \cite{cq}, we will have three types of symmetries, and will setup the correspondence in each of the three cases.

\begin{proposition}
Fix a multi-Hamiltonian system $(M,\omega,H)$ where $\omega\in\Omega^{n+1}(M)$ and $H\in\Omega^{n-1}_{\text{Ham}}(M)$. For an element $p$ in the Lie kernel $\Rho_{\g,k}$ we have that its infinitesimal generator is a global symmetry. Its corresponding conserved quantity is $f_k(p)$, where $(f)$ is a homotopy co-momentum map.
\end{proposition}

We get the following generalization of Noether's theorem.

\begin{theorem}
If $\alpha\in\Omega^{n-k}_{\text{Ham}}(M)$ is a (local, global) conserved quantity then $X_\alpha$ is a (local, global) continuous symmetry, for any Hamiltonian vector field $X_\alpha$ of $\alpha$. Conversely, if $A\in\Gamma(\Lambda^k(TM))$ is a continuous symmetry, then locally $A=X_\alpha$ for some $(n-k)$-form $\alpha$. If $A$ is a (local, global) continuous symmetry, then this form $\alpha$ is a (local, global) conserved quantity, locally.
\end{theorem}

We also show that a moment map is an $L_\infty$-morphism between various different spaces.

\begin{theorem}
A momentum map restricts to an $L_\infty$-morphism between $(\Rho_\g,\partial,[\cdot,\cdot])$ and $(\widetilde L, \{l_k\})$.

\end{theorem}

\begin{theorem}
A momentum map is an $L_\infty$-algebra morphism between $C(X_H)\cap \widetilde L$ and $S(H)\cap \mathfrak{X}$ (also local and strict).
\end{theorem}

We also exhibit how a moment map reduces to an Lie algebra isomorphism when restricted in a certain cohomological sense.

(This addresses two open problems proposed in \cite{questions}, namely the first and third bullet points of section 13.  Actually, the results concerning the 3rd bullet are only referenced here, the details are in other notes.)
}

\section{Differential Graded Lie Algebras and $L_\infty$-algebras}

We start by recalling some basic properties of differential graded Lie algebras.
\subsection{Differential Graded Lie Algebras}

We first need to recall the definition of a differential graded Lie algebra and a Gerstenhaber algebra.

\begin{definition}
A differential graded Lie algebra is a $\Z$-graded vector space $L=\oplus_{i\in\Z} L_i$ together with a bracket $[\cdot,\cdot]:L_i\otimes L_j\to L_{i+j}$ and a differential $d:L_i\to L_{i-1}$. The bilinear map $[\cdot,\cdot]$ is graded skew symmetric:  \[[x,y]=-(-1)^{|x||y|}[y,x],\] and satisfies the graded Jacobi identity: \[(-1)^{|x||z|}[x,[y,z]]+(-1)^{|y||x|}[y,[z,x]]+(-1)^{|z||y|}[z,[x,y]]=0.\] Lastly, the differential and bilinear map satisfiy the graded Leibniz rule: \[d[x,y]=[dx,y]+(-1)^{|x|}[x,dy].\] Here we have let $x,y,$ and $z$ be arbitrary homogeneous elements in $L$ of degrees $|x|,|y|$ and $|z|$ respectively.
\end{definition}

\begin{definition}
A Gerstenhaber algebra is a $\Z$-graded commutative algebra $A=\oplus_{i\in\Z}A_i$ with a bilinear map $[\cdot,\cdot]:A\otimes A \to A$ satisfying the following properties: 

\begin{itemize}

\item $|[a,b]|=|a|+|b|-1$ (the bilinear map has degree $-1$),

\item $[a,bc]=[a,b]c+(-1)^{(|a|-1)|b|}b[a,c]$ (the bilinear map satisfies the Poisson identity),

\item $[a,b]=-(-1)^{(|a|-1)(|b|-1)}[b,a]$ (the bilinear map is antisymmetric),
\end{itemize} and lastly, the bilinear map satisfies the Jacobi identity: 

\begin{itemize}
\item $(-1)^{(|a|-1)(|c|-1)}[a,[b,c]]+(-1)^{(|b|-1)(|a|-1)}[b,[c,a]]+(-1)^{(|c|-1)(|b|-1)}[c,[a,b]]=0$.
 \end{itemize}
 Here we have let $|a|$ denote the order of $a\in A$, and $ab$ the product of $a$ and $b$ in $A$.
\end{definition}
Let $(V,[\cdot,\cdot])$ be a Lie algebra. The Schouten bracket turns $\Lambda^\bullet V$, the exterior algebra of $V$, into a Gerstenhaber algebra. We quickly recall some properties of the Schouten bracket. A more detailed discussion can be found in \cite{marle}.

\begin{proposition}
The Schouten bracket is the unique bilinear map $[\cdot,\cdot]:\Lambda^\bullet V\times\Lambda^\bullet V \to\Lambda^\bullet V$ satisfying the following properties: 
\begin{itemize}
\item If $\deg X = k$ and $\deg Y= l$ then $\deg([X,Y])=k+l-1$.
\item $[X,Y]=-(-1)^{(k+1)(l+1)}[Y,X]$.
\item It coincides with the Lie bracket on $V$.
\item For $X\in\Gamma(\Lambda^k(V))$, $Y\in\Gamma(\Lambda^l(V))$ and $Z\in\Gamma(\Lambda^m(V))$, \[[X,Y\wedge Z]=[X,Y]\wedge Z+(-1)^{(k-1)l}Y\wedge[X,Z].\]
\item It satisfies the graded Jacobi identity: For $X,Y$ and $Z$ of degree $k,l$ and $m$ respectively, \[\sum_{\mathrm{cyclic}}(-1)^{(k-1)(m-1)}[X,[Y,Z]] = 0.\]
\end{itemize}
\end{proposition}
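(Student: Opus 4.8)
The plan is to prove the statement in two independent halves: \emph{existence}, that the Schouten bracket actually satisfies the five listed properties, and \emph{uniqueness}, that any bilinear map $\{\cdot,\cdot\}\colon \Lambda^\bullet V\times\Lambda^\bullet V\to\Lambda^\bullet V$ satisfying them must coincide with it. The second half is where the characterizing force of the proposition lies, so I would organize the argument around it.

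For existence I would realize the bracket by an explicit formula on decomposable multivectors rather than postulating it abstractly. Writing $X=x_1\wedge\cdots\wedge x_k$ and $Y=y_1\wedge\cdots\wedge y_l$ with $x_i,y_j\in V$, I would take (up to the standard sign convention)
\[
[X,Y]=\sum_{i=1}^{k}\sum_{j=1}^{l}(-1)^{i+j}\,[x_i,y_j]\wedge x_1\wedge\cdots\widehat{x_i}\cdots\wedge x_k\wedge y_1\wedge\cdots\widehat{y_j}\cdots\wedge y_l,
\]
and extend bilinearly, with scalars in $\Lambda^0 V=\K$ bracketing trivially. The degree property, agreement with the Lie bracket on $V$ (the case $k=l=1$), and the graded antisymmetry are then immediate from the formula and the skew-symmetry of $[\cdot,\cdot]$ on $V$. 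The Poisson identity (fourth bullet) is a direct bookkeeping computation, tracking into which of the two wedge blocks the active factor $[x_i,y_j]$ falls.

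The step I expect to be the main obstacle is the graded Jacobi identity, since expanding the triple bracket directly from the formula produces a sign-laden triple sum. To avoid this I would realize $\Lambda^\bullet V$ inside the graded Lie algebra of graded derivations of the Chevalley--Eilenberg algebra $\Lambda^\bullet V^*$: a $k$-vector $X$ acts by contraction $\iota_X$ of degree $-k$, and the operator $\mathcal{L}_X=[\iota_X,d]$ (graded commutator with the CE differential $d$) is a derivation of degree $-(k-1)$. One checks that the Schouten bracket is characterized by a relation of the form $\iota_{[X,Y]}=[\mathcal{L}_X,\iota_Y]$, whence $\mathcal{L}_{[X,Y]}=[\mathcal{L}_X,\mathcal{L}_Y]$. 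Because the right-hand sides are graded commutators of operators, the graded Jacobi identity for $[\cdot,\cdot]$ is inherited from the automatic graded Jacobi identity in the endomorphism Lie algebra; injectivity of $X\mapsto\iota_X$ then transports it back to $\Lambda^\bullet V$. This reduces Jacobi to the associativity of operator composition and sidesteps the combinatorial expansion entirely.

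For uniqueness, suppose $\{\cdot,\cdot\}$ satisfies the five properties; I would show $\{X,Y\}=[X,Y]$ for all homogeneous $X,Y$ by induction on $k+l=\deg X+\deg Y$. The third property fixes the bracket on $V\times V$, and the degree property together with graded antisymmetry pin down the cases involving $\Lambda^0 V$, supplying the base cases. For the inductive step, write $Y=Y'\wedge y$ with $y\in V$ and $\deg Y'<\deg Y$; the Poisson identity gives
\[
\{X,Y'\wedge y\}=\{X,Y'\}\wedge y+(-1)^{(k-1)\deg Y'}\,Y'\wedge\{X,y\},
\]
and by the inductive hypothesis $\{X,Y'\}$ and $\{X,y\}$ equal the corresponding Schouten brackets; the identical expansion holds for $[\cdot,\cdot]$, so the two agree. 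Using graded antisymmetry to peel factors off the first argument in the same way, bilinearity then forces agreement on all of $\Lambda^\bullet V$. I would note explicitly that only the first four properties are used here — the Jacobi identity plays no role in uniqueness — which underscores that it is a genuine additional constraint satisfied by the Schouten bracket rather than a formal consequence of the others.
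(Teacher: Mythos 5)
The paper itself does not prove this proposition: its entire proof is the line ``This is Proposition A.1 of \cite{poisson}.'' Your proposal therefore goes beyond the paper by supplying a self-contained argument, and the route you take is essentially the one the cited literature takes. In particular, your device for the Jacobi identity --- representing $X$ by the contraction operator $\iota_X$, setting $\mathcal{L}_X=[\iota_X,d]$ with $d$ the Chevalley--Eilenberg differential, characterizing the bracket by $\iota_{[X,Y]}=[\mathcal{L}_X,\iota_Y]$, and inheriting graded Jacobi from graded commutators in the endomorphism algebra --- is precisely the interior-product calculus the paper later imports from \cite{marle} as its Proposition \ref{interior}. What your version buys is independence from the reference and, in the uniqueness half, the correct and worthwhile observation that the graded Jacobi identity is never used, so the first four properties alone characterize the bracket; what the citation buys is brevity. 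Two small points of hygiene: when you ``extend bilinearly'' the defining formula you should note that it is alternating in the $x_i$ and in the $y_j$ separately, so it descends to the exterior powers; and if $V$ is infinite dimensional, $\Lambda^\bullet V^*$ should be read as the complex of alternating multilinear forms on $V$, on which $d$ and $\iota_X$ are defined and $X\mapsto\iota_X$ is still injective.

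There is one genuine slip, in the base case of your uniqueness induction. You claim that the degree property together with graded antisymmetry pin down the brackets involving $\Lambda^0V=\K$. They do not: for $X$ of degree $k\geq 1$ and a scalar $c$, the bracket $\{X,c\}$ lies in $\Lambda^{k-1}V$, which is not forced to vanish on degree grounds, and antisymmetry merely exchanges $\{X,c\}$ with $\{c,X\}$ up to sign. The repair stays inside your own toolkit: apply the Poisson identity (fourth bullet) with $Y=Z=1$, noting $1\wedge 1=1$ and $\deg 1=0$, to get $\{X,1\}=\{X,1\wedge 1\}=\{X,1\}\wedge 1+1\wedge\{X,1\}=2\{X,1\}$, hence $\{X,1\}=0$; bilinearity then gives $\{X,c\}=0$ for every scalar $c$, and antisymmetry gives $\{c,X\}=0$. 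With this emendation the induction on $\deg X+\deg Y$ closes as you describe: the third property settles $V\times V$, the scalar cases vanish as just shown, and the peeling step $Y=Y'\wedge y$ (mirrored on the first slot via antisymmetry) strictly lowers total degree, so the inductive hypothesis applies to every term produced. The rest of the argument is sound.
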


\begin{proof}
This is Proposition A.1 of \cite{poisson}.
\end{proof}

On decomposable multivectors $X=X_1\wedge\cdots \wedge X_k \in\Lambda^k V$ and $Y=Y_1\wedge\cdots \wedge Y_l\in\Lambda^lV$ the Schouten bracket is given by \[ [X,Y]:=\sum_{i=1}^k\sum_{j=1}^l(-1)^{i+j}[X_i,Y_j]X_1\wedge\cdots\wedge \widehat X_i\wedge\cdots\wedge X_k\wedge Y_1\wedge\cdots\wedge\widehat Y_j\wedge\cdots\wedge Y_l.\]

We now let $M$ be a manifold and consider the Gerstenhaber algebra $(\Gamma(\Lambda^\bullet(TM)),\wedge,[\cdot,\cdot])$. 

\begin{definition}
For a decomposable multivector field $X=X_1\wedge\cdots\wedge X_k$ in $\Gamma(\Lambda^k(TM))$ and a differential form $\tau$, we define the contraction of $\tau$ by $X$ to be \[X\hk\tau:=X_k\hk\cdots\hk X_1\hk\tau,\] and extend by linearity to all multivector fields. We define the Lie derivative of $\tau$ in the direction of $X$ to be \begin{equation}\label{Lie}\L_X\tau:=d(X\hk\tau)-(-1)^{k}X\hk d\tau.\end{equation} Note that this is the usual Lie derivative when $k=1$.
\end{definition}Throughout the paper we will make extensive use of the following propositions.

\begin{proposition}
\label{properties}
Let $X\in\Gamma(\Lambda^k(TM))$ and $Y\in\Gamma(\Lambda^l(TM))$ be arbitrary. For a differential form $\tau$, the following hold:

\begin{equation}\label{dL}d\L_X\tau=(-1)^{k+1}\L_Xd\tau\end{equation}

\begin{equation}\label{bracket hook}[X,Y]\hk\tau=(-1)^{(k+1)l}\L_X(Y\hk\tau)-Y\hk(\L_X\tau)\end{equation}

\begin{equation}\label{L bracket}\L_{[X,Y]}\tau=(-1)^{(k+1)(l+1)}\L_X\L_Y\tau-\L_Y\L_X\tau\end{equation}

\begin{equation}\L_{X\wedge Y}\tau=(-1)^lY\hk(\L_X\tau)+\L_Y(X\hk\tau)\end{equation}

\end{proposition}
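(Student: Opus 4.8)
The plan is to dispatch the four identities in the order listed, treating \eqref{dL} as an easy warm-up, deriving the unnumbered wedge identity by a direct computation, and then reducing both bracket identities to a single inductive argument. For \eqref{dL} I would simply apply $d$ to the definition \eqref{Lie}: since $d^2=0$ we get $d\L_X\tau = -(-1)^k\, d(X\hk d\tau)$, while $\L_X d\tau = d(X\hk d\tau)$ because $X\hk d(d\tau)=0$. Comparing the two yields $d\L_X\tau=(-1)^{k+1}\L_X d\tau$ with no sign subtlety.

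Before attacking the Schouten-bracket identities I would record the composite-contraction rule $(X\wedge Y)\hk\tau = Y\hk(X\hk\tau)$, which is immediate from the definition $X\hk\tau = X_k\hk\cdots\hk X_1\hk\tau$ on decomposables together with linearity. With this in hand, the unnumbered wedge identity $\L_{X\wedge Y}\tau=(-1)^l Y\hk(\L_X\tau)+\L_Y(X\hk\tau)$ follows by pure computation: expand $\L_{X\wedge Y}\tau$ via \eqref{Lie}, rewrite $(X\wedge Y)\hk(\cdot)$ as $Y\hk(X\hk(\cdot))$, and expand the two terms on the right-hand side via \eqref{Lie} as well. The $d\big(Y\hk(X\hk\tau)\big)$ contributions match, and the remaining contraction terms assemble with the stated signs. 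No induction is needed here.

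The heart of the proof is \eqref{bracket hook}, which I would establish by induction on the total degree $k+l$. The base case $k=l=1$ is the classical Cartan identity $\iota_{[X,Y]}=\L_X\iota_Y-\iota_Y\L_X$ for vector fields (and here $(-1)^{(k+1)l}=1$). For the inductive step, when $l\geq 2$ I would split $Y=Y'\wedge Y''$ and apply the Schouten Leibniz rule $[X,Y'\wedge Y'']=[X,Y']\wedge Y''+(-1)^{(k-1)|Y'|}Y'\wedge[X,Y'']$, contract with $\tau$ using the composite rule, invoke the inductive hypothesis on the lower-degree pairs $(X,Y')$ and $(X,Y'')$, and reassemble using the wedge identity just proved; when $l=1$ and $k\geq 2$ I would instead move a wedge factor onto the first slot via graded antisymmetry $[X,Y]=-(-1)^{(k+1)(l+1)}[Y,X]$ and the second-slot Leibniz rule. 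This bookkeeping — propagating the signs $(-1)^{(k+1)l}$ and $(-1)^{(k-1)|Y'|}$ correctly through the wedge splittings and verifying that they recombine — is exactly where the work concentrates, and I expect it to be the main obstacle.

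Finally, \eqref{L bracket} requires no new induction: it follows by substituting \eqref{bracket hook} (applied to both $\tau$ and $d\tau$) into the definition $\L_{[X,Y]}\tau=d([X,Y]\hk\tau)-(-1)^{k+l-1}[X,Y]\hk d\tau$ and simplifying with \eqref{dL}. Concretely, the terms carrying $Y\hk(\L_X\,\cdot\,)$ reassemble into $\L_Y\L_X\tau$ through \eqref{Lie} and \eqref{dL}, while the terms carrying $\L_X(Y\hk\,\cdot\,)$ collapse — two contraction contributions cancel because $(-1)^{k+l+1}=(-1)^{k+l-1}$ — leaving $(-1)^{(k+1)l}(-1)^{k+1}\L_X\L_Y\tau=(-1)^{(k+1)(l+1)}\L_X\L_Y\tau$. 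Assembling the two pieces gives precisely \eqref{L bracket}.
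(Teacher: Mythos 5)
Your proposal is correct, but it takes a genuinely different route from the paper: the paper gives no argument at all for Proposition \ref{properties}, disposing of it with a one-line citation to Proposition A.3 of \cite{poisson}, whereas you reprove everything from the definition \eqref{Lie}. Your outline checks out in full: \eqref{dL} is immediate from $d^2=0$; the wedge identity follows from the composite rule $(X\wedge Y)\hk\tau=Y\hk(X\hk\tau)$ exactly as you describe; the induction on $k+l$ for \eqref{bracket hook} is well-founded (Cartan's identity as base case, splitting $Y=Y'\wedge Y''$ when $l\geq 2$, antisymmetry plus the second-slot Leibniz rule when $l=1$, $k\geq 2$); and \eqref{L bracket} does drop out of substituting \eqref{bracket hook} for $\tau$ and $d\tau$ into the definition of $\L_{[X,Y]}$ and invoking \eqref{dL}. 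Two refinements to your bookkeeping: in the $l\geq 2$ step the wedge identity is not actually needed, because after applying the inductive hypothesis the two cross terms $Y''\hk\L_X(Y'\hk\tau)$ cancel outright, since $(-1)^{(k-1)l'}=(-1)^{(k+1)l'}$; it is the $l=1$, $k\geq 2$ case that genuinely requires the wedge identity together with the commutation rule $A\hk B\hk\sigma=(-1)^{pq}B\hk A\hk\sigma$ for a $p$-vector $A$ and a $q$-vector $B$ --- a rule you use implicitly but never record, and which should be stated alongside the composite rule. You should also say explicitly that both sides of every identity are $\R$-bilinear in $(X,Y)$, so that working with decomposables (as your splittings require) suffices. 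As for what each approach buys: the paper's citation keeps the exposition short but imports the sign conventions of \cite{poisson}, which a careful reader must verify agree with \eqref{Lie}; your proof costs a page or two of sign bookkeeping but makes the paper self-contained and isolates exactly which structural inputs (the Schouten Leibniz rule, composite contraction, $d^2=0$) drive all the later computations in the paper.
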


\begin{proof}
This is Proposition A.3 of \cite{poisson}.
\end{proof}

Another formula for the interior product by the Schouten bracket is given by the next proposition.
\begin{proposition}
\label{interior}For $X\in\Gamma(\Lambda^k(TM))$ and $Y\in\Gamma(\Lambda^l(TM))$ we have that interior product with their Schouten bracket satisfies
 \[i[X,Y]=[-[i(Y),d],i(X)],\] where the bracket on the right hand side is the graded commutator. Written out fully, this says that for an arbitrary form $\tau$,
 \begin{equation}\label{interior equation}[X,Y]\hk\tau=-Y\hk d(X\hk\tau)+(-1)^ld(Y\hk X\hk\tau)+(-1)^{kl+k}X\hk Y\hk d\tau-(-1)^{kl+k+l}X\hk d(Y\hk\tau)\end{equation}
\end{proposition}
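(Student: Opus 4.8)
The plan is to reduce everything to formula \eqref{bracket hook} already established in Proposition \ref{properties}, since that formula expresses $[X,Y]\hk\tau$ through the Lie derivative, and the Lie derivative is by definition \eqref{Lie} built out of $d$ and the contraction operators. Before doing so I would first observe that the compact operator identity $i[X,Y]=[-[i(Y),d],i(X)]$ and the expanded equation \eqref{interior equation} are merely two encodings of the same statement: as operators on forms $i(X)$ and $i(Y)$ have degrees $-k$ and $-l$, while $d$ has degree $+1$, and expanding the two nested graded commutators according to $[A,B]=AB-(-1)^{|A||B|}BA$ produces exactly the four terms on the right of \eqref{interior equation}. Thus it suffices to prove \eqref{interior equation}, and the passage back to the compact form is pure (if careful) sign bookkeeping.

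To prove \eqref{interior equation} I would start from \eqref{bracket hook}, namely $[X,Y]\hk\tau=(-1)^{(k+1)l}\L_X(Y\hk\tau)-Y\hk(\L_X\tau)$, and substitute the definition \eqref{Lie} of the Lie derivative into each of the two terms. Expanding $\L_X(Y\hk\tau)=d(X\hk Y\hk\tau)-(-1)^k X\hk d(Y\hk\tau)$ and $Y\hk(\L_X\tau)=Y\hk d(X\hk\tau)-(-1)^k Y\hk X\hk d\tau$ turns the right-hand side into a sum of four contraction-and-$d$ terms. The only remaining ingredient is the graded-commutativity of interior products, $i(X)i(Y)=(-1)^{kl}i(Y)i(X)$, that is $X\hk Y\hk\sigma=(-1)^{kl}Y\hk X\hk\sigma$ for every form $\sigma$; applying this to the terms containing $X\hk Y\hk\tau$ and $Y\hk X\hk d\tau$ converts them into the $Y\hk X\hk\tau$ and $X\hk Y\hk d\tau$ appearing in \eqref{interior equation}. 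A short check that the exponents collapse modulo $2$ — for instance $(k+1)l+kl\equiv l$ and $(k+1)l+k\equiv kl+k+l$ — then matches all four signs with those in \eqref{interior equation}, pairing each of the four produced terms with its counterpart.

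Since the computation is entirely mechanical, the one genuine source of difficulty is the sign bookkeeping: keeping straight the operator degrees in the graded commutators, the $(-1)^k$ and $(-1)^{kl}$ factors coming from \eqref{Lie} and from commuting the contractions, and verifying that the exponents reduce correctly modulo $2$. As a robustness check (and an alternative should one prefer not to invoke \eqref{bracket hook}), I note that the identity could instead be proved by reducing to decomposable $X=X_1\wedge\cdots\wedge X_k$ and $Y=Y_1\wedge\cdots\wedge Y_l$ via the explicit Schouten formula recorded just before Proposition \ref{properties}, using the classical single-vector-field relations $\L_{X_i}=d\,i(X_i)+i(X_i)\,d$, $i(X_i)i(X_j)=-i(X_j)i(X_i)$, and $i[X_i,Y_j]=[\L_{X_i},i(Y_j)]$, and then extending by the derivation property; but the route through \eqref{bracket hook} is by far the shortest.
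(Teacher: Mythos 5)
Your proposal is correct, and it is precisely the route the paper itself endorses: the paper's proof consists of citing Proposition 4.1 of \cite{marle} together with the remark that the identity ``can also be derived directly from equations (\ref{Lie}) and (\ref{bracket hook})'' --- your argument simply carries out that derivation in full. All four sign computations check out (including the expansion of the graded commutator $[-[i(Y),d],i(X)]$ into the four terms of (\ref{interior equation})), so nothing is missing.
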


\begin{proof}
This is Proposition 4.1 of \cite{marle}. It can also be derived directly from equations (\ref{Lie}) and (\ref{bracket hook}). We state it as a separate proposition because equation (\ref{interior equation}) will be used frequently in the rest of the paper.
\end{proof}

Next we recall the Chevalley-Eilenberg complex. We start with a Lie algebra $\g$ and its exterior algebra $\Lambda^\bullet\g$. The Gerstehaber algebra $(\Lambda^\bullet\g,\wedge,[\cdot,\cdot])$ is turned into a differential algebra by the following differential. 

\begin{definition} For a Lie algebra $\g$, consider the differential 

\[\partial_k:\Lambda^k\g\to\Lambda^{k-1}\g, \ \ \ \ \ \xi_1\wedge\cdots\wedge\xi_k\mapsto\sum_{1\leq i<j\leq k}(-1)^{i+j}[\xi_i,\xi_j]\wedge\xi_1\wedge\cdots\wedge\widehat\xi_i\wedge\cdots\wedge\widehat\xi_j\wedge\cdots\wedge\xi_k\] for $k\geq 1$, and extend by linearity to non-decomposables. Define $\Lambda^{-1}\g=\{0\}$ and $\partial_0$ to be the zero map. It follows from the graded Jacobi identity that $\partial^2=0$.
The differential Gerstenhaber algebra $(\Lambda^k\g,\wedge,\partial,[\cdot, \cdot])$ is called the Chevalley-Eilenberg complex.\end{definition}

\begin{definition}\label{Lie kernel}
We follow the terminology and notation of \cite{ms} and call $\Rho_{\g,k}=\ker \partial_k$ the $k$-th Lie kernel, which is a vector subspace of $\Lambda^k\g$. Let $\Rho_\g$ denote the direct sum of all the Lie kernels: \[\Rho_\g=\oplus_{k=0}^{\dim(\g)}\Rho_{\g,k}.\] Note that if the group is abelian then $\Rho_{\g,k}=\Lambda^k\g$.
\end{definition}

A straightforward computation gives the following lemma.
\begin{lemma}\label{lemma formula}
For arbitrary $p\in\Lambda^k\g$ and $q\in\Lambda^l\g$ we have that 
\[\partial(p\wedge q)=\partial(p)\wedge q+(-1)^kp\wedge\partial(q)+(-1)^k[p,q].\]
\end{lemma}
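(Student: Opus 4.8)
The plan is to prove the identity by reducing to decomposable elements and then carefully partitioning the defining sum of $\partial$. Since $\partial$, $\wedge$, and the Schouten bracket $[\cdot,\cdot]$ are all multilinear, it suffices to verify the formula for decomposables $p=\xi_1\wedge\cdots\wedge\xi_k$ and $q=\eta_1\wedge\cdots\wedge\eta_l$; the general statement then follows by extending both sides bilinearly in $p$ and $q$.

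First I would write $p\wedge q=\xi_1\wedge\cdots\wedge\xi_k\wedge\eta_1\wedge\cdots\wedge\eta_l$ as a single decomposable element of $\Lambda^{k+l}\g$ and apply the definition of $\partial_{k+l}$ directly. This yields a double sum over index pairs $1\le a<b\le k+l$. The key step is to split this sum into three families according to where the two indices land: (i) both indices in $\{1,\dots,k\}$, (ii) both indices in $\{k+1,\dots,k+l\}$, and (iii) one index in each range. I claim these three families assemble respectively into $\partial(p)\wedge q$, into $(-1)^k\,p\wedge\partial(q)$, and into $(-1)^k[p,q]$, which is exactly the right-hand side of the lemma.

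The substance of the argument is the sign analysis in each family. Family (i) matches $\partial(p)\wedge q$ term by term, since in both expressions the bracketed vector $[\xi_a,\xi_b]$ is front-loaded and the surviving factors occur in the same order. For family (iii), writing $a=i\le k$ and $b=k+j$, the index sign is $(-1)^{i+k+j}=(-1)^k(-1)^{i+j}$, and the surviving wedge factors $\xi_1\wedge\cdots\widehat{\xi_i}\cdots\wedge\xi_k\wedge\eta_1\wedge\cdots\widehat{\eta_j}\cdots\wedge\eta_l$ are precisely those in the decomposable formula for the Schouten bracket; since $\partial$ and $[\cdot,\cdot]$ both place the bracketed vector at the front, no reordering is needed and this family equals $(-1)^k[p,q]$ on the nose. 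For family (ii), writing $a=k+i$ and $b=k+j$, the index sign $(-1)^{a+b}=(-1)^{i+j}$ reproduces the sign appearing in $\partial(q)$, but the bracketed vector $[\eta_i,\eta_j]$ sits at the very front of the product rather than after the factors of $p$.

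I expect the only genuine obstacle to be this last point: reconciling the front-loading convention of $\partial$ with the placement in $p\wedge\partial(q)$. Commuting the degree-one vector $[\eta_i,\eta_j]$ past the degree-$k$ factor $\xi_1\wedge\cdots\wedge\xi_k$ contributes a sign $(-1)^k$, which is exactly the factor appearing in front of $p\wedge\partial(q)$ in the statement. Once this single commutation sign is accounted for, the three families sum to $\partial(p)\wedge q+(-1)^k p\wedge\partial(q)+(-1)^k[p,q]$, and the identity is established.
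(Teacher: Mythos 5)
Your proof is correct: the reduction to decomposables, the three-way partition of the index pairs in $\partial(p\wedge q)$, and in particular the $(-1)^k$ sign from commuting the degree-one vector $[\eta_i,\eta_j]$ past $\xi_1\wedge\cdots\wedge\xi_k$ in family (ii) all check out against the paper's conventions for $\partial$ and the Schouten bracket. The paper omits this argument entirely, stating only that ``a straightforward computation gives the following lemma,'' and your computation is exactly the one intended.
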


From this lemma we get the following.
\begin{proposition} 
\label{Schouten is closed}
We have $(\Rho_{\g},\partial,[\cdot,\cdot])$ is a differential graded subalgebra of the Chevalley-Eilenberg complex, with $\partial=0$.
\end{proposition}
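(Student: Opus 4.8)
The plan is to verify two things separately: that the differential $\partial$ restricts to the zero map on $\Rho_\g$, and that $\Rho_\g$ is closed under the Schouten bracket. Once these are in hand, the remaining axioms of a differential graded Lie algebra come for free from the ambient Chevalley--Eilenberg complex. The first point is immediate from the definitions: since $\Rho_{\g,k}=\ker\partial_k$, the differential annihilates every homogeneous element of $\Rho_\g$, so $\partial\equiv 0$ there, and in particular $\Rho_\g$ is trivially closed under $\partial$.

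The substance of the argument is closure under the bracket, and I would extract it from Lemma \ref{lemma formula} rather than compute it directly. Taking homogeneous $p\in\Rho_{\g,k}$ and $q\in\Rho_{\g,l}$, I feed them into the Leibniz-type identity of that lemma; because $\partial p=\partial q=0$, the first two terms on its right-hand side drop out and leave
\[\partial(p\wedge q)=(-1)^k[p,q].\]
Applying $\partial$ once more and invoking $\partial^2=0$ yields $(-1)^k\partial([p,q])=0$, hence $\partial([p,q])=0$. Since $[p,q]$ lies in $\Lambda^{k+l-1}\g$, this is precisely the statement that $[p,q]\in\Rho_{\g,k+l-1}\subseteq\Rho_\g$, so the Lie kernel is closed under the Schouten bracket.

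To finish, I would note that the graded skew-symmetry and graded Jacobi identity for the restricted bracket are inherited verbatim from the Gerstenhaber structure on $\Lambda^\bullet\g$, since $\Rho_\g$ is a subspace stable under the bracket, while the graded Leibniz rule relating $\partial$ and $[\cdot,\cdot]$ holds vacuously because $\partial=0$ on $\Rho_\g$. Assembling these gives exactly the assertion that $(\Rho_\g,\partial,[\cdot,\cdot])$ is a differential graded subalgebra with $\partial=0$.

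I do not anticipate a serious obstacle; the only mildly subtle move is recognizing that bracket-closure should be obtained by differentiating the wedge-product formula of Lemma \ref{lemma formula} and using $\partial^2=0$, rather than attempting to bound $\partial([p,q])$ head-on. It is also worth flagging, to justify the precise wording of the proposition, that $\Rho_\g$ is \emph{not} a full Gerstenhaber subalgebra: the same identity $\partial(p\wedge q)=(-1)^k[p,q]$ shows that $p\wedge q\in\Rho_\g$ would force $[p,q]=0$, so closure under $\wedge$ fails in general, which is why the claim is stated for the differential graded \emph{Lie} structure alone.
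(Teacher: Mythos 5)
Your proof is correct and takes essentially the same route as the paper: both apply Lemma \ref{lemma formula} to $p,q$ in the Lie kernel, so that $\partial(p\wedge q)=\pm[p,q]$, and conclude that $[p,q]$ is exact and hence closed (your explicit application of $\partial^2=0$ is just the spelled-out form of ``exact therefore closed''). Your supplementary observations --- that $\partial$ vanishes tautologically on $\Rho_\g$, that the remaining Lie axioms are inherited from $\Lambda^\bullet\g$, and that closure under $\wedge$ fails in general --- are all sound but go slightly beyond what the paper records.
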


\begin{proof}
The only nontrivial thing we need to show is that the Schouten bracket preserves the Lie kernel. While this follows immediately from the fact that $\partial$ is a graded derivation of the Schouten bracket, we can also show it using Lemma \ref{lemma formula}. Indeed, for $p\in\Rho_{\g,l}$ and $q\in\Rho_{\g,l}$ we have that \begin{align*}\partial(p\wedge q)&=\partial(p)\wedge q +(-1)^k p\wedge\partial(q)+[p,q]\\
&=[p,q].\end{align*}Hence, $[p,q]$ is exact and therefore closed.
\end{proof}

We now define a differential graded Lie algebra consisting of multivector fields.

Let $G$ be a connected Lie group acting on a manifold $M$. For $\xi\in\g$ let $V_\xi\in\Gamma(TM)$ denote the infinitesimal generator of the induced action on $M$ by the one-parameter subgroup of $G$ generated by $\xi$.  For decomposable $p=\xi_1\wedge\cdots\wedge\xi_k$ in $\Lambda^k\g$ we introduce the notation $V_p:=V_{\xi_1}\wedge\cdots\wedge V_{\xi_k}$ for the infinitesimal generator of $p$.  Let \begin{equation}\label{S_k}S_k=\{V_p \ ;\  p\in\Rho_{\g,k}\}\end{equation} and set \begin{equation}S=\oplus_{k=0}^{\dim(\g)} S_k.\end{equation} 

\begin{proposition}
\label{infinitesimal generator of Schouten}
We have that $(S,[\cdot,\cdot])$ is a graded Lie algebra. Moreover, for $p\in\Lambda^k\g$, we have that $\partial V_p=-V_{\partial p}$. Note that we have abused notation and let $\partial$ denote the Chevalley-Eilenberg differentials for both the Lie algebras $(\Gamma(TM),[\cdot,\cdot])$ and $(\g,[\cdot,\cdot])$.\end{proposition}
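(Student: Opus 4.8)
The plan is to reduce both assertions to a single sign rule governing the induced map $V_{(\cdot)}\colon\Lambda^\bullet\g\to\Gamma(\Lambda^\bullet(TM))$. Recall that, in the conventions for a left action in force here, the infinitesimal generator map $\xi\mapsto V_\xi$ is a Lie algebra antihomomorphism, i.e. $[V_\xi,V_\eta]=-V_{[\xi,\eta]}$ for all $\xi,\eta\in\g$; moreover $V_{(\cdot)}$ is by construction an algebra homomorphism for the wedge products, since $V_{\xi_1\wedge\cdots\wedge\xi_k}=V_{\xi_1}\wedge\cdots\wedge V_{\xi_k}$, so it is the functorially induced map $\Lambda^\bullet\Phi$ for the linear map $\Phi=V_{(\cdot)}|_\g$ and in particular is well defined and linear on each $\Lambda^k\g$. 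The first thing I would establish is the compatibility of $V_{(\cdot)}$ with the two Schouten brackets, namely
\[[V_p,V_q]=-V_{[p,q]}\qquad\text{for all }p\in\Lambda^k\g,\ q\in\Lambda^l\g.\]
By bilinearity of the Schouten bracket and linearity of $V_{(\cdot)}$ it suffices to check this on decomposables $p=\xi_1\wedge\cdots\wedge\xi_k$ and $q=\eta_1\wedge\cdots\wedge\eta_l$, where it follows by substituting the explicit formula for the Schouten bracket on decomposables and replacing each $[V_{\xi_i},V_{\eta_j}]$ by $-V_{[\xi_i,\eta_j]}$: the single sign $-1$ from the antihomomorphism factors out of the sum, leaving exactly $-V_{[p,q]}$.

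For the first claim I would then argue as follows. Each $S_k$ is the image of the subspace $\Rho_{\g,k}\subseteq\Lambda^k\g$ under the linear map $V_{(\cdot)}$, hence is a graded subspace of $\Gamma(\Lambda^k(TM))$. By Proposition \ref{Schouten is closed} the Lie kernel is closed under the Schouten bracket, so for $p\in\Rho_{\g,k}$ and $q\in\Rho_{\g,l}$ we have $[p,q]\in\Rho_{\g,k+l-1}$, and therefore the displayed identity gives $[V_p,V_q]=-V_{[p,q]}\in S_{k+l-1}$. Thus $S$ is closed under the Schouten bracket. Since $(\Gamma(\Lambda^\bullet(TM)),\wedge,[\cdot,\cdot])$ is a Gerstenhaber algebra, it is a graded Lie algebra for the shifted grading $\deg_{\mathrm{gLie}}X=\deg_\wedge X-1$; under this shift the Gerstenhaber antisymmetry and Jacobi identity become precisely the graded Lie algebra axioms and the bracket becomes degree preserving. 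Being a graded subspace closed under the bracket, $S$ inherits this structure and is a graded Lie algebra.

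The second claim is the same computation with the Chevalley--Eilenberg differential in place of the bracket. Writing out $\partial(V_{\xi_1}\wedge\cdots\wedge V_{\xi_k})$ via the differential on $(\Gamma(TM),[\cdot,\cdot])$ applied to this specific representative, and substituting $[V_{\xi_i},V_{\xi_j}]=-V_{[\xi_i,\xi_j]}$, turns the expression into $-V_{\partial p}$ once one matches it termwise against the formula defining $\partial_k$ on $\Lambda^k\g$; extending by linearity gives $\partial V_p=-V_{\partial p}$ for all $p$. As a consistency check, on $p\in\Rho_{\g,k}$ this yields $\partial V_p=-V_{\partial p}=0$, mirroring the vanishing of $\partial$ on the Lie kernel.

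I expect the only real obstacle to be the sign bookkeeping: one must commit to the antihomomorphism convention $[V_\xi,V_\eta]=-V_{[\xi,\eta]}$ and then verify that the signs $(-1)^{i+j}$ coming from the Schouten and Chevalley--Eilenberg formulas interact correctly with the single overall sign produced by the antihomomorphism, so that both identities emerge with the stated minus sign rather than a plus. Everything else is formal: closure under the bracket together with the ambient Gerstenhaber structure immediately yields the graded Lie algebra statement, and the differential identity is a termwise comparison of two sums.
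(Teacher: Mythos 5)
Your proposal is correct and follows essentially the same route as the paper: both establish the anti-homomorphism identity $[V_p,V_q]=-V_{[p,q]}$ on decomposables via $[V_\xi,V_\eta]=-V_{[\xi,\eta]}$, invoke Proposition \ref{Schouten is closed} for closure of the bracket on the Lie kernel, and obtain $\partial V_p=-V_{\partial p}$ by termwise comparison of the two Chevalley--Eilenberg formulas. The only difference is presentational: you make explicit the linearity of $V_{(\cdot)}$ and the degree-shift turning the ambient Gerstenhaber algebra into a graded Lie algebra, points the paper leaves implicit.
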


\begin{proof}
We first show that $V_{[p,q]}=-[V_p,V_q]$. Let $p=\xi_1\wedge\cdots\wedge\xi_k$ and $q=\eta_1\wedge\cdots\wedge\eta_l$. Then we have that \begin{align*}
V_{[p,q]}&=\sum_{i,j}(-1)^{i+j}V_{[\xi_i,\eta_j]}\wedge V_{\xi_1}\wedge\cdots \widehat V_{\xi_i}\cdots\widehat V_{\eta_j}\wedge\cdots\wedge V_{\eta_l}\\
&=\sum_{i,j}-(-1)^{i+j}[V_{\xi_i},V_{\eta_j}]\wedge V_{\xi_1}\wedge\cdots \widehat V_{\xi_i}\cdots\widehat V_{\eta_j}\wedge\cdots \wedge V_{\eta_l}\\
&=-[V_p,V_q].
\end{align*}
Here we used the standard result of group actions that $[V_\xi,V_\eta]=-V_{[\xi,\eta]}$. The first claim now follows since $(\Rho_\g,[\cdot,\cdot])$ is a graded Lie algebra by Proposition \ref{Schouten is closed}. Moreover, we have that 

\begin{align*}
\partial V_p&=\partial (V_{\xi_1}\wedge\cdots\wedge V_{\xi_k})\\
&=\sum_{1\leq i<j\leq k}(-1)^{i+j}[V_{\xi_i},V_{\xi_j}]\wedge V_{\xi_1}\wedge\cdots\wedge\widehat V_{\xi_i}\wedge\cdots\wedge\widehat V_{\xi_j}\wedge\cdots\wedge V_{\xi_k}\\
&=-\sum_{1\leq i<j\leq k}(-1)^{i+j}V_{[\xi_i,\xi_j]}\wedge V_{\xi_1}\wedge\cdots\wedge\widehat V_{\xi_i}\wedge\cdots\wedge\widehat V_{\xi_j}\wedge\cdots\wedge V_{\xi_k}\\
&=-V_{\partial p}
\end{align*}

In particular then, if $p$ is in the Lie kernel, we have that $\partial V_p=-V_{\partial p}=0$.

\end{proof}

The last lemma of this section will be used repeatedly in the rest of the paper. We remark that it holds for arbitrary multivector fields; however, for our purposes it will suffice to consider the restriction to elements of $S$.

\begin{lemma}(\bf{Extended Cartan Lemma})\rm
\label{extended Cartan}
For decomposable $p=\xi_1\wedge\cdots\wedge\xi_k$ in $\Lambda^k\g$ and differential form $\tau$ we have that 

\begin{align*}
(-1)^kd(V_p\hk\tau)&=V_{\partial{p}}\hk\tau +\sum_{i=1}^k(-1)^i(V_{\xi_1}\wedge\cdots\wedge\widehat V_{\xi_i}\wedge\cdots\wedge V_{\xi_k})\hk\L_{V_{\xi_i}}\tau +V_p\hk d\tau.
\end{align*} 
\end{lemma}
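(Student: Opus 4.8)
The plan is to prove the Extended Cartan Lemma by induction on the degree $k$ of the decomposable multivector field $p = \xi_1 \wedge \cdots \wedge \xi_k$, using the definition of the Lie derivative \eqref{Lie} together with the formula $\L_{X\wedge Y}\tau = (-1)^l Y \hk (\L_X\tau) + \L_Y(X\hk\tau)$ from Proposition \ref{properties}. The base case $k=1$ is just the classical Cartan magic formula: for a single vector field $V_{\xi_1}$ we have $\partial p = 0$ (as $\Lambda^0\g$ maps to zero), the sum contains only the term $-\L_{V_{\xi_1}}\tau$, and the identity reduces to $-d(V_{\xi_1}\hk\tau) = -\L_{V_{\xi_1}}\tau + V_{\xi_1}\hk d\tau$, which is exactly \eqref{Lie} with $k=1$.

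For the inductive step I would write $V_p = V_{\xi_1} \wedge W$ where $W = V_{\xi_2}\wedge\cdots\wedge V_{\xi_k}$ corresponds to $p' = \xi_2\wedge\cdots\wedge\xi_k \in \Lambda^{k-1}\g$, and then compute $d(V_p\hk\tau)$ by first re-expressing $V_p\hk\tau$ via the definition of contraction by a decomposable multivector, $V_p\hk\tau = W \hk (V_{\xi_1}\hk\tau)$. The strategy is to apply the definition \eqref{Lie} of $\L_{V_p}$ to solve for $(-1)^k d(V_p\hk\tau) = (-1)^k\L_{V_p}\tau + V_p\hk d\tau$, and then to expand $\L_{V_p}\tau = \L_{V_{\xi_1}\wedge W}\tau$ using the wedge formula. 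This expansion produces a term $\L_W(V_{\xi_1}\hk\tau)$ and a term involving $V_{\xi_1}\hk(\L_W\tau)$, to each of which the inductive hypothesis for $W$ (degree $k-1$) can be applied.

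The main obstacle will be the bookkeeping of signs and the correct reassembly of the Chevalley-Eilenberg differential term. Specifically, after applying the induction hypothesis, the terms $V_{\partial p'}\hk\tau$ and the single-derivative terms $(V_{\xi_2}\wedge\cdots\wedge\widehat V_{\xi_i}\wedge\cdots)\hk\L_{V_{\xi_i}}\tau$ must be combined with the extra contributions coming from the $\xi_1$-slot. The key identity to verify is that $V_{\partial p} = V_{[\xi_1,\cdot]\text{-terms}} + (\text{wedge of }V_{\xi_1}\text{ with }V_{\partial p'})$ matches up with the commutator terms $[V_{\xi_1}, V_{\xi_i}]$ that appear when $\L_{V_{\xi_1}}$ and $\L_W$ fail to commute; here I expect to use the relation $[V_{\xi_i},V_{\xi_j}] = -V_{[\xi_i,\xi_j]}$ from the proof of Proposition \ref{infinitesimal generator of Schouten} and the bracket-hook formula \eqref{bracket hook}. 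Tracking that these produce exactly the $i=1$ contribution to the sum over $i$ and the correct $\partial p$ term, with all signs $(-1)^i$ aligning, is the delicate part; the rest is a routine unwinding of the contraction and Lie-derivative definitions.
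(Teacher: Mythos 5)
Your strategy is the right one, and it supplies something the paper itself does not: the paper's ``proof'' of Lemma \ref{extended Cartan} is only a citation to Lemma 3.4 of \cite{ms} and Lemma 2.18 of \cite{cq}, so an induction built on the definition (\ref{Lie}), the wedge formula of Proposition \ref{properties}, and the bracket--hook identity (\ref{bracket hook}) is exactly how to make it self-contained, and your base case is correct. However, there is a genuine obstruction at precisely the step you flag as delicate: the signs do \emph{not} assemble into the printed formula. Carried out correctly under this paper's conventions, the induction produces as first term $\partial(V_p)\hk\tau$, where $\partial$ is the Chevalley--Eilenberg differential of the multivector field $V_p$ formed with \emph{vector-field} brackets; by Proposition \ref{infinitesimal generator of Schouten} this equals $-V_{\partial p}\hk\tau$, the opposite of what the lemma states. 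You can see this already at $k=2$: writing $V_p\hk\tau=V_{\xi_2}\hk V_{\xi_1}\hk\tau$ and using Cartan's formula together with $\L_Y(Z\hk\sigma)=[Y,Z]\hk\sigma+Z\hk\L_Y\sigma$,
\begin{align*}
d(V_p\hk\tau) &= \L_{V_{\xi_2}}(V_{\xi_1}\hk\tau) - V_{\xi_2}\hk d(V_{\xi_1}\hk\tau)\\
&= [V_{\xi_2},V_{\xi_1}]\hk\tau - V_{\xi_2}\hk\L_{V_{\xi_1}}\tau + V_{\xi_1}\hk\L_{V_{\xi_2}}\tau + V_p\hk d\tau,
\end{align*}
and since $\partial p=-[\xi_1,\xi_2]$ while $[V_{\xi_2},V_{\xi_1}]=-V_{[\xi_2,\xi_1]}=V_{[\xi_1,\xi_2]}$, that leading term is $-V_{\partial p}\hk\tau$, not $+V_{\partial p}\hk\tau$. (Concrete check: $V_{\xi_1}=\pd/\pd x$, $V_{\xi_2}=x\,\pd/\pd y$, $\tau=dx\wedge dy$ gives $d(V_p\hk\tau)=dx=[V_{\xi_2},V_{\xi_1}]\hk\tau$, whereas $V_{\partial p}\hk\tau=-dx$.) The same flip occurs in your general step: using $\partial(Y_1\wedge W')=-Y_1\wedge\partial(W')+\sum_{i\geq 2}(-1)^i[Y_i,Y_1]\wedge Y_2\wedge\cdots\wedge\widehat Y_i\wedge\cdots\wedge Y_k$, the commutator terms and the $Y_1\wedge\partial(W')$ term assemble into $\partial(V_p)$, and converting to $V_{\partial p}$ via $[V_{\xi_i},V_{\xi_j}]=-V_{[\xi_i,\xi_j]}$ reverses the sign.

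So what your induction actually proves is
\[
(-1)^kd(V_p\hk\tau)=\partial(V_p)\hk\tau+\sum_{i=1}^k(-1)^i\left(V_{\xi_1}\wedge\cdots\wedge\widehat V_{\xi_i}\wedge\cdots\wedge V_{\xi_k}\right)\hk\L_{V_{\xi_i}}\tau+V_p\hk d\tau,
\]
an identity valid for arbitrary decomposable multivector fields, and this is the version the paper itself actually uses: in Example \ref{multisymplectic phase space} the lemma is invoked with first term $\partial V_p\hk\theta$. The statement as printed, with $+V_{\partial p}$, is consistent only under the opposite convention $[V_\xi,V_\eta]=+V_{[\xi,\eta]}$ (the one in the cited sources), which contradicts Proposition \ref{infinitesimal generator of Schouten}; in the lemma's main applications $p$ lies in the Lie kernel, where $\partial V_p=-V_{\partial p}=0$, so nothing downstream is affected. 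You should therefore aim your induction at the displayed identity above rather than chase the signs toward the printed one, which your own computation will contradict. One smaller point: the wedge formula with $X=V_{\xi_1}$, $Y=W$ produces the pair of terms $(-1)^{k-1}W\hk\L_{V_{\xi_1}}\tau$ and $\L_W(V_{\xi_1}\hk\tau)$; the term $V_{\xi_1}\hk(\L_W\tau)$ you mention belongs to the other decomposition ($X=W$, $Y=V_{\xi_1}$) and never appears alongside them, so only one of the two expansions should be carried through.
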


\begin{proof}

This is Lemma 3.4 of \cite{ms} or Lemma 2.18 of \cite{cq}.
\end{proof}

While in this paper we will mostly be concerned with differential graded Lie algebras, we will also have the need to consider the more general structure of an $L_\infty$-algebra.

\subsection{$L_\infty$-algebras} We only state the definition of an $L_\infty$-algebra and do not go into detail. More detail can be found in \cite{rogers}, for example.

\begin{definition} An $L_\infty$-algebra is a graded vector space $L=\oplus_{i=-\infty}^\infty L_i$ together with a collection of  graded skew-symmetric linear maps  $\{l_k:L^{\otimes k}\to L \ ; \ k\geq 1\}$, with $\text{deg}(l_k)=k-2$, satisfying the following identity for all $m\geq 1$:

\[\sum_{\substack{ i+j=m+1\\ \sigma\in \text{Sh}(i,m-i)}}(-1)^\sigma\epsilon(\sigma)(-1)^{i(j-1)}l_j(l_i(x_{\sigma(1)},\ldots,x_{\sigma(i)}),x_{\sigma(i+1)},\ldots,x_{\sigma(m)})=0 .\] Here $\sigma$ is a permutation of $m$ letters, $(-1)^\sigma$ is the sign of $\sigma$, and $\epsilon(\sigma)$ is the Koszul sign. The subset Sh$(p,q)$ of permutations on $p+q$ letters is the set of $(p,q)$-unshuffles. A permutation $\sigma$ of $p+q$ letters is called a $(p,q)$-unshuffle if $\sigma(i)<\sigma(i+1)$ for $i\not=p$. 

An $L_\infty$-algebra $(L,\{l_k\})$ is called a Lie $n$-algebra if $L_i=0$ for $i\geq n$ and $i<0$.

\end{definition}

Since any differential graded Lie algebra is a Lie $n$-algebra (indeed, just take $l_1=\partial$, $l_2=[\cdot,\cdot]$ and $l_k=0$ for $k\geq 3$), Propositions \ref{Schouten is closed} and \ref{infinitesimal generator of Schouten} show that the spaces $S$ and $\Rho_\g$ have $L_\infty$-algebra structures.

\section{Multisymplectic Geometry}

We first recall some concepts from multisymplectic geometry.

\subsection{Multi-Hamiltonian Systems}

\begin{definition}
A manifold $M$ equipped with a closed $(n+1)$-form $\omega$ is called a pre-multisymplectic (or pre-$n$-plectic) manifold. If in addition the map $T_pM\to\Lambda^n T_p^\ast M,\  V\mapsto V\hk\omega$ is injective, then $(M,\omega)$ is called a multisymplectic  or $n$-plectic manifold.
\end{definition} 
\del{
The following proposition demonstrates the abundance of multisymplectic manifolds.

\begin{proposition}
A vector space of dimension $m$ admits an $n$-plectic form, with $n\geq 2$ if and only if $m\geq n+1$ and $m\not=n+2$. 
\end{proposition} 
\begin{proof}
This is proposition 2.6 of \cite{MS}.
\end{proof}
}

We will provide examples of multisymplectic manifolds in future sections, but for now give an example which comes up frequently in the rest of the paper.

\begin{example}\label{Multisymplectic Phase Space}\bf{(Multisymplectic Phase Space)}\rm

Let $N$ be a manifold and let $M=\Lambda^{k}(T^\ast N)$. Then $\pi:M\to N$ is a vector bundle over $N$ with canonical $k$-form $\theta\in\Omega^k(M)$ defined by \[\theta_{\mu_x}(Z_1,\cdots, Z_k) :=\mu_x(\pi_\ast(Z_1),\cdots,\pi_\ast(Z_k)), \] for $x\in N$, $\mu_x\in\Lambda^k(T^\ast_xN)$, and $Z_1,\cdots, Z_k \in T_{\mu_x}M$. The $(k+1)$-form $\omega\in\Omega^{k+1}(M)$ defined by $\omega=-d\theta$ is the canonical $(k+1)$-form. The pair $(M,\omega)$ is a $k$-plectic manifold.
\end{example}
\begin{definition}

If for $\alpha\in\Omega^{n-1}(M)$ there exists $X_\alpha\in\Gamma(TM)$ such that $d\alpha=-X_\alpha\hk \omega$ then we call $\alpha$ a Hamiltonian $(n-1)$-form and $X_\alpha$ its corresponding Hamiltonian vector field. We let $\Omega^{n-1}_{\text{Ham}}(M)$ denote the space of Hamiltonian $(n-1)$-forms. \end{definition}

\begin{remark}
If $\omega$ is $n$-plectic then the Hamiltonian vector field $X_\alpha$ is unique. If $\omega$ is pre-$n$-plectic then Hamiltonian vector fields are unique up to an element in the kernel of $\omega$. Also, notice that in the $1$-plectic (i.e. symplectic) case, every function is Hamiltonian.
\end{remark}

\begin{definition} In analogy to Hamiltonian mechanics, for a fixed $n$-plectic form $\omega$ and Hamiltonian $(n-1)$-form $H$, we call $(M,\omega,H)$ a multi-Hamiltonian system. We denote the Hamiltonian vector field of $H$ by $X_H$.
 \end{definition}

There are many examples of multi-Hamiltonian systems and we refer the reader to Section 3.1 of \cite{cq} for some results on their existence. 
\del{

\begin{example}
A orientable manifold $M$ with a volume form $\mu$. A corresponding Hamiltonian could be...We could take the acting group to be the volume preserving diffeomorphisms. 
\end{example}

\begin{example}
A G2 manifold where the corresponding Hamiltonian is...
\end{example}
We will study in more detail the following example

\begin{example}

The cotangent bundle with Hamiltonian

\end{example}

}

 In \cite{rogers} it was shown that to any multisymplectic manifold one can associate the following $L_\infty$-algebra.
\begin{definition}\label{Lie n observables}
The Lie $n$-algebra of observables, $L_\infty(M,\omega)$ is the following $L_\infty$-algebra. Let $L=\oplus_{i=0}^nL_i$ where $L_0=\Omega^{n-1}_{\text{Ham}}(M)$ and $L_i=\Omega^{n-1-i}(M)$ for $1\leq i\leq n-1$. The maps $l_k:L^{\otimes k}\to L$ of degree $k-2$ are defined as follows. The map $l_1$ is defined to be the exterior derivative on elements of positive degree, and $0$ on Hamiltonian $(n-1)$-forms. For $k>1$ the maps $l_k$ are defined to be $0$ on elements with positive degree and to be $l_k(\alpha_1,\cdots,\alpha_k):=\zeta(k)X_{\alpha_k}\hk\cdots \hk X_{\alpha_1}\hk\omega$ if all of $\alpha_1,\ldots,\alpha_k$ are Hamiltonian $(n-1)$-forms.
 Here $\zeta(k)$ is defined to equal $-(-1)^{\frac{k(k+1)}{2}}$. We introduce this notation as this sign comes up frequently.
\end{definition}

\begin{remark}\label{ugly signs}
It is easily verified that $\zeta(k)\zeta(k+1)=(-1)^{k+1}$. For future reference we also note that $\zeta(k)\zeta(l)\zeta(k+l-1)=-(-1)^{k+l+kl}$ and $\zeta(k)\zeta(l)=-(-1)^{lk}\zeta(k+l).$
\end{remark}

The following lemma from \cite{rogers} will be useful later on in the paper.

\begin{lemma}
\label{rogers 3.7}
Let $\alpha_1,\ldots,\alpha_m\in\Omega^{n-1}_{\text{Ham}}(M)$ be arbitrary Hamiltonian $(n-1)$-forms on a multisymplectic manifold $(M,\omega)$. Let $X_1,\ldots, X_m$ denote the associated Hamiltonian vector fields. Then
\[d(X_m\hk\cdots\hk X_1\hk\omega)=(-1)^m\sum_{1\leq i<j\leq m}(-1)^{i+j}X_m\hk\cdots\hk\widehat X_j\hk\cdots\hk\widehat X_i\hk\cdots\hk X_1\hk[X_i,X_j]\hk\omega.\]
\end{lemma}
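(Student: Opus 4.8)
The plan is to compute $d(X_m \hk \cdots \hk X_1 \hk \omega)$ directly by unraveling the contraction one vector field at a time and repeatedly using the Hamiltonian condition $X_i \hk \omega = -d\alpha_i$ together with the fact that $\omega$ is closed. The key observation is that the right-hand side is built entirely from brackets $[X_i,X_j]$ contracted into $\omega$, so the engine of the proof should be equation (\ref{interior equation}) from Proposition \ref{interior}, which expresses $[X,Y]\hk\tau$ in terms of contractions and exterior derivatives. First I would set $\beta_k := X_k \hk \cdots \hk X_1 \hk \omega$ and aim for an inductive statement on $m$, since $\beta_m = X_m \hk \beta_{m-1}$ and each $\beta_k$ is a differential form to which the propositions in Section 2 apply.

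The cleanest route, I expect, is induction on $m$. For the base case one checks $m=1$: here $d(X_1\hk\omega) = -d\,d\alpha_1 = 0$, and the empty sum on the right is also $0$, so the claim holds. For the inductive step I would write $d\beta_m = d(X_m \hk \beta_{m-1})$ and use the Lie-derivative definition (\ref{Lie}), namely $\L_{X_m}\tau = d(X_m\hk\tau) - (-1)^1 X_m \hk d\tau$ for the single vector field $X_m$, to trade $d(X_m\hk\beta_{m-1})$ for $\L_{X_m}\beta_{m-1} - X_m\hk d\beta_{m-1}$. The second term is handled by the inductive hypothesis applied to $d\beta_{m-1}$; the first term, $\L_{X_m}\beta_{m-1} = \L_{X_m}(X_{m-1}\hk\cdots\hk X_1\hk\omega)$, is where the brackets are generated. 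I would push $\L_{X_m}$ through the successive contractions using (\ref{bracket hook}) rewritten as $\L_X(Y\hk\tau) = (-1)^{?}\big([X,Y]\hk\tau + Y\hk\L_X\tau\big)$ for single vector fields, which converts each commutation of $\L_{X_m}$ past some $X_i$ into a term containing $[X_m,X_i]\hk(\cdots)$, while the leftover term $\L_{X_m}\omega$ vanishes because $\L_{X_m}\omega = d(X_m\hk\omega) = -d\,d\alpha_m = 0$.

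The main obstacle, and the part demanding genuine care rather than routine manipulation, will be the bookkeeping of signs. The signs $(-1)^{i+j}$ and the overall $(-1)^m$ on the right-hand side must emerge correctly from the repeated Koszul-type signs that accumulate when $\L_{X_m}$ is commuted past the chain of interior products, and when the new bracket term $[X_m,X_i]$ is reinserted into its correct position (producing the hatted contraction $X_m\hk\cdots\hk\widehat X_i\hk\cdots$). I would organize this by tracking the degree of the partial contraction $X_i\hk\cdots\hk X_1\hk\omega$ at each stage, since these degrees feed directly into the exponents in (\ref{Lie}) and (\ref{bracket hook}). An alternative to the inductive approach is a purely combinatorial one: expand everything using Proposition \ref{interior} and Proposition \ref{properties} and match the terms against the double sum directly; but I expect induction on $m$ to keep the sign accounting most manageable, since it isolates exactly one new vector field $X_m$ to commute through at each step. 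Either way, the conceptual content is light—the identity is forced by $d^2=0$, $d\omega=0$, and the Hamiltonian relations—so the real work is confirming that the signs assemble into $(-1)^m(-1)^{i+j}$.
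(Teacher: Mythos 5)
Your proposal is correct, but note that the paper itself contains no argument to compare against: its ``proof'' is the single line ``This is Lemma 3.7 of \cite{rogers}.'' Your induction is essentially the standard proof of that cited lemma, so you have in effect filled in what the citation hides. The one point you flagged as risky --- the sign bookkeeping --- does close up cleanly. Writing $\beta_{m-1}=X_{m-1}\hk\cdots\hk X_1\hk\omega$, equation (\ref{Lie}) with $k=1$ gives $d\beta_m=\L_{X_m}\beta_{m-1}-X_m\hk d\beta_{m-1}$. The second term, by the inductive hypothesis, contributes
\[
-X_m\hk d\beta_{m-1}=(-1)^{m}\sum_{1\leq i<j\leq m-1}(-1)^{i+j}\,X_m\hk X_{m-1}\hk\cdots\hk\widehat X_j\hk\cdots\hk\widehat X_i\hk\cdots\hk X_1\hk[X_i,X_j]\hk\omega,
\]
which is exactly the portion of the claimed double sum with $j\leq m-1$. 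For the first term, iterating equation (\ref{bracket hook}) with $k=l=1$ (i.e.\ $\L_X(Y\hk\tau)=[X,Y]\hk\tau+Y\hk\L_X\tau$) and using $\L_{X_m}\omega=d(X_m\hk\omega)=-dd\alpha_m=0$ yields
\[
\L_{X_m}\beta_{m-1}=\sum_{i=1}^{m-1}X_{m-1}\hk\cdots\hk X_{i+1}\hk[X_m,X_i]\hk X_{i-1}\hk\cdots\hk X_1\hk\omega;
\]
anticommuting the vector field $[X_m,X_i]$ past the $i-1$ inner contractions costs $(-1)^{i-1}$, and $[X_m,X_i]=-[X_i,X_m]$ flips this to $(-1)^{i}$, so each term becomes $(-1)^{i}X_{m-1}\hk\cdots\hk\widehat X_i\hk\cdots\hk X_1\hk[X_i,X_m]\hk\omega$. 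Since $(-1)^m(-1)^{i+m}=(-1)^i$, this is precisely the $j=m$ part of the claimed sum, completing the induction exactly as you outlined.
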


\begin{proof}
This is Lemma 3.7 of \cite{rogers}.
\end{proof}
Lastly we recall the terminology for group actions on a multisymplectic manifold. 

\begin{definition}
A Lie group action $\Phi:G\times M\to M$ is called multisymplectic if $\Phi_g^\ast\omega=\omega$. A Lie algebra action $\g\times \Gamma(TM)\to \Gamma(TM)$ is called multisymplectic if $\L_{V_\xi}\omega=0$ for all $\xi\in\g$. We remark that a multisymplectic Lie group action induces a multisymplectic Lie algebra action. Conversely, a multisymplectic Lie algebra action induces a multisymplectic group action if the Lie group is connected.  Moreover, as in \cite{cq}, we will call a Lie group action on a multi-Hamiltonian system $(M,\omega,H)$ locally, globally, or strictly $H$-preserving if it is multisymplectic and if $\L_{V_\xi} H$ is closed, exact, or zero respectively, for all $\xi\in\g$. 
\end{definition}

\subsection{Homotopy Co-Momentum Maps}
For a group acting on a symplectic manifold $M$, a moment map is a Lie algebra morphism between $(\g,[\cdot,\cdot])$ and  $(C^\infty(M),\{\cdot,\cdot\})$, where $\{\cdot,\cdot\}$ is the Poisson bracket. In multisymplectic geometry, the $n$-plectic form no longer provides a Lie algebra structure on the space of smooth functions. However, as we saw in the previous section, the $n$-plectic structure does define an $L_\infty$-algebra, namely the Lie $n$-algebra of observables. A homotopy co-momentum map is an $L_\infty$-morphism from $\g$ to the Lie $n$-algebra of observables. We explain what this means in the following definition, while refer the reader to \cite{questions} for further information on $L_\infty$-morphisms.

For the rest of this section, we assume a multisymplectic action of a Lie algebra $\g$ on $(M,\omega)$.
\begin{definition}\label{hmm}
A (homotopy) co-momentum map is an $L_\infty$-morphism $(f)$ between $\g$ and the Lie $n$-algebra of observables. This means that $(f)$ is a collection of maps $f_1:\Lambda^1\g\to\Omega^{n-1}_{\text{Ham}}(M)$ and $f_k:\Lambda^k\g\to\Omega^{n-k}(M)$ for $k\geq 2$ satisfying, \[df_1(\xi)=-V_\xi\hk\omega\] for $\xi\in\g$ and  \begin{equation}\label{hcmm}-f_{k-1}(\partial p)=df_k(p)+\zeta(k)V_p\hk\omega,\end{equation} for $p\in\Lambda^k\g$ and $k\geq 1$. A co-momentum map is called equivariant if each component $f_i:\Lambda^i\g\to\Omega^{n-i}(M)$ is equivariant with respect to the adjoint and pullback actions respectively.
\end{definition}

For reasons to be discussed in detail in the next section, we will mostly be concerned with the restriction of co-momentum maps to the Lie kernel. Under this assumption, equation $(\ref{hcmm})$ reduces to \begin{equation}\label{hcmm kernel}df_k(p)=-\zeta(k)V_p\hk\omega.\end{equation}

\begin{definition} A weak (homotopy) co-momentum map $(f)$ is a collection of maps $f_k:\Rho_{\g,k}\to\Omega^{n-k}_{\mathrm{Ham}}(M)$, where $1\leq k\leq n$, satisfying equation (\ref{hcmm kernel}).

\end{definition}

\begin{remark}
By restricting the domain of a homotopy co-momentum map to the Lie kernel, we see that the multi-moment maps of Madsen and Swann (in \cite{ms} and \cite{MS}) are given precisely by the $n$-th component of our weak homotopy co-momentum maps.
\end{remark}

We conclude this section with some examples of weak co-momentum maps: 

In Hamiltonian mechanics, the phase space of a manifold $M$ is the symplectic manifold $(T^\ast M,\omega=-d\theta)$. The next example generalizes this to the setting of multisymplectic geometry.
 
 \begin{example}\bf{(Multisymplectic Phase Space)}\rm
 \label{multisymplectic phase space}
 As in Example \ref{Multisymplectic Phase Space}, let $N$ be a manifold and let $M=\Lambda^{k}(T^\ast N)$, with $\pi:M\to N$ the projection map. Let $\theta$ and $\omega=-d\theta$ denote the canonical $k$ and $(k+1)$-forms respectively.  Let $G$ be a group acting on $N$ and lift this action to $M$ in the standard way. Such an action on $M$ necessarily preserves $\theta$. We define a weak homotopy co-momentum map by \[f_l(p):=-\zeta(l+1)V_p\hk\theta,\]for $p\in\Rho_{\g,l}$.

We now show that $(f)$ is a weak homotopy co-momentum map. For $l\geq 1$, first consider a decomposable element $p=A_1\wedge\cdots\wedge A_l$ in $\Lambda^l\g$. Then,

\begin{align*}
df_l(p)&=-\zeta(l+1)d(V_p\hk\theta)\\
&=-\zeta(l+1)(-1)^{l}\left(\partial V_p\hk\theta +\sum_i^l(-1)^iA_1\wedge\cdots\wedge\widehat A_i\wedge\cdots\wedge A_l\hk \L_{A_i}\theta-V_p\hk\omega\right)&\text{by Lemma \ref{extended Cartan}}\\
&=\zeta(l)\left(\partial V_p\hk\theta -V_p\hk\omega\right) &\text{since $G$ preserves $\theta$}
\end{align*}

By linearity, we thus see that this equation holds for an arbitrary element in $\Lambda^l\g$. That is, for all $p\in\Lambda^l\g$, \[df_l(p)=\zeta(l)\left(\partial V_p\hk\theta -V_p\hk\omega\right).\] If we assume now that $p\in\Rho_{\g,l}$, it then follows from Proposition \ref{infinitesimal generator of Schouten} that  \[df_l(p)=-\zeta(l)V_p\hk\omega.\] Thus by equation $(\ref{hcmm kernel})$ we see $(f)$ is a weak homotopy co-momentum map.
\end{example}

\begin{remark}
In symplectic geometry, symmetries on the phase space $T^\ast M$ have an important relationship with the classical momentum and position functions (see Chapter 4.3 of \cite{Marsden}). These momentum and position functions satisfy specific commutation relations which play an important role in connecting classical and quantum mechanics. Once we extend the Poisson bracket to multisymplectic manifolds and discuss a generalized notion of symmetry, we will come back to this multisymplectic phase space and give a generalization of these classical momentum and position functions (see Section 6.1).
\end{remark}
 
The next two examples will be used when we look at manifolds with a torsion-free $G_2$ structure.

\begin{example}\bf{($\C^3$ with the standard holomorphic volume form)}\rm
\label{complex moment map 1}
Consider $\C^3$ with standard coordinates $z_1,z_2,z_3$. Let $\Omega=dz^1\wedge dz^2\wedge dz^3$ denote the standard holomorphic volume form. Let \[\alpha=\text{Re}(\Omega)=\frac{1}{2}(dz^1\wedge dz^2\wedge dz^3+d\o z^1\wedge d\o z^2\wedge d\o z^3).\] It follows that $\alpha$ is a $2$-plectic form on $\C^3$.  We consider the diagonal action by the maximal torus $T^2\subset SU(3)$ given by $(e^{i\theta},e^{i\eta})\cdot(z_1,z_2,z_3)=(e^{i\theta}z_1,e^{i\eta}z_2,e^{-i(\theta+\eta)}z_3)$. We have $\mathfrak{t}^2=\R^2$ and that the infinitesimal generators of $(1,0)$ and $(0,1)$ are \[A=\frac{i}{2}\left(z_1\frac{\pd}{\pd z_1}-z_3\frac{\pd}{\pd z_3}-\o z_1\frac{\pd}{\pd\o z_1}+\o z_3\frac{\pd}{\pd \o z_3}\right)\]and \[B=\frac{i}{2}\left(z_2\frac{\pd}{\pd z_2}-z_3\frac{\pd}{\pd z_3}-\o z_2\frac{\pd}{\pd\o z_2}+\o z_3\frac{\pd}{\pd \o z_3}\right)\] respectively.

A computation then shows that 
\[A\hk\alpha=\frac{1}{2}d(\text{Im}(z_1z_3dz^2))\]
and
\[B\hk\alpha=\frac{1}{2}d(\text{Im}(z_1z_3dz^1)).\]

Moreover,
\[B\hk A\hk\alpha=-\frac{1}{4}d(\text{Re}(z_1z_2z_3))\]
Since $G=T^2$ is abelian we have that $\Rho_{\g,2}=\Lambda^2\g$. Thus, by equation $(\ref{hcmm kernel})$ we see a weak homotopy co-momentum map is given by

\[f_1(A)=\frac{1}{2}(\text{Im}(z_1z_3dz^2)) \ \ \ \ \ \ \ \ \ \ f_1(B)=\frac{1}{2}(\text{Im}(z_1z_3dz^1))\]
and
\[f_2(A\wedge B)=\frac{1}{4}(\text{Re}(z_1z_2z_3)).\]

If instead of Re$(\Omega)$ we were to consider Im$(\Omega)$ then in the above expressions for $f_1$ and $f_2$ we would just swap the roles of Re and Im.
\end{example}

\begin{example}\bf{($\C^3$ with the standard Kahler form)}\rm
\label{complex moment map 2}
Working with the same set up as Example \ref{complex moment map 1}, now consider the standard Kahler form $\omega=\frac{i}{2}(dz^1\wedge d\o z^1+dz^2\wedge d\o z^2+dz^3\wedge d\o z^3)$. This is a $1$-plectic (i.e. symplectic) form on $\C^3$.  A computation shows that \[A\hk\omega=-\frac{1}{4}d(|z_1|^2-|z_3|^2)\] and \[B\hk\omega=-\frac{1}{4}d(|z_2|^2-|z_3|^2).\] Thus, by equation $(\ref{hcmm kernel})$ a weak homotopy co-momentum map is given by

\[f_1(A)=-\frac{1}{4}(|z_1|^2-|z_3|^2) \ \ \ \ \ \ \ \ \ \ f_1(B)=-\frac{1}{4}(|z_2|^2-|z_3|^2).\]

\end{example}

For our last example of this section, we consider a multi-Hamiltonian system which models the motion of a particle, with unit mass, under no external net force. 
\begin{example}\label{translation}\bf{(Motion in a conservative system under translation)}\rm

Consider $\R^3$ with the standard metric $g$ and standard coordinates $q^1$, $q^2$, $q^3$. Let $q^1$, $q^2$, $q^3$, $p_1$, $p_2$, $p_3$ denote the induced coordinates on $T^\ast\R^3=\R^6$.
The motion of a particle in $\R^3$, subject to no external force, is given by a geodesic. That is, the path $\gamma$ of the particle is an integral curve for the geodesic spray $S$, a vector field on $T\R^3=\R^6$.  Using the metric to identity $T\R^3$ and $T^\ast\R^3$, the geodesic spray is given by \[S=g^{kj}p_j\frac{\pd}{\pd q^k}-\frac{1}{2}\frac{\pd g^{ij}}{\pd q^k}p_ip_j\frac{\pd}{\pd p_k},\] as shown in Example 5.21 of \cite{me}. Since we our working with the standard metric, the geodesic spray is just \[S=\sum_{i=1}^3p_i\frac{\pd}{\pd q^i}.\] 

Let $M=T^\ast\R^3=\R^6$ and consider the multi-Hamiltonian system $(M,\omega,H)$ where \[\omega=\text{vol}=dq^1dq^2dq^3dp_1dp_2dp_3\] is the canonical volume form, and \[H=\frac{1}{2}\left((p_1q^2dq^3-p_1q^3dq^2)-(p_2q^1dq^3-p_2q^3dq^2)+(p_3q^1dq^2-p_3q^2dq^1)\right)dp_1dp_2dp_3.\] Then \[S\hk\omega=dH\] so that the $X_H=S$. That is, the Hamiltonian vector field in this multi-Hamiltonian system is the geodesic spray.  Consider the translation action of $G=\R^3$ on $\R^3$ and pull this back to an action on $M$. The infinitesimal generators of $e_1,e_2,e_3$ on $M$ are $\frac{\pd}{\pd q^1},\frac{\pd}{\pd q^2}$ and $\frac{\pd}{\pd q^3}$ respectively.  We compute the moment map for this action:

Since $\frac{\pd}{\pd q^1}\hk\omega=dq^2dq^3dp_1dp_2dp_3$ it follows that  $f_1(e_1)=\frac{1}{2}(q^2dq^3-q^3dq^2)dp_1dp_2dp_3$ satisfies $df(e_1)=V_{e_1}\hk\omega$. Similar computations show that the following is a homotopy co-momentum map for the translation action on $(M,\omega,H)$:

\[f_1(e_1)=\frac{1}{2}(q^2dq^3-q^3dq^2)dp_1dp_2dp_3, \] \[ f_1(e_2)=\frac{1}{2}(q^1dq^3-q^3dq^1)dp_1dp_2dp_3, \] \[f_1(e_3)=\frac{1}{2}(q^1dq^2-q^2dq^2)dp_1dp_2dp_3,\]
\[f_2(e_1\wedge e_2)=q^3dp_1dp_2dp_3, \ \  \ \ \ \ f_2(e_1\wedge e_3)=q^2dp_1dp_2dp_3, \ \ \ \ \ \ f_2(e_2\wedge e_3)=q^1dp_1dp_2dp_3,\]and
\[f_3(e_1\wedge e_2\wedge e_3)=\frac{1}{3}\left(p_1dp_2dp_3+p_2dp_3dp_1+p_3dp_1dp_2\right).\]

\end{example}
\begin{remark}
In Section 5.3 we will  come back to Example \ref{translation} and consider the multisymplectic symmetries and conserved quantities coming from this homotopy co-momentum map.
\end{remark}
\section{Multisymplectic Symmetries and Conserved Quantities}

In this section we give a definition of conserved quantities and continuous symmetries on multisymplectic manifolds. In symplectic geometry, the Poisson bracket plays a large role in the discussion of conserved quantities. To that end, we first try to generalize the Poisson bracket to multisymplectic geometry. \del{We will always work with a fixed multi-Hamiltonian system $(M,\omega,H)$. We let $X_H$ denote the unique Hamiltonian vector field corresponding to $H$. }

\subsection{A Generalized Poisson Bracket}
We first extend the notion of a Hamiltonian $(n-1)$-form to arbitrary forms of degree $\leq n-1$.
\begin{definition}We call
 \[\Omega^{n-k}_{\text{Ham}}(M):=\{\alpha\in\Omega^{n-k}(M) ; \text{ there exists $X_\alpha\in\Gamma(\Lambda^k(TM))$ with $d\alpha=-X_\alpha\hk\omega$ } \} \]the set of Hamiltonian $(n-k)$-forms. For a Hamiltonian $(n-k)$-form $\alpha$, we call $X_\alpha$ a corresponding Hamiltonian $k$-vector field (or multivector field if $k$ is not explicit).
 
We call 
 \[\X^k_{\text{Ham}}(M) := \{X\in\Gamma(\Lambda^k(TM)) ; X\hk\omega \text{ is exact} \} \] the set of Hamiltonian $k$-vector fields. We will refer to a primitive of $X\hk\omega$ as a corresponding Hamiltonian $(n-k)$-form. 
 
 \end{definition}

Of course, given a Hamiltonian $(n-k)$-form, it does not necessarily have a unique associated Hamiltonian multivector field. Moreover, a Hamiltonian $k$-vector field doesn't necessarily have a unique corresponding Hamiltonian $(n-k)$-form. However, the following is clear:
\begin{proposition}
\label{kernel}
For $\alpha\in\Omega^{n-k}_{\mathrm{Ham}}(M)$,  any two of its Hamiltonian $k$-vector fields differ by something in the kernel of $\omega$. Conversely, for $X\in\X^k_{\mathrm{Ham}}(M)$, any two of its Hamiltonian forms differ by a closed form. 
\end{proposition}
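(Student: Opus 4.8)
The plan is to reduce both assertions to the linearity of the contraction $Y \mapsto Y\hk\omega$ together with the observation that the defining equation $d\alpha = -X_\alpha\hk\omega$ depends on the pair $(\alpha, X_\alpha)$ only through the form $X_\alpha\hk\omega$. In each direction I would form the difference of two candidates and read off from the defining equation which space the difference lands in.

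For the first claim, I would take $\alpha\in\Omega^{n-k}_{\mathrm{Ham}}(M)$ and suppose $X$ and $X'$ are both Hamiltonian $k$-vector fields for $\alpha$, so that $d\alpha = -X\hk\omega$ and $d\alpha = -X'\hk\omega$. Subtracting and using linearity of contraction in the multivector slot gives $(X - X')\hk\omega = 0$, which is exactly the statement that $X - X'$ lies in the kernel of the map $\Gamma(\Lambda^k(TM)) \to \Omega^{n+1-k}(M),\ Y \mapsto Y\hk\omega$. I would add a clarifying remark that ``kernel of $\omega$'' here means the kernel of this contraction map on $k$-vector fields, and that, in contrast to the vector-field case where $n$-plecticity forces injectivity, for $k \geq 2$ this kernel may be nontrivial, which is precisely why the two Hamiltonian multivector fields need not agree.

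For the converse, I would take $X\in\X^k_{\mathrm{Ham}}(M)$ and let $\alpha$ and $\beta$ be two corresponding Hamiltonian forms, so each is a primitive of $X\hk\omega$; under the sign convention of the definition this reads $d\alpha = -X\hk\omega = d\beta$. Subtracting gives $d(\alpha - \beta) = 0$, so $\alpha - \beta$ is closed, as claimed.

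I do not expect a genuine obstacle here, since both statements are immediate consequences of linearity of $\hk$ and of the fact that $d\alpha$ is determined by $X\hk\omega$ alone; the paper itself flags the result as ``clear.'' The only points deserving care are the bookkeeping of the sign convention relating ``$d\alpha = -X_\alpha\hk\omega$'' to ``$\alpha$ is a primitive of $X\hk\omega$,'' and making explicit that the relevant kernel is that of contraction on $\Gamma(\Lambda^k(TM))$ rather than on $\Gamma(TM)$.
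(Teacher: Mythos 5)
Your proof is correct and is exactly the immediate argument the paper has in mind when it states the proposition without proof ("the following is clear"): subtracting the two defining equations $d\alpha=-X\hk\omega=-X'\hk\omega$ gives $(X-X')\hk\omega=0$, and subtracting the two primitive equations gives $d(\alpha-\beta)=0$. Your added care about the sign convention and about the kernel being that of contraction on $\Gamma(\Lambda^k(TM))$ (which, unlike the $k=1$ case, may be nontrivial) is consistent with the paper's definitions and flags the right subtleties.
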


Proposition \ref{kernel} motivates consideration of the following spaces. Let  $\widetilde{\X}^k_{\text{Ham}}(M)$ denote the quotient space of $\X^k_{\text{Ham}}(M)$ by elements in the kernel of $\omega$. Let $\widetilde{\Omega}^{n-k}_{\text{Ham}}(M)$ denote the quotient of $\Omega^{n-k}_{\text{Ham}}$ by closed forms. We let \[\Omega_{\text{Ham}}(M)=\oplus_{k=0}^{n-1}\Omega^k_{\text{Ham}}(M)\]and \[\widetilde\Omega_{\text{Ham}}(M)=\oplus_{k=0}^{n-1}\widetilde\Omega^k_{\text{Ham}}(M).\] Similarly, we let \[\X_{\text{Ham}}(M)=\oplus_{k=0}^{n-1}\X^k_{\text{Ham}}(M)\] and \[\widetilde\X_{\text{Ham}}(M)=\oplus_{k=0}^{n-1}\widetilde\X^k_{\text{Ham}}(M).\] It is clear that the map from $\widetilde{\Omega}^{n-k}_{\text{Ham}}(M)$ to $\widetilde{\X}^k_{\text{Ham}}(M)$ given by $[\alpha]\mapsto [X_\alpha]$ is a bijection.

\begin{proposition}
 The spaces $\widetilde{\Omega}^{n-k}_{\mathrm{Ham}}(M)$ and $\widetilde{\X}^k_{\mathrm{Ham}}(M)$ are isomorphic. 
\end{proposition}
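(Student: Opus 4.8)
The plan is to show that the natural map $\Psi\colon\widetilde\Omega^{n-k}_{\text{Ham}}(M)\to\widetilde\X^k_{\text{Ham}}(M)$ given by $[\alpha]\mapsto[X_\alpha]$, where $X_\alpha$ is any Hamiltonian $k$-vector field for $\alpha$, is a linear isomorphism. Since the preceding discussion already records that this assignment is a bijection, the remaining work is to confirm that it is well defined and linear, both of which follow quickly from Proposition \ref{kernel} and the linearity of contraction.

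First I would verify well-definedness, which requires two checks. For a fixed representative $\alpha$, the class $[X_\alpha]$ must not depend on the chosen Hamiltonian $k$-vector field: by Proposition \ref{kernel} any two such fields differ by an element of $\ker\omega$, hence determine the same class in $\widetilde\X^k_{\text{Ham}}(M)$. Next, $[X_\alpha]$ must depend only on $[\alpha]$: if $\alpha' = \alpha + \gamma$ with $\gamma$ closed, then $d\alpha' = d\alpha = -X_\alpha\hk\omega$, so $X_\alpha$ is simultaneously a Hamiltonian $k$-vector field for $\alpha'$, giving $\Psi([\alpha']) = [X_\alpha] = \Psi([\alpha])$.

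Linearity is then immediate from the defining relation $d\alpha = -X_\alpha\hk\omega$: contraction with $\omega$ is $\R$-linear, so $d(a\alpha + b\alpha') = -(aX_\alpha + bX_{\alpha'})\hk\omega$ exhibits $aX_\alpha + bX_{\alpha'}$ as a Hamiltonian $k$-vector field for $a\alpha + b\alpha'$, whence $\Psi$ respects addition and scalar multiplication. For injectivity, suppose $\Psi([\alpha]) = 0$, i.e. $X_\alpha\in\ker\omega$; then $d\alpha = -X_\alpha\hk\omega = 0$, so $\alpha$ is closed and $[\alpha] = 0$. For surjectivity, take any $X\in\X^k_{\text{Ham}}(M)$; since $X\hk\omega$ is exact, write $X\hk\omega = d\beta$ and set $\alpha = -\beta$, so that $d\alpha = -X\hk\omega$ realizes $X$ as a Hamiltonian $k$-vector field for $\alpha\in\Omega^{n-k}_{\text{Ham}}(M)$ and $\Psi([\alpha]) = [X]$.

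The only real content is the well-definedness of $\Psi$, and this is precisely what Proposition \ref{kernel} supplies; everything else is a direct unwinding of the definitions together with the linearity of the interior product. I therefore do not anticipate any genuine obstacle, the main point being to keep track of the two distinct ambiguities—Hamiltonian $k$-vector fields modulo $\ker\omega$ on one side, and Hamiltonian forms modulo closed forms on the other—and to confirm that $\Psi$ trades one for the other.
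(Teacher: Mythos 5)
Your proposal is correct and follows exactly the route the paper intends: the paper simply declares that the map $[\alpha]\mapsto[X_\alpha]$ is a bijection, with the well-definedness implicitly supplied by Proposition \ref{kernel}, which is precisely the content you spell out. Your write-up just makes explicit the linearity, injectivity, and surjectivity checks that the paper leaves to the reader.
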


Later on, we will see that there are graded Lie brackets on the spaces $\widetilde{\Omega}_{\text{Ham}}(M)$ and $\widetilde{\X}_{\text{Ham}}(M)$ making them isomorphic as graded Lie algebras.

The next proposition will be used to show that certain statements about a Hamiltonian form are independent of the choice of the corresponding Hamiltonian multivector field.
\begin{proposition}
\label{well defined}
If $\kappa$ is in the kernel of $\omega$, then for any $X\in\X^k_{\mathrm{Ham}}(M)$ we have $[X,\kappa]\hk\omega=0$.
\end{proposition}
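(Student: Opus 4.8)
The plan is to reduce the statement to two elementary observations and then feed them into the interior-product identity (\ref{bracket hook}). First, by definition of the kernel of $\omega$, the hypothesis on $\kappa$ means exactly that $\kappa\hk\omega=0$; I may assume $\kappa$ is homogeneous of some degree $l$ and extend by linearity at the end. Second, I would record that every Hamiltonian multivector field $X\in\X^k_{\mathrm{Ham}}(M)$ preserves $\omega$, i.e. $\L_X\omega=0$. This is the only genuine content of the argument, and it drops straight out of the definition of the Lie derivative in (\ref{Lie}): since $\omega$ is closed,
\[
\L_X\omega=d(X\hk\omega)-(-1)^kX\hk d\omega=d(X\hk\omega),
\]
and since $X$ is Hamiltonian the form $X\hk\omega$ is exact, hence closed, so $d(X\hk\omega)=0$.

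With these two facts in hand, I would apply (\ref{bracket hook}) with $Y=\kappa$ and $\tau=\omega$ to obtain
\[
[X,\kappa]\hk\omega=(-1)^{(k+1)l}\L_X(\kappa\hk\omega)-\kappa\hk(\L_X\omega).
\]
The first term vanishes because $\kappa\hk\omega=0$, and the second vanishes because $\L_X\omega=0$. Hence $[X,\kappa]\hk\omega=0$, which is the claim.

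As a cross-check I would run the same computation through the fuller formula (\ref{interior equation}) with $Y=\kappa$: three of its four terms die because $\omega$ is closed (killing $X\hk\kappa\hk d\omega$ and $X\hk d(\kappa\hk\omega)$) and because $X\hk\omega$ is closed (killing $\kappa\hk d(X\hk\omega)$), while the surviving term $d(\kappa\hk X\hk\omega)$ vanishes after rewriting $\kappa\hk(X\hk\omega)=(X\wedge\kappa)\hk\omega=\pm\,X\hk(\kappa\hk\omega)=0$, using graded-commutativity of the wedge together with $\kappa\hk\omega=0$. I do not anticipate any real obstacle: the whole proof is bookkeeping, and the route through (\ref{bracket hook}) is the cleanest precisely because it isolates the two hypotheses $\kappa\hk\omega=0$ and $\L_X\omega=0$ into the two separate terms. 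The one point worth stating carefully is the auxiliary observation $\L_X\omega=0$, since it is exactly where the Hamiltonian hypothesis on $X$ and the closedness of $\omega$ enter.
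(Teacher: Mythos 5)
Your proof is correct and takes essentially the same route as the paper: the paper's proof also applies equation (\ref{bracket hook}) with $Y=\kappa$ and $\tau=\omega$ and kills the two resulting terms using $\kappa\hk\omega=0$ and $\L_X\omega=0$ (you additionally justify $\L_X\omega=0$ from (\ref{Lie}), which the paper simply asserts). One trivial slip in your optional cross-check via (\ref{interior equation}): the term $X\hk d(\kappa\hk\omega)$ vanishes because $\kappa\hk\omega=0$, not because $d\omega=0$; this does not affect the main argument.
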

\begin{proof}
Using equation (\ref{bracket hook}) together with the fact that $\L_X\omega=0$ and $\kappa\hk\omega=0$ we have \[[X,\kappa]\hk\omega=(-1)^{k(k+1)}\L_{X}(\kappa\hk\omega)-\kappa\hk\L_{X}\omega=0.\]
\end{proof}

The next proposition shows that, as in symplectic geometry, any Hamiltonian multivector field preserves $\omega$.

\begin{proposition}
\label{preserve}
For $\alpha\in\Omega^{n-k}_{\mathrm{Ham}}(M)$ we have that $\L_{X_\alpha}\omega=0$ for all Hamiltonian multivector fields $X_\alpha$ of $\alpha$.
\end{proposition}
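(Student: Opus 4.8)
The plan is to compute $\L_{X_\alpha}\omega$ directly from its definition as a Lie derivative of a form by a multivector field, namely equation (\ref{Lie}): for $X\in\Gamma(\Lambda^k(TM))$ and a form $\tau$,
\[\L_X\tau = d(X\hk\tau) - (-1)^k X\hk d\tau.\]
The only facts I expect to need are the defining property of a Hamiltonian multivector field and the closedness of $\omega$, both of which are already available.

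First I would take $\tau=\omega$ and $X=X_\alpha\in\Gamma(\Lambda^k(TM))$, so that
\[\L_{X_\alpha}\omega = d(X_\alpha\hk\omega) - (-1)^k X_\alpha\hk d\omega.\]
Next I would substitute $X_\alpha\hk\omega = -d\alpha$, which is exactly the defining relation for $X_\alpha$ being a Hamiltonian $k$-vector field of $\alpha$; this makes the first term $d(X_\alpha\hk\omega) = -d(d\alpha) = 0$. For the second term, I would invoke that $\omega$ is closed (it is the multisymplectic, hence pre-$n$-plectic, form), so $d\omega = 0$ and therefore $X_\alpha\hk d\omega = 0$. Both summands vanish, giving $\L_{X_\alpha}\omega = 0$.

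There is essentially no obstacle here: the computation is immediate once the definition (\ref{Lie}) is unwound. The only point worth emphasizing is that the argument uses nothing about $X_\alpha$ beyond the relation $X_\alpha\hk\omega = -d\alpha$, which holds for \emph{every} Hamiltonian multivector field associated to $\alpha$ (any two of which differ by an element of the kernel of $\omega$ by Proposition \ref{kernel}); hence the conclusion holds uniformly for all such $X_\alpha$, as claimed. This is the multisymplectic analogue of the symplectic fact that every Hamiltonian vector field preserves the symplectic form, and the place where closedness of $\omega$ enters is precisely the vanishing of the $X_\alpha\hk d\omega$ term.
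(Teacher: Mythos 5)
Your proposal is correct and follows exactly the paper's own argument: unwind the definition of the Lie derivative via equation (\ref{Lie}), kill the $X_\alpha\hk d\omega$ term using $d\omega=0$, and kill the remaining term using $d(X_\alpha\hk\omega)=-d(d\alpha)=0$. Your closing observation that the argument applies uniformly to every Hamiltonian multivector field of $\alpha$ is a nice explicit touch, but the substance is identical to the paper's proof.
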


\begin{proof}
Let $X_\alpha$ be a Hamiltonian multivector field. We have that \begin{align*}
\L_{X_\alpha}\omega&=\L_{X_\alpha}\omega\\
&=d(X_\alpha\hk\omega)-(-1)^kX_\alpha\hk d\omega&\text{by equation (\ref{Lie})}\\
&=d(X_\alpha\hk\omega)&\text{since $d\omega=0$}\\
&=-d(d\alpha)&\text{by definition}\\
&=0.
\end{align*}
\end{proof}

\del{
The next lemma is a generalization of Lemma 3.7 of \cite{rogers}. It will be used repeatedly throughout this paper.

\begin{lemma}
\label{Rogers}
Fix $m\geq 2$. Let $\alpha_1,\ldots,\alpha_m$ be Hamiltonian forms of degree $n-k_1,\ldots, n-k_m$ respectively. Let $X_1,\ldots, X_m$ be arbtirary Hamiltonian vector fields for $\alpha_1,\ldots,\alpha_m$. Note that $X_j$ has degree $k_j$. The following holds:
\begin{align*}&d(X_m\hk\cdots\hk X_1\hk\omega)=\\
&(-1)^m(-1)^{k_1\cdots k_m}\sum_{1\leq i<j\leq m}(-1)^{i+j}X_m\hk\cdots\hk\widehat X_i\hk\cdots\hk\widehat X_j\hk\cdots\hk X_1\hk[X_i,X_j]\hk\omega
\end{align*}
\end{lemma}
\begin{proof}
Brutal
\end{proof}
}

We now put in a structure analogous to the Poisson bracket in Hamiltonian mechanics, which has graded analogous properties. 

Given $\alpha\in\Omega^{n-k}_{\text{Ham}}(M)$ and $\beta\in\Omega^{n-l}_{\text{Ham}}(M)$, a first attempt would be to define their generalized bracket to be \[\{\alpha,\beta\} := X_\beta\hk X_\alpha\hk \omega,\] mimicing the Poisson bracket in symplectic geometry. However, we can see right away that this bracket is not graded anti-commutative since $\{\alpha,\beta\}=(-1)^{kl}\{\beta,\alpha\}$. Hence, we modify our grading of the Hamiltonian forms, following the work done in \cite{dropbox}.

\begin{definition}
Let $\mathcal{H}^p(M)=\Omega^{n-p+1}_{\text{Ham}}(M)$. That is, we are assigning the grading of $\alpha\in\Omega^{n-k}_{\text{Ham}}(M)$ to be $|\alpha|=k+1$. For $\alpha\in\Omega^{n-k}_{\text{Ham}}(M)$ and $\beta\in\Omega^{n-l}_{\text{Ham}}(M)$ (i.e. $\alpha\in\mathcal{H}^{k+1}(M)$ and $\beta\in\mathcal{H}^{l+1}(M)$) we define their (generalized) Poisson bracket to be \begin{align*}\{\alpha,\beta\}&:=(-1)^{|\beta|}X_\beta\hk X_\alpha\hk\omega\\
&=(-1)^{l+1}X_\beta\hk X_\alpha\hk\omega.
\end{align*}

Notice that this bracket is well defined follows directly from Proposition \ref{kernel}.
\end{definition}

With this new grading, the generalized Poisson bracket is graded commutative. 

\begin{proposition}
\label{skew graded}
 Let $\alpha$ be a form of grading $|\alpha|=k+1$ and $\beta$ a form of grading $|\beta|=l+1$. That is,  $\alpha\in\Omega^{n-k}_{\text{Ham}}(M)$ and $\beta\in\Omega^{n-l}_{\text{Ham}}(M)$. Then we have that \[\{\alpha,\beta\}=-(-1)^{|\alpha||\beta|}\{\beta,\alpha\}.\]

\end{proposition}

\begin{proof}
By definition,
\begin{align*}
\{\alpha,\beta\}&=(-1)^{l+1}X_\beta\hk X_\alpha\hk\omega\\
&=(-1)^{l+1}(-1)^{kl}X_\alpha\hk X_\beta\hk\omega\\
&=-(-1)^{l(k+1)}X_\alpha\hk X_\beta\hk\omega\\
&=-(-1)^{(l+1)(k+1)+k+1}X_\alpha\hk X_\beta\hk\omega\\
&=-(-1)^{|\alpha||\beta|}(-1)^{k+1}X_\alpha\hk X_\beta\hk\omega\\
&=-(-1)^{|\alpha||\beta|}\{\beta,\alpha\}.
\end{align*}
\end{proof}

The next lemma shows that the bracket of two Hamiltonian forms is Hamiltonian. In symplectic geometry, we have $X_{\{f,g\}}=[X_f,X_g]$ (or $X_{\{f,g\}}=-[X_f,X_g]$ if the defining equation for a Hamiltonian vector field is $X_\alpha\hk\omega=d\alpha$). In multisymplectic geometry we have

\begin{lemma}
\label{Poisson is Schouten}
For $\alpha\in\Omega^{n-k}_{\text{Ham}}(M)$ and $\beta\in\Omega^{n-l}_{\text{Ham}}(M)$ their bracket $\{\alpha,\beta\}$ is in $\Omega^{n+1-k-l}_{\text{Ham}}(M)$. That is, $\{\alpha,\beta\}$ is a Hamiltonian form with grading $|\{\alpha,\beta\}|=k+l-2$. More precisely, we have that  $[X_\alpha,X_\beta]$ is a Hamiltonian vector field for $\{\alpha,\beta\}$.
\end{lemma}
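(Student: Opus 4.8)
The plan is to reduce everything to the single identity
\[d\{\alpha,\beta\} = -[X_\alpha,X_\beta]\hk\omega,\]
since this is precisely the statement that $\{\alpha,\beta\}$ is a Hamiltonian form admitting $[X_\alpha,X_\beta]$ as a corresponding Hamiltonian multivector field. The assertion about the form degree $n+1-k-l$ is then immediate from counting contractions: $X_\alpha\hk\omega$ lowers the degree of $\omega$ by $k$, and the further contraction by $X_\beta$ lowers it by $l$, so $\{\alpha,\beta\}=(-1)^{l+1}X_\beta\hk X_\alpha\hk\omega$ lies in $\Omega^{n+1-k-l}(M)$; the identity above then promotes it to $\Omega^{n+1-k-l}_{\text{Ham}}(M)$. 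Note also that $[X_\alpha,X_\beta]$ has Schouten degree $k+l-1$, matching the multivector degree required for a form in $\Omega^{n-(k+l-1)}_{\text{Ham}}(M)$.

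The engine of the argument is Proposition \ref{interior}, namely equation (\ref{interior equation}), applied with $X=X_\alpha$ (degree $k$), $Y=X_\beta$ (degree $l$), and $\tau=\omega$. This expands $[X_\alpha,X_\beta]\hk\omega$ as a sum of four terms, three of which I would eliminate using the data at hand: the term carrying $d\omega$ vanishes since $\omega$ is closed; the term $X_\beta\hk d(X_\alpha\hk\omega)$ vanishes because $X_\alpha\hk\omega=-d\alpha$ is closed; and the term involving $d(X_\beta\hk\omega)$ vanishes because $X_\beta\hk\omega=-d\beta$ is closed. What survives is the single term
\[[X_\alpha,X_\beta]\hk\omega = (-1)^l\,d(X_\beta\hk X_\alpha\hk\omega).\]

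I would then substitute the definition $\{\alpha,\beta\}=(-1)^{l+1}X_\beta\hk X_\alpha\hk\omega$, equivalently $X_\beta\hk X_\alpha\hk\omega=(-1)^{l+1}\{\alpha,\beta\}$, and collect signs using $(-1)^l(-1)^{l+1}=-1$, which yields exactly $[X_\alpha,X_\beta]\hk\omega=-d\{\alpha,\beta\}$. The only delicate point is the sign bookkeeping in (\ref{interior equation}); there is no genuine obstacle once the three vanishing terms are spotted, so this is really the whole proof.

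Finally, I would address well-definedness. Since a Hamiltonian form does not pin down its multivector field uniquely, one should check the conclusion does not depend on the chosen $X_\alpha$ and $X_\beta$. By Proposition \ref{kernel} any two choices differ by an element $\kappa$ in the kernel of $\omega$, and Proposition \ref{well defined} gives $[X_\alpha,\kappa]\hk\omega=0$; hence $[X_\alpha,X_\beta]\hk\omega$ is unaffected, which is consistent with the statement that $[X_\alpha,X_\beta]$ is merely \emph{one} valid Hamiltonian multivector field for $\{\alpha,\beta\}$.
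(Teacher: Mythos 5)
Your proof is correct and follows essentially the same route as the paper: both apply equation (\ref{interior equation}) with $\tau=\omega$, kill three of the four terms using $d\omega=0$ and the closedness of $X_\alpha\hk\omega$ and $X_\beta\hk\omega$, and then match signs against the definition of $\{\alpha,\beta\}$ to get $[X_\alpha,X_\beta]\hk\omega=-d\{\alpha,\beta\}$. The added remark on independence of the choices of $X_\alpha$, $X_\beta$ via Propositions \ref{kernel} and \ref{well defined} is a harmless (and correct) supplement.
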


\begin{proof}
We have that
\begin{align*}
[X_\alpha,X_\beta]\hk\omega&=-X_\beta\hk d(X_\alpha\hk\omega)+(-1)^ld(X_\beta\hk X_\alpha\hk\omega)\\
&\quad{}+(-1)^{kl+k}X_\alpha\hk X_\beta\hk dw-(-1)^{kl+k+l}X_\alpha\hk d(X_\beta\hk\omega)&\text{by equation (\ref{interior equation})}\\
&=(-1)^ld(X_\beta\hk X_\alpha\hk\omega)\\
&=-d(\{\alpha,\beta\}).
\end{align*}

\end{proof}

We now investigate the Jacobi identity for this bracket. In \cite{dropbox} it was mentioned that the graded Jacobi identity holds up to a closed form. We now show that the graded Jacobi identity holds up to an exact term. 
\begin{proposition}\bf{(Graded Jacobi.)} \rm 
\label{Jacobi} \ Fix $\alpha\in\Omega^{n-k}_{\text{Ham}}(M)$, $\beta\in\Omega^{n-l}_{\text{Ham}}(M)$ and $\gamma\in\Omega^{n-m}_{\text{Ham}}(M)$. Let $X_\alpha, X_\beta$ and $X_\gamma$ denote arbitrary Hamiltonian multivector fields for $\alpha,\beta$ and $\gamma$ respectively. Then we have that
\[\sum_{\text{cyclic}}(-1)^{|\alpha||\gamma|}\{\alpha,\{\beta,\gamma\}\} = (-1)^{|\beta||\gamma|+|\beta||\alpha|+|\beta|}d(X_\alpha\hk X_\beta\hk X_\gamma\hk\omega).\] 

\end{proposition}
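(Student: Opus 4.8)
The plan is to verify the graded Jacobi identity by direct computation, expanding each cyclic term using the definition of the bracket and the interior-product formula for the Schouten bracket from Proposition \ref{interior}. Since Lemma \ref{Poisson is Schouten} tells us that $[X_\alpha,X_\beta]$ is a Hamiltonian multivector field for $\{\alpha,\beta\}$, I can compute the double bracket $\{\alpha,\{\beta,\gamma\}\}$ directly as a contraction of $\omega$ by $X_\alpha$ and $[X_\beta,X_\gamma]$. Concretely, unwinding the grading conventions, $\{\alpha,\{\beta,\gamma\}\}$ becomes (up to an explicit sign depending on $k,l,m$) the iterated contraction $[X_\beta,X_\gamma]\hk X_\alpha\hk\omega$. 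The goal is then to show that the cyclic sum of three such iterated-Schouten contractions equals an exact term, namely $d(X_\alpha\hk X_\beta\hk X_\gamma\hk\omega)$ up to the stated sign.

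First I would fix all sign conventions carefully: record $|\alpha|=k+1$, $|\beta|=l+1$, $|\gamma|=m+1$, and the degree of $\{\beta,\gamma\}$ as $l+m-1$ (so its Hamiltonian vector field $[X_\beta,X_\gamma]$ has degree $l+m-1$). Then I would rewrite each term $\{\alpha,\{\beta,\gamma\}\}$ as $(-1)^{?}\,[X_\beta,X_\gamma]\hk X_\alpha\hk\omega$ with the precise sign coming from the definition $\{\mu,\nu\}=(-1)^{|\nu|}X_\nu\hk X_\mu\hk\omega$. The crucial step is to apply Proposition \ref{interior} (equation \eqref{interior equation}) to expand $[X_\beta,X_\gamma]\hk(\,\cdot\,)$, where the form being contracted is $X_\alpha\hk\omega$. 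Each application produces four terms; the second term, $(-1)^{l}d(X_\gamma\hk X_\beta\hk(X_\alpha\hk\omega))$, is exact and is the source of the right-hand side, while the remaining terms involve $d(X_\beta\hk X_\alpha\hk\omega)$-type expressions and contractions of $d\omega=0$.

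I expect the main obstacle to be bookkeeping: showing that after expanding all three cyclic terms via \eqref{interior equation}, the non-exact pieces cancel in pairs across the cyclic sum, leaving only the three exact contributions, which then combine into a single $d(X_\alpha\hk X_\beta\hk X_\gamma\hk\omega)$ with the asserted coefficient. The cancellation of the non-exact terms should follow because each term like $X_\gamma\hk d(X_\beta\hk X_\alpha\hk\omega)$ appears twice with opposite signs as the indices cycle through $(\alpha,\beta,\gamma)$; the contractions against $d\omega$ vanish identically. The delicate work is tracking the Koszul signs produced by reordering the contractions $X_\alpha,X_\beta,X_\gamma$ of degrees $k,l,m$, and confirming that the surviving exact terms assemble with the uniform sign $(-1)^{|\beta||\gamma|+|\beta||\alpha|+|\beta|}$. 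I would also use Proposition \ref{kernel} and Proposition \ref{well defined} at the outset to confirm the whole expression is independent of the choice of Hamiltonian multivector fields, so that computing with any fixed choice of $X_\alpha,X_\beta,X_\gamma$ suffices. A useful sanity check is to compare with the graded Jacobi identity for the Schouten bracket in Proposition \ref{properties} (equation \eqref{L bracket} and the Jacobi item), which governs how the $[X_\alpha,[X_\beta,X_\gamma]]$ terms behave and guarantees the internal consistency of the sign pattern.
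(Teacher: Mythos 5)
Your overall strategy is viable: expanding each double bracket with equation (\ref{interior equation}) applied to the closed form $\tau=X_\alpha\hk\omega$ (note $d\tau=-dd\alpha=0$, which kills the $d\tau$ term) is sound, and it is genuinely different from the paper's proof, which instead rewrites $\{\alpha,\beta\}=d(X_\beta\hk\alpha)-\L_{X_\beta}\alpha$ and cancels the iterated Lie derivatives via (\ref{L bracket}). However, the step you rely on to close the argument is false: the non-exact terms do \emph{not} cancel in pairs across the cyclic sum. Consider the pair you would need to cancel, namely the term $X_\gamma\hk d(X_\beta\hk X_\alpha\hk\omega)$ produced by $(-1)^{|\alpha||\gamma|}\{\alpha,\{\beta,\gamma\}\}$ and the term $X_\gamma\hk d(X_\alpha\hk X_\beta\hk\omega)$ produced by $(-1)^{|\alpha||\beta|}\{\beta,\{\gamma,\alpha\}\}$. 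Writing $k,l,m$ for the degrees of $X_\alpha,X_\beta,X_\gamma$ and tracking the signs carefully (using $\{\alpha,\{\beta,\gamma\}\}=(-1)^{l+m}[X_\beta,X_\gamma]\hk X_\alpha\hk\omega$, its cyclic analogue, and $X_\alpha\hk X_\beta\hk\omega=(-1)^{kl}X_\beta\hk X_\alpha\hk\omega$), both contributions come out equal to $(-1)^{km+k+l}\,X_\gamma\hk d(X_\beta\hk X_\alpha\hk\omega)$: they reinforce, giving $2(-1)^{km+k+l}X_\gamma\hk d(X_\beta\hk X_\alpha\hk\omega)$ rather than zero, and the same doubling occurs for the other two pairs. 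So the proof as described collapses exactly at the cancellation step.

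The repair stays entirely within your framework. By the computation proving Lemma \ref{Poisson is Schouten}, $d(X_\beta\hk X_\alpha\hk\omega)=(-1)^{l}[X_\alpha,X_\beta]\hk\omega$, so each doubled non-exact term is itself (up to sign) a double bracket such as $\{\{\alpha,\beta\},\gamma\}$; summing all six and using Proposition \ref{skew graded}, the non-exact terms equal $-2\Sigma$, where $\Sigma=\sum_{\mathrm{cyclic}}(-1)^{|\alpha||\gamma|}\{\alpha,\{\beta,\gamma\}\}$. Since the three exact terms, after reordering contractions, are each equal to $(-1)^{|\beta||\gamma|+|\alpha||\beta|+|\beta|}d(X_\alpha\hk X_\beta\hk X_\gamma\hk\omega)$, your expansion yields $\Sigma=-2\Sigma+3(-1)^{|\beta||\gamma|+|\alpha||\beta|+|\beta|}d(X_\alpha\hk X_\beta\hk X_\gamma\hk\omega)$, hence $3\Sigma=3(-1)^{|\beta||\gamma|+|\alpha||\beta|+|\beta|}d(X_\alpha\hk X_\beta\hk X_\gamma\hk\omega)$, which is the assertion. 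More economically, expand only the single term $\{\alpha,\{\beta,\gamma\}\}$: its two non-exact pieces are, by Lemma \ref{Poisson is Schouten}, precisely $\{\{\alpha,\beta\},\gamma\}$ and $(-1)^{lm+l+m}\{\{\alpha,\gamma\},\beta\}$, and Proposition \ref{skew graded} converts these into the remaining two cyclic terms, giving the identity with no cancellation claim at all. Finally, your appeal to the Schouten--Jacobi identity as a consistency check is idle here, since no nested brackets $[X_\alpha,[X_\beta,X_\gamma]]$ ever appear in this expansion.
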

\begin{proof}
By definition, we have that \[\{\alpha,\beta\}=(-1)^{|\beta|}X_\beta\hk X_\alpha\hk\omega=(-1)^{|\beta|+1}X_\beta\hk d\alpha.\] Since $X_\beta$ is in $\Lambda^{|\beta|+1}(TM)$, by (\ref{Lie}) it follows that \begin{align}\{\alpha,\beta\}&\nonumber=(-1)^{|\beta|+1}(-1)^{|\beta|+1}(d(X_\beta\hk\alpha)-\L_{X_\beta}\alpha)\\&\label{bracket 1}=d(X_\beta\hk\alpha)-\L_{X_\beta}\alpha.\end{align}Thus, \begin{align*}
\{\alpha,\{\beta,\gamma\}\}&=(-1)^{|\beta||\gamma|+1}\{\alpha,\{\gamma,\beta\}\}\\
&=(-1)^{|\beta||\gamma|+1+(|\beta|+|\gamma|)|\alpha|+1}\{\{\gamma,\beta\},\alpha\}\\
&=(-1)^{|\beta||\gamma|+1+(|\beta|+|\gamma|)|\alpha|+1}(d(X_\alpha\hk\{\gamma,\beta\})-\L_{X_\alpha}\{\gamma,\beta\})&\text{by $(\ref{bracket 1})$}\\
&=(-1)^{|\beta||\gamma|+|\beta||\alpha|+|\gamma||\alpha|}(d(X_\alpha\hk\{\gamma,\beta\})-\L_{X_\alpha}(d(X_\beta\hk\gamma))+\L_{X_\alpha}\L_{X_\beta}\gamma).
\end{align*} Hence, \begin{equation}\label{first Jacobi}(-1)^{|\alpha||\gamma|}\{\alpha,\{\beta,\gamma\}\}=(-1)^{|\beta||\gamma|+|\beta||\alpha|}\left(d(X_\alpha\hk\{\gamma,\beta\})-\L_{X_\alpha}d(X_\beta\hk\gamma)+\L_{X_\alpha}\L_{X_\beta}\gamma\right).\end{equation} Similarly, since $|\{\gamma,\alpha\}|=|\gamma|+|\alpha|-2$, we have that \begin{align*}\{\beta,\{\gamma,\alpha\}\}&=(-1)^{(|\gamma|+|\alpha|)|\beta|+1}\{\{\gamma,\alpha\},\beta\}\\&=(-1)^{|\gamma||\beta|+|\alpha||\beta|+1}(d(X_\beta\hk\{\gamma,\alpha\})-\L_{X_\beta}d(X_\alpha\hk\gamma)+\L_{X_\beta}\L_{X_\alpha}\gamma).&\text{by $(\ref{bracket 1})$}\\
\end{align*}Hence, \begin{equation}\label{second Jacobi}(-1)^{|\beta||\alpha|}\{\beta,\{\gamma,\alpha\}\}=(-1)^{|\gamma||\beta|+1}\left(d(X_\beta\hk\{\gamma,\alpha\})-\L_{X_\beta}d(X_\alpha\hk\gamma)+\L_{X_\beta}\L_{X_\alpha}\gamma\right).\end{equation} Lastly, using Lemma \ref{Poisson is Schouten} and $(\ref{bracket 1})$, we have that  \begin{equation}\label{third Jacobi} (-1)^{|\gamma||\beta|}\{\gamma,\{\alpha,\beta\}\}=(-1)^{|\gamma||\beta|}\left(d([X_\alpha,X_\beta]\hk\gamma)-\L_{[X_\alpha,X_\beta]}\gamma\right).\end{equation} Now we notice that by (\ref{L bracket}) the terms involving $\L_{X_\alpha}\L_{X_\beta}\gamma$ from (\ref{first Jacobi}) , $\L_{X_\beta}\L_{X_\alpha}\gamma$ from (\ref{second Jacobi}) and $\L_{[X_\alpha,X_\beta]}\gamma$ from (\ref{third Jacobi}) add to zero. Hence we now consider the term $(-1)^{|\gamma||\beta|}d([X_\alpha,X_\beta]\hk\gamma)$ from (\ref{third Jacobi}). We have that

\begin{align*}
d(&[X_\alpha,X_\beta]\hk\gamma)=d\left((-1)^{|\alpha|(|\beta|+1)}\L_{X_\alpha}(X_\beta\hk\gamma)-X_\beta\hk\L_{X_\alpha}\gamma\right)&\text{by (\ref{bracket hook})}\\
&=(-1)^{|\alpha|(|\beta|+1)}d\left(\L_{X_\alpha}(X_\beta\hk\gamma)\right)-d\left(X_\beta\hk d(X_\alpha\hk\gamma)\right)+(-1)^{|\alpha|+1}d\left(X_\beta\hk X_\alpha\hk d\gamma\right)&\text{by (\ref{Lie})}\\
&=(-1)^{|\alpha||\beta|}\L_{X_\alpha}d(X_\beta\hk\gamma)-\L_{X_\beta} d(X_\alpha\hk\gamma)+(-1)^{|\alpha|}d(X_\beta\hk X_\alpha\hk X_\gamma\hk\omega).&\text{by (\ref{dL}) and (\ref{Lie})}
\end{align*}
Thus \begin{equation}\label{fourth Jacobi}\begin{aligned}
(-1)^{|\gamma||\beta|}d([X_\alpha,X_\beta]\hk\gamma)&=(-1)^{|\alpha||\beta|+|\gamma||\beta|}\L_{X_\alpha}d(X_\beta\hk\gamma)-(-1)^{|\gamma||\beta|}\L_{X_\beta}d(X_\alpha\hk\gamma)+\\
&\quad{}+(-1)^{|\gamma||\beta|+|\alpha|}d(X_\beta\hk X_\alpha\hk X_\gamma\hk\omega).
\end{aligned}\end{equation}Thus, upon adding (\ref{first Jacobi}), (\ref{second Jacobi}) and (\ref{fourth Jacobi}) we are left with 

\begin{align*}
(-1)^{|\alpha||\gamma|}&\{\{\alpha,\beta\},\gamma\}+(-1)^{|\beta||\alpha|}\{\{\beta,\gamma\},\alpha\}+(-1)^{|\gamma||\beta|}\{\{\gamma,\alpha\},\beta\}  \\
&=(-1)^{|\beta||\gamma|+|\beta||\alpha|}d(X_\alpha\hk\{\gamma,\beta\})+(-1)^{|\gamma||\beta|+1}d(X_\beta\hk\{\gamma,\alpha\})\\
&\quad{}+(-1)^{|\gamma||\beta|+|\alpha|}(X_\beta\hk X_\alpha\hk X_\gamma\hk\omega)\\
&=(-1)^{|\beta||\gamma|+|\beta||\alpha|+|\beta|}d(X_\alpha\hk X_\beta\hk X_\gamma\hk\omega)+(-1)^{|\gamma||\beta|+1+|\alpha|}d(X_\beta\hk X_\alpha\hk X_\gamma\hk\omega)\\
&\quad{} +(-1)^{|\gamma||\beta|+|\alpha|}d(X_\beta\hk X_\alpha\hk X_\gamma\hk\omega)\\
&=(-1)^{|\beta||\gamma|+|\beta||\alpha|+|\beta|}d(X_\alpha\hk X_\beta\hk X_\gamma\hk\omega).
\end{align*}

\end{proof}

Summing up the results of this section we have confirmed Theorem 4.1 of \cite{dropbox}: 

\begin{proposition}
\label{graded Lie algebra of forms}
With the above grading, $(\widetilde\Omega_{\text{Ham}}(M),\{\cdot,\cdot\})$ is a graded Lie algebra.
\end{proposition}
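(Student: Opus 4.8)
The plan is to assemble the graded Lie algebra axioms from the results already proved in this section; the only genuinely new content is verifying that the bracket descends to the quotient $\widetilde\Omega_{\text{Ham}}(M)$, and recognizing that this quotient is precisely what is needed to make the Jacobi identity hold on the nose.

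First I would dispose of the structural properties that already hold at the level of forms. Lemma \ref{Poisson is Schouten} shows that $\{\alpha,\beta\}$ is again a Hamiltonian form (with $[X_\alpha,X_\beta]$ one of its Hamiltonian multivector fields), so $\{\cdot,\cdot\}$ maps $\Omega_{\text{Ham}}(M)\times\Omega_{\text{Ham}}(M)$ into $\Omega_{\text{Ham}}(M)$ and carries the degree recorded there. Bilinearity is immediate from the definition, since $X_{\alpha+\alpha'}=X_\alpha+X_{\alpha'}$ is an admissible Hamiltonian multivector field and contraction is linear. Graded skew-symmetry is exactly Proposition \ref{skew graded}. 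All three of these are identities of forms, so they pass to the quotient automatically.

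The crux is descent to $\widetilde\Omega_{\text{Ham}}(M)$, which has two parts. First, the bracket must not depend on the chosen Hamiltonian multivector fields: by Proposition \ref{kernel} any two choices differ by an element of the kernel of $\omega$, and since such an element contracts to zero against $\omega$, the value $X_\beta\hk X_\alpha\hk\omega$ is unchanged (this is the well-definedness already observed after the definition, and is the form-level counterpart of Proposition \ref{well defined}). Second, the bracket must be insensitive to altering either argument by a closed form: if $\alpha$ is closed then $X_\alpha=0$ is an admissible Hamiltonian multivector field, so $\{\alpha,\beta\}=0$, and by bilinearity replacing $\alpha$ by $\alpha+\alpha'$ with $\alpha'$ closed leaves $\{\alpha,\beta\}$ unchanged, symmetrically in the second slot. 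Hence $\{\cdot,\cdot\}$ is well defined on $\widetilde\Omega_{\text{Ham}}(M)$.

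Finally I would establish the graded Jacobi identity on the quotient. Proposition \ref{Jacobi} gives the cyclic sum $\sum_{\text{cyclic}}(-1)^{|\alpha||\gamma|}\{\alpha,\{\beta,\gamma\}\}$ equal to $(-1)^{|\beta||\gamma|+|\beta||\alpha|+|\beta|}d(X_\alpha\hk X_\beta\hk X_\gamma\hk\omega)$, an exact and therefore closed form, which is zero in $\widetilde\Omega_{\text{Ham}}(M)$. The anticipated ``obstacle'' is conceptual rather than computational: on $\Omega_{\text{Ham}}(M)$ itself the identity fails by exactly this exact term, so passing to the quotient by closed forms is forced if we want an honest graded Lie algebra. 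No new estimate or calculation is required beyond the bookkeeping of descent above, and the proposition follows.
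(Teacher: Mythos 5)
Your proof is correct and takes essentially the same approach as the paper: closure via Lemma \ref{Poisson is Schouten}, skew-symmetry via Proposition \ref{skew graded}, the Jacobi identity holding modulo the exact term of Proposition \ref{Jacobi}, and descent to the quotient by observing that a closed form admits the zero multivector field as Hamiltonian vector field (the paper phrases this as $\{\gamma,\alpha\}=(-1)^k X_\alpha\hk d\gamma=0$ for $\gamma$ closed). Your additional remarks on independence of the choice of Hamiltonian multivector field merely make explicit what the paper records immediately after the definition of the bracket.
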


\begin{proof}
The bracket is well defined on $\widetilde\Omega_{\text{Ham}}(M)$ since if $\gamma$ is closed then $\{\gamma,\alpha\}=(-1)^kX_\alpha\hk d\gamma=0$. Clearly the bracket is bilinear. Proposition \ref{skew graded} shows that the bracket is skew graded and Proposition \ref{Jacobi} shows that it satisfies the Jacobi identity.
\end{proof}

\subsection{Conserved Quantities and their Algebraic Structure}
We now turn our attention towards conserved quantities. In symplectic geometry, a conserved quantity is a $0$-form $\alpha$ that is preserved by the Hamiltonian, i.e. satisfying $\L_{X_H}\alpha=0$. A generalization of this definition to multisymplectic geometry was given in \cite{cq}; however, we add the requirement that a conserved quantity is also Hamiltonian. By adding in this requirement, we can now take the generalized Poisson bracket of two conserved quantities, as in symplectic geometry. 

We work with a fixed multi-Hamiltonian system $(M,\omega,H)$ with $\omega\in\Omega^{n+1}(M)$ and $H\in\Omega^{n-1}_{\text{Ham}}(M)$, and let $X_H$ denote the corresponding Hamiltonian vector field.

\begin{definition}
A Hamiltonian $(n-k)$-form $\alpha$ in $\Omega^{n-k}_{\text{Ham}}(M)$ is called 
\begin{itemize}
\item locally conserved if $\L_{X_H}\alpha$ is closed,
\item globally conserved if $\L_{X_H}\alpha$ is exact, 
\item strictly conserved if $\L_{X_H}\alpha=0$.
\end{itemize}

As in \cite{cq}, we denote the space of locally, globally and strictly conserved forms by $\mathcal{C}_{\text{loc}}(X_H)$, $\mathcal{C}(X_H)$ and $\mathcal{C}_{\text{str}}(X_H)$ respectively.  We will let $\widetilde{\mathcal{C}}_{\text{loc}}(X_H)$, $\widetilde{\mathcal{C}}(X_H)$ and $\widetilde{\mathcal{C}}_{\text{str}}(X_H)$ denote the conserved quantities modulo closed forms. 
Note that $\mathcal{C}_{\text{str}}(X_H)\subset\mathcal{C}(X_H)\subset\mathcal{C}_{\text{loc}}(X_H)$ and $\widetilde{\mathcal{C}}_{\text{str}}(X_H)\subset\widetilde{\mathcal{C}}(X_H)\subset\widetilde{\mathcal{C}}_{\text{loc}}(X_H)$.
\end{definition}

The next lemma is a generalization of Lemma 1.7 in \cite{cq}.

\begin{lemma}
\label{conserved interior}
Fix a Hamiltonian $(n-k)$-form $\alpha\in\Omega^{n-k}_{\text{Ham}}(M)$. If $\alpha$ is a local conserved quantity then $[X_\alpha,X_H]\hk \ \omega=0$, for some (or equivalently every) Hamiltonian multivector field $X_\alpha$ of $\alpha$. Conversely, if $[X_\alpha,X_H]\hk\omega=0$  then $\alpha$ is locally conserved.
\end{lemma}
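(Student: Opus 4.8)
The plan is to establish the single identity
\[[X_\alpha,X_H]\hk\omega = d(\L_{X_H}\alpha),\]
from which both implications and the independence of the choice of $X_\alpha$ follow at once. To obtain it, I would feed $X=X_\alpha$ (degree $k$), $Y=X_H$ (degree $1$), and $\tau=\omega$ into the expansion (\ref{interior equation}) of Proposition \ref{interior}. Three of the four resulting terms vanish: since $X_\alpha\hk\omega=-d\alpha$ we get $d(X_\alpha\hk\omega)=0$, killing the first term; $d\omega=0$ kills the term carrying $d\tau$; and $X_H\hk\omega=-dH$ gives $d(X_H\hk\omega)=0$, killing the last term. Only the middle term survives, and after substituting $X_\alpha\hk\omega=-d\alpha$ it collapses to $d(X_H\hk d\alpha)$.

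The second step is to recognize $d(X_H\hk d\alpha)$ as $d(\L_{X_H}\alpha)$. Applying the Lie derivative formula (\ref{Lie}) with the degree-one field $X_H$ gives $\L_{X_H}\alpha=d(X_H\hk\alpha)+X_H\hk d\alpha$; taking $d$ of both sides annihilates the exact term via $d^2=0$ and leaves $d(\L_{X_H}\alpha)=d(X_H\hk d\alpha)$, which proves the claimed identity.

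With the identity in hand the lemma is immediate. If $\alpha$ is locally conserved then $\L_{X_H}\alpha$ is closed, so the right-hand side vanishes and $[X_\alpha,X_H]\hk\omega=0$; conversely, $[X_\alpha,X_H]\hk\omega=0$ forces $d(\L_{X_H}\alpha)=0$, i.e. $\L_{X_H}\alpha$ is closed. Moreover, since the right-hand side $d(\L_{X_H}\alpha)$ makes no reference to $X_\alpha$, the contraction $[X_\alpha,X_H]\hk\omega$ takes the same value for every Hamiltonian multivector field of $\alpha$; this settles the ``for some (or equivalently every)'' phrasing with no extra work. If one prefers a direct argument, two choices of $X_\alpha$ differ by an element $\kappa$ of the kernel of $\omega$ by Proposition \ref{kernel}, and $[\kappa,X_H]\hk\omega=0$ follows from Proposition \ref{well defined} applied to the Hamiltonian field $X_H$, together with graded skew-symmetry of the Schouten bracket.

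The only real hazard here is the sign bookkeeping in (\ref{interior equation}): one must track the Koszul signs $(-1)^{kl+k}$ and $(-1)^{kl+k+l}$ carefully with $l=1$, and check that the surviving middle term $(-1)^{l}d(X_H\hk X_\alpha\hk\omega)$ indeed produces $+d(X_H\hk d\alpha)$ rather than its negative. Everything else is formal manipulation using $d^2=0$ and the closedness of $\omega$.
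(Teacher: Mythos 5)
Your proposal is correct and follows essentially the same route as the paper: both feed $X_\alpha$, $X_H$, and $\omega$ into equation (\ref{interior equation}), observe that three terms die because $d\alpha$, $dH$, and $\omega$ are closed, and identify the surviving term with $d(\L_{X_H}\alpha)$. The only (immaterial) difference is that the paper rewrites $-d(X_H\hk X_\alpha\hk\omega)$ via (\ref{Lie}) and (\ref{dL}) as $-\L_{X_H}(X_\alpha\hk\omega)=d\L_{X_H}\alpha$, while you substitute $X_\alpha\hk\omega=-d\alpha$ first and then invoke (\ref{Lie}) with $d^2=0$.
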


\begin{proof}
Let $X_\alpha$ be an arbitrary Hamiltonian multivector field of $\alpha$. We have that 
\begin{align*}
[X_\alpha,X_H]\hk\omega&=-X_H\hk d(X_\alpha\hk\omega)-d(X_H\hk X_\alpha\hk\omega)+\\
&\quad{}+X_\alpha\hk(d(X_H\hk\omega))+X_H\hk X_\alpha\hk d\omega &\text{by Prop \ref{interior}}\\
&=- d(X_H\hk X_\alpha\hk\omega)\\
&=-\L_{X_H}(X_\alpha\hk\omega)&\text{by (\ref{Lie})}\\
&=d\L_{X_H}\alpha&\text{by (\ref{dL}).}
\end{align*}
\end{proof}

Recall the following standard result from Hamiltonian mechanics: If $H$ is a Hamiltonian on a symplectic manifold and $f$ and $g$ are two strictly conserved quantities, i.e. $\{f,H\}=0=\{g,H\}$,then $\{f,g\}$ is strictly conserved. This is because $\L_{X_H}\{f,g\}=\{\{f,g\},H\}=0$ by the Jacobi identity.  Moreover, if $f$ and $g$ are local or global conserved quantities (meaning that their bracket with $H$ is constant) then again $\{f,g\}$ is strictly conserved by the Jacobi identity together with the fact that the Poisson bracket with a constant function vanishes. 

The next proposition generalizes these results to multisymplectic geometry. 

\begin{proposition}
\label{Poisson is strictly conserved}
The bracket of two (local, global, or strict) conserved quantities is a strictly conserved quantity. 
\end{proposition}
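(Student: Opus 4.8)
The plan is to reduce everything to the graded Jacobi identity of Proposition \ref{Jacobi} applied to the triple $\alpha,\beta,H$, exploiting the fact (recorded in the proof of Proposition \ref{graded Lie algebra of forms}) that the Poisson bracket with a closed form vanishes. Write $\alpha\in\Omega^{n-k}_{\text{Ham}}(M)$ and $\beta\in\Omega^{n-l}_{\text{Ham}}(M)$. The three notions of conservation all imply that $\L_{X_H}\alpha$ and $\L_{X_H}\beta$ are at least closed, so I would treat the local, global, and strict cases simultaneously under the single hypothesis that $\L_{X_H}\alpha$ and $\L_{X_H}\beta$ are closed.

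First I would show that $\{\alpha,H\}$ and $\{\beta,H\}$ are closed. Using the identity (\ref{bracket 1}) in the form $\{\mu,H\}=d(X_H\hk\mu)-\L_{X_H}\mu$, the form $\{\alpha,H\}$ is the sum of the exact form $d(X_H\hk\alpha)$ and the closed form $-\L_{X_H}\alpha$, hence closed; likewise for $\{\beta,H\}$, and by skew-symmetry (Proposition \ref{skew graded}) so are $\{H,\alpha\}$ and $\{H,\beta\}$. Now I apply Proposition \ref{Jacobi} with $\gamma=H$. Two of the three cyclic terms are $\{\alpha,\{\beta,H\}\}$ and $\{\beta,\{H,\alpha\}\}$; since their inner brackets are closed and the bracket with a closed form is zero, both terms vanish. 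What remains (using $|H|=2$, so all Koszul signs attached to $H$ are $+1$) is the clean relation
\[
\{H,\{\alpha,\beta\}\}=(-1)^{kl+k}\,d\bigl(X_\alpha\hk X_\beta\hk X_H\hk\omega\bigr).
\]

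Next I would convert this into a statement about $\L_{X_H}\{\alpha,\beta\}$. Applying (\ref{bracket 1}) to $\{\{\alpha,\beta\},H\}$ and using skew-symmetry together with the parity $|\{\alpha,\beta\}|=k+l$ (which makes the relevant sign $+1$), I get $\L_{X_H}\{\alpha,\beta\}=d(X_H\hk\{\alpha,\beta\})+\{H,\{\alpha,\beta\}\}$. Both summands are exact, so at this stage one already sees that $\{\alpha,\beta\}$ is globally conserved; the content of the proposition is that the two exact terms cancel exactly, upgrading global to strict conservation.

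The main obstacle, and the crux of the argument, is therefore the final sign computation showing these two exact forms are negatives of one another. For this I would rewrite $X_H\hk\{\alpha,\beta\}=(-1)^{l+1}X_H\hk X_\beta\hk X_\alpha\hk\omega$ from the definition of the bracket, then reorder the interior products into $X_\alpha\hk X_\beta\hk X_H\hk\omega$, each transposition of multivectors of degrees $a$ and $b$ contributing a sign $(-1)^{ab}$; this yields $X_H\hk\{\alpha,\beta\}=(-1)^{kl+k+1}X_\alpha\hk X_\beta\hk X_H\hk\omega$. Comparing the resulting sign $(-1)^{kl+k+1}$ with the sign $(-1)^{kl+k}$ from the Jacobi term above, the two contributions $d(X_H\hk\{\alpha,\beta\})$ and $\{H,\{\alpha,\beta\}\}$ differ precisely by a factor of $-1$ and cancel, giving $\L_{X_H}\{\alpha,\beta\}=0$. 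I expect the bookkeeping of these Koszul signs to be the only delicate point; everything else is a direct application of the identities already established, in particular (\ref{bracket 1}), Lemma \ref{Poisson is Schouten}, and Proposition \ref{Jacobi}.
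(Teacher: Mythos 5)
Your proof is correct --- the key sign computations check out ($\{H,\{\alpha,\beta\}\}=(-1)^{kl+k}d(X_\alpha\hk X_\beta\hk X_H\hk\omega)$ from the Jacobi identity with $|H|=2$, and $X_H\hk\{\alpha,\beta\}=(-1)^{kl+k+1}X_\alpha\hk X_\beta\hk X_H\hk\omega$ from reordering the contractions, so the two exact terms do cancel) --- but it takes a genuinely different route from the paper's proof. You run the symplectic argument in graded form: apply Proposition \ref{Jacobi} to the triple $(\alpha,\beta,H)$, kill the two cyclic terms whose inner brackets $\{\beta,H\}$ and $\{H,\alpha\}$ are closed (using that the bracket with a closed form vanishes in either slot), and then cancel the Jacobi defect $d(X_\alpha\hk X_\beta\hk X_H\hk\omega)$ against the term $d(X_H\hk\{\alpha,\beta\})$ supplied by (\ref{bracket 1}). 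The paper avoids the Jacobi identity entirely: it writes $\L_{X_H}\{\alpha,\beta\}=(-1)^{|\beta|}\L_{X_H}(X_\beta\hk X_\alpha\hk\omega)$, uses equation (\ref{bracket hook}) together with Lemma \ref{conserved interior} (local conservation of $\alpha$ and $\beta$ is exactly $[X_\alpha,X_H]\hk\omega=0=[X_\beta,X_H]\hk\omega$) to slide the Lie derivative past both contractions, and concludes from $\L_{X_H}\omega=0$ (Proposition \ref{preserve}). The paper's computation is shorter and immune to Koszul-sign bookkeeping, since its only inputs are two vanishing statements; yours is longer but more structural: it mirrors the classical mechanism the paper itself invokes as motivation (in symplectic geometry $\L_{X_H}\{f,g\}=\{\{f,g\},H\}=0$ by Jacobi), it shows along the way that $\{\alpha,\beta\}$ is globally conserved before any cancellation, and it makes visible that strict (rather than merely global) conservation holds precisely because the exact correction term in Proposition \ref{Jacobi} is absorbed by $d(X_H\hk\{\alpha,\beta\})$. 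One cosmetic remark: the sign $+1$ in $\L_{X_H}\{\alpha,\beta\}=d(X_H\hk\{\alpha,\beta\})+\{H,\{\alpha,\beta\}\}$ comes from $|H|=2$ being even (so $(-1)^{|\{\alpha,\beta\}||H|}=1$ always), not from the parity of $|\{\alpha,\beta\}|=k+l$, which need not be even; your conclusion is unaffected.
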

\begin{proof}
Let $\alpha\in\Omega^{n-k}_{\text{Ham}}(M)$ and $\beta\in\Omega^{n-l}_{\text{Ham}}(M)$ be any two (local, global or strict) conserved quantities. Let $X_\alpha$ and $X_\beta$ denote arbitrary Hamiltonian multivector fields corresponding to $\alpha$ and $\beta$ respectively. By definition,
\begin{align*}
\L_{X_H}\{\alpha,\beta\}&=(-1)^{|\beta|}\L_{X_H}X_\beta\hk X_\alpha\hk\omega.
\end{align*}By (\ref{bracket hook}) together with Lemma \ref{conserved interior} we see that we can commute the Lie derivative and interior product. Hence, \[\L_{X_H}\{\alpha,\beta\}=(-1)^{|\beta|}X_\alpha\hk X_\beta\hk\L_{X_H}\omega.\] The claim now follows since $\L_{X_H}\omega=0$, by Proposition \ref{preserve}. 
\del{
 We have that, 
 \begin{align*}
\L_{X_H}\{\alpha,\beta\}&=d(X_H\hk(\{\alpha,\beta\}))+ X_H\hk X_{\{\alpha,\beta\}}\hk\omega\\
&=(-1)^{|\beta||\alpha|+1}d(X_H\hk\{\beta,\alpha\})+X_H\hk X_{\{\alpha,\beta\}}\hk\omega\\
&=(-1)^{|\beta||\alpha|+1+|\alpha|}d(X_H\hk X_\alpha\hk X_\beta\hk\omega)+X_H\hk X_{\{\alpha,\beta\}}\hk\omega\\
&=(-1)^{|\beta||\alpha|+1+|\alpha|}d(X_H\hk X_\alpha\hk X_\beta\hk\omega)-\{\{\alpha,\beta\},H\}\\
&=(-1)^{|\beta||\alpha|+1+|\alpha|}d(X_H\hk X_\alpha\hk X_\beta\hk\omega)+\{H,\{\alpha,\beta\}\}
\end{align*}
where in the last line we used the graded skew symmetry of the bracket together with $|H|=2$. 
Now we notice that, by proposition \ref{Poisson is Schouten}, we have $\{\{\alpha,H\},\beta\}=(-1)^{|\beta|}X_\beta\hk[X_\alpha,X_H]\hk\omega$, so that by lemma \ref{conserved interior} \[\{\{\alpha,H\},\beta\}=0=\{\{\beta,H\},\alpha\}.\] Hence, by the graded Jacobi identity (proposition \ref{Jacobi}) together with the fact that $|H|=2$,  we obtain \[\{H,\{\alpha,\beta\}\}=(-1)^{|\alpha||\beta|+|\alpha|}d(X_H\hk X_\alpha\hk X_\beta). \] It now follows that \[\L_{X_H}\{\alpha,\beta\}=0\] as desired. 
}
\end{proof}

As a consequence, we obtain:

\begin{proposition}
The spaces $(\widetilde{\mathcal{C}}_{\text{loc}}(X_H),\{\cdot,\cdot\})$, $(\widetilde{\mathcal{C}}(X_H),\{\cdot,\cdot\})$ and $(\widetilde{\mathcal{C}}_{\text{str}}(X_H),\{\cdot,\cdot\})$ are graded Lie subalgebras of $(\widetilde\Omega_{\text{Ham}}(M),\{\cdot,\cdot\})$.
\end{proposition}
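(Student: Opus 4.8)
The plan is to verify the three defining properties of a graded Lie subalgebra: that each space is closed under the bracket, that it is a graded vector subspace, and that the bracket is well defined on the quotient. Since Proposition \ref{graded Lie algebra of forms} already establishes that $(\widetilde\Omega_{\text{Ham}}(M),\{\cdot,\cdot\})$ is a graded Lie algebra, the skew-symmetry and Jacobi identity are inherited automatically on any subspace that is closed under the bracket. So the real content reduces to two things: (i) checking that each of $\mathcal{C}_{\text{loc}}(X_H)$, $\mathcal{C}(X_H)$, $\mathcal{C}_{\text{str}}(X_H)$ is a linear subspace of $\Omega_{\text{Ham}}(M)$, and (ii) checking closure under $\{\cdot,\cdot\}$.

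For step (i), linearity is immediate from the linearity of $\L_{X_H}$, $d$, and the Hamiltonian condition: if $\alpha,\beta$ are Hamiltonian forms with $\L_{X_H}\alpha$ and $\L_{X_H}\beta$ both closed (resp.\ exact, resp.\ zero), then so is $\L_{X_H}(a\alpha+b\beta)$ for scalars $a,b$, and $a\alpha+b\beta$ is again Hamiltonian with corresponding multivector field $aX_\alpha+bX_\beta$. I would also note here that passing to the quotient by closed forms is harmless: if $\gamma$ is closed then it is trivially strictly conserved ($\L_{X_H}\gamma = d(X_H\hk\gamma)$ since $\gamma\hk d$-term vanishes), so closed forms lie in all three conserved spaces, and the quotients $\widetilde{\mathcal{C}}_\bullet(X_H)$ are genuine subspaces of $\widetilde\Omega_{\text{Ham}}(M)$.

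For step (ii), the closure under the bracket is exactly the content of Proposition \ref{Poisson is strictly conserved}: the bracket of two conserved quantities (local, global, or strict) is \emph{strictly} conserved. Since $\mathcal{C}_{\text{str}}(X_H)\subset\mathcal{C}(X_H)\subset\mathcal{C}_{\text{loc}}(X_H)$, a strictly conserved quantity lies in all three spaces, so each of the three spaces is closed under $\{\cdot,\cdot\}$ --- in fact the bracket lands in the smallest of them. Finally, the bracket is well defined on the quotients: by the computation in Proposition \ref{graded Lie algebra of forms}, $\{\gamma,\alpha\}=0$ whenever $\gamma$ is closed, so the induced bracket on $\widetilde\Omega_{\text{Ham}}(M)$ restricts to a well-defined bracket on each quotient $\widetilde{\mathcal{C}}_\bullet(X_H)$.

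**The only real obstacle** is already resolved by the earlier results, so I expect this proof to be essentially a bookkeeping assembly rather than a new calculation. The one point requiring a moment's care is confirming that $\{\alpha,\beta\}$ is itself \emph{Hamiltonian} (so that it genuinely lies in $\widetilde\Omega_{\text{Ham}}(M)$ and the subalgebra claim even makes sense) --- but this is supplied by Lemma \ref{Poisson is Schouten}, which shows $[X_\alpha,X_\beta]$ is a Hamiltonian multivector field for $\{\alpha,\beta\}$. Thus the proof is just: invoke Proposition \ref{graded Lie algebra of forms} for the ambient structure, note linearity for the subspace property, and invoke Proposition \ref{Poisson is strictly conserved} for closure.
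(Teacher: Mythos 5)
Your proposal is correct in its core and follows exactly the paper's route: the paper's own proof is precisely the two-step assembly you describe, namely closure under the bracket from Proposition \ref{Poisson is strictly conserved} and the ambient graded Lie algebra structure from Proposition \ref{graded Lie algebra of forms}; your extra bookkeeping (linearity, Hamiltonicity of $\{\alpha,\beta\}$ via Lemma \ref{Poisson is Schouten}, vanishing of the bracket against closed forms) just makes explicit what the paper leaves implicit.

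One slip should be corrected. You claim that a closed form $\gamma$ is ``trivially strictly conserved'' because $\L_{X_H}\gamma = d(X_H\hk\gamma)$. The formula is right, but it shows that $\L_{X_H}\gamma$ is \emph{exact}, not zero: closed forms are globally (hence locally) conserved, but they need not lie in $\mathcal{C}_{\text{str}}(X_H)$. So your parenthetical justification that ``closed forms lie in all three conserved spaces'' fails for the strict case. This does not damage the proposition: $\widetilde{\mathcal{C}}_{\text{str}}(X_H)$ is the quotient of $\mathcal{C}_{\text{str}}(X_H)$ by those closed forms it actually contains, and the induced map into $\widetilde\Omega_{\text{Ham}}(M)$ is injective (its kernel is exactly the class of closed strictly conserved forms, which is zero in the quotient), so $\widetilde{\mathcal{C}}_{\text{str}}(X_H)$ is still a genuine subspace and the rest of your argument goes through verbatim. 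But as written the claim is false, and since the whole point of that passage was to legitimize the quotient construction, the strict case deserves this one-line repair.
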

\begin{proof}
Proposition \ref{Poisson is strictly conserved} shows that each of these spaces is preserved by the bracket. The claim now follows from Proposition \ref{graded Lie algebra of forms}.
\end{proof}
We conclude this section by showing that the Hamiltonian forms constitute an $L_\infty$-subalgebra of the Lie $n$-algebra of observables. Moreover, restricting a homotopy co-momentum map to the Lie kernel gives an $L_\infty$-morphism into this $L_\infty$-algebra:

Let $\widehat L_\infty(M,\omega)=(\widehat L,\{l_k\})$ denote the graded vector space $\widehat L_i=\Omega^{n-1-i}_{\text{Ham}}(M)$ for $i=0,\ldots, n-1$, together with the maps $l_k$ from the Lie $n$-algebra of observables. 

\begin{theorem}
\label{L infinity subalgebra}
The space $(\widehat L,\{l_k\})$ is an $L_\infty$-subalgebra of $(L,\{l_k\})$.
\end{theorem}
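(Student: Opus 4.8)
The plan is to use the general fact that a graded subspace of an $L_\infty$-algebra on which all the structure maps restrict is automatically an $L_\infty$-subalgebra: the defining $L_\infty$ identities are multilinear relations among the $l_k$ that already hold on $L$, so once closure $l_k(\widehat L^{\otimes k}) \subseteq \widehat L$ is verified for every $k$, they hold verbatim on $\widehat L$ with no further work. Thus the entire content of the theorem is the closure of each $l_k$. First I would record that $\widehat L$ is a graded subspace of $L$, since $\widehat L_i = \Omega^{n-1-i}_{\text{Ham}}(M) \subseteq \Omega^{n-1-i}(M) = L_i$ for $1 \le i \le n-1$ and $\widehat L_0 = L_0 = \Omega^{n-1}_{\text{Ham}}(M)$.

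For $l_1$, on a positive-degree element $\alpha \in \widehat L_i = \Omega^{n-1-i}_{\text{Ham}}(M)$ with $i \ge 1$ we have $l_1(\alpha) = d\alpha \in \Omega^{n-i}(M)$, and since $d(d\alpha) = 0 = -\,0 \hk \omega$, the form $d\alpha$ is Hamiltonian (one may take its Hamiltonian multivector field to be $0$). Hence $l_1(\alpha) \in \Omega^{n-i}_{\text{Ham}}(M) = \widehat L_{i-1}$, and since $l_1$ vanishes on $\widehat L_0$, closure for $l_1$ is immediate.

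For $k > 1$, by definition $l_k$ vanishes unless all its arguments lie in degree $0$, so it suffices to treat $\alpha_1, \ldots, \alpha_k \in \widehat L_0 = \Omega^{n-1}_{\text{Ham}}(M)$, with Hamiltonian vector fields $X_{\alpha_1}, \ldots, X_{\alpha_k} \in \Gamma(TM)$. Then $l_k(\alpha_1, \ldots, \alpha_k) = \zeta(k) X_{\alpha_k} \hk \cdots \hk X_{\alpha_1} \hk \omega$ lies in $\Omega^{n+1-k}(M)$, which is the correct degree $\widehat L_{k-2}$. To show it is Hamiltonian I would apply Lemma \ref{rogers 3.7}, which rewrites $d(X_{\alpha_k} \hk \cdots \hk X_{\alpha_1} \hk \omega)$ as a signed sum of terms, each a contraction of $\omega$ by a $(k-1)$-vector field built from the $X_{\alpha_i}$ and their pairwise Schouten brackets $[X_{\alpha_i}, X_{\alpha_j}]$ (which are again vector fields). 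Collecting these into a single $Y \in \Gamma(\Lambda^{k-1}(TM))$ gives $d\bigl(l_k(\alpha_1, \ldots, \alpha_k)\bigr) = -\,Y \hk \omega$, so $l_k(\alpha_1, \ldots, \alpha_k) \in \Omega^{n+1-k}_{\text{Ham}}(M) = \widehat L_{k-2}$, as required.

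The main (and essentially the only) obstacle is the closure of the higher brackets, and the decisive input is exactly Lemma \ref{rogers 3.7}: it guarantees that the exterior derivative of an iterated contraction of $\omega$ by Hamiltonian vector fields is again an iterated contraction of $\omega$, so such contractions never leave the space of Hamiltonian forms. With this in hand the inheritance of the $L_\infty$ identities is automatic, completing the proof.
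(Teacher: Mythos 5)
Your proposal is correct and follows essentially the same approach as the paper: closure under $l_1$ because closed forms are Hamiltonian, closure under the higher $l_k$ via Lemma \ref{rogers 3.7}, with the $L_\infty$ identities inherited automatically from $L$. Your write-up simply makes explicit the details (the zero multivector field as a Hamiltonian vector field for closed forms, and the assembly of the multivector field $Y$ from the terms in Lemma \ref{rogers 3.7}) that the paper's terser proof leaves implicit.
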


\begin{proof}
We note that $l_1$ preserves $\widehat L$ since closed forms are Hamiltonian. For $k>1$, since $l_k$ vanishes on elements of positive degree we need only consider $l_k(\alpha_1,\cdots,\alpha_k)=-(-1)^{\frac{k(k+1)}{2}}X_{\alpha_k}\hk\cdots\hk X_{\alpha_1}\hk\omega$, where $\alpha_1,\ldots,\alpha_k$ are Hamiltonian $(n-1)$-forms. By Lemma \ref{rogers 3.7} we see that $l_k(\alpha_1,\ldots,\alpha_k)$ is a Hamiltonian $(n+1-k)$-form.
\end{proof}

\begin{proposition}
\label{conserved quantities are L infinity}
The spaces $\mathcal{C}(X_H)\cap \widehat L$ , $\mathcal{C}_{\text{loc}}(X_H)\cap\widehat L$ and $\mathcal{C}_{\text{str}}(X_H)\cap\widehat L$ are $L_\infty$-subalgebras of $\widehat L_\infty(M,\omega)$.
\end{proposition}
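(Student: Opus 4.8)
<br>

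The goal is to show that the three intersection spaces $\mathcal{C}(X_H)\cap\widehat{L}$, $\mathcal{C}_{\text{loc}}(X_H)\cap\widehat{L}$, and $\mathcal{C}_{\text{str}}(X_H)\cap\widehat{L}$ are $L_\infty$-subalgebras of $\widehat{L}_\infty(M,\omega)$. Since Theorem \ref{L infinity subalgebra} already establishes that $(\widehat{L},\{l_k\})$ is an $L_\infty$-subalgebra, the plan is to verify that each of the three conserved subspaces is closed under every structure map $l_k$. Being an $L_\infty$-subalgebra reduces to this closure condition, because the defining $L_\infty$-identities are inherited automatically from the ambient algebra once closure is known.

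First I would dispose of $l_1$. Recall that $l_1$ is the exterior derivative on elements of positive degree and zero on Hamiltonian $(n-1)$-forms. I need to check that $l_1$ maps each conserved subspace into itself. The key point is that differentials of conserved forms remain conserved: if $\L_{X_H}\alpha$ is closed (resp.\ exact, resp.\ zero), then by equation (\ref{dL}) we have $\L_{X_H}d\alpha = (-1)^{?}\,d\L_{X_H}\alpha$, and the exterior derivative sends closed to zero, exact to zero, and preserves each condition in the required way; moreover $d\alpha$ is automatically Hamiltonian since any closed form is Hamiltonian (with zero multivector field). So $l_1$ preserves each conserved subspace.

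The main work is the closure of $\mathcal{C}(X_H)$ under the higher brackets $l_k$ for $k\geq 2$. Here the crucial observation is the relationship between the $L_\infty$ bracket $l_k$ and the generalized Poisson bracket already studied. For Hamiltonian $(n-1)$-forms $\alpha_1,\dots,\alpha_k$, the element $l_k(\alpha_1,\dots,\alpha_k)=\zeta(k)X_{\alpha_k}\hk\cdots\hk X_{\alpha_1}\hk\omega$ is, up to sign, an iterated interior product matching the structure of the generalized Poisson bracket $\{\cdot,\cdot\}$. Indeed, I would relate $l_k$ to nested applications of the Poisson bracket (or directly compute $\L_{X_H}$ of the iterated contraction). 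The essential input is Proposition \ref{Poisson is strictly conserved}: the bracket of conserved quantities is strictly conserved. I would argue that $l_k$ of conserved forms is expressible through the Poisson bracket of conserved forms, hence is itself strictly (thus globally, thus locally) conserved. Concretely, for the global case I would apply $\L_{X_H}$ to $X_{\alpha_k}\hk\cdots\hk X_{\alpha_1}\hk\omega$, using Lemma \ref{conserved interior} to commute $\L_{X_H}$ past each interior product $X_{\alpha_i}\hk(\cdot)$ (since $[X_{\alpha_i},X_H]\hk\omega=0$ for conserved $\alpha_i$), landing on $\L_{X_H}\omega=0$ by Proposition \ref{preserve}. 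This shows $l_k(\alpha_1,\dots,\alpha_k)$ is in fact \emph{strictly} conserved, which simultaneously handles all three cases and is stronger than needed.

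The step I expect to be the main obstacle is justifying that $\L_{X_H}$ commutes with each interior product in the iterated contraction $X_{\alpha_k}\hk\cdots\hk X_{\alpha_1}\hk\omega$ when the $\alpha_i$ are merely local or global (not strict) conserved quantities. The commutation from equation (\ref{bracket hook}) introduces a term $[X_{\alpha_i},X_H]\hk(\cdots)$, and I must ensure these correction terms vanish after contraction against $\omega$. Lemma \ref{conserved interior} gives exactly $[X_{\alpha_i},X_H]\hk\omega=0$ for local (hence global and strict) conserved $\alpha_i$, so the correction terms collapse; the delicate point is bookkeeping the signs and confirming that the interior products can be rearranged so that $[X_{\alpha_i},X_H]$ always acts directly on $\omega$ rather than on a partially-contracted form. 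Once this commutation is established cleanly, the conclusion $\L_{X_H}l_k(\alpha_1,\dots,\alpha_k)=0$ follows, proving all three spaces are closed under the $l_k$ and hence are $L_\infty$-subalgebras.
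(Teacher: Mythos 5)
Your proposal is correct and takes essentially the same route as the paper's own proof: both reduce the claim to closure under the $l_k$ via Theorem \ref{L infinity subalgebra}, then apply $\L_{X_H}$ to the iterated contraction $X_{\alpha_k}\hk\cdots\hk X_{\alpha_1}\hk\omega$, commuting the Lie derivative past each interior product using equation (\ref{bracket hook}) together with Lemma \ref{conserved interior}, and finish with $\L_{X_H}\omega=0$ from Proposition \ref{preserve}, obtaining strict conservation in all three cases. The ``delicate point'' you flag does resolve cleanly, since contractions by multivector fields commute up to sign, so $[X_{\alpha_i},X_H]\hk\omega=0$ annihilates each correction term even when it arises against a partially contracted form.
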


\begin{proof}

The proof is analogous to the proof of Proposition 1.15 in \cite{cq}. Since the proof is short, we include it here. From Theorem \ref{L infinity subalgebra} we see that each of the spaces $\mathcal{C}(X_H)\cap \widehat L$ , $\mathcal{C}_{\text{loc}}(X_H)\cap\widehat L$ and $\mathcal{C}_{\text{str}}(X_H)\cap\widehat L$ are closed under each $l_k$. It remains to show that for Hamiltonian $(n-1)$-forms $\alpha_1,\cdots,\alpha_k$ which are (locally, globally, strictly) conserved, that $l_k(\alpha_1,\cdots,\alpha_k)$ is (locally, globally, strictly) conserved. Indeed, \[\L_{X_H}l_k(\alpha_1,\cdots,\alpha_k)=\L_{X_H}X_{\alpha_k}\hk\cdots\hk X_{\alpha_1}\hk\omega.\] Using equation (\ref{bracket hook}) together with Lemma \ref{conserved interior} we see that we can commute the Lie derivative and interior product. The claim then follows since $\L_{X_H}\omega=0$.
\end{proof}

\subsection{Continuous Symmetries and their Algebraic Structure}

Fix a multi-Hamiltonian system $(M,\omega,H)$. Our motivation for the definition of a continuous symmetry comes from Hamiltonian mechanics; we directly generalize the definition. As is the case with conserved quantities, we define three types of continuous symmetry. 

\begin{definition}
We say that a Hamiltonian multivector field $X\in\X_{\text{Ham}}(M)$ is 

\begin{itemize}
\item a local continuous symmetry if $\L_XH$ is closed,
\item a global continuous symmetry if $\L_XH$ is exact,
\item a strict continuous symmetry if $\L_XH=0$.
\end{itemize}
Note that a continuous symmetry automatically preserves $\omega$ by Proposition \ref{preserve}. We denote the space of local, global, and strict continuous symmetries by $\mathcal{S}_{\text{loc}}(H)$, $\mathcal{S}(H)$, and $\mathcal{S}_{\text{str}}(H)$ respectively. Moreover, we let $\widetilde{\mathcal{S}}_{\text{loc}}(H)$, $\widetilde{\mathcal{S}}(H)$, and $\widetilde{\mathcal{S}}_{\text{str}}(H)$ denote the quotient by the kernel of $\omega$. 

We will say that a multivector field $X$ is a weak (local, global, strict) continuous symmetry if $\L_X\omega=0$ and $\L_XH$ is closed, exact, or zero respectively. That is, a weak continuous symmetry is not necessarily Hamiltonian.
\end{definition}

\del{

\begin{remark}
Motivation for calling these things symmetries comes from the fact that with this definition of symmetry sets up a correspondence with our notion of conserved quantity. 
\end{remark}

For a Lie group acting on a symplectic Hamiltonian system, we call each infinitesimal generator a symmetry. For example, on the standard phase space $(T^\ast\R^3,\omega)$ with Hamiltonian $H=K-U$, we would call $SO(3)$ a symmetry group since the flow of each infinitesimal generator preserves $H$. In the multisymplectic setup, $H$ is replaced with a higher degree form and the infinitesimal generators are replaced with the wedge products of infinitesimal generators.   

\begin{example}
Consider $\R^3$ with $2$-plectic form $\mu=\text{vol}$. Let $H=E$ be the electromagnetic field. Want charge to be conserved so find that symmetry.  We will see later that under Noether's theorem, the corresponding conserved quantity is charge.
\end{example}

\begin{remark} For the case of the multisymplectic manifold $(\R^n,\text{vol})$,  in sectino ?? we give a geometric interpretation of a symmetry. In particular, we will see that the symmetries of degree $k$ are in one-to-one correspondence to $k$-Lagrangian submanifolds of $\R^{2k}$.
\end{remark}
}

\del{
Proposition \ref{kernel} showed that any two Hamiltonian vector fields corresponding to a Hamiltonian form differ by something in the kernel of $\omega$. Conversely, we have that 

\begin{proposition}
If two symmetries differ by something in the kernel of $\omega$, then any of the conserved quantities they generate differ by a closed form. 
\end{proposition}

\begin{proof}
Suppose that $X$ and $Y$ are (local, global or strict) symmetries with arbitrary corresponding conserved quantities $\alpha$ and $\beta$ respectively.  Suppose that $X-Y$ is in the kernel of $\omega$. Then \[d\alpha=X\hk \omega=Y\hk\omega=d\beta\] and so $\alpha$ and $\beta$ differ by a closed term.
\end{proof}

}

\begin{proposition}
\label{graded Lie algebra of vector fields}
We have that $(\X_{\text{Ham}}(M),[\cdot,\cdot])$ is a graded Lie subalgebra of  $(\Gamma(\Lambda^\bullet(TM)),[\cdot,\cdot])$. 
\end{proposition}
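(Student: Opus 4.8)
The plan is to verify the two defining conditions for $(\X_{\text{Ham}}(M),[\cdot,\cdot])$ to be a graded Lie subalgebra of the Gerstenhaber algebra $(\Gamma(\Lambda^\bullet(TM)),[\cdot,\cdot])$: first that $\X_{\text{Ham}}(M)$ is closed under the Schouten bracket, and second that the graded skew-symmetry and graded Jacobi identity are inherited from the ambient algebra. Since the bracket, skew-symmetry, and Jacobi identity are already established for all of $\Gamma(\Lambda^\bullet(TM))$ by the properties of the Schouten bracket recalled earlier, the only genuine content is the closure statement: given $X\in\X^k_{\text{Ham}}(M)$ and $Y\in\X^l_{\text{Ham}}(M)$, I must show $[X,Y]\in\X^{k+l-1}_{\text{Ham}}(M)$, i.e.\ that $[X,Y]\hk\omega$ is exact.

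First I would recall the definitions: $X\hk\omega$ and $Y\hk\omega$ are exact, say there exist forms $\alpha,\beta$ with $X\hk\omega=-d\alpha$ and $Y\hk\omega=-d\beta$, so $X$ and $Y$ are Hamiltonian multivector fields for the Hamiltonian forms $\alpha$ and $\beta$. The key observation is that this is exactly the setting of Lemma~\ref{Poisson is Schouten}, which shows $[X_\alpha,X_\beta]$ is a Hamiltonian vector field for the Poisson bracket $\{\alpha,\beta\}$; its proof computes directly, via equation~(\ref{interior equation}) together with $d\omega=0$, that
\[
[X,Y]\hk\omega=(-1)^l d(Y\hk X\hk\omega).
\]
This displays $[X,Y]\hk\omega$ as an exact form, which is precisely the condition for $[X,Y]\in\X_{\text{Ham}}(M)$. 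The degree bookkeeping then confirms $[X,Y]$ lands in $\X^{k+l-1}_{\text{Ham}}(M)$, consistent with the grading $\deg[X,Y]=k+l-1$ from the Schouten bracket.

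Having secured closure, I would invoke the Proposition recalling the properties of the Schouten bracket (which states it makes $\Lambda^\bullet V$ a Gerstenhaber algebra) to conclude that graded skew-symmetry $[X,Y]=-(-1)^{(k+1)(l+1)}[Y,X]$ and the graded Jacobi identity hold automatically on the subspace $\X_{\text{Ham}}(M)$, since they hold on all of $\Gamma(\Lambda^\bullet(TM))$ and the subspace is closed under the bracket. I expect the closure step to be the only substantive one, and it is essentially immediate from the computation already carried out in Lemma~\ref{Poisson is Schouten}; the main point to get right is simply matching the degree shift and citing that the ambient algebraic identities restrict. No obstacle of real difficulty is anticipated here, which is why the statement is recorded as a proposition rather than proved at length.
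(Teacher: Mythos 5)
Your proposal is correct and follows essentially the same route as the paper: the paper's one-line proof applies equation~(\ref{interior equation}) directly to $\omega$, using exactness of $X\hk\omega$, $Y\hk\omega$ and $d\omega=0$ to get $[X,Y]\hk\omega=(-1)^l d(Y\hk X\hk\omega)$, which is exactly the computation you extract from Lemma~\ref{Poisson is Schouten}. Your additional remarks on the grading and on skew-symmetry and the Jacobi identity being inherited from the ambient Gerstenhaber algebra are correct and left implicit in the paper.
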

\begin{proof}
By equation (\ref{interior equation}) we see that $[X,Y]\hk\omega=(-1)^ld(X\hk Y\hk\omega)$. Hence the space of Hamiltonian multivector fields is closed under the Schouten bracket. 
\end{proof}

\begin{proposition}
\label{symmetry super algebra}
The spaces $\mathcal{S}_{\text{loc}}(H)$, $\mathcal{S}(H)$, and $\mathcal{S}_{\text{str}}(H)$ are graded Lie subalgebras of $(\X_{\text{Ham}}(M),[\cdot,\cdot])$.
\end{proposition}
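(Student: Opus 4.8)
The plan is to show that each of the three spaces is closed under the Schouten bracket; since Proposition \ref{graded Lie algebra of vector fields} already establishes that $(\X_{\text{Ham}}(M),[\cdot,\cdot])$ is a graded Lie subalgebra of $(\Gamma(\Lambda^\bullet(TM)),[\cdot,\cdot])$, this is all that remains. Concretely, given two continuous symmetries $X\in\Gamma(\Lambda^k(TM))$ and $Y\in\Gamma(\Lambda^l(TM))$ of the same type, I must verify that $\L_{[X,Y]}H$ is again closed, exact, or zero. The whole argument rests on equation (\ref{L bracket}), which gives
\[\L_{[X,Y]}H=(-1)^{(k+1)(l+1)}\L_X\L_Y H-\L_Y\L_X H,\]
reducing everything to the two iterated Lie derivatives $\L_X\L_Y H$ and $\L_Y\L_X H$.

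The strict and global cases are immediate from this formula. If $\L_X H=0=\L_Y H$, then both terms on the right vanish and $\L_{[X,Y]}H=0$, so $[X,Y]$ is strict. If instead $\L_X H=d\mu$ and $\L_Y H=d\nu$ are exact, then I would rewrite $\L_Y\L_X H=\L_Y\, d\mu$ and $\L_X\L_Y H=\L_X\, d\nu$ and invoke equation (\ref{dL}) in the form $\L_Z\, d\tau=(-1)^{m+1}d\L_Z\tau$ (for $Z$ of degree $m$) to see that each is exact; hence their combination $\L_{[X,Y]}H$ is exact and $[X,Y]$ is global.

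For the local case I would again use equation (\ref{dL}): applying $d$ to $\L_X\L_Y H$ and writing $\sigma=\L_Y H$ gives $d\L_X\sigma=(-1)^{k+1}\L_X\, d\sigma=(-1)^{k+1}\L_X\, d\L_Y H$. Since $\L_Y H$ is assumed closed, $d\L_Y H=0$ and this vanishes; the same computation shows $d\L_Y\L_X H=0$. Therefore $d\L_{[X,Y]}H=0$, i.e. $\L_{[X,Y]}H$ is closed and $[X,Y]$ is a local continuous symmetry.

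There is no genuine obstacle here: the entire proof is a mechanical application of the identities in Proposition \ref{properties} to the single form $H$. I would emphasize that the precise signs appearing in (\ref{L bracket}) and (\ref{dL}) play no role whatsoever, because the three defining conditions (closed, exact, zero) each cut out a linear subspace invariant under multiplication by $\pm1$; thus the only content is that the Lie derivative along a Schouten bracket is built from iterated ordinary Lie derivatives, which manifestly preserve the properties of being closed, exact, or zero.
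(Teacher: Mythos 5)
Your proof is correct and follows exactly the paper's own route: the paper's proof is a one-line appeal to equations (\ref{dL}) and (\ref{L bracket}), and your argument is simply the careful spelling-out of that appeal (bracket formula to reduce to iterated Lie derivatives, then (\ref{dL}) to preserve closedness/exactness), together with the explicit citation of Proposition \ref{graded Lie algebra of vector fields} for membership in $\X_{\text{Ham}}(M)$, which the paper leaves implicit.
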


\begin{proof}
We see that each of $\mathcal{S}_{\text{loc}}(H)$, $\mathcal{S}(H)$, and $\mathcal{S}_{\text{str}}(H)$ are closed under the Schouten bracket directly from equations (\ref{dL}) and (\ref{L bracket}).
\end{proof}
\del{
The next example shows that none of $\mathcal{S}_{\text{loc}}(H), \mathcal{S}(H)$ or $\mathcal{S}_{\text{str}}(H)$ are necessarily closed under the wedge product.
\begin{example}
To Do.
\end{example}
}
The next lemma generalizes Lemma 2.9 (ii) of \cite{cq}.
\begin{lemma}
\label{symmetry interior}
Let $Y\in\Gamma(\Lambda^k(TM))$. If $Y$ is a local symmetry, then $[Y,X_H] \hk\  \omega=0$. Conversely, if $[Y,X_H]\hk\ \omega=0$ and $\L_Y\omega=0$, then $Y$ is a local symmetry.
\end{lemma}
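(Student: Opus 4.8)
The plan is to reduce everything to a single computation of $[Y,X_H]\hk\omega$ using the interior-product formula for the Schouten bracket (Proposition \ref{interior}, equation (\ref{interior equation})), just as Lemma \ref{conserved interior} reduced the conserved-quantity statement to the analogous identity. The key observation is that a local symmetry $Y$ is, by definition, a Hamiltonian multivector field with $\L_Y\omega=0$ and $\L_YH$ closed; the Hamiltonian condition means $Y\hk\omega$ is exact, say $Y\hk\omega=-d\alpha$, while $X_H$ satisfies $X_H\hk\omega=dH$. The strategy is to expand $[Y,X_H]\hk\omega$ via (\ref{interior equation}), discard the terms that vanish because $d\omega=0$ (since $\omega$ is closed) and because $Y\hk\omega$ and $X_H\hk\omega$ are exact (so their further $d$'s vanish), and identify the surviving term as $d\L_Y H$ up to sign.

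First I would write out (\ref{interior equation}) with $X=Y$ (degree $k$) and $Y=X_H$ (degree $1$), applied to $\tau=\omega$:
\begin{align*}
[Y,X_H]\hk\omega&=-X_H\hk d(Y\hk\omega)+(-1)\,d(X_H\hk Y\hk\omega)\\
&\quad{}+(-1)^{k+k}Y\hk X_H\hk d\omega-(-1)^{k+k+1}Y\hk d(X_H\hk\omega).
\end{align*}
Now $d\omega=0$ kills the third term. Since $Y$ is Hamiltonian, $Y\hk\omega$ is exact, hence $d(Y\hk\omega)=0$, killing the first term; similarly $X_H\hk\omega=dH$ is exact so $d(X_H\hk\omega)=0$, killing the last term. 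This leaves $[Y,X_H]\hk\omega=-d(X_H\hk Y\hk\omega)$. I would then recognize, via the Lie-derivative formula (\ref{Lie}) and the closedness of $d\omega$, that $-d(X_H\hk Y\hk\omega)=-\L_{X_H}(Y\hk\omega)$, and then use (\ref{dL}) together with $Y\hk\omega=-d\alpha$ to rewrite this in terms of $\L_Y H$. The cleanest route, mirroring Lemma \ref{conserved interior}, is to show $[Y,X_H]\hk\omega=\pm\, d\L_Y H$, so that $\L_Y H$ closed is equivalent to $[Y,X_H]\hk\omega=0$.

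For the converse, I would note that the hypotheses are exactly $\L_Y\omega=0$ together with $[Y,X_H]\hk\omega=0$; running the same chain of identities backward gives $d\L_Y H=0$, i.e. $\L_Y H$ is closed, which together with $\L_Y\omega=0$ is precisely the definition of $Y$ being a local symmetry (the Hamiltonian requirement on $Y$ being part of the standing hypothesis $Y\in\X_{\text{Ham}}(M)$, or else derivable from $\L_Y\omega=0$ via $d(Y\hk\omega)=\L_Y\omega=0$ showing $Y\hk\omega$ closed). The phrase ``for some (or equivalently every)'' in the conserved-quantity lemma suggests I should also remark that the statement is independent of which Hamiltonian form $\alpha$ represents $Y$; this follows from Proposition \ref{well defined}, since two such forms differ by a closed form and closed forms contribute nothing under $d$. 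The main obstacle will be bookkeeping the signs correctly through (\ref{interior equation}), (\ref{Lie}), and (\ref{dL}) for general degree $k$, and being careful that the identity $[Y,X_H]\hk\omega=\pm d\L_Y H$ carries the right sign convention so that both directions of the equivalence read off cleanly; the underlying geometric content is immediate once the exact and closed terms are cancelled.
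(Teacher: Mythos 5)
Your argument is correct, and it lands on exactly the identity the paper proves, namely $[Y,X_H]\hk\omega=-d\L_YH$, but your route is genuinely different from the paper's. The paper's proof never invokes Proposition \ref{interior}: it applies equation (\ref{bracket hook}) once, $[Y,X_H]\hk\omega=(-1)^{k+1}\L_Y(X_H\hk\omega)-X_H\hk\L_Y\omega$, kills the second term using $\L_Y\omega=0$, substitutes $X_H\hk\omega=-dH$, and converts $(-1)^{k}\L_YdH$ into $-d\L_YH$ by (\ref{dL}) --- three lines, with no cancellation bookkeeping. Your proof instead mirrors the paper's proof of the companion Lemma \ref{conserved interior}, which is the one that uses the four-term expansion (\ref{interior equation}); after discarding three terms you must still recombine the survivor $-d(X_H\hk Y\hk\omega)$ into $-d\L_YH$, which costs the commutation $X_H\hk Y\hk\omega=(-1)^{k}Y\hk X_H\hk\omega$ plus the definition (\ref{Lie}) of $\L_YH$ applied with $dH=-X_H\hk\omega$. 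Note that this last substitution is the one you actually need: your suggested use of ``$Y\hk\omega=-d\alpha$'' at that step would instead produce $[Y,X_H]\hk\omega=d\L_{X_H}\alpha$, which is the conserved-quantity identity of Lemma \ref{conserved interior}, not a statement about $\L_YH$; it is recoverable, but it points at the wrong lemma. The two approaches are equivalent in substance --- the paper itself remarks that (\ref{interior equation}) follows from (\ref{Lie}) and (\ref{bracket hook}) --- so the paper's choice buys brevity while yours buys uniformity with Lemma \ref{conserved interior}. Three minor points: in this paper's convention $X_H\hk\omega=-dH$, not $dH$ (immaterial, since you only use exactness); the remark about Proposition \ref{well defined} and choices of Hamiltonian form is unnecessary here, since no such choice appears in the statement and $X_H$ is unique because $\omega$ is $n$-plectic; and your caveat that $\L_Y\omega=0$ yields only closedness, not exactness, of $Y\hk\omega$ --- so the converse strictly produces a weak local symmetry --- is well taken, as the paper's own statement and proof gloss over the same point.
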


\begin{proof}
We have that 
\begin{align*}[Y,X_H]\hk\omega&=(-1)^{k+1}\L_Y(X_H\hk\omega)-X_H\hk\L_Y\omega&\text{by (\ref{bracket hook})}\\
&=(-1)^{k}\L_YdH&\text{since $\L_Y\omega=0$ and $X_H\hk\omega=-dH$}\\
&=-d\L_YH&\text{by (\ref{dL}).}
\end{align*}
\end{proof}

Recall that for a group $G$ acting on a manifold $M$ we had defined in equation $(\ref{S_k})$ the set $S_k:=\{V_p \ ; \ p\in\Rho_{\g,k}\}$. Proposition \ref{infinitesimal generator of Schouten} showed that $S=\oplus S_k$ was a graded Lie algebra.  We now get the following.

\begin{proposition}
\label{symmetries are L infinity}The spaces  $\mathcal{S}_{\text{loc}}(H)\cap S$, $\mathcal{S}(H)\cap S$, and $\mathcal{S}_{\text{str}}(H)\cap S$ are graded Lie subalgebras of $S$. 
\end{proposition}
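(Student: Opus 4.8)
The plan is to exploit the fact that all the relevant spaces live inside the single ambient graded Lie algebra $(\Gamma(\Lambda^\bullet(TM)),[\cdot,\cdot])$, so that the proposition reduces to the elementary observation that the intersection of two graded Lie subalgebras is again a graded Lie subalgebra. In particular, no new computation involving the Schouten bracket or $\L_{X_H}$ is needed; everything is assembled from propositions already established.

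First I would record that each of $\mathcal{S}_{\text{loc}}(H)$, $\mathcal{S}(H)$, and $\mathcal{S}_{\text{str}}(H)$ is a graded Lie subalgebra of $(\Gamma(\Lambda^\bullet(TM)),[\cdot,\cdot])$. By Proposition \ref{symmetry super algebra} each is a graded Lie subalgebra of $\X_{\text{Ham}}(M)$, and by Proposition \ref{graded Lie algebra of vector fields} the latter is itself a graded Lie subalgebra of $\Gamma(\Lambda^\bullet(TM))$; since the Schouten bracket is the same throughout, chaining the two inclusions gives the claim. Next, by Proposition \ref{infinitesimal generator of Schouten}, $(S,[\cdot,\cdot])$ is a graded Lie algebra under the Schouten bracket, and as $S\subseteq\Gamma(\Lambda^\bullet(TM))$ with that same bracket, $S$ too is a graded Lie subalgebra of the ambient algebra. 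I would also note that the gradings are compatible: an element $V_p$ with $p\in\Rho_{\g,k}$ is a $k$-vector field, so $S_k$ sits in multivector degree $k$, matching the grading of $\X^k_{\text{Ham}}(M)$.

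With both $\mathcal{S}_\bullet(H)$ and $S$ identified as graded Lie subalgebras of a common graded Lie algebra, I would finish by verifying closure of the intersection directly. Given homogeneous $X,Y\in\mathcal{S}_{\text{loc}}(H)\cap S$, the bracket $[X,Y]$ lies in $\mathcal{S}_{\text{loc}}(H)$ because that space is closed under $[\cdot,\cdot]$, and lies in $S$ because $S$ is closed under $[\cdot,\cdot]$; hence $[X,Y]\in\mathcal{S}_{\text{loc}}(H)\cap S$. Since this intersection is manifestly a graded vector subspace of $S$, it is a graded Lie subalgebra of $S$. The identical argument applies verbatim with $\mathcal{S}(H)$ and $\mathcal{S}_{\text{str}}(H)$ in place of $\mathcal{S}_{\text{loc}}(H)$.

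There is essentially no obstacle beyond bookkeeping. The only point requiring care is that all four subalgebras are measured against the same bracket and the same grading on $\Gamma(\Lambda^\bullet(TM))$, which the cited propositions guarantee; once that is in place, the statement is immediate from the stability of the subalgebra property under intersection.
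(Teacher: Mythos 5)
Your proposal is correct and follows essentially the same route as the paper: the paper's proof likewise combines Proposition \ref{symmetry super algebra} (closure of the symmetry spaces under the Schouten bracket) with Proposition \ref{infinitesimal generator of Schouten} (closure of $S$ under the Schouten bracket) to conclude that the intersections are graded Lie subalgebras. Your version merely spells out the intersection-of-subalgebras bookkeeping that the paper leaves implicit.
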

\begin{proof}
By Proposition \ref{symmetry super algebra} we have that the spaces of symmetries are preserved by the Schouten bracket. The claim now follows by Proposition \ref{infinitesimal generator of Schouten}.
\end{proof}

\del{

\begin{corollary}
If $Y$ is a (local,global, strict) symmetry, then $Y\hk\omega$ is a strict conserved quantity.
\end{corollary}

\begin{proof}e roles of $Y$ and $X_H$ in the above lemma, we have that \[[X_H,Y]\hk\omega=\L_{X_H}(Y\hk\omega).\]
\end{proof}

}
 \section{Noether's Theorem in Multisymplectic Geometry}
In this section we show how Noether's theorem extends from symplectic to multisymplectic geometry. To see this generalization explicitly, we first recall how Noether's theorem works in symplectic geometry.

\subsection{Noether's Theorem in Symplectic Geometry}

Let $(M,\omega,H)$ be a Hamiltonian system. That is $(M,\omega)$ is symplectic and $H$ is in $C^\infty(M)$. Noether's theorem gives a correspondence between symmetries and conserved quantities. If $f\in C^\infty(M)$ is a (local, global) conserved quantity then $X_f$ is a (local, global) continuous symmetry. Conversely, if a vector field $X_f$ is a (local, global) continuous symmetry, then $f$ is a (local, global) conserved quantity. Note that in the symplectic case, local and strict symmetries and conserved quantities are the same thing.

If $X$ is only a weak (local, global) continuous symmetry, then $\L_X\omega=0$ so that by the Cartan formula  around each point there is a neighbourhood $U$ and a function $f\in C^\infty(U)$ such that $X=X_f$ on $U$.  This function $f$ is a (local, global) conserved quantity in the Hamiltonian system $(U,\omega|_U,H|_U)$.

If we only consider the symmetries and conserved quantities coming from a co-momentum map $\mu:\g\to C^\infty(M)$ then, under the assumption of an $H$-preserving group action, each symmetry $\xi$ has corresponding global conserved quantity $\mu(\xi)$ and vice versa. 

The rest of this subsection formalizes this, and the following sections will generalize it to multi-symplectic geometry.

Recall that an equivariant co-momentum map gives a Lie algebra morphism between $(\g,[\cdot,\cdot])$ and $(C^\infty(M),\{\cdot,\cdot\})$.

\begin{proposition}
\label{difference}
Let $\mu:\g\to C^\infty(M)$ be a momentum map. For $\xi,\eta\in\g$ we have that $\mu([\xi,\eta])=\{\mu(\xi),\mu(\eta)\}+\text{constant}$. If the moment map is equivariant then  $\mu([\xi,\eta])=\{\mu(\xi),\mu(\eta)\}$.
\end{proposition}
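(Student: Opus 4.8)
The plan is to compare the two functions $\mu([\xi,\eta])$ and $\{\mu(\xi),\mu(\eta)\}$ by showing they have the same exterior derivative, so that on a connected $M$ they can differ only by an additive constant, and then to invoke equivariance to force that constant to vanish. The key observation to set up is that the defining relation for the momentum map says precisely that $V_\xi$ is a Hamiltonian vector field for $\mu(\xi)$; that is, in the symplectic ($1$-plectic) case $V_\xi = X_{\mu(\xi)}$. This is what lets the machinery of Section 4, in particular Lemma \ref{Poisson is Schouten}, be brought to bear.

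First I would establish the statement up to a constant. By Lemma \ref{Poisson is Schouten}, the Schouten bracket (here just the ordinary Lie bracket of vector fields) $[X_{\mu(\xi)},X_{\mu(\eta)}] = [V_\xi,V_\eta]$ is a Hamiltonian vector field for $\{\mu(\xi),\mu(\eta)\}$, so $\{\mu(\xi),\mu(\eta)\}$ and the bracket $[V_\xi,V_\eta]$ determine one another through contraction with $\omega$. I would then feed in the standard anti-homomorphism property of infinitesimal generators, $[V_\xi,V_\eta] = -V_{[\xi,\eta]}$, which was already used in the proof of Proposition \ref{infinitesimal generator of Schouten}. Since $V_{[\xi,\eta]}$ is by definition a Hamiltonian vector field for $\mu([\xi,\eta])$, this identifies the Hamiltonian vector fields of $\{\mu(\xi),\mu(\eta)\}$ and of $\mu([\xi,\eta])$, so the two functions have equal differentials. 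A closed $0$-form on a connected manifold is constant, giving $\mu([\xi,\eta]) = \{\mu(\xi),\mu(\eta)\} + \mathrm{constant}$ and proving the first assertion.

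For the equivariant case I would instead differentiate the equivariance relation directly, which has the advantage of pinning down the constant rather than merely controlling the differential. Writing equivariance in the form relating $\mu(\mathrm{Ad}_g\,\xi)$ to the pullback $\Phi_g^\ast\mu(\xi)$ and differentiating at $g=e$ in the direction $\eta$ produces an infinitesimal identity expressing $\mu([\xi,\eta])$ as a directional derivative $V_\eta(\mu(\xi))$. Applying the defining relation $d\mu(\xi) = V_\xi\hk\omega$ once more rewrites $V_\eta(\mu(\xi)) = d\mu(\xi)(V_\eta)$ as a contraction of $\omega$ against $V_\xi$ and $V_\eta$, which is exactly the value $\{\mu(\xi),\mu(\eta)\}$ produced by the definition of the generalized Poisson bracket in the symplectic case. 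Reconciling the two expressions yields $\mu([\xi,\eta]) = \{\mu(\xi),\mu(\eta)\}$ with no leftover constant.

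The individual computations are routine; the single point demanding genuine care is sign bookkeeping. One must track the anti-homomorphism $[V_\xi,V_\eta]=-V_{[\xi,\eta]}$, the sign built into the definition of $\{\cdot,\cdot\}$, and the chosen sign convention in the defining relation for $\mu$, since these jointly fix the sign of the final identity. In particular, the main obstacle is the last step of the equivariant argument: converting the group-level equivariance (adjoint action on $\g$ versus pullback action on $C^\infty(M)$) into the infinitesimal Poisson relation, where an incorrect handling of $\Phi_g^\ast$ versus $\Phi_{g^{-1}}^\ast$ flips the sign and so must be matched consistently with the convention under which $V_\xi = X_{\mu(\xi)}$.
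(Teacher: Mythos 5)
The paper offers no argument for this proposition at all: its ``proof'' is the citation ``See Theorem 4.2.8 of \cite{Marsden}.'' So your self-contained argument is necessarily a different route, and in outline it is the standard textbook one (compare differentials via Hamiltonian vector fields, then use equivariance to kill the constant), built on exactly the right ingredients: $V_\xi=X_{\mu(\xi)}$, Lemma \ref{Poisson is Schouten}, and the anti-homomorphism property of infinitesimal generators.

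The genuine gap sits precisely at the step you set aside as ``sign bookkeeping,'' and within this paper's conventions it cannot be repaired by more careful bookkeeping. Carry your own chain through: Lemma \ref{Poisson is Schouten} gives $X_{\{\mu(\xi),\mu(\eta)\}}=[X_{\mu(\xi)},X_{\mu(\eta)}]=[V_\xi,V_\eta]$, and then $[V_\xi,V_\eta]=-V_{[\xi,\eta]}=-X_{\mu([\xi,\eta])}$. The two Hamiltonian vector fields are \emph{negatives} of one another, not equal, so Part 1 of your argument actually yields $d\mu([\xi,\eta])=-d\{\mu(\xi),\mu(\eta)\}$, i.e.\ $\mu([\xi,\eta])=-\{\mu(\xi),\mu(\eta)\}+\text{const}$. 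The obstruction is structural: with the paper's conventions $X_h\hk\omega=-dh$ and $\{f,g\}=\omega(X_f,X_g)$, the map $f\mapsto X_f$ is a bracket \emph{homomorphism} (that is the content of Lemma \ref{Poisson is Schouten}), while $\xi\mapsto V_\xi$ reverses brackets; since $\mu$ intertwines them, it must reverse brackets up to constants. You can falsify the ``$+$'' version inside the paper's own results: for the lifted action on the phase space, $f_1(\xi)=P(V_\xi)$, so equation (\ref{classical 1}) gives $\{f_1(\xi),f_1(\eta)\}=P([V_\xi,V_\eta])=-f_1([\xi,\eta])$. The ``$+$'' in the proposition is Abraham--Marsden's statement under \emph{their} convention $X_h\hk\omega=+dh$, where $f\mapsto X_f$ is an anti-homomorphism and the two reversals cancel; the paper itself flags this convention clash in the remark following equation (\ref{classical 3}).

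The same issue makes your equivariant case inconsistent with your Part 1. There you write the defining relation as $d\mu(\xi)=V_\xi\hk\omega$, whereas Part 1 used $V_\xi=X_{\mu(\xi)}$, which under the paper's convention means $d\mu(\xi)=-V_\xi\hk\omega$; and you adopt the equivariance convention $\mu(\mathrm{Ad}_g\xi)=\Phi_g^\ast\mu(\xi)$, which differentiates to the ``$+$'' identity, contradicting what Part 1 (done correctly) gives for \emph{every} momentum map, equivariant or not. The equivariance that actual examples satisfy under these conventions is $\mu(\mathrm{Ad}_g\xi)=\Phi_{g^{-1}}^\ast\mu(\xi)$, and differentiating it lands on the ``$-$'' identity, consistent with Part 1. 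In short: your argument, with signs carried honestly, proves $\mu([\xi,\eta])=-\{\mu(\xi),\mu(\eta)\}+\text{const}$, with equality in the equivariant case; to obtain the proposition as literally stated you must switch to Abraham--Marsden's sign for Hamiltonian vector fields (or define $V_\xi$ using $\exp(-t\xi)$), and your write-up should say which convention it is working in.
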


\begin{proof}
See Theorem 4.2.8 of \cite{Marsden}.
\end{proof}

\begin{remark}
The constant in the above proposition actually has a specific form. It is given by a specific Lie-algebra cocycle. It turns out that this cocycle has a generalization to multisymplectic geometry, a topic being explored by the author in \cite{future}.
\end{remark}

\del{

First note that if two conserved quantities differ by a constant they give the same symmetry. Moreover if two symmetries differ by something in the kernel of $\omega$ then they give the same conserved quantity. These two results are trivial, but we state them as propositions for reference when we generalize to the multisymplectic case.

\begin{proposition}
\label{difference of cq}
If $f,g\in C^\infty(M)$ are conserved quantities and $f=g+c$ then $X_f=X_g$. 
\end{proposition}

\begin{proposition}
\label{difference of cs}
If $X$ and $Y$ are symmetries whose difference is in the kernel of $\omega$, then they generate conserved quantities that differ by a constant.
\end{proposition}
}
As stated above, it is clear that in the symplectic case $\mathcal{C}_{loc}(X_H)=\mathcal{C}_{str}(X_H)$ and $\mathcal{S}_{loc}(H)=\mathcal{S}_{str}(H)$. It is easily verified that the map $\alpha\mapsto X_\alpha$ is a Lie algebra morphism from  $(\mathcal{C}(X_H),\{\cdot,\cdot\})$ to $(\mathcal{S}(H),[\cdot,\cdot])$ and from $(\mathcal{C}_{loc}(X_H),\{\cdot,\cdot\})$ to $(\mathcal{S}_{loc}(H),[\cdot,\cdot])$ . However, under the quotients this map turns into a Lie algebra isomorphism. 

\begin{proposition}
\label{graded Lie iso}
The map $\alpha \mapsto X_\alpha$ is a Lie algebra isomorphism from $(\widetilde{C}(X_H),\{\cdot,\cdot\})$ to $(\widetilde{S}(H),[\cdot,\cdot])$ and $(\widetilde{\mathcal{C}}_{loc}(X_H),\{\cdot,\cdot\})$ to $(\widetilde{\mathcal{S}}_{loc}(H),[\cdot,\cdot])$ .\end{proposition}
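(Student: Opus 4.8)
The plan is to show that the descended assignment $[\,f\,]\mapsto[\,X_f\,]$ is well defined, a Lie algebra homomorphism, injective, and surjective, handling the global and local versions in parallel. Since $\omega$ is symplectic its kernel is trivial, so $\widetilde{\mathcal{S}}(H)=\mathcal{S}(H)$ and $\widetilde{\mathcal{S}}_{\text{loc}}(H)=\mathcal{S}_{\text{loc}}(H)$; thus no genuine quotient occurs on the symmetry side, and the only quotient to track is that of the conserved quantities by closed (equivalently, locally constant) functions. For well-definedness I would observe that if $f-g$ is closed then $0=d(f-g)=-X_{f-g}\hk\omega$, so non-degeneracy of $\omega$ forces $X_{f-g}=0$, i.e. $X_f=X_g$; hence the map descends to $\widetilde{\mathcal{C}}(X_H)$ and to $\widetilde{\mathcal{C}}_{\text{loc}}(X_H)$. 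The homomorphism property is exactly Lemma~\ref{Poisson is Schouten} with $k=l=1$, giving $X_{\{f,g\}}=[X_f,X_g]$, while the fact that both brackets descend to the relevant quotients is supplied by Proposition~\ref{graded Lie algebra of forms} and Proposition~\ref{symmetry super algebra}.

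Injectivity is immediate: if $X_f=0$ then $df=-X_f\hk\omega=0$, so $f$ is closed and $[\,f\,]=0$ in $\widetilde{\mathcal{C}}$. For surjectivity I would start from a symmetry $X$; by definition $X\in\X^{1}_{\text{Ham}}(M)$, so $X\hk\omega$ is exact and $X=X_f$ for a primitive $f$ of $-X\hk\omega$, with $\L_X\omega=0$ by Proposition~\ref{preserve}. It then remains to check that $f$ is conserved of the same type that $X$ is a symmetry. In the local case this is automatic: Lemma~\ref{symmetry interior} states that $X$ is a local symmetry iff $[X,X_H]\hk\omega=0$, while Lemma~\ref{conserved interior} states that $f$ is locally conserved iff $[X_f,X_H]\hk\omega=0$, and with $X=X_f$ these are literally the same condition. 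Hence $X_f$ is a local symmetry precisely when $f$ is locally conserved, which gives both the forward map into $\mathcal{S}_{\text{loc}}(H)$ and surjectivity onto it.

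The global case is where I expect the only real subtlety, because for a $0$-form exactness of $\L_{X_H}f$ collapses to $\L_{X_H}f=0$ (there are no $(-1)$-forms), so ``globally conserved'' and ``strictly conserved'' coincide, and likewise for symmetries. Lemmas~\ref{conserved interior} and~\ref{symmetry interior} only identify $[X_f,X_H]\hk\omega$ with $d\L_{X_H}f$ and with $-d\L_{X_f}H$, i.e. a relation at the level of $d$; this suffices for the local statement but not to conclude vanishing. To bridge the gap I would establish the sharp pointwise identity directly: using $X_H\hk\omega=-dH$, $X_f\hk\omega=-df$ and the antisymmetry of $\omega$, one computes $\L_{X_f}H=dH(X_f)=-\omega(X_H,X_f)$ and $\L_{X_H}f=df(X_H)=\omega(X_H,X_f)$, so that $\L_{X_f}H=-\L_{X_H}f$. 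Consequently $\L_{X_f}H=0$ if and only if $\L_{X_H}f=0$, and $X=X_f$ is a global (equivalently strict) symmetry exactly when $f$ is a global (equivalently strict) conserved quantity, yielding surjectivity onto $\mathcal{S}(H)$.

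Assembling these steps, the descended map $[\,f\,]\mapsto[\,X_f\,]$ is a bijective Lie algebra homomorphism in both the global and the local settings, hence a Lie algebra isomorphism. The step I would flag as requiring genuine care is precisely the global case: one cannot argue only through Lemmas~\ref{conserved interior} and~\ref{symmetry interior}, which operate modulo closed forms, but must invoke the exact identity $\L_{X_f}H+\L_{X_H}f=0$ to match the ``global'' (hence strict) types correctly.
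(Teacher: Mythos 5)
Your proposal is correct, and it is worth noting that the paper never actually proves this proposition: it is stated immediately after the remark that the morphism property ``is easily verified,'' and the only supporting argument supplied anywhere in the paper is the later multisymplectic generalization (Theorem \ref{la iso} together with its corollary, which invokes Propositions \ref{Poisson is strictly conserved} and \ref{symmetry super algebra}). Your direct symplectic verification follows that same skeleton --- well-definedness and injectivity from nondegeneracy of $\omega$, the homomorphism property from Lemma \ref{Poisson is Schouten}, surjectivity from the Hamiltonian-ness built into the definition of a continuous symmetry --- but you are more careful than the paper on the one point with genuine content, namely that $f\mapsto X_f$ matches the \emph{type} of conserved quantity to the type of symmetry. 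Your observation that exact $0$-forms vanish, so that the global and strict notions coincide, is right, and it in fact corrects the paper's own imprecise remark that ``local and strict symmetries and conserved quantities are the same thing'' in the symplectic case: closed functions are locally constant rather than zero, so it is the global and strict notions that collapse, exactly as you say, while the local notion is strictly weaker. One remark on signs: your identity $\L_{X_f}H=-\L_{X_H}f$ is the correct one under the conventions $X_f\hk\omega=-df$ and $X_H\hk\omega=-dH$; specializing the paper's equations (\ref{Noether 1})--(\ref{Noether 2}) to the symplectic case (where the hook terms vanish on functions) would instead give $\L_{X_f}H=+\L_{X_H}f$, a discrepancy traceable to the paper writing $\{\alpha,H\}=-X_H\hk X_\alpha\hk\omega$ in Section 5.2 whereas its own definition of the bracket gives $+X_H\hk X_\alpha\hk\omega$; since only closedness and vanishing of these Lie derivatives matter for the statement, neither sign affects the isomorphism, but yours is the consistent one.
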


As a consequence of this proposition, we can now see how a momentum map sets up a Lie algebra isomorphism between the symmetries and conserved quantities it generates. Let $C=\{\mu(\xi); \xi\in\g\}$ and $S=\{V_\xi;\xi\in\g\}$. Let $\widetilde C$ be the quotient of $C$ by constant functions. Let $\widetilde S$ denote the quotient of $S$ by the kernel of $\omega$. Since the kernel of $\omega$ is trivial, $S=\widetilde{S}$. Then we get an induced well defined Poisson bracket on $\widetilde C$ and an induced well defined Lie bracket on $\widetilde S$.  We thus get a Lie algebra isomorphism:

\begin{proposition}
\label{la isomorphism}
The map between $(\widetilde C,\{\cdot,\cdot\})$ and $(\widetilde S,[\cdot,\cdot])$ that sends $[V_\xi]$ to $[\mu(\xi)]$ is a Lie-algebra isomorphism.

\end{proposition}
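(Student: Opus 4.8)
The plan is to recognize this map as the inverse of the isomorphism $[\alpha]\mapsto[X_\alpha]$ from Proposition \ref{graded Lie iso}, restricted to the subspaces $\widetilde C$ and $\widetilde S$. The key preliminary observation is that the symplectic specialization of the defining equation $df_1(\xi)=-V_\xi\hk\omega$ of Definition \ref{hmm} reads $d\mu(\xi)=-V_\xi\hk\omega$, so that $V_\xi$ is exactly the Hamiltonian vector field $X_{\mu(\xi)}$. Because $\omega$ is nondegenerate its kernel is trivial, so $X_{\mu(\xi)}$ is unique and $\widetilde S=S$; hence the assignment $\Psi\colon[\mu(\xi)]\mapsto[X_{\mu(\xi)}]=[V_\xi]$ is precisely the restriction to $C$ of the isomorphism in Proposition \ref{graded Lie iso}, and the map in question is $\Phi=\Psi^{-1}$.

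First I would record the inclusions that let me invoke that proposition. Under the standing $H$-preserving hypothesis each $V_\xi$ is a global continuous symmetry, and, since every function is Hamiltonian in the symplectic case, each $\mu(\xi)$ is a global conserved quantity; thus $C\subseteq\mathcal C(X_H)$ and $S\subseteq\mathcal S(H)$, giving embeddings $\widetilde C\subseteq\widetilde{\mathcal C}(X_H)$ and $\widetilde S\subseteq\widetilde{\mathcal S}(H)$. Well-definedness of $\Phi$ then follows in both directions: if $[V_\xi]=[V_\eta]$ then $V_\xi=V_\eta$ by triviality of the kernel, whence $d(\mu(\xi)-\mu(\eta))=0$ and the two functions agree modulo constants, which are exactly what is quotiented out in $\widetilde C$. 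Since $\Psi$ is injective and its restriction to $\widetilde C$ surjects onto $\widetilde S$ by the very definition of $\widetilde S$, that restriction is a Lie algebra isomorphism $\widetilde C\to\widetilde S$, so its inverse $\Phi$ is also a Lie algebra isomorphism.

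As a cross-check I would verify the bracket compatibility by hand rather than rely solely on Proposition \ref{graded Lie iso}, combining $[V_\xi,V_\eta]=-V_{[\xi,\eta]}$ for infinitesimal generators, the equivariance identity $\{\mu(\xi),\mu(\eta)\}=\mu([\xi,\eta])$ from Proposition \ref{difference}, and $X_{\{f,g\}}=[X_f,X_g]$ from Lemma \ref{Poisson is Schouten}, so as to match $\Phi$ of the Lie bracket $[[V_\xi],[V_\eta]]$ against $\{\Phi[V_\xi],\Phi[V_\eta]\}$. The main obstacle is precisely this sign bookkeeping: the assignment $\xi\mapsto V_\xi$ is an anti-homomorphism while $\mu$ is a genuine homomorphism into $(C^\infty(M),\{\cdot,\cdot\})$, so the sign convention of Definition \ref{hmm} must be reconciled with that underlying the cited Proposition \ref{difference}. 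Because the clean identification $\Phi=\Psi^{-1}$ already settles that $\Phi$ is an isomorphism, this direct computation serves only to confirm the signs are consistent after the constants and kernel elements are discarded in passing to $\widetilde C$ and $\widetilde S$.
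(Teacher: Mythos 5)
Your proposal is correct and is essentially the paper's own argument: the paper gives no separate proof, presenting the proposition as an immediate consequence of Proposition \ref{graded Lie iso} via exactly your identification $V_\xi=X_{\mu(\xi)}$ (from $d\mu(\xi)=-V_\xi\hk\omega$) together with the triviality of $\ker\omega$, so that the map is the inverse of the restriction of that isomorphism to $\widetilde C$. Your sign worry in the cross-check is also well placed but harmless: with the paper's conventions ($X_{\{f,g\}}=[X_f,X_g]$ and $[V_\xi,V_\eta]=-V_{[\xi,\eta]}$) the bracket relation is really $\{\mu(\xi),\mu(\eta)\}=-\mu([\xi,\eta])+\mathrm{const}$, as the $k=l=1$ case of Proposition \ref{difference is closed} confirms, and the two minus signs cancel, so the map $[V_\xi]\mapsto[\mu(\xi)]$ still intertwines the brackets.
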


With our newly defined notions of symmetry and conserved quantity on a multisympletic manifold, we now exhibit how these concepts generalize to the setup of multisymplectic geometry. 

\del{

If two conserved quantties differ by a constant, then they will generate the same symmetry. Conversely, if two symmetries differ by something in the kernel of $\omega$ (which is zero in the symplectic case) then they will generate the same conserved quantity.co-moment

A moment map gives us symmetries, each $\xi$ generates a symmetry and by Noether we see that the corresponding conserved quantity is $\mu(\xi)$. We know that the moment map is a Lie algebra homomorphism between. (123)

Consider symmetries of the form. Consider their quotient by constant functions. Consider conserved quantites of form...and their quotient by kernel of $\omega$. Then the Lie bracket/Poisson descends to well defined bracket. The moment map gives us the following Lie algebra isomorphism.  (1234)

}
\subsection{The Correspondence between Mutlisymplectic Conserved Quantities and Continuous Symmetries}

We first examine the correspondence between symmetries and conserved quantities on multi-Hamiltonian systems. We will make repeated use of the following equations. Fix $\alpha\in\Omega^{n-k}_{\mathrm{Ham}}(M)$. By definition we have that \[\{\alpha,H\}=-X_H\hk X_\alpha\hk\omega=X_{H}\hk d\alpha=\L_{X_H}\alpha-d(X_H\hk\alpha).\] But we also know that $\{\alpha,H\}=-\{H,\alpha\}$, since $|H|=2$. Thus, by definition of the Poisson bracket and equation (\ref{Lie}) we have that \[-\{H,\alpha\}=(-1)^kX_\alpha\hk X_H\hk\omega=(-1)^{k+1}X_\alpha\hk dH=\L_{X_\alpha}H-d(X_\alpha\hk H).\]Putting these together we obtain 
\begin{equation}
\label{Noether 1}
\L_{X_\alpha}H=d(X_\alpha\hk H)+\L_{X_H}\alpha-d(X_H\hk\alpha)
\end{equation}
and
\begin{equation}
\label{Noether 2}
\L_{X_H}\alpha=d(X_H\hk\alpha)+\L_{X_\alpha}H-d(X_\alpha\hk H).
\end{equation}
\del{
As in the case of symplectic geometry, a symmetry only gives a conserved quantity locally, and this will be resolved when we consider symmetries and conserved quantities coming from a homotopy co-momentum map.}
\begin{theorem} \label{Noether theorem 1}If $\alpha\in\Omega^{n-k}_{\text{Ham}}(M)$ is a (local, global) conserved quantity then any corresponding Hamiltonian $k$-vector field is a (local, global) continuous symmetry. Conversely, if $A\in\Gamma(\Lambda^k(TM))$ is a (local, global) continuous symmetry, then any corresponding Hamiltonian form is a (local,global) conserved quantity.

\end{theorem}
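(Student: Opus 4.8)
The key tools are equations (\ref{Noether 1}) and (\ref{Noether 2}), which were set up precisely for this purpose, so the plan is to read the definitions of conserved quantity and continuous symmetry directly off of these two identities. Recall that $\alpha$ being locally (resp. globally) conserved means $\L_{X_H}\alpha$ is closed (resp. exact), while a Hamiltonian multivector field $A$ being a local (resp. global) continuous symmetry means $\L_A H$ is closed (resp. exact). Since $A$ is automatically Hamiltonian in the forward direction (we take $A = X_\alpha$ for a Hamiltonian $\alpha$) and Hamiltonian by hypothesis in the converse direction, in both cases $\L_\cdot \omega = 0$ holds by Proposition \ref{preserve}, so the only content is tracking the exactness of the relevant Lie derivative.

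For the forward direction, suppose $\alpha \in \Omega^{n-k}_{\text{Ham}}(M)$ is a conserved quantity and let $X_\alpha$ be any corresponding Hamiltonian $k$-vector field. I would apply equation (\ref{Noether 1}): the terms $d(X_\alpha \hk H)$ and $d(X_H \hk \alpha)$ are manifestly exact, hence closed, so $\L_{X_\alpha} H$ differs from $\L_{X_H}\alpha$ by an exact form. If $\alpha$ is locally conserved then $\L_{X_H}\alpha$ is closed, and adding exact (hence closed) terms keeps it closed, so $\L_{X_\alpha}H$ is closed and $X_\alpha$ is a local symmetry. If $\alpha$ is globally conserved then $\L_{X_H}\alpha$ is exact, and the sum of exact forms is exact, so $\L_{X_\alpha}H$ is exact and $X_\alpha$ is a global symmetry. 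The converse direction is entirely symmetric: starting from a Hamiltonian continuous symmetry $A = X_\alpha$ with corresponding Hamiltonian form $\alpha$, I would invoke equation (\ref{Noether 2}) and run the identical closed/exact bookkeeping to conclude that $\L_{X_H}\alpha$ inherits the closedness or exactness of $\L_{X_\alpha}H$.

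I do not anticipate a genuine obstacle here, since equations (\ref{Noether 1}) and (\ref{Noether 2}) already encode the entire symplectic Noether argument; the work was front-loaded into deriving those identities. The one point requiring a little care is the claim that the conclusion holds for \emph{any} corresponding Hamiltonian multivector field in the forward direction (and that the statement is independent of choices). This is exactly what Proposition \ref{kernel} and Proposition \ref{well defined} guarantee: two Hamiltonian $k$-vector fields for $\alpha$ differ by an element $\kappa$ in the kernel of $\omega$, and one checks via Lemma \ref{conserved interior} or Proposition \ref{well defined} that such a $\kappa$ contributes nothing to the relevant closed/exact condition, so the property of being a (local or global) continuous symmetry is independent of the choice of $X_\alpha$. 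Thus the theorem reduces to a clean application of the two boxed equations together with the observation that exact forms are closed and that sums of exact forms are exact.
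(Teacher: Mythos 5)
Your proof is correct and takes essentially the same route as the paper's own argument: both read the (local, global) conditions directly off equations (\ref{Noether 1}) and (\ref{Noether 2}), observing that the terms $d(X_\alpha\hk H)$ and $d(X_H\hk\alpha)$ are exact, so closedness/exactness passes between $\L_{X_H}\alpha$ and $\L_{X_\alpha}H$. Your extra remark about independence of the choice of $X_\alpha$ is sound but not strictly necessary, since equation (\ref{Noether 1}) holds verbatim for every Hamiltonian multivector field corresponding to $\alpha$, which is exactly how the paper handles the word ``any.''
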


\begin{proof} 
Consider $\alpha\in\Omega^{n-k}_{\text{Ham}}(M)$. Let $X_\alpha$ be an arbitrary Hamiltonian multivector field.  Then, by equation (\ref{Noether 1}) we have that
\[\L_{X_\alpha}H=d(X_\alpha\hk H)+\L_{X_H}\alpha-d(X_H\hk\alpha).\]

Thus, if $\alpha$ is a (local or global) conserved quantity then $X_\alpha$ is a (local or global) continuous symmetry.

Conversely, suppose that $A$ is a (local or global) continuous symmetry and let $\alpha$ be a corresponding Hamiltonian form. Following the same argument above, we have by equation (\ref{Noether 2})
\[\L_{X_H}\alpha=d(X_H\hk\alpha)+\L_{X_\alpha}H-d(X_\alpha\hk H)\]
\del{
In summary we have the following correspondences: \[\mathcal{S}_{\text{loc}}(H) \longleftrightarrow \mathcal{C}_{\text{loc}}(X_H) \ \ \ \ \ \ \ \ \ \ \mathcal{S}(H) \longleftrightarrow \mathcal{C}(X_H)\]and \[\mathcal{S}_{\text{str}}(H) \longleftrightarrow \mathcal{C}(X_H) \ \ \ \ \ \ \ \ \ \ \mathcal{C}_{\text{str}}(X_H) \longleftrightarrow \mathcal{S}(H)\]
 }
\end{proof}

The correspondence between strictly conserved quantities and strict continuous symmetries is a little bit different. We have that

\begin{corollary}
If $\alpha\in\Omega^{n-k}_{\mathrm{Ham}}(M)$ is a strictly conserved quantity then $X_\alpha$ is a global continuous symmetry. Conversely, if $A$ is a strict continuous symmetry then the corresponding Hamiltonian $(n-k)$-form $\alpha$ is a global conserved quantity. 

\end{corollary}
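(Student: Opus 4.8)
The plan is to obtain both implications by reading off the two identities (\ref{Noether 1}) and (\ref{Noether 2}), which were established just before Theorem \ref{Noether theorem 1}, and then specializing the hypothesis to the strict case. The essential observation is that strictness kills exactly one Lie-derivative term on the right-hand side of each identity, but the two contraction terms $d(X_\alpha\hk H)$ and $d(X_H\hk\alpha)$ survive; these are always exact yet in general nonzero, which is precisely why the conclusion drops from strict to global rather than remaining strict.

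For the forward direction I would take $\alpha$ strictly conserved, so that $\L_{X_H}\alpha=0$, and let $X_\alpha$ be any corresponding Hamiltonian $k$-vector field. Substituting $\L_{X_H}\alpha=0$ into (\ref{Noether 1}) gives
\[\L_{X_\alpha}H=d(X_\alpha\hk H)-d(X_H\hk\alpha)=d\big(X_\alpha\hk H-X_H\hk\alpha\big),\]
so $\L_{X_\alpha}H$ is exact; since $\L_{X_\alpha}\omega=0$ by Proposition \ref{preserve} and $X_\alpha$ is Hamiltonian by hypothesis, $X_\alpha$ is a global continuous symmetry. For the converse I would take $A$ a strict continuous symmetry, so that $A$ is Hamiltonian and $\L_A H=0$, and choose a corresponding Hamiltonian form $\alpha$ with $A=X_\alpha$. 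Substituting $\L_{X_\alpha}H=0$ into (\ref{Noether 2}) yields
\[\L_{X_H}\alpha=d(X_H\hk\alpha)-d(X_\alpha\hk H)=d\big(X_H\hk\alpha-X_\alpha\hk H\big),\]
exhibiting $\L_{X_H}\alpha$ as exact, so $\alpha$ is globally conserved.

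I expect no substantial obstacle here: both directions are one-line substitutions into the pre-derived identities, and the apparent asymmetry (strict $\mapsto$ global rather than strict $\mapsto$ strict) is accounted for entirely by the exact contraction terms. The only bookkeeping worth flagging is independence of representative, namely that the conclusion does not depend on which $X_\alpha$ (resp. which $\alpha$) is chosen. This follows exactly as in Theorem \ref{Noether theorem 1}: by Proposition \ref{kernel} any two Hamiltonian $k$-vector fields for $\alpha$ differ by an element of the kernel of $\omega$, and any two Hamiltonian forms for $A$ differ by a closed form; in either case the right-hand sides above are altered only by exact terms, so the property of being exact, and hence the "global" conclusion, is unaffected.
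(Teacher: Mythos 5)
Your proposal is correct and follows exactly the paper's route: the paper's proof of this corollary is literally ``this follows from the proof of the above theorem,'' and the substance of that proof is precisely the substitution of the strictness hypothesis into equations (\ref{Noether 1}) and (\ref{Noether 2}), as you do. Your extra remark on independence of the choice of $X_\alpha$ (resp.\ $\alpha$) via Proposition \ref{kernel} is sound, though not strictly needed since the identities already hold for an arbitrary choice of representative.
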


\begin{proof}
This follows from the proof of the above theorem.
\end{proof}

\begin{remark}
If we were to consider weak continuous symmetries in the above theorem, then by the Poincar\'e lemma, a continuous symmetry would still give a conserved quantity, but only in a neighbourhood around each point of the manifold.
\end{remark}

The following simple example exhibits the correspondence.
\begin{example}
Consider $M=\R^3$ with volume form $\omega=dx\wedge dy\wedge dz$, $H=-xdy$ and $\alpha=zdx$. Then $dH=-dx\wedge dy$ so that $X_H=\frac{\pd}{\pd z}$. Also, $d\alpha=dz\wedge dx$ and so $X_\alpha=\frac{\pd}{\pd y}$. By the Cartan formula, we have that \[\L_{X_\alpha}H=-dx +dx=0,\] which means that $X_\alpha\in\mathcal{S}_{\text{str}}(H)\subset\mathcal{S}(H)$. We also have that \[\L_{X_H}\alpha=d(X_H\hk \alpha)+\{\alpha,H\}=d(X_H\hk\alpha)-d(X_\alpha\hk H)=dx.\] That is, $\alpha\in \mathcal{C}(X_H)$. Thus $\alpha$ is a global conserved quantity and $X_\alpha$ is a global continuous symmetry.
\end{example}

\subsection{A Homotopy Co-Momentum  Map as a Morphism}

We work with a fixed multi-Hamiltonian system $(M,\omega,H)$ with acting symmetry group $G$. By definition, a co-momentum map is an $L_\infty$-morphism between the Chevalley-Eilenberg complex and the Lie $n$-algebra of observables. Recall that in Section 4 we had defined the $L_\infty$-algebra $\widehat L(M,\omega)$, where $\widehat L$ consisted entirely of Hamiltonian forms: $\widehat L=\oplus_{i=0}^{n-1}\Omega^{n-1-i}_{\text{Ham}}(M)$.  

\begin{proposition}
A weak homotopy co-momentum map is an $L_\infty$-morphism from $(\Rho_\g,\partial,[\cdot,\cdot])$ to $(\widehat L, \{l_k\})$.
\end{proposition}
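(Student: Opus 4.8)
The plan is to verify directly that the collection $(f)$ obeys the defining relations of an $L_\infty$-morphism, taking advantage of the fact that the Chevalley--Eilenberg differential vanishes on the Lie kernel. Recall from Definition \ref{hmm} that a homotopy co-momentum map is governed by the single packaged family of equations (\ref{hcmm}), $-f_{k-1}(\partial p)=df_k(p)+\zeta(k)V_p\hk\omega$; these are the Taylor coefficients of the morphism, with $f_{k-1}(\partial p)$ carrying the bracket data of the source and $\zeta(k)V_p\hk\omega$ playing the role of the top target bracket applied to linear components. I would open by recording the elementary identity $\zeta(k)V_p\hk\omega=l_k\bigl(f_1(\xi_1),\dots,f_1(\xi_k)\bigr)$ for decomposable $p=\xi_1\wedge\cdots\wedge\xi_k$. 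This follows from $X_{f_1(\xi_i)}=V_{\xi_i}$ (a consequence of $df_1(\xi)=-V_\xi\hk\omega$) together with the contraction convention $V_p\hk\omega=V_{\xi_k}\hk\cdots\hk V_{\xi_1}\hk\omega$ and the formula for $l_k$ in Definition \ref{Lie n observables}. It recasts (\ref{hcmm}) in manifestly morphism-theoretic form, $l_1(f_k(p))+l_k(f_1,\dots,f_1)=-f_{k-1}(\partial p)$.

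First I would confirm that the target is legitimate. By Theorem \ref{L infinity subalgebra} the space $\widehat L$ of Hamiltonian forms is an $L_\infty$-subalgebra of the observables $L_\infty(M,\omega)$, and by the very definition of a weak co-momentum map each $f_k$ sends $\Rho_{\g,k}$ into $\Omega^{n-k}_{\text{Ham}}(M)=\widehat L_{k-1}$. Hence $(f)$ does take values in $\widehat L$, so the maps $l_k$ of the target restrict correctly and there is no ambiguity in interpreting the right-hand sides above inside $\widehat L$.

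The core of the argument is the restriction to the Lie kernel. By Proposition \ref{Schouten is closed} the differential $\partial$ vanishes identically on $\Rho_\g$; in particular, for $p\in\Rho_{\g,k}$ the term $f_{k-1}(\partial p)$ equals $f_{k-1}(0)=0$, so the argument never requires a value of $f$ outside the Lie kernel and the proof is self-contained for weak maps. With this term gone, the morphism relation collapses to $l_1(f_k(p))+l_k(f_1,\dots,f_1)=0$, which is exactly (\ref{hcmm kernel}), namely $df_k(p)=-\zeta(k)V_p\hk\omega$. Since a weak co-momentum map satisfies (\ref{hcmm kernel}) by hypothesis, every $L_\infty$-morphism relation holds on $\Rho_\g$. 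I would additionally invoke Proposition \ref{infinitesimal generator of Schouten} ($\partial V_p=-V_{\partial p}$ and $V_{[p,q]}=-[V_p,V_q]$) to see that the Schouten bracket on $\Rho_\g$ is transported compatibly, so that no separate bracket-compatibility relation survives beyond what $\partial=0$ already enforces.

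The step I expect to be the main obstacle is the bookkeeping that justifies this collapse: one must line up the abstract morphism identity — with its sum over unshuffles, its Koszul signs, and its full hierarchy of terms $l_j(f_{i_1},\dots,f_{i_j})$ — against the single packaged equation (\ref{hcmm}), and check that setting $\partial=0$ annihilates precisely the bracket-carrying terms while leaving (\ref{hcmm kernel}) intact. The delicate points are confirming that the sign $\zeta(k)$ produced by the contraction convention matches the Koszul sign attached to $l_k(f_1,\dots,f_1)$, and that no intermediate terms $l_j$ with $1<j<k$ persist on $\Rho_\g$. To push these signs through cleanly I would rely on the $\zeta$-identities collected in Remark \ref{ugly signs}.
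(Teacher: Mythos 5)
Your proposal is correct and follows essentially the same route as the paper's proof, which likewise combines equation (\ref{hcmm kernel}) (so the image lies in Hamiltonian forms, hence in $\widehat L$), Proposition \ref{Schouten is closed} ($\partial=0$ on the Lie kernel, killing the bracket-carrying term $f_{k-1}(\partial p)$), and Theorem \ref{L infinity subalgebra} ($\widehat L$ is an $L_\infty$-subalgebra of the observables). The extra detail you supply --- the identity $\zeta(k)V_p\hk\omega=l_k\bigl(f_1(\xi_1),\dots,f_1(\xi_k)\bigr)$ and the sign bookkeeping via Remark \ref{ugly signs} --- fleshes out what the paper leaves implicit rather than constituting a different argument.
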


\begin{proof}
Equation (\ref{hcmm kernel}) shows that a homotopy co-momentum map sends each element of the Lie kernel to a Hamiltonian form. Hence the claim follows from Proposition \ref{Schouten is closed} and Theorem \ref{L infinity subalgebra}.
\end{proof}

Next we study how a weak homotopy co-momentum  map interacts with the generalized Poisson bracket on the space of Hamiltonian forms. In particular, to make a connection with Proposition \ref{difference} from symplectic geometry, we compare the difference of $f_{k+l-1}([p,q])$ and $\{f_k(p),f_l(q)\}$.

Let $G$ be a Lie group acting on a multi-Hamiltonian system $(M,\omega,H)$. Let $(f)$ be a weak homotopy co-momentum  map. By equation (\ref{hcmm kernel}) we see that under this restriction the image of the co-momentum map is contained in the $L_\infty$-algebra  $\widehat L(M,\omega)$ of Hamiltonian forms. Moreover, we obtain that every element in the image is a conserved quantity. This was one of the main points of \cite{cq}. Indeed, in \cite{cq} Propositions 2.12 and 2.21 say:

\begin{proposition}
\label{H action}
If the group locally or globally  preserves $H$, then $f_k(p)$ is a local conserved quantity for all $p\in\Rho_{\g,k}$. If the group strictly preserves $H$ then $f_k(p)$ is a globally conserved quantity for all $p\in\Rho_{\g,k}$.
\end{proposition}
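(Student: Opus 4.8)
The plan is to reduce the statement to a single computation — that the multivector-field Lie derivative $\L_{V_p}H$ is closed (in the local/global case) or zero (in the strict case) — and then feed this into the Noether identity (\ref{Noether 2}). First I would observe that equation (\ref{hcmm kernel}) reads $df_k(p)=-\zeta(k)V_p\hk\omega$; since $\zeta(k)^2=(-1)^{k(k+1)}=1$, this exhibits $\zeta(k)V_p$ as a Hamiltonian $k$-vector field for $f_k(p)$, and in particular $f_k(p)\in\Omega^{n-k}_{\text{Ham}}(M)$. Substituting $\alpha=f_k(p)$ and $X_\alpha=\zeta(k)V_p$ into (\ref{Noether 2}) gives
\[\L_{X_H}f_k(p)=d\big(X_H\hk f_k(p)-\zeta(k)\,V_p\hk H\big)+\zeta(k)\,\L_{V_p}H.\]
Thus $\L_{X_H}f_k(p)$ differs from $\zeta(k)\L_{V_p}H$ by an exact form, and the whole proposition follows once I know that $\L_{V_p}H$ is closed when the action is locally or globally $H$-preserving, and exact (indeed zero) when it is strictly $H$-preserving.

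For the strict case I would extract an explicit formula for $\L_{V_p}H$. Writing $p=\xi_1\wedge\cdots\wedge\xi_k$ and applying the Extended Cartan Lemma (Lemma \ref{extended Cartan}) with $\tau=H$, the term $V_{\partial p}\hk H$ drops out because $p\in\Rho_{\g,k}$ forces $\partial p=0$ (Proposition \ref{infinitesimal generator of Schouten}); rearranging against the definition (\ref{Lie}) of $\L_{V_p}H$ cancels the $V_p\hk dH$ contributions and leaves
\[\L_{V_p}H=(-1)^k\sum_{i=1}^k(-1)^i\big(V_{\xi_1}\wedge\cdots\wedge\widehat{V_{\xi_i}}\wedge\cdots\wedge V_{\xi_k}\big)\hk\L_{V_{\xi_i}}H.\]
If the action strictly preserves $H$ then every $\L_{V_{\xi_i}}H$ vanishes, so $\L_{V_p}H=0$ and the displayed Noether identity shows $\L_{X_H}f_k(p)$ is exact, i.e. $f_k(p)$ is globally conserved.

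The local/global case is the main obstacle, since the explicit formula expresses $\L_{V_p}H$ as a sum of contractions of the closed (resp. exact) forms $\L_{V_{\xi_i}}H$, and contracting a closed form by a multivector field need not preserve closedness. To get around this I would not differentiate the explicit formula directly, but instead pass to $dH$: by equation (\ref{dL}), $d\L_{V_p}H=(-1)^{k+1}\L_{V_p}(dH)$, so it suffices to show $\L_{V_p}(dH)=0$. The key observation is that in both the local and global cases $d\L_{V_{\xi_i}}H=0$, whence $\L_{V_{\xi_i}}(dH)=d\L_{V_{\xi_i}}H=0$ for each generator. Applying the Extended Cartan Lemma to the \emph{closed} form $\tau=dH$, all three of its summands vanish ($V_{\partial p}\hk dH=0$ since $\partial p=0$, the middle sum vanishes since each $\L_{V_{\xi_i}}(dH)=0$, and $V_p\hk d(dH)=0$), so $d(V_p\hk dH)=0$ and hence $\L_{V_p}(dH)=d(V_p\hk dH)=0$. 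Therefore $d\L_{V_p}H=0$, i.e. $\L_{V_p}H$ is closed, and the Noether identity shows $\L_{X_H}f_k(p)$ is closed; that is, $f_k(p)$ is locally conserved.

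Finally, I would note that the same conclusions can be reached through the symmetry side. A one-line application of Lemma \ref{extended Cartan} to $\omega$ (using $\partial p=0$, $\L_{V_{\xi_i}}\omega=0$ and $d\omega=0$) shows $\L_{V_p}\omega=0$, so together with the above $V_p$ is a genuine (local, global, or strict) continuous symmetry; then $f_k(p)$ being conserved is also a direct instance of the Noether correspondence of Theorem \ref{Noether theorem 1}, and Lemmas \ref{conserved interior} and \ref{symmetry interior} give the equivalent route via the vanishing of $[V_p,X_H]\hk\omega$.
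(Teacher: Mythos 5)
You should first note that there is no internal proof in the paper to compare against: Proposition \ref{H action} is imported verbatim from Propositions 2.12 and 2.21 of \cite{cq}, with the citation standing in for an argument. Your proposal therefore supplies something the paper omits, and it is correct. The reduction is the right one: equation (\ref{hcmm kernel}) exhibits $\zeta(k)V_p$ as a Hamiltonian multivector field for $f_k(p)$, and the identity (\ref{Noether 2}) shows that $\L_{X_H}f_k(p)$ and $\zeta(k)\L_{V_p}H$ differ by an exact form, so everything hinges on the behaviour of $\L_{V_p}H$. (As printed, (\ref{Noether 2}) carries a sign slip --- in the symplectic case $n=k=1$ it would assert $\L_{X_h}f=\L_{X_f}h$, whereas the correct relation is $\L_{X_f}h=-\L_{X_h}f$ --- but your argument uses only exactness of the difference, so this is harmless.) The substantive content is your split treatment of the two cases: in the strict case Lemma \ref{extended Cartan} expresses $\L_{V_p}H$ as a sum of contractions of the forms $\L_{V_{\xi_i}}H$, which vanish; in the local and global cases you correctly recognize that this explicit formula proves nothing (contraction by a multivector field does not preserve closedness) and instead establish $d\L_{V_p}H=0$ by passing to the closed form $dH$ via (\ref{dL}) and applying Lemma \ref{extended Cartan} to $\tau=dH$, where every term dies. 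That detour is precisely the obstruction that explains why the proposition promises only \emph{local} conservation even under a \emph{globally} $H$-preserving action, so your proof also makes the asymmetry in the statement transparent. Finally, your closing observation inverts the paper's order of deduction: the paper obtains Proposition \ref{H action 2} from Theorem \ref{Noether theorem 1} together with the cited Proposition \ref{H action}, whereas you prove the symmetry statement first (via $\L_{V_p}\omega=0$ and the closedness or vanishing of $\L_{V_p}H$) and recover the conservation statement from the converse half of Theorem \ref{Noether theorem 1}; either packaging is fine, and yours keeps the whole chain internal to the paper.

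One step should be made explicit before this could stand as a proof. A general element of $\Rho_{\g,k}$ is not decomposable, and the decomposable summands of a kernel element need not lie in the kernel themselves, so you may not write $p=\xi_1\wedge\cdots\wedge\xi_k$ and discard the $V_{\partial p}$ term summand by summand. The repair is routine: apply Lemma \ref{extended Cartan} to each decomposable summand of $p$, retaining the $V_{\partial(\cdot)}\hk\tau$ terms, and sum. The total of those terms is $V_{\partial p}\hk\tau$, which vanishes because $\partial$ and $q\mapsto V_q\hk\tau$ are linear and $\partial p=0$; the middle sums vanish for each summand separately, because your hypotheses --- $\L_{V_\xi}H$ closed (resp.\ zero), hence $\L_{V_\xi}(dH)=0$ --- hold for every $\xi\in\g$, not merely for generators appearing in kernel elements. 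With that sentence added, the argument is complete.
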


Thus, by restricting a homotopy co-momentum  map to the Lie kernel, we see that, under assumptions on the group action, every element is a conserved quantity, analogous to the setup in symplectic geometry.

As a consequence of Theorem \ref{Noether theorem 1} we see that the moment map also gives a family of continuous symmetries.

\begin{proposition}\label{H action 2}
If the group locally or globally  preserves $H$, then $V_p$ is a local continuous symmetry for all $p\in\Rho_{\g,k}$. If the group strictly preserves $H$ then $V_p$ is a global continuous symmetry for all $p\in\Rho_{\g,k}$.
\end{proposition}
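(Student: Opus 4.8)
The plan is to deduce Proposition \ref{H action 2} as a direct corollary of the correspondence already established in Theorem \ref{Noether theorem 1}, together with the fact that every $f_k(p)$ is a conserved quantity (Proposition \ref{H action}) and that $V_p$ is a Hamiltonian multivector field corresponding to $f_k(p)$. The key observation is that, for $p \in \Rho_{\g,k}$, equation (\ref{hcmm kernel}) gives $df_k(p) = -\zeta(k)V_p \hk \omega$, so that $\frac{1}{\zeta(k)}V_p$ is a Hamiltonian $k$-vector field for $f_k(p)$ (equivalently, $V_p$ is, up to the nonzero scalar $\zeta(k)$, a corresponding Hamiltonian multivector field). Thus $f_k(p) \in \Omega^{n-k}_{\text{Ham}}(M)$ and $V_p$ is one of its corresponding Hamiltonian multivector fields, which is exactly the situation to which Theorem \ref{Noether theorem 1} applies.

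First I would record that, by Proposition \ref{H action}, if the group locally or globally preserves $H$ then $f_k(p)$ is a local conserved quantity, and if it strictly preserves $H$ then $f_k(p)$ is a global conserved quantity. Next I would invoke Theorem \ref{Noether theorem 1}: since $\frac{1}{\zeta(k)}V_p$ (and hence $V_p$, by scaling, which preserves all the relevant Lie-derivative conditions since $\L_{cX} = c\,\L_X$ for a constant $c$) is a Hamiltonian multivector field corresponding to the conserved quantity $f_k(p)$, the theorem asserts that this multivector field is a continuous symmetry of the same type — local or global. This immediately yields that $V_p$ is a local continuous symmetry in the locally-or-globally preserving case.

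For the strict case I would use the Corollary following Theorem \ref{Noether theorem 1}: there it is shown that if $\alpha$ is a strictly conserved quantity then $X_\alpha$ is a \emph{global} continuous symmetry. Under a strictly $H$-preserving action, Proposition \ref{H action} gives only that $f_k(p)$ is \emph{globally} conserved, not strictly conserved, so the cleanest route is to apply the global half of Theorem \ref{Noether theorem 1} directly to the global conserved quantity $f_k(p)$, concluding that $V_p$ is a global continuous symmetry. This matches the claimed statement precisely and avoids over-claiming strictness on the symmetry side.

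The only point requiring care — and the main (minor) obstacle — is the bookkeeping that $V_p$, rather than its scalar multiple $\frac{1}{\zeta(k)}V_p$, is legitimately ``a corresponding Hamiltonian multivector field'' in the sense needed by Theorem \ref{Noether theorem 1}, and that the conclusions of that theorem are phrased for \emph{every} corresponding Hamiltonian multivector field. Since $\zeta(k) = -(-1)^{k(k+1)/2} \in \{\pm 1\}$ is a nonzero constant, $V_p$ and $\frac{1}{\zeta(k)}V_p$ differ only by a sign, so one is a Hamiltonian multivector field for $f_k(p)$ if and only if the other is, and the symmetry conditions $\L_{V_p}H$ closed/exact are insensitive to this scaling. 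Hence the theorem applies verbatim to $V_p$, and no further computation is needed. One should also note that $\L_{V_p}\omega = 0$ holds because a multisymplectic group action satisfies $\L_{V_\xi}\omega = 0$ for each generator, which propagates to $V_p$ via equation (\ref{L bracket}) and the wedge formula in Proposition \ref{properties}; this confirms $V_p$ genuinely preserves $\omega$, as any continuous symmetry must.
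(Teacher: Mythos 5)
Your proposal is correct and is essentially the paper's own argument: the paper states Proposition \ref{H action 2} precisely as a consequence of Theorem \ref{Noether theorem 1} applied to the conserved quantities $f_k(p)$ of Proposition \ref{H action}, with $V_p$ (up to the sign $\zeta(k)=\pm 1$) serving as the corresponding Hamiltonian multivector field via equation (\ref{hcmm kernel}), exactly as you argue. Your closing remark about $\L_{V_p}\omega=0$ is redundant (and its stated mechanism imprecise --- the clean justification is Proposition \ref{preserve}, since $V_p\hk\omega$ is exact, or Lemma \ref{extended Cartan} using $\partial p=0$), but since the definition of continuous symmetry only requires $V_p\in\X_{\text{Ham}}(M)$ and the condition on $\L_{V_p}H$, this does not affect the proof.
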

\begin{example}\bf{(Motion in a conservative system under translation)}\rm

Recall that in Example \ref{translation} we considered the translation action of $\R^3$ on $(M,\omega,H)$ where $M=T^\ast\R^3=\R^6$, $\omega=\text{vol}$ and \[H=\frac{1}{2}\left((p_1q^2dq^3-p_1q^3dq^2)-(p_2q^1dq^3-p_2q^3dq^2)+(p_3q^1dq^2-p_3q^2dq^1)\right)dp_1dp_2dp_3,\] where $q^1$, $q^2$, $q^3$ are the standard coordinates on $\R^3$ and $q^1$, $q^2$, $q^3$, $p_1$, $p_2$, $p_3$ are the induced coordinates on $T^\ast\R^3$. It is easy to check that each of $\L_{\frac{\pd}{\pd q^i}}H$ are exact for $i=1,2,3$. That is,  the group action globally preserves $H$. Hence, by Proposition \ref{H action} each of the differential forms, computed in Example \ref{translation}, \[f_1(e_1)=\frac{1}{2}(q^2dq^3-q^3dq^2)dp_1dp_2dp_3, \] \[ f_1(e_2)=\frac{1}{2}(q^1dq^3-q^3dq^1)dp_1dp_2dp_3, \] \[f_1(e_3)=\frac{1}{2}(q^1dq^2-q^2dq^2)dp_1dp_2dp_3,\]
\[f_2(e_1\wedge e_2)=q^3dp_1dp_2dp_3, \ \  \ \ \ \ f_2(e_1\wedge e_3)=q^2dp_1dp_2dp_3, \ \ \ \ \ \ f_2(e_2\wedge e_3)=q^1dp_1dp_2dp_3,\]and
\[f_3(e_1\wedge e_2\wedge e_3)=\frac{1}{3}\left(p_1dp_2dp_3+p_2dp_3dp_1+p_3dp_1dp_2\right).\]
are all globally conserved.  Thus, by Example \ref{translation}, the Lie derivative of these differential forms by the geodesic spray are all exact.

Moreover, by Proposition \ref{H action 2}, each of $\frac{\pd}{\pd q^1},\frac{\pd}{\pd q^2},\frac{\pd}{\pd q^3},\frac{\pd}{\pd q^1}\wedge\frac{\pd}{\pd q^2},\frac{\pd}{\pd q^1}\wedge\frac{\pd}{\pd q^3}\frac{\pd}{\pd q^3},\frac{\pd}{\pd q^2}\wedge\frac{\pd}{\pd q^3}$ and $\frac{\pd}{\pd q^1}\wedge\frac{\pd}{\pd q^2}\wedge\frac{\pd}{\pd q^3}$ are global continuous symmetries in this multi-Hamiltonian system. 

\end{example}

\begin{proposition}
\label{Schouten under moment}Let $p\in\Rho_{\g,k}$ and $q\in\Rho_{\g,l}$ be arbitrary. Set $\alpha=f_{n-k}(p)$ and $\beta=f_{n-l}(q)$. Then we have that $-\zeta(k)\zeta(l)[V_p,V_q]$ is a Hamiltonian multivector field for $\{\alpha,\beta\}$.
\end{proposition}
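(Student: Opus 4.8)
The plan is to extract explicit Hamiltonian multivector fields for $\alpha$ and $\beta$ straight out of the weak co-momentum equation (\ref{hcmm kernel}), and then let Lemma \ref{Poisson is Schouten} do the work: it already asserts that the Schouten bracket of any two Hamiltonian multivector fields for $\alpha$ and $\beta$ is a Hamiltonian multivector field for $\{\alpha,\beta\}$. Thus the entire proposition collapses to identifying the representatives and factoring the constants out of the Schouten bracket, with the signs being the only thing that demands real care.

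First I would note that because $p\in\Rho_{\g,k}$ and $q\in\Rho_{\g,l}$ lie in the Lie kernel, $\alpha$ and $\beta$ are genuine Hamiltonian forms, so that the generalized Poisson bracket $\{\alpha,\beta\}$ and Lemma \ref{Poisson is Schouten} both apply. Equation (\ref{hcmm kernel}) reads $d\alpha=-\zeta(k)V_p\hk\omega$ and $d\beta=-\zeta(l)V_q\hk\omega$. Comparing these with the defining relation $d\alpha=-X_\alpha\hk\omega$ of a Hamiltonian multivector field, and using that $\zeta(k),\zeta(l)$ are scalars, I may take $X_\alpha=\zeta(k)V_p$ and $X_\beta=\zeta(l)V_q$ as Hamiltonian $k$- and $l$-vector fields for $\alpha$ and $\beta$. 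By Lemma \ref{Poisson is Schouten}, $[X_\alpha,X_\beta]$ is then a Hamiltonian multivector field for $\{\alpha,\beta\}$, and $\R$-bilinearity of the Schouten bracket gives
\[[X_\alpha,X_\beta]=[\zeta(k)V_p,\zeta(l)V_q]=\zeta(k)\zeta(l)[V_p,V_q].\]
This is the claimed multivector field, up to the overall sign, whose determination is the one delicate point.

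Two things must still be checked, though neither is deep. Since a Hamiltonian multivector field is determined only up to the kernel of $\omega$, the choice $X_\alpha=\zeta(k)V_p$ is non-canonical; however Proposition \ref{well defined} shows that replacing $X_\alpha$ by $X_\alpha+\kappa$ with $\kappa\hk\omega=0$ alters $[X_\alpha,X_\beta]$ only by a term that hooks trivially into $\omega$, so the output is well defined modulo $\ker\omega$ and independent of all choices; moreover $[V_p,V_q]$ is automatically Hamiltonian by Proposition \ref{graded Lie algebra of vector fields}, so the assertion is even type-correct. The genuine obstacle is the sign bookkeeping: the precise coefficient of $[V_p,V_q]$ must be assembled from the factor $(-1)^{|\beta|}$ built into the definition of $\{\cdot,\cdot\}$, the conventions in (\ref{hcmm kernel}), and the sign produced in the proof of Lemma \ref{Poisson is Schouten} (where one finds $[X_\alpha,X_\beta]\hk\omega=-d\{\alpha,\beta\}$). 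Carrying these through carefully is exactly what fixes the sign of the constant $\zeta(k)\zeta(l)$ multiplying $[V_p,V_q]$ in the final statement.
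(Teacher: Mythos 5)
Your proposal uses the same ingredients as the paper -- equation (\ref{hcmm kernel}), Proposition \ref{well defined}, Lemma \ref{Poisson is Schouten} -- but in a cleaner order: you simply take the representatives $X_\alpha=\zeta(k)V_p$ and $X_\beta=\zeta(l)V_q$ that (\ref{hcmm kernel}) hands you and feed them into Lemma \ref{Poisson is Schouten}, whereas the paper starts from arbitrary representatives and compares $[X_\alpha,X_\beta]$ with $[V_p,V_q]$ by expanding $[X_\alpha-\zeta(k)V_p,\,X_\beta-\zeta(l)V_q]\hk\omega$. Your argument, including the well-definedness remarks, is correct as far as it goes.

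The one point where you waver -- the sign -- is exactly the point where you should have stood firm. Your computation already determines the sign, and no amount of further bookkeeping can flip it: by Lemma \ref{Poisson is Schouten}, $d\{\alpha,\beta\}=-[X_\alpha,X_\beta]\hk\omega$, and with your representatives $[X_\alpha,X_\beta]=\zeta(k)\zeta(l)[V_p,V_q]$, so
\[
d\{\alpha,\beta\}=-\bigl(\zeta(k)\zeta(l)[V_p,V_q]\bigr)\hk\omega,
\]
which says precisely that $+\zeta(k)\zeta(l)[V_p,V_q]$ is a Hamiltonian multivector field for $\{\alpha,\beta\}$; the field $-\zeta(k)\zeta(l)[V_p,V_q]$ can only be one where $[V_p,V_q]\hk\omega=0$. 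In other words, the minus sign in the stated proposition is an error in the paper, not a subtlety you failed to capture. The error enters the paper's proof at the expansion step: the claimed identity $[X_\alpha-\zeta(k)V_p,X_\beta-\zeta(l)V_q]\hk\omega=([X_\alpha,X_\beta]+\zeta(k)\zeta(l)[V_p,V_q])\hk\omega$ silently discards the cross terms $-[X_\alpha,\zeta(l)V_q]$ and $-[\zeta(k)V_p,X_\beta]$, which do \emph{not} hook to zero; writing $\zeta(l)V_q=X_\beta-\kappa_2$ and $\zeta(k)V_p=X_\alpha-\kappa_1$ with $\kappa_i$ in the kernel and applying Proposition \ref{well defined} (plus graded antisymmetry), each cross term contributes $-[X_\alpha,X_\beta]\hk\omega$, so the correct identity is $0=(-[X_\alpha,X_\beta]+\zeta(k)\zeta(l)[V_p,V_q])\hk\omega$ -- your sign. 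Two confirmations: in the symplectic case $k=l=1$ (where $\zeta(1)=1$ and the kernel is trivial) one has $X_{f_1(\xi)}=V_\xi$ and hence $X_{\{f_1(\xi),f_1(\eta)\}}=[V_\xi,V_\eta]$, not $-[V_\xi,V_\eta]$; and the proof of Proposition \ref{difference is closed} invokes the present proposition in the form $[V_p,V_q]\hk\omega=\zeta(k)\zeta(l)[X_{f_k(p)},X_{f_l(q)}]\hk\omega$, i.e.\ with your sign, so the downstream results of the paper are consistent with your version rather than with the statement as printed.
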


\begin{proof}
By definition of the co-momentum map (equation (\ref{hcmm kernel})) we have that $X_\alpha-\zeta(k)V_p$ and $X_\beta-\zeta(l)V_q$ are in the kernel of $\omega$. Hence, by Proposition \ref{well defined} we have that \[[X_\alpha-\zeta(k)V_p,X_\beta-\zeta(l)V_q]\hk\omega=0.\]Proposition \ref{well defined} also shows that \[[X_\alpha-\zeta(k)V_p,X_\beta-\zeta(l)V_q]\hk\omega=([X_\alpha,X_\beta]+\zeta(k)\zeta(l)[V_p,V_q])\hk\omega.\]Thus \[[X_\alpha,X_\beta]\hk\omega=-\zeta(k)\zeta(l)[V_p,V_q]\hk\omega.\]

The claim now follows from Proposition \ref{Poisson is Schouten}.
\end{proof}

Our generalization of Proposition \ref{difference} to multisymplectic geometry is:
\begin{proposition}
\label{difference is closed}
For $p\in\Rho_{\g,k}$ and $q\in\Rho_{\g,l}$ we have that $\{f_k(p),f_l(q)\}-(-1)^{k+l+kl}f_{k+l-1}([p,q])$ is a closed $(n+1-k-l)$-form.
\end{proposition}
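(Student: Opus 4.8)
The plan is to prove closedness by showing that the exterior derivative of the difference vanishes identically. First I would observe that both terms are genuinely Hamiltonian $(n+1-k-l)$-forms, so that their difference is a well-defined form and closedness is equivalent to $d(\,\cdot\,)=0$. Indeed, by Lemma \ref{Poisson is Schouten} the bracket $\{f_k(p),f_l(q)\}$ lies in $\Omega^{n+1-k-l}_{\text{Ham}}(M)$, while $[p,q]\in\Rho_{\g,k+l-1}$ by Proposition \ref{Schouten is closed}, so that $f_{k+l-1}([p,q])$ is defined and also lies in $\Omega^{n+1-k-l}_{\text{Ham}}(M)$. (When $k+l-1>n$ both forms have negative degree, hence vanish, and the statement is trivial; so the content is in the range $k+l-1\le n$.)

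Next I would differentiate each term and reduce both to a scalar multiple of the single expression $[V_p,V_q]\hk\omega$. For the Poisson term, Lemma \ref{Poisson is Schouten} tells us that $[X_{f_k(p)},X_{f_l(q)}]$ is a Hamiltonian multivector field for $\{f_k(p),f_l(q)\}$, so $d\{f_k(p),f_l(q)\}=-[X_{f_k(p)},X_{f_l(q)}]\hk\omega$; Proposition \ref{Schouten under moment} then rewrites $[X_{f_k(p)},X_{f_l(q)}]\hk\omega$ as $\pm\zeta(k)\zeta(l)[V_p,V_q]\hk\omega$, giving $d\{f_k(p),f_l(q)\}=c_1\,[V_p,V_q]\hk\omega$ for an explicit sign $c_1$ built from $\zeta(k)\zeta(l)$. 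For the moment-map term, the defining relation (\ref{hcmm kernel}) applied to $[p,q]\in\Rho_{\g,k+l-1}$ gives $df_{k+l-1}([p,q])=-\zeta(k+l-1)V_{[p,q]}\hk\omega$, and Proposition \ref{infinitesimal generator of Schouten} ($V_{[p,q]}=-[V_p,V_q]$) turns this into $\zeta(k+l-1)[V_p,V_q]\hk\omega$.

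Finally I would assemble the two computations: the differential of the difference equals $\big(c_1-(-1)^{k+l+kl}\zeta(k+l-1)\big)\,[V_p,V_q]\hk\omega$, and the scalar in front is forced to vanish by the identity $\zeta(k)\zeta(l)\zeta(k+l-1)=-(-1)^{k+l+kl}$ recorded in Remark \ref{ugly signs} (together with $\zeta(k+l-1)^2=1$). This yields $d\big(\{f_k(p),f_l(q)\}-(-1)^{k+l+kl}f_{k+l-1}([p,q])\big)=0$, as required. The main obstacle is precisely the sign bookkeeping: one must track the grading conventions entering the Poisson bracket, the sign in $V_{[p,q]}=-[V_p,V_q]$, and the sign supplied by Proposition \ref{Schouten under moment}, and check that they combine into exactly the quantity annihilated by Remark \ref{ugly signs}. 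Conceptually there is nothing deeper than the fact that both differentials are multiples of the common term $[V_p,V_q]\hk\omega$; the entire content is that, after inserting the prescribed factor $(-1)^{k+l+kl}$, the two scalar coefficients cancel, which is the $\zeta$-sign relation. I would therefore double-check the sign asserted in Proposition \ref{Schouten under moment}, since the cancellation requires $[X_{f_k(p)},X_{f_l(q)}]\hk\omega=\zeta(k)\zeta(l)[V_p,V_q]\hk\omega$.
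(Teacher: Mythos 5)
Your proof is correct and is essentially the paper's own argument: the paper runs the identical chain of lemmas --- equation (\ref{hcmm}) with $f_{k+l-2}(\partial[p,q])=0$ by Proposition \ref{Schouten is closed}, then Proposition \ref{infinitesimal generator of Schouten}, Proposition \ref{Schouten under moment}, Lemma \ref{Poisson is Schouten} and Remark \ref{ugly signs} --- only organized as a single string of equalities computing $d\bigl(f_{k+l-1}([p,q])\bigr)$, rather than differentiating the two terms separately and comparing coefficients of $[V_p,V_q]\hk\omega$ as you do.

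On the sign you flag at the end: the relation your cancellation requires, $[X_{f_k(p)},X_{f_l(q)}]\hk\omega=\zeta(k)\zeta(l)[V_p,V_q]\hk\omega$, is the correct one, and it is also what the paper's proof of the present proposition silently uses; the minus sign in the statement of Proposition \ref{Schouten under moment} is a slip. To see this, expand $0=[X_{f_k(p)}-\zeta(k)V_p,\,X_{f_l(q)}-\zeta(l)V_q]\hk\omega$: the two cross terms do not vanish, because $V_q$ (resp.\ $V_p$) differs from $\zeta(l)X_{f_l(q)}$ (resp.\ $\zeta(k)X_{f_k(p)}$) only by an element of the kernel of $\omega$; by Proposition \ref{well defined} each cross term equals $-[X_{f_k(p)},X_{f_l(q)}]\hk\omega$, so the expansion reads $-[X_{f_k(p)},X_{f_l(q)}]\hk\omega+\zeta(k)\zeta(l)[V_p,V_q]\hk\omega=0$, which is exactly the plus-sign relation you need. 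Had one used Proposition \ref{Schouten under moment} as literally stated, one would instead conclude that $\{f_k(p),f_l(q)\}+(-1)^{k+l+kl}f_{k+l-1}([p,q])$ is closed.
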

\begin{proof}
By definition of co-momentum map (equation (\ref{hcmm})) we have that 

\begin{align*}
d(f_{k+l-1}([p,q]))&=-f_{k+l-2}(\partial[p,q])-\zeta(k+l-1)V_{[p,q]}\hk\omega\\
&=-\zeta(k+l-1)V_{[p,q]}\hk\omega&\text{by Proposition \ref{Schouten is closed}}\\
&=\zeta(k+l-1)[V_p,V_q]\hk\omega&\text{by Proposition \ref{infinitesimal generator of Schouten}}\\
&=\zeta(k+l-1)\zeta(k)\zeta(l)[X_{f_k(p)},X_{f_l(p)}]\hk\omega&\text{by Proposition \ref{Schouten under moment}}\\
&=-(-1)^{k+l+kl}X_{\{f_k(p),f_l(q)\}}\hk\omega&\text{by Proposition \ref{Poisson is Schouten} and Remark \ref{ugly signs}}\\
&=(-1)^{k+l+kl}d(\{f_k(p),f_l(q)\})&\text{by definition.}
\end{align*}
\end{proof}

\begin{remark}
In the symplectic case we have that $\mu([\xi,\eta])-\{\mu(\xi),\mu(\eta)\}$ is closed (i.e. constant) and if $\mu$ is equivariant then this constant is zero. In the multiysymplectic set up, the above proposition shows that $f_{k+l-1}([p,q])-\{f_k(p),f_l(q)\}$ is a closed form. It can be shown that if the co-momentum map $(f)$, restricted to the Lie kernel, is equivariant, then this difference is exact (this is the content of ongoing research in \cite{future}). We thus have obtained the following generalization from symplectic geometry: A homotopy co-momentum map, restricted to the Lie kernel, is equivariant if and only if $\{f_k(p),f_l(q)\}=f_{k+l-1}([p,q])$ in deRham cohomology.
\end{remark}

From this proposition we see that a co-momentum map does not necessarily preserve brackets; however, we now show that once we pass to certain cohomology groups then it will. Moreover, the co-momentum map will give an isomorphism of graded Lie algebras, generalizing Proposition \ref{la isomorphism}. Recall that we had defined  $\widetilde\X^k_{\text{Ham}}(M)$ to be the quotient of $\X^k_{\text{Ham}}(M)$ by the kernel of $\omega$ restricted to $\Lambda^k(TM)$. We set $\widetilde\X_{\text{Ham}}(M)=\oplus\widetilde\X^k_{\text{Ham}}(M)$. 

\begin{proposition}
The Schouten bracket on $\X_{\mathrm{Ham}}(M)$ descends to a well defined bracket on $\widetilde \X_{\mathrm{Ham}}(M)$.

\end{proposition}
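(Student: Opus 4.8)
The plan is to show that if $X$ and $Y$ are Hamiltonian $k$-vector fields differing by an element in the kernel of $\omega$, then their Schouten brackets with any fixed third element agree modulo the kernel of $\omega$; bilinearity of the Schouten bracket then reduces well-definedness to exactly this check. Concretely, I would take $X \in \X^k_{\mathrm{Ham}}(M)$ and $\kappa \in \Gamma(\Lambda^j(TM))$ with $\kappa \hk \omega = 0$, and a second Hamiltonian multivector field $Y \in \X^l_{\mathrm{Ham}}(M)$, and verify that $[X + \kappa, Y]$ and $[X,Y]$ represent the same class in $\widetilde\X_{\mathrm{Ham}}(M)$, i.e. that $[\kappa, Y] \hk \omega = 0$.

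The main computation is precisely Proposition \ref{well defined}, which states that for $\kappa$ in the kernel of $\omega$ and any $Y \in \X^l_{\mathrm{Ham}}(M)$ we have $[Y,\kappa]\hk\omega = 0$. By the graded skew-symmetry of the Schouten bracket, this immediately gives $[\kappa, Y]\hk\omega = 0$ as well, so that $[\kappa, Y]$ lies in the kernel of $\omega$. Hence the bracket of a kernel element with a Hamiltonian multivector field is again a kernel element, and by symmetry the same holds when the kernel element sits in the second slot. First I would invoke this proposition, then argue that for representatives $X, X'$ of the same class in $\widetilde\X^k_{\mathrm{Ham}}(M)$ and $Y, Y'$ of the same class in $\widetilde\X^l_{\mathrm{Ham}}(M)$, writing $X' = X + \kappa_1$ and $Y' = Y + \kappa_2$ with $\kappa_1, \kappa_2$ in the kernel, bilinearity yields
\[
[X', Y'] = [X,Y] + [X, \kappa_2] + [\kappa_1, Y] + [\kappa_1, \kappa_2],
\]
and all three correction terms have vanishing contraction with $\omega$ by Proposition \ref{well defined} (applied in both slots).

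A secondary point that must be checked is that the bracket does not leave the space $\X_{\mathrm{Ham}}(M)$: that $[X,Y]$ is itself Hamiltonian is guaranteed by Proposition \ref{graded Lie algebra of vector fields}, which shows $\X_{\mathrm{Ham}}(M)$ is closed under the Schouten bracket via $[X,Y]\hk\omega = (-1)^l d(X\hk Y\hk\omega)$. The only genuine obstacle is the bookkeeping around the fact that the ``kernel of $\omega$'' here means elements $\kappa$ with $\kappa \hk \omega = 0$ in each graded piece, and one should confirm that Proposition \ref{well defined} as stated applies regardless of the degree $j$ of $\kappa$; inspecting its proof, which uses only $\L_X\omega = 0$ and $\kappa\hk\omega = 0$ together with equation (\ref{bracket hook}), shows it holds for $\kappa$ of arbitrary degree, so no degree restriction intervenes. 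The result then follows, and the descended bracket inherits graded skew-symmetry and the graded Jacobi identity directly from the Schouten bracket on $\X_{\mathrm{Ham}}(M)$.
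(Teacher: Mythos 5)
Your proposal is correct and follows essentially the same route as the paper, which simply observes that the statement follows directly from Proposition \ref{well defined}; you have spelled out the bilinearity bookkeeping that the paper leaves implicit. The only detail worth noting is that your term $[\kappa_1,\kappa_2]$ is also handled by Proposition \ref{well defined}, since any element of the kernel of $\omega$ is itself (trivially) Hamiltonian.
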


\begin{proof} This follows directly from Proposition \ref{well defined}.
\end{proof}
Similarily, we let $\widetilde\Omega^{n-k}_{\text{Ham}}(M)$ denote the quotient of $\Omega^{n-k}_{\text{Ham}}(M)$ by the closed forms of degree $k$ and set $\widetilde\Omega_{\text{Ham}}(M)=\oplus\Omega^{n-k}_{\text{Ham}}(M)$. Recall that Proposition \ref{graded Lie algebra of forms} showed that $(\widetilde\Omega_{\text{Ham}}(M),\{\cdot,\cdot\})$ was a well defined graded Lie algebra. 
\begin{theorem}
\label{la iso}
The map $\alpha\mapsto X_\alpha$ is an isomorphism of graded Lie algebras from $(\widetilde\Omega_{\mathrm{Ham}}(M),\{\cdot,\cdot\})$ to $(\widetilde\X_{\mathrm{Ham}}(M),[\cdot,\cdot])$.
\end{theorem}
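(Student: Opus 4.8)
The plan is to show that the single assignment $\Phi\colon[\alpha]\mapsto[X_\alpha]$ is at once a degree-preserving linear bijection and a homomorphism of brackets; since a bijective homomorphism of graded Lie algebras is automatically an isomorphism (its set-theoretic inverse then intertwines the brackets as well), establishing these two features suffices.

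First I would check that $\Phi$ is well defined on the quotients. If $\alpha$ and $\alpha'$ determine the same class in $\widetilde\Omega^{n-k}_{\mathrm{Ham}}(M)$, then $\alpha-\alpha'$ is closed, so $d\alpha=d\alpha'$ and any Hamiltonian $k$-vector field for $\alpha$ is also one for $\alpha'$; conversely, by Proposition \ref{kernel} the Hamiltonian $k$-vector field of a fixed $\alpha$ is determined only up to the kernel of $\omega$. Hence the class $[X_\alpha]\in\widetilde\X^k_{\mathrm{Ham}}(M)$ depends solely on $[\alpha]$, and $\Phi$ is well defined and linear. Its bijectivity is exactly the isomorphism of graded vector spaces recorded immediately after Proposition \ref{kernel}, applied in each fixed degree and summed. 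For the grading I would assign to $X\in\X^k_{\mathrm{Ham}}(M)$ the grading $k+1$ matching that of $\Omega^{n-k}_{\mathrm{Ham}}(M)$, so that $\Phi$ is degree-preserving by construction.

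The heart of the argument is bracket compatibility, where I would invoke Proposition \ref{Poisson is Schouten} directly. On the form side the bracket on $\widetilde\Omega_{\mathrm{Ham}}(M)$ sends $([\alpha],[\beta])$ to $[\{\alpha,\beta\}]$, which $\Phi$ carries to $[X_{\{\alpha,\beta\}}]$. On the multivector side the Schouten bracket, which descends to $\widetilde\X_{\mathrm{Ham}}(M)$ by the proposition preceding this theorem (itself a consequence of Proposition \ref{well defined}), sends $([X_\alpha],[X_\beta])$ to $[[X_\alpha,X_\beta]]$. Now Proposition \ref{Poisson is Schouten} asserts precisely that $[X_\alpha,X_\beta]$ is a Hamiltonian $(k+l-1)$-vector field for $\{\alpha,\beta\}$, so $[X_\alpha,X_\beta]$ and $X_{\{\alpha,\beta\}}$ differ by an element of $\ker\omega$ and represent the same class in $\widetilde\X_{\mathrm{Ham}}(M)$. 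Therefore $\Phi(\{[\alpha],[\beta]\})=[[X_\alpha,X_\beta]]=[\Phi[\alpha],\Phi[\beta]]$, so $\Phi$ intertwines the two brackets; moreover $\{\alpha,\beta\}\in\Omega^{n+1-k-l}_{\mathrm{Ham}}(M)$ corresponds to $[X_\alpha,X_\beta]\in\X^{k+l-1}_{\mathrm{Ham}}(M)$, both carrying grading $k+l$, so the two bracket operations have the same degree and the compatibility is compatible with the grading.

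There is no genuine analytic obstacle here: the real work was already carried out in Proposition \ref{kernel} (well-definedness and bijectivity), in the descent of the Schouten bracket via Proposition \ref{well defined}, and in Proposition \ref{Poisson is Schouten} (bracket compatibility), while Proposition \ref{graded Lie algebra of forms} supplies the graded Lie algebra structure on the domain. The only point that warrants care is the bookkeeping of gradings, so that the conclusion is an isomorphism of \emph{graded} Lie algebras rather than of the bare Lie algebras; I would make the degree assignment $|X|=k+1$ on $\X^k_{\mathrm{Ham}}(M)$ explicit and confirm that $\Phi$ preserves it and that the two brackets share the same degree. Having verified that $\Phi$ is a degree-preserving linear bijection intertwining the brackets, I conclude that it is the asserted isomorphism of graded Lie algebras.
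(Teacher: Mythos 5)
Your proposal is correct and follows essentially the same route as the paper: well-definedness via Proposition \ref{kernel} (closed forms have Hamiltonian multivector fields in $\ker\omega$), bijectivity as already recorded after Proposition \ref{kernel}, and bracket compatibility via Lemma \ref{Poisson is Schouten}, which gives $X_{\{\alpha,\beta\}}=[X_\alpha,X_\beta]$ in the quotient. Your extra care with the grading (assigning $|X|=k+1$ on $\X^k_{\mathrm{Ham}}(M)$ so both brackets have degree $-2$) is a worthwhile explicit supplement to the paper's terser argument, but it is bookkeeping rather than a different method.
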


\begin{proof}
The map is well defined since the Hamiltonian multivector field of a closed form is the zero vector field.  The map is clearly surjective. It is injective since if $X_\alpha=X_\beta$ then $d\alpha=d\beta$. Lastly, by Lemma \ref{Poisson is Schouten}, we have that $X_{\{\alpha,\beta\}}=[X_\alpha,X_\beta]$ in the quotient space.
\end{proof}
We have now obtained a generalization of Proposition \ref{graded Lie iso} from symplectic geometry.
\begin{corollary} The map $\alpha\mapsto X_\alpha$ is a graded Lie algebra isomorphism from $(\widetilde{\mathcal{C}}(X_H),\{\cdot,\cdot\})$ to $(\widetilde{\mathcal{S}}(H),[\cdot,\cdot])$ and from $(\widetilde{\mathcal{C}}_{loc}(X_H),\{\cdot,\cdot\})$ to $(\widetilde{\mathcal{S}}_{loc}(H),[\cdot,\cdot])$.

\end{corollary}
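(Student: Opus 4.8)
The plan is to obtain the corollary as a restriction of the global isomorphism already furnished by Theorem \ref{la iso}, using the Noether correspondence of Theorem \ref{Noether theorem 1} to identify which classes correspond. Throughout, write $\Phi$ for the graded Lie algebra isomorphism $[\alpha]\mapsto[X_\alpha]$ from $(\widetilde\Omega_{\mathrm{Ham}}(M),\{\cdot,\cdot\})$ to $(\widetilde\X_{\mathrm{Ham}}(M),[\cdot,\cdot])$ of Theorem \ref{la iso}. Since $\Phi$ is a bracket-preserving bijection on the ambient spaces, it suffices to check that $\Phi$ carries $\widetilde{\mathcal{C}}(X_H)$ bijectively onto $\widetilde{\mathcal{S}}(H)$ and $\widetilde{\mathcal{C}}_{\mathrm{loc}}(X_H)$ bijectively onto $\widetilde{\mathcal{S}}_{\mathrm{loc}}(H)$; the restriction of a graded Lie algebra isomorphism to subalgebras matched up in this way is again such an isomorphism.

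First I would verify that the four quotients in the statement are honestly subspaces of the ambient quotients, i.e. that the relations being divided out lie in the relevant subspaces. If $\gamma$ is closed, then by equation (\ref{Lie}) and $d\omega=0$ we have $\L_{X_H}\gamma=d(X_H\hk\gamma)$, which is exact; hence every closed form lies in $\mathcal{C}(X_H)\subseteq\mathcal{C}_{\mathrm{loc}}(X_H)$, so $\widetilde{\mathcal{C}}(X_H)$ and $\widetilde{\mathcal{C}}_{\mathrm{loc}}(X_H)$ are genuine subspaces of $\widetilde\Omega_{\mathrm{Ham}}(M)$. Dually, let $\kappa\in\ker\omega$. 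Then $\kappa$ is Hamiltonian and $\L_\kappa\omega=0$, and Proposition \ref{well defined} gives $[\kappa,X_H]\hk\omega=0$, so $\kappa$ is a local symmetry by Lemma \ref{symmetry interior}. Choosing a closed form $\alpha$ with $X_\alpha=\kappa$ and substituting into equation (\ref{Noether 2}) (noting $\L_{X_H}\alpha=d(X_H\hk\alpha)$ since $d\alpha=0$) yields $\L_\kappa H=d(\kappa\hk H)$, so $\kappa$ is in fact a global symmetry. Thus $\ker\omega\subseteq\mathcal{S}(H)\subseteq\mathcal{S}_{\mathrm{loc}}(H)$, and $\widetilde{\mathcal{S}}(H)$, $\widetilde{\mathcal{S}}_{\mathrm{loc}}(H)$ are genuine subspaces of $\widetilde\X_{\mathrm{Ham}}(M)$.

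Next I would match the subspaces under $\Phi$. The forward direction of Theorem \ref{Noether theorem 1} says that a (local or global) conserved $\alpha$ has every $X_\alpha$ a (local or global) symmetry, giving $\Phi(\widetilde{\mathcal{C}}(X_H))\subseteq\widetilde{\mathcal{S}}(H)$ and $\Phi(\widetilde{\mathcal{C}}_{\mathrm{loc}}(X_H))\subseteq\widetilde{\mathcal{S}}_{\mathrm{loc}}(H)$; the converse direction says that a (local or global) symmetry has every corresponding Hamiltonian form a (local or global) conserved quantity, giving the reverse inclusions $\Phi^{-1}(\widetilde{\mathcal{S}}(H))\subseteq\widetilde{\mathcal{C}}(X_H)$ and $\Phi^{-1}(\widetilde{\mathcal{S}}_{\mathrm{loc}}(H))\subseteq\widetilde{\mathcal{C}}_{\mathrm{loc}}(X_H)$. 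Applying $\Phi$ to the reverse inclusions and combining with the forward ones promotes both to equalities, so $\Phi$ restricts to a bijection on each pair. Because $\Phi$ preserves brackets (Theorem \ref{la iso}, ultimately Lemma \ref{Poisson is Schouten}), these restrictions are isomorphisms of graded Lie algebras, which is the assertion.

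I expect the only real content to be the well-definedness check of the second paragraph, specifically the upgrade from ``$\kappa\in\ker\omega$ is a local symmetry'' to ``$\kappa$ is a global symmetry'', since this is exactly what guarantees the quotient by $\ker\omega$ is compatible with the global (as opposed to merely local) symmetry filtration. Once that is in place, the rest is a purely formal restriction of the isomorphism of Theorem \ref{la iso}, requiring no computation beyond quoting Theorem \ref{Noether theorem 1}.
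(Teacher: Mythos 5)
Your proposal is correct and takes essentially the same route as the paper: both obtain the corollary by restricting the graded Lie algebra isomorphism $[\alpha]\mapsto[X_\alpha]$ of Theorem \ref{la iso} to the conserved-quantity and symmetry subspaces. The difference is only one of emphasis: you make explicit the matching of those subspaces via Theorem \ref{Noether theorem 1} together with the well-definedness checks (closed forms are globally conserved, kernel elements are global symmetries), which the paper leaves implicit, whereas the paper instead cites the bracket-closure of both sides (Propositions \ref{Poisson is strictly conserved} and \ref{symmetry super algebra}), a fact your argument uses tacitly when it speaks of restricting to ``subalgebras matched up in this way.''
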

\begin{proof}
We know from Proposition \ref{Poisson is strictly conserved} that each of the spaces of conserved quantities are closed under the Poisson bracket. Similarly, by Proposition \ref{symmetry super algebra}, the spaces of continuous symmetries are all closed under the Schouten bracket. The claim now follows from Theorem \ref{la iso}.
\end{proof}

We now give a generalization of Proposition \ref{la isomorphism} to multisymplectic geometry: We let $C_k$ denote the image of the Lie kernel under the co-momentum map. That is, let $C_k=f_{n-k}(\Rho_{\g,n-k})$. Let $\widetilde C_k$ denote the quotient of $C_k$ by closed forms and set $\widetilde C=\oplus \widetilde C_k$.   Recall that we had defined $S_k$ to be the set $\{V_p; p\in\Rho_{\g,k}\}$. Let $\widetilde S_k$ denote the quotient of $S_k$ by elements in the Lie kernel and set $\widetilde S=\oplus\widetilde S_k$.  Our generalization of Proposition \ref{la isomorphism} is given by the following corollaries.

\begin{corollary}
A momentum map induces an $L_\infty$-algebra morphism from $\widetilde S$ to $\widetilde C \cap \widehat L$ given by $V_p\mapsto f_k(p)$.
\end{corollary}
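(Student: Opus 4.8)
The plan is to obtain the stated map as the descent to the quotients $\widetilde S$ and $\widetilde C$ of the $L_\infty$-morphism $(f)\colon(\Rho_\g,\partial,[\cdot,\cdot])\to(\widehat L,\{l_k\})$ established earlier, transported along the correspondence $p\mapsto V_p$ of Proposition \ref{infinitesimal generator of Schouten}. Concretely, for $V_p\in S_k$ I would choose a representative $p\in\Rho_{\g,k}$ and define $\Phi([V_p]):=[f_k(p)]$, the goal being to show this is a well-defined $L_\infty$-morphism.

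First I would establish well-definedness. Equation (\ref{hcmm kernel}) gives $df_k(p)=-\zeta(k)\,V_p\hk\omega$ for $p$ in the Lie kernel; hence if $V_p-V_q$ lies in the kernel of $\omega$ (in particular if $V_p=V_q$, so that the choice of kernel representative is immaterial), then $d(f_k(p)-f_k(q))=-\zeta(k)(V_p-V_q)\hk\omega=0$, so $[f_k(p)]=[f_k(q)]$ in $\widetilde C$. The same equation shows every $f_k(p)$ is Hamiltonian, so $\Phi$ lands in $\widetilde C\cap\widehat L$, and linearity of each $f_k$ makes $\Phi$ linear.

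Next I would verify compatibility with the structure maps. After quotienting by closed forms and regrading as in Proposition \ref{graded Lie algebra of forms}, the target carries the genuine graded Lie algebra structure $(\widetilde\Omega_{\mathrm{Ham}}(M),\{\cdot,\cdot\})$, and $\widetilde S$ is a graded Lie algebra by Proposition \ref{infinitesimal generator of Schouten}; thus it suffices to check the $l_1$- and $l_2$-relations. The $l_1$-relation is automatic: on $\widetilde S$ the differential $\partial$ vanishes since $\partial V_p=-V_{\partial p}=0$ for $p$ in the Lie kernel, while on $\widetilde C$ the differential $d$ is zero by construction. For the bracket I would combine $[V_p,V_q]=-V_{[p,q]}$ and $[p,q]\in\Rho_{\g,k+l-1}$ (Propositions \ref{infinitesimal generator of Schouten} and \ref{Schouten is closed}) with Proposition \ref{difference is closed}, which says precisely that $\{f_k(p),f_l(q)\}$ and $(-1)^{k+l+kl}f_{k+l-1}([p,q])$ coincide modulo closed forms; these identify $\Phi([V_p,V_q])$ with $\{\Phi([V_p]),\Phi([V_q])\}$ up to the prescribed grading signs.

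The main obstacle is sign bookkeeping rather than anything conceptual: because $\Phi$ runs opposite to the defining correspondence $p\mapsto V_p$, one must reconcile the sign $-1$ from $V_{[p,q]}=-[V_p,V_q]$ with the factor $(-1)^{k+l+kl}$ of Proposition \ref{difference is closed} under the grading conventions on $\widetilde S$ and $\widetilde C$, using the $\zeta$-identities of Remark \ref{ugly signs}. Confirming that these signs assemble into a genuine graded Lie (hence $L_\infty$) morphism is the only delicate point.
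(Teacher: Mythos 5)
Your route is the same as the paper's: its entire proof consists of invoking Proposition \ref{difference is closed} to see that the Poisson bracket preserves $\widetilde C$, and then appealing to the fact that a weak co-momentum map is an $L_\infty$-morphism; your well-definedness argument via equation (\ref{hcmm kernel}) and the observation that both $l_1$'s vanish on the quotients are correct details that the paper simply omits. The problem is the one step you postpone. Carrying out the sign check shows that it does \emph{not} close for the map as stated: from $[V_p,V_q]=V_{-[p,q]}$ you get $\Phi([V_p,V_q])=-[f_{k+l-1}([p,q])]$, while Proposition \ref{difference is closed} gives $\{\Phi[V_p],\Phi[V_q]\}=(-1)^{k+l+kl}[f_{k+l-1}([p,q])]$, so that
\[
\{\Phi[V_p],\Phi[V_q]\}=-(-1)^{k+l+kl}\,\Phi([V_p,V_q])=(-1)^{(k+1)(l+1)}\,\Phi([V_p,V_q]).
\]
The sign $(-1)^{(k+1)(l+1)}$ equals $-1$ whenever $k$ and $l$ are both even (e.g.\ $p,q\in\Rho_{\g,2}$ on an $n$-plectic manifold with $n\geq 3$), so $V_p\mapsto f_k(p)$ preserves brackets only up to a degree-dependent sign; it is not a graded Lie algebra morphism, hence not the claimed $L_\infty$-morphism (and since both differentials vanish on the quotients, no higher morphism components can absorb the discrepancy). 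The map that genuinely works is the twisted one $[V_p]\mapsto\zeta(k)[f_k(p)]$: by (\ref{hcmm kernel}), $\zeta(k)f_k(p)$ is a Hamiltonian form whose Hamiltonian multivector field is $V_p$, so the twisted map is precisely the restriction to $\widetilde S$ of the inverse of the isomorphism of Theorem \ref{la iso}, and the identity $\zeta(k)\zeta(l)\zeta(k+l-1)=-(-1)^{k+l+kl}$ of Remark \ref{ugly signs} is exactly what makes its signs close.

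To be fair, this gap is inherited from the paper rather than created by you. The paper's two-sentence proof never confronts the sign, and the ingredient behind Proposition \ref{difference is closed}, namely Proposition \ref{Schouten under moment}, itself carries a sign error: expanding $0=[X_\alpha-\zeta(k)V_p,\,X_\beta-\zeta(l)V_q]\hk\omega$ using Proposition \ref{well defined}, the two cross terms each contribute $-[X_\alpha,X_\beta]\hk\omega$ rather than zero, so the correct conclusion is $[X_\alpha,X_\beta]\hk\omega=+\zeta(k)\zeta(l)[V_p,V_q]\hk\omega$; a second, compensating slip in the proof of Proposition \ref{difference is closed} is why that proposition's statement nevertheless comes out right. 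So your proposal faithfully reproduces the paper's argument and is more candid about where the difficulty sits, but the ``delicate point'' you defer is exactly where both arguments (and the statement as literally phrased) are incomplete: a full proof must either build the $\zeta(k)$ twist into the map or weaken the conclusion to bracket preservation up to the sign $(-1)^{(k+1)(l+1)}$.
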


\begin{proof}
We see by Proposition \ref{difference is closed} that the Poisson bracket preserves $\widetilde C$. The claim follows since by definition a homotopy co-momentum map is an $L_\infty$-morphism.
\end{proof}

\begin{corollary}
For a group action that is (locally, globally or strictly) $H$-preserving, an equivariant homotopy co-momentum map induces an isomorphism of graded Lie algebras between $(\widetilde S,[\cdot,\cdot])$ and $(\widetilde C, \{\cdot,\cdot\})$. Explicitly, the map is given by $[V_p]\mapsto [f_k(p)]$. 
\end{corollary}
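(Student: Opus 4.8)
The plan is to deduce this corollary from the graded Lie algebra isomorphism $\alpha\mapsto X_\alpha$ of Theorem \ref{la iso} by restricting it to the relevant subspaces and inverting it, so that almost all the work is already done. The starting observation is that the defining relation (\ref{hcmm kernel}) of a weak co-momentum map, $df_k(p)=-\zeta(k)V_p\hk\omega$, says precisely that $X_{f_k(p)}\hk\omega=\zeta(k)V_p\hk\omega$, i.e. $X_{f_k(p)}=\zeta(k)V_p$ modulo the kernel of $\omega$. Hence under the isomorphism of Theorem \ref{la iso} the class $[f_k(p)]$ is sent to $\zeta(k)[V_p]$, so that isomorphism carries $\widetilde C$ onto $\widetilde S$, and its inverse is the degree-by-degree map $[V_p]\mapsto\zeta(k)[f_k(p)]$ for $p\in\Rho_{\g,k}$.

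First I would confirm that $\widetilde C$ and $\widetilde S$ really are graded Lie subalgebras of the ambient algebras, so the restriction is legitimate. For $\widetilde S$ this is Propositions \ref{infinitesimal generator of Schouten} and \ref{symmetry super algebra}, together with the fact that each $V_p$ is Hamiltonian since $V_p\hk\omega$ is exact by (\ref{hcmm kernel}), giving $S\subseteq\X_{\text{Ham}}(M)$. For $\widetilde C$ the closure under the Poisson bracket is exactly Proposition \ref{difference is closed}: $\{f_k(p),f_l(q)\}$ agrees modulo closed forms with $(-1)^{k+l+kl}f_{k+l-1}([p,q])$, and $[p,q]$ again lies in the Lie kernel by Proposition \ref{Schouten is closed}, so the bracket stays inside $\widetilde C$. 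Well-definedness, surjectivity and injectivity are then immediate from (\ref{hcmm kernel}): if $V_p\equiv V_{p'}\bmod\ker\omega$ then $d(f_k(p)-f_k(p'))=-\zeta(k)(V_p-V_{p'})\hk\omega=0$, so the images agree in $\widetilde C$; surjectivity is built into the definition of $\widetilde C$ as the image of the Lie kernel; and if $f_k(p)$ is closed then $\zeta(k)V_p\hk\omega=-df_k(p)=0$, whence $[V_p]=0$. The $H$-preserving hypothesis enters only through Propositions \ref{H action} and \ref{H action 2}, which place the $V_p$ among the continuous symmetries and the $f_k(p)$ among the conserved quantities, so that the isomorphism is genuinely one between symmetries and conserved quantities rather than abstract subspaces.

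The delicate point, and the step I would handle most carefully, is bracket-preservation together with the precise sign. Combining $[V_p,V_q]=-V_{[p,q]}$ from Proposition \ref{infinitesimal generator of Schouten} with Proposition \ref{difference is closed} forces the degree-dependent factor $\zeta(k)$ into the map: with $\Phi([V_p]):=\zeta(k)[f_k(p)]$ one computes $\{\Phi([V_p]),\Phi([V_q])\}=\zeta(k)\zeta(l)(-1)^{k+l+kl}[f_{k+l-1}([p,q])]$, whereas $\Phi([V_p,V_q])=-\zeta(k+l-1)[f_{k+l-1}([p,q])]$, and these coincide exactly because of the identity $\zeta(k)\zeta(l)\zeta(k+l-1)=-(-1)^{k+l+kl}$ recorded in Remark \ref{ugly signs}. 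Thus the natural isomorphism is $[V_p]\mapsto\zeta(k)[f_k(p)]$, which agrees with the stated map $[V_p]\mapsto[f_k(p)]$ up to the sign $\zeta(k)=\pm1$ in each degree, and I would state the map with this $\zeta(k)$ to make it a strict homomorphism.

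Finally, the role of equivariance is to sharpen the correspondence at the cohomological level. In $\widetilde C$ we quotient only by closed forms, so Proposition \ref{difference is closed} already suffices for the isomorphism; equivariance, via the remark following that proposition, upgrades the relevant closed forms to exact ones, yielding $\{f_k(p),f_l(q)\}=f_{k+l-1}([p,q])$ on the nose in de Rham cohomology and thereby matching the clean symplectic statement of Proposition \ref{la isomorphism}. The main obstacle throughout is the sign bookkeeping relating the Schouten bracket to the Poisson bracket, but it is entirely controlled by Remark \ref{ugly signs}, so I expect no genuine difficulty beyond careful accounting.
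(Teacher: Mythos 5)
Your proposal is correct and follows essentially the same route as the paper's proof: restrict the graded Lie algebra isomorphism of Theorem \ref{la iso} to the subalgebras $\widetilde S$ and $\widetilde C$, using Proposition \ref{difference is closed} (together with Propositions \ref{Schouten is closed}, \ref{infinitesimal generator of Schouten}, \ref{H action} and \ref{H action 2}) for closure and bracket compatibility. Where you genuinely improve on the paper is the sign bookkeeping, and your version is the correct one. The paper's proof asserts that $X_{f_k(p)}=V_p$ in $\widetilde\X_{\text{Ham}}(M)$ ``since both are Hamiltonian vector fields for $f_k(p)$''; by (\ref{hcmm kernel}) this holds only up to the sign $\zeta(k)$, since $X_{f_k(p)}\hk\omega=-df_k(p)=\zeta(k)V_p\hk\omega$, and $\zeta(k)=-1$ when $k\equiv 0,3\pmod 4$. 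As you observe, this sign cannot be dropped: for the unsigned map $[V_p]\mapsto[f_k(p)]$ with $p\in\Rho_{\g,k}$, $q\in\Rho_{\g,l}$ and $k,l$ both even (say $k=l=2$), Proposition \ref{difference is closed} gives $\{[f_k(p)],[f_l(q)]\}=(-1)^{k+l+kl}[f_{k+l-1}([p,q])]=+[f_{k+l-1}([p,q])]$, while $[V_p,V_q]=-V_{[p,q]}$ is sent to $-[f_{k+l-1}([p,q])]$, so brackets are not intertwined unless $f_{k+l-1}([p,q])$ happens to be closed. Your corrected map $[V_p]\mapsto\zeta(k)[f_k(p)]$, whose compatibility you verify from the identity $\zeta(k)\zeta(l)\zeta(k+l-1)=-(-1)^{k+l+kl}$ of Remark \ref{ugly signs}, is what the corollary should actually say. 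Your further observations --- that equivariance is not needed at the level of $\widetilde C$ (closed forms are quotiented out, so Proposition \ref{difference is closed} suffices), and that the $H$-preserving hypothesis serves only to place the $V_p$ and $f_k(p)$ among the symmetries and conserved quantities --- are also accurate and make explicit what the paper's proof leaves implicit.
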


\begin{proof}
The Lie algebra isomorphism given in Theorem \ref{la iso} is precisely the co-momentum map. Indeed, if $\alpha=f_k(p)$ for $p\in\Rho_{\g,k}$, then $X_{f_k(p)}=V_p$ in $\widetilde\X_{\text{Ham}}(M)$, since both are Hamiltonian vector fields for $\alpha$. Proposition \ref{difference is closed} now shows that the co-momentum map preserves the Lie brackets on these quotient spaces.

\end{proof}

\del{

Now by proposition \ref{H action}, we know that the image of the moment map, when restricted to the Lie kernel, contains entirely conserved quantities, if the action preserves $H$.  Indeed, proposition \ref{H action} says that if the action locally or globally preserves $H$ then $C$ is contained in $C_{\text{loc}}(X_H)$ and if the action strictly preserves $H$ then $C$ is  contained in $C(X_H)$.

\begin{corollary} For a locally or globally $H$ preserving group action, an equivariant homotopy comomentum moment map gives a graded Lie algebra isomorphism from $(C_{\text{loc}}(X_H)\cap \widetilde C,\{\cdot,\cdot\})$ to $(S_{\text{loc}}(H)\cap\widetilde S,[\cdot,\cdot])$. If the action strictly preserves $H$, then the comomentum map gives a graded Lie algebra isomorphism from $(C(X_H)\cap \widetilde C,\{\cdot,\cdot\})$ to $(S(H)\cap\widetilde S,[\cdot,\cdot])$
\end{corollary}

}

\section{Applications}
We first apply the generalized Poisson bracket to extend the theory of classical momentum and position functions on the phase space of a manifold to the multisymplectic phase space.
\subsection{Classical Multisymplectic Momentum and Position Forms}
Recall the following notions from Hamiltonian mechanics:

Let $N$ be a manifold and $(T^\ast N,\omega=-d\theta)$ the canonical phase space. Given a group action on $N$ we can extend this to a group action on $T^\ast N$ that preserves both the tautological forms $\theta$ and $\omega$. It is easy to check that a co-momentum map for this action is given by $f:\g\to C^\infty(N) \ , \ \xi\mapsto V_\xi\hk\theta$. From this co-momentum map we can introduce the classical momentum functions, as discussed in Propositions 4.2.12 and 5.4.4 of \cite{Marsden}. Given $X\in\Gamma(TN)$, its classical momentum function is $P(X)\in C^\infty(T^\ast N)$ defined by $P(X)(\alpha_q):= \alpha_q(X_q)$. Corollary 4.2.11 of \cite{Marsden} then shows that \begin{equation}\label{momentum form equal}P(V_\xi)=f(\xi).\end{equation} Next, given $h\in C^\infty(N)$ define $\widetilde h\in C^\infty(T^\ast N)$ by $\widetilde h= h\circ\pi$. The function $\widetilde h$ is referred to as the corresponding position function. The following Poisson bracket relations between the momentum and position functions are then obtained in Proposition 4.2.12 of \cite{Marsden}: 
\begin{equation}\label{classical 1}\{P(X),P(Y)\}=P([X,Y])\end{equation}
\begin{equation}\label{classical 2}\{\widetilde h,\widetilde g\}=0\end{equation}and
\begin{equation}\label{classical 3}\{\widetilde h, P(X)\}=\widetilde{X(h)}.\end{equation} 
These bracket relations are the starting point for obtaining a quantum system from a classical system. 

\begin{remark}In \cite{Marsden} their first bracket relation actually reads $\{P(X),P(Y)\}=-P([X,Y])$. This is because their defining equation for a Hamiltonian vector field is $dh=X_h\hk\omega$, as compared to our $dh=-X_h\hk\omega$. 
\end{remark}

The goal of this subsection is to show how these concepts generalize to the multisymplectic phase space. As in Example \ref{Multisymplectic Phase Space}, let $N$ be a manifold and $(M,\omega)$ the multisymplectic phase space. That is, $M=\Lambda^k(T^\ast N)$ and $\omega=-d\theta$ is the canonical $(k+1)$-form on $M$. Let $\pi:M\to N$ denote the projection map. In Example \ref{multisymplectic phase space} we showed that \[f_l:\Rho_{\g,l}\to\Omega^{k-l}_{\text{Ham}}(M) \ \ \ \ \ \ \ \ \ \ p\mapsto -\zeta(l+1)V_p\hk\theta\] was a weak homotopy co-momentum map for the action on $M$ induced from the action on $N$.

\begin{definition}
Given decomposable $X=X_1\wedge\cdots\wedge X_l$ in $\Gamma(\Lambda^l(TN))$ we define its momentum form $P(X)\in \Omega^{k-l}(M)$ by \[P(X)(\mu_x)(Z_1,\dots, Z_{k-l}):=-\zeta(l+1) \mu_x(X_1,\cdots,X_l,\pi_\ast Z_1,\cdots, \pi_\ast Z_{k-l}),\]where $\mu_x$ is in $M$, and then extend by linearity to non-decomposables. Moreover, given $\alpha\in\Omega^{k-l}(N)$, we define the corresponding position form to be $\pi^\ast\alpha$, a $(k-l)$-form on $M$. Notice that in symplectic case ($l=1$) this definition coincides with the classical momentum and position functions.
\end{definition}

\begin{definition}
Given a vector field $Y\in\Gamma(TN)$ with flow $\theta_t$, the complete lift of $Y$ is the vector field $Y^\sharp\in\Gamma(TM)$ whose flow is $(\theta_t^\ast)^{-1}$. For a decomposable multivector field $Y=Y_1\wedge\cdots\wedge Y_l\in\Gamma(\Lambda^l(TN))$ we define its complete lift $Y^\sharp$ to be $Y_1^\sharp\wedge\cdots Y_l^\sharp$ and then extend by linearity.
\end{definition}

For $\xi\in\g$, let $V_\xi$ denote its infinitesimal generator on $N$ and let $V^\sharp_\xi$ denote its infinitesimal generator on $M$. Similarly, for $p=\xi_1\wedge\cdots\wedge \xi_l$ in $\Rho_{\g,l}$ let $V_p$ denote $V_{\xi_1}\wedge\cdots\wedge V_{\xi_l}$ and $V^\sharp_p$ denote $V^\sharp_{\xi_1}\wedge\cdots\wedge V^\sharp_{\xi_l}$. Notice that by definition, we are not abusing notation by letting $V_p^\sharp$ denote both the complete lift of $V_p$ and the infinitesimal generator of $p$ under the induced action on $M$.

Lastly, note that by the equivariance of $\pi:M\to N$, we have \[\pi_\ast(V^\sharp_p)=V_p\circ\pi.\]

We now examine the bracket relations between our momentum and position forms. We first rewrite the momentum form in a different way:

\begin{proposition}
\label{momentum form}
For $Y\in\Gamma(\Lambda^l(TN))$ we have that $P(Y)=-\zeta(l+1)Y^\sharp\hk\theta$.
\end{proposition}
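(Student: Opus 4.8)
The plan is to verify the identity $P(Y) = -\zeta(l+1) Y^\sharp \hk \theta$ by showing both sides agree as $(k-l)$-forms on $M$. Since both the momentum form (by its defining formula) and $Y^\sharp \hk \theta$ depend linearly on $Y$, it suffices to treat the decomposable case $Y = Y_1 \wedge \cdots \wedge Y_l$ and extend by linearity; I would state this reduction at the outset.

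First I would fix a point $\mu_x \in M$ with $\mu_x \in \Lambda^k(T_x^\ast N)$, and tangent vectors $Z_1, \ldots, Z_{k-l} \in T_{\mu_x} M$, and evaluate the right-hand side on them. Unwinding the contraction convention from the definition of $\hk$ for multivector fields, namely $Y^\sharp \hk \theta = Y_l^\sharp \hk \cdots \hk Y_1^\sharp \hk \theta$, I would write
\[
(Y^\sharp \hk \theta)(\mu_x)(Z_1, \ldots, Z_{k-l}) = \theta_{\mu_x}(Y_1^\sharp, \ldots, Y_l^\sharp, Z_1, \ldots, Z_{k-l}),
\]
being careful to track the ordering and any signs that the contraction convention introduces. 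Then I would apply the definition of the canonical $k$-form $\theta$ from Example \ref{Multisymplectic Phase Space}, which gives
\[
\theta_{\mu_x}(Y_1^\sharp, \ldots, Y_l^\sharp, Z_1, \ldots, Z_{k-l}) = \mu_x\big(\pi_\ast Y_1^\sharp, \ldots, \pi_\ast Y_l^\sharp, \pi_\ast Z_1, \ldots, \pi_\ast Z_{k-l}\big).
\]

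The key input is then the pushforward relation $\pi_\ast(Y_i^\sharp) = Y_i \circ \pi$, which holds because the complete lift $Y_i^\sharp$ is $\pi$-related to $Y_i$ (its flow is dual to the flow of $Y_i$ and projects to it); this is the analogue of the equivariance relation $\pi_\ast(V_p^\sharp) = V_p \circ \pi$ noted just before the statement. Substituting this in yields exactly $\mu_x(Y_1, \ldots, Y_l, \pi_\ast Z_1, \ldots, \pi_\ast Z_{k-l})$, which upon multiplying by $-\zeta(l+1)$ matches the definition of $P(Y)(\mu_x)(Z_1, \ldots, Z_{k-l})$.

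The main obstacle I anticipate is purely bookkeeping: confirming that the ordering of arguments produced by the iterated contraction $Y_l^\sharp \hk \cdots \hk Y_1^\sharp \hk \theta$ matches the argument ordering $(Y_1, \ldots, Y_l, \pi_\ast Z_1, \ldots)$ in the definition of $P(Y)$ without introducing a spurious sign. Since the contraction convention reverses the order of the $Y_i^\sharp$ but the definition of $\theta$ is alternating, I would check that the net permutation contributes trivially (or that any sign is absorbed consistently on both sides). Once the conventions are aligned, the identity is immediate; I would not expect the verification of $\pi_\ast(Y^\sharp) = Y \circ \pi$ to require more than a one-line appeal to the defining property of the complete lift.
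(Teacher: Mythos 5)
Your proposal is correct and follows essentially the same route as the paper's proof: evaluate both sides pointwise on a decomposable $Y$, unwind the contraction convention, apply the definition of the canonical form $\theta$, and use $\pi_\ast Y_i^\sharp = Y_i\circ\pi$ to land on the defining formula for $P(Y)$. The sign bookkeeping you flag indeed resolves trivially, since the convention $Y^\sharp\hk\theta = Y_l^\sharp\hk\cdots\hk Y_1^\sharp\hk\theta$ produces exactly the argument ordering $\theta(Y_1^\sharp,\ldots,Y_l^\sharp,Z_1,\ldots,Z_{k-l})$ with no extra sign, which is precisely how the paper's computation proceeds.
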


\begin{proof}
Let $Y=Y_1\wedge\cdots Y_l$ be an arbitrary decomposable element of $\Gamma(\Lambda^l(TN))$. Let $Z_1,\cdots, Z_{k-l}$ be arbitrary vector fields on $M$. Fix $\mu_x\in M$. Then

\begin{align*}
(Y^\sharp\hk\theta)_{\mu_x}(Z_1,\cdots, Z_{k-l})&=\theta_{\mu_x}(Y_1^\sharp,\cdots, Y_l^\sharp, Z_1,\cdots, Z_{k-l})\\
&=\mu_x(\pi_\ast Y_1^\sharp,\cdots, \pi_\ast Y_l^\sharp, \pi_\ast Z_1,\cdots, \pi_\ast Z_{k-l})\\
&=\mu_x(Y_1,\cdots,Y_l,\pi_\ast Z_1,\cdots, \pi_\ast Z_{k-l})\\
&=-\zeta(l+1)P(Y)_{\mu_x}(Z_1,\cdots, Z_{k-l}).
\end{align*}
\end{proof}
As a corollary to the above proposition, we obtain a generalization of (\ref{momentum form equal}) to multisymplectic geometry:

\begin{corollary}
For $p=\xi_1\wedge\cdots\wedge\xi_l$ in $\Rho_{\g,l}$ we have \[P(V_p)=f_l(p).\]
\end{corollary}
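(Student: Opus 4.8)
The plan is to chain together two facts that have already been established. First, by definition the momentum form for a decomposable multivector admits the clean description $P(V_p) = -\zeta(l+1)\,V_p^\sharp\hk\theta$, which is exactly the content of Proposition \ref{momentum form} applied to $Y = V_p$. Second, the weak homotopy co-momentum map constructed in Example \ref{multisymplectic phase space} is defined by $f_l(p) = -\zeta(l+1)\,V_p\hk\theta$, where here $V_p$ denotes the infinitesimal generator of $p$ under the \emph{induced} action on $M = \Lambda^k(T^\ast N)$. So both quantities are already expressed as $-\zeta(l+1)$ times a contraction of $\theta$ by a multivector field on $M$, and the entire corollary reduces to identifying those two multivector fields.

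The key observation that bridges the gap is the remark made just before the statement: the complete lift $V_p^\sharp$ of the infinitesimal generator $V_p$ on $N$ coincides with the infinitesimal generator of $p$ under the induced action on $M$. This is precisely the deliberate ``abuse of notation'' flagged in the text, where $V_p^\sharp$ is allowed to denote both objects because they are equal. Granting that identification, the proof is a one-line substitution:
\begin{align*}
P(V_p) &= -\zeta(l+1)\,V_p^\sharp\hk\theta \\
&= f_l(p).
\end{align*}
First I would invoke Proposition \ref{momentum form} with $Y = V_p \in \Gamma(\Lambda^l(TN))$ to rewrite $P(V_p)$, then recall the formula for $f_l$ from Example \ref{multisymplectic phase space}, and finally match the two using the equality of the complete lift with the induced infinitesimal generator.

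The only point requiring genuine care — and the place I would expect a referee to look hardest — is the justification that the complete lift of $V_\xi$ really is the infinitesimal generator of $\xi$ for the lifted action on $M$. This is a standard fact about natural lifts of group actions to tensor bundles (the lifted action is generated by the complete lift of the generating vector field), but it is the conceptual heart of the statement rather than the formal manipulation. In the write-up it is reasonable to treat this as already settled, since the paragraph preceding the corollary explicitly asserts that $V_p^\sharp$ denotes both notions simultaneously and observes that this introduces no ambiguity; the proof of the corollary can therefore legitimately lean on that identification and consist of nothing more than the substitution above. The decomposable case suffices by the usual extension by linearity built into both definitions of $P$ and $f_l$.
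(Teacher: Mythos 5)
Your proof is correct and is essentially identical to the paper's own argument: both invoke Proposition \ref{momentum form} with $Y = V_p$ and then identify the complete lift $V_p^\sharp$ with the infinitesimal generator of $p$ for the lifted action on $M$, which is exactly how $f_l(p)$ is defined in Example \ref{multisymplectic phase space}. No gaps; the care you flag about the lift-versus-generator identification is precisely the point the paper's one-line proof also rests on.
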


\begin{proof}
\del{
Let $\mu_x\in\Lambda^l(T_x^\ast N)$ be arbitrary.  Then, for arbitrary $Z_1,\cdots, Z_{k-l}\in\Gamma(TM)$, we have 

\begin{align*}
f_l(p)(\mu_x)(Z_1,\dots,Z_{k-l})&=-\zeta(l+1)(V^\sharp_p\hk\theta)_{\mu_x}(Z_1,\dots, Z_{k-l})\\
&=-\zeta(l+1)\mu_x(\pi_\ast(V^\sharp_p),\pi_\ast Z_1,\dots,\pi_\ast Z_{k-l})&\text{by definition of $\theta$}\\
&=-\zeta(l+1)\mu_x(V_{\xi_1},\dots,V_{\xi_l},\pi_\ast Z_1,\dots,\pi_\ast Z_{k-l})\\
&=P(V_p)(\mu_x)(Z_1,\dots,Z_{k-l})
\end{align*}
}This follows immediately from Proposition \ref{momentum form} since $V_p^\sharp$ is the infinitesimal generator of $p$ on $M$.
\end{proof}

\del{
\begin{remark}
If we have a group acting on $N$, then the induced action on $M$ is given precisely by the pull back. Hence, for an element $\xi\in\g$ we have that the infinitesimal generator on $M$ is exactly the complete lift of the infinitesimal generator of $\xi$ on $N$. This justifies the use of the $\sharp$ notation in the above proposition.
\end{remark}
}

In the symplectic case, given $Y\in\Gamma(TN)$ the complete lift $Y^\sharp\in\Gamma(T(T^\ast N))$ preserves the tautological forms $\theta$ and $\omega$. Hence $d(Y^\sharp\hk\theta)=Y^\sharp\hk\omega$ showing that each momentum function is Hamiltonian with Hamiltonian vector field the complete lift of the base vector field. 

In the multisymplectic case, it is no longer true that $\L_{Y^\sharp}\theta=0$ for a multivector field $Y$. Instead, we need to restrict our attention to multivector fields in the Lie kernel, which we defined in Definition \ref{Lie kernel}. We quickly recall this definition and some terminology and notation introduced in \cite{MS}. 

Any degree $l$-multivector field is a sum of multivectors of the form $Y=Y_1\wedge\cdots\wedge Y_l$. We consider the differential graded Lie algebra $(\Gamma(\Lambda^\bullet(TN)),\pd)$ where $\pd_l:\Gamma(\Lambda^l(TN))\to\Gamma(\Lambda^{l-1}(TN))$ is given by \[\pd_l(Y_1\wedge\cdots\wedge Y_l)=\sum_{1\leq i\leq j\leq l}[Y_i,Y_j]\wedge Y_1\wedge\cdots\wedge\widehat Y_i\wedge\cdots\wedge\widehat Y_j\wedge\cdots\wedge Y_l.\]As in \cite{MS}, for a differential form $\tau$, let \[(\hk \ \L)_Y\tau=\sum_{i=1}^lY_1\wedge\cdots\wedge \widehat Y_i\wedge\cdots\wedge Y_l\hk\L_{Y_i}\tau.\]

A more general version of Lemma \ref{extended Cartan} is given by Lemma 3.4 of \cite{MS}:

\begin{lemma}
\label{extended Cartan lemma}
For a differential form $\tau$ and $Y=Y_1\wedge\cdots\wedge Y_l\in\Gamma(\Lambda^l(TN))$ we have that \[Y\hk d\tau-(-1)^ld(Y\hk\tau)=(\hk \ \L)_Y\tau-\pd_l(Y)\hk\tau.\]
\end{lemma}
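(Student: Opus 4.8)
The plan is to first rewrite the left-hand side using the very definition of the multivector Lie derivative. By equation (\ref{Lie}), for $Y$ of degree $l$ one has $\L_Y\tau = d(Y\hk\tau) - (-1)^l Y\hk d\tau$, so that
\[
Y\hk d\tau - (-1)^l d(Y\hk\tau) = -(-1)^l\L_Y\tau .
\]
Thus the lemma is equivalent to the assertion that $-(-1)^l\L_Y\tau = (\hk \ \L)_Y\tau - \pd_l(Y)\hk\tau$; in other words, it is a formula expressing the Lie derivative along the decomposable multivector $Y = Y_1\wedge\cdots\wedge Y_l$ in terms of the ordinary Lie derivatives $\L_{Y_i}\tau$ and the bracket contractions packaged in $\pd_l(Y)\hk\tau$. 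I would prove this by induction on $l$. The base case $l=1$ is exactly Cartan's magic formula: there $(\hk \ \L)_{Y_1}\tau = \L_{Y_1}\tau$ and $\pd_1 = 0$, matching $-(-1)^1\L_{Y_1}\tau = \L_{Y_1}\tau$.

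For the inductive step I would split $Y = Y'\wedge Y_l$ with $Y' = Y_1\wedge\cdots\wedge Y_{l-1}$ of degree $l-1$, and apply the product rule for the multivector Lie derivative from Proposition \ref{properties}, which gives $\L_Y\tau = -\,Y_l\hk\L_{Y'}\tau + \L_{Y_l}(Y'\hk\tau)$. The first term is rewritten by the inductive hypothesis applied to $Y'$. For the second term I would use equation (\ref{bracket hook}) with the single vector field $Y_l$ (where the sign prefactor $(-1)^{(k+1)l}$ collapses to $1$), obtaining $\L_{Y_l}(Y'\hk\tau) = [Y_l,Y']\hk\tau + Y'\hk\L_{Y_l}\tau$, and then expand the Schouten bracket by its Leibniz property as $[Y_l,Y'] = \sum_{i=1}^{l-1} Y_1\wedge\cdots\wedge[Y_l,Y_i]\wedge\cdots\wedge Y_{l-1}$. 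The remaining work is purely combinatorial: the terms $[Y_l,Y_i]\wedge(\cdots)$ are, up to sign, precisely the extra terms with $j=l$ needed to upgrade $\pd_{l-1}(Y')$ to $\pd_l(Y)$, while the contributions $Y'\hk\L_{Y_l}\tau$ and $Y_l\hk(\hk \ \L)_{Y'}\tau$ reassemble into $(\hk \ \L)_Y\tau$ once the retained wedge factors are reindexed.

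The main obstacle is sign bookkeeping. The statement inherits three separate conventions from \cite{MS} — the ordering convention in the contraction $\hk$, the placement of the bracket in $\pd_l$, and the sign-free form of $(\hk \ \L)_Y$ — and one must check that the Koszul signs generated by the product rule, by moving $Y_l$ into contraction position, and by equation (\ref{bracket hook}) all conspire to reproduce exactly $(\hk \ \L)_Y\tau - \pd_l(Y)\hk\tau$ rather than a sign-twisted variant; a low-degree sanity check (e.g.\ $l=2$, where the identity reduces to the relation between $[Y_1,Y_2]\hk\tau$ and the $\L_{Y_i}$ via equation (\ref{bracket hook})) is worth carrying out first to fix the conventions. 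Alternatively, and more economically, one may observe that this is simply the Extended Cartan Lemma (Lemma \ref{extended Cartan}) with the infinitesimal generators $V_{\xi_i}$ replaced by arbitrary vector fields $Y_i$: the proof of that lemma given in \cite{ms} and \cite{cq} never uses the group action beyond treating the $V_{\xi_i}$ as vector fields and their wedge as a multivector field — the action enters only afterwards, through the identity $\pd V_p = -V_{\pd p}$ of Proposition \ref{infinitesimal generator of Schouten}. Hence the identical argument applies verbatim, with the Chevalley–Eilenberg differential on $\g$ replaced by the differential $\pd$ on $\Gamma(\Lambda^\bullet(TN))$, which is exactly the content of Lemma 3.4 of \cite{MS}.
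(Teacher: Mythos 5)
Your strategy is sound, but note that the paper itself offers no argument here at all: Lemma \ref{extended Cartan lemma} is introduced purely as a citation of Lemma 3.4 of \cite{MS}, exactly as Lemma \ref{extended Cartan} is cited from \cite{ms} and \cite{cq}. Your closing observation --- that the proof of the Extended Cartan Lemma never uses the group action (which enters only afterwards, through $\partial V_p=-V_{\partial p}$ of Proposition \ref{infinitesimal generator of Schouten}), so the identical argument applies to arbitrary vector fields $Y_i$ --- is precisely the paper's implicit justification, and it is valid. What you add beyond the paper is a self-contained inductive proof, whose skeleton is correct: the base case is Cartan's formula; writing $Y=Y'\wedge Y_l$, the product rule of Proposition \ref{properties} gives $\L_{Y'\wedge Y_l}\tau=-Y_l\hk\L_{Y'}\tau+\L_{Y_l}(Y'\hk\tau)$, equation (\ref{bracket hook}) applied with the single vector field $Y_l$ gives $\L_{Y_l}(Y'\hk\tau)=[Y_l,Y']\hk\tau+Y'\hk\L_{Y_l}\tau$ with trivial prefactor, and the Leibniz property of the Schouten bracket expands $[Y_l,Y']$ into (up to sign) the $j=l$ terms of $\pd_l(Y)$. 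One concrete warning, which vindicates your insistence on an $l=2$ sanity check rather than treating it as optional: with the sign-free formulas for $(\hk\ \L)_Y$ and $\pd_l$ as transcribed in this section of the paper, the identity is actually false --- already for $Y=Y_1\wedge Y_2$ a direct computation with the paper's contraction convention gives $Y\hk d\tau-d(Y\hk\tau)=Y_2\hk\L_{Y_1}\tau-Y_1\hk\L_{Y_2}\tau+[Y_1,Y_2]\hk\tau$, which differs from the unsigned right-hand side by $2\bigl(Y_1\hk\L_{Y_2}\tau-[Y_1,Y_2]\hk\tau\bigr)$, nonzero already for commuting coordinate vector fields. The statement is correct under the conventions of \cite{MS}, in which the sum defining $(\hk\ \L)_Y$ carries alternating signs (as it does in the paper's own Lemma \ref{extended Cartan}) and $\pd_l$ is the signed Chevalley--Eilenberg differential; your induction closes without difficulty once those signs are reinstated, and carrying it out would have the added benefit of pinning down the conventions that the paper leaves inconsistent.
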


\begin{definition}
As in Definition \ref{Lie kernel}, we call $\Rho_{l}=\text{ker }\pd_l$ the $l$-th Lie kernel. 
\end{definition}

\begin{proposition}
\label{momentum form 2}
For an $l$-multivector field in the Lie kernel, $Y\in\Rho_{l}$, we have that $P(Y)$ is in $\Omega^{k-l}_{\mathrm{Ham}}(M)$. More precisely, $\zeta(l)Y^\sharp$ is a Hamiltonian multivector field for $P(Y)$. 
\end{proposition}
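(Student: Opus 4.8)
The plan is to verify the single identity that the statement reduces to. Since $M=\Lambda^k(T^\ast N)$ is $k$-plectic and $P(Y)\in\Omega^{k-l}(M)$, by the definition of a Hamiltonian form the assertion that $\zeta(l)Y^\sharp$ is a Hamiltonian multivector field for $P(Y)$ is exactly the equation
\[
dP(Y)=-\zeta(l)\,Y^\sharp\hk\omega.
\]
So the whole proof amounts to computing $dP(Y)$ and matching signs.

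First I would start from the rewriting $P(Y)=-\zeta(l+1)\,Y^\sharp\hk\theta$ of Proposition \ref{momentum form}, so that $dP(Y)=-\zeta(l+1)\,d(Y^\sharp\hk\theta)$. Applying the Extended Cartan Lemma (Lemma \ref{extended Cartan lemma}) with $\tau=\theta$ gives
\[
Y^\sharp\hk d\theta-(-1)^l d(Y^\sharp\hk\theta)=(\hk\,\L)_{Y^\sharp}\theta-\partial_l(Y^\sharp)\hk\theta.
\]
The strategy is to show that both terms on the right vanish; then, using $d\theta=-\omega$, this collapses to $d(Y^\sharp\hk\theta)=(-1)^{l+1}Y^\sharp\hk\omega$.

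The two vanishing facts are the structural heart of the argument, and both come from properties of the complete lift. For the term $(\hk\,\L)_{Y^\sharp}\theta=\sum_i Y_1^\sharp\wedge\cdots\widehat{Y_i^\sharp}\cdots\wedge Y_l^\sharp\hk\L_{Y_i^\sharp}\theta$, I would use that the complete lift of any vector field on $N$ preserves the tautological form, i.e. $\L_{Y_i^\sharp}\theta=0$; this is the infinitesimal form of the fact (noted in Example \ref{multisymplectic phase space}) that the natural lift of a diffeomorphism to $M$ preserves $\theta$, applied to the flow $(\theta_t^\ast)^{-1}$ of $Y_i^\sharp$. For the term $\partial_l(Y^\sharp)$, I would use that the complete lift is a morphism of Lie algebras, $[Y_i^\sharp,Y_j^\sharp]=[Y_i,Y_j]^\sharp$, which together with the compatibility of the lift with the wedge product gives $\partial_l(Y^\sharp)=(\partial_l Y)^\sharp$; since $Y\in\Rho_{l}=\ker\partial_l$ this is zero. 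This is the base-to-phase-space analogue of Proposition \ref{infinitesimal generator of Schouten}.

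Finally I would reconcile the signs. Combining the above with $dP(Y)=-\zeta(l+1)\,d(Y^\sharp\hk\theta)$ yields $dP(Y)=-\zeta(l+1)(-1)^{l+1}Y^\sharp\hk\omega$, and the identity $\zeta(l)\zeta(l+1)=(-1)^{l+1}$ from Remark \ref{ugly signs} (so that $\zeta(l+1)=(-1)^{l+1}\zeta(l)$, using $\zeta(l)^2=1$) turns the coefficient into $-\zeta(l)$, giving precisely $dP(Y)=-\zeta(l)\,Y^\sharp\hk\omega$. By the definition of a Hamiltonian form this is exactly the claim that $\zeta(l)Y^\sharp$ is a Hamiltonian multivector field for $P(Y)$, and hence $P(Y)\in\Omega^{k-l}_{\mathrm{Ham}}(M)$. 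I expect the only real obstacle to be cleanly justifying the two complete-lift identities, $\L_{Y_i^\sharp}\theta=0$ and $[Y_i^\sharp,Y_j^\sharp]=[Y_i,Y_j]^\sharp$; once these are in hand, the remainder is sign bookkeeping already packaged in the earlier lemmas.
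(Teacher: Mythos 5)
Your proposal is correct and follows essentially the same route as the paper: rewrite $P(Y)=-\zeta(l+1)Y^\sharp\hk\theta$ via Proposition \ref{momentum form}, apply Lemma \ref{extended Cartan lemma} with both right-hand terms vanishing, and finish with the sign identity $\zeta(l)\zeta(l+1)=(-1)^{l+1}$ from Remark \ref{ugly signs}. The only difference is that you justify the two vanishing facts ($\L_{Y_i^\sharp}\theta=0$ and $\partial_l(Y^\sharp)=(\partial_l Y)^\sharp=0$) more explicitly than the paper, which simply asserts them; this is a welcome tightening, not a different argument.
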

\begin{proof}
Abusing notation, let $\partial_l$ denote the differential on both $\Gamma(\Lambda^\bullet(TN))$ and $\Gamma(\Lambda^\bullet(TM))$. By definition, we have $\partial_l(Y)=0$. It follows that $\partial_l(Y^\sharp)=0$. Now, since the action on $M$ preserves $\theta$, we have that $(\hk \ \L)_{Y^\sharp}\theta=0$. Thus, by Proposition \ref{momentum form} and Lemma  \ref{extended Cartan lemma}, we have that 
\begin{align*}
d(P(Y))&=-\zeta(l+1)d(Y^\sharp\hk\theta)\\
&=-\zeta(l+1)(-1)^l(Y^\sharp\hk d\theta)\\
&=\zeta(l+1)(-1)^{l}(Y^\sharp\hk\omega)\\
&=-\zeta(l)Y^\sharp\hk\omega,
\end{align*} where in the last equality we used Remark \ref{ugly signs}.
\end{proof}

\begin{remark}
In the setup of classical Hamiltonian mechanics, the phase space of $N$ is just $T^\ast N$, and so $k=l=1$. Since $\Rho_{1}=\Gamma(TN)$ we see that we are obtaining a generalization from Hamiltonian mechanics.
\end{remark}
We now arrive at our generalization of equation (\ref{classical 1}):

\begin{proposition}
For $Y_1\in\Rho_s$ and $Y_2\in\Rho_t$ we have that
\[\{P(Y_1),P(Y_2)\}=-(-1)^{ts+s+t}P([Y_1,Y_2]) -\zeta(s+1)\zeta(t+1)d(Y_1^\sharp\hk Y_2^\sharp\hk\theta).\]

\end{proposition}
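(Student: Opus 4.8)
The plan is to reduce both sides of the identity to contractions of the canonical forms $\theta$ and $\omega=-d\theta$ by the complete lifts $Y_1^\sharp$ and $Y_2^\sharp$, and then match terms. First I would record the two ingredients coming from Proposition \ref{momentum form 2}: for $Y_1\in\Rho_s$ and $Y_2\in\Rho_t$ the multivector fields $\zeta(s)Y_1^\sharp$ and $\zeta(t)Y_2^\sharp$ are Hamiltonian multivector fields for $P(Y_1)$ and $P(Y_2)$ respectively. Since $P(Y_1)\in\Omega^{k-s}_{\mathrm{Ham}}(M)$ has grading $s+1$ and $P(Y_2)$ has grading $t+1$, the definition of the generalized Poisson bracket gives directly
\[\{P(Y_1),P(Y_2)\}=(-1)^{t+1}\zeta(s)\zeta(t)\,Y_2^\sharp\hk Y_1^\sharp\hk\omega,\]
a single contraction term carrying no exact part.

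Next I would rewrite $P([Y_1,Y_2])$. This requires two structural facts. The first is that the Lie kernel is closed under the Schouten bracket, so $[Y_1,Y_2]\in\Rho_{s+t-1}$; this is the multivector-field analogue of Proposition \ref{Schouten is closed}, and holds because $\pd$ is a graded derivation of the Schouten bracket. The second is that the complete lift is a homomorphism for the Schouten bracket, $[Y_1,Y_2]^\sharp=[Y_1^\sharp,Y_2^\sharp]$; this follows because $\psi\mapsto(\psi^{-1})^\ast$ is a group homomorphism $\mathrm{Diff}(N)\to\mathrm{Diff}(M)$, so the lift respects Lie brackets of vector fields, and the Schouten bracket of decomposables is built from such brackets and from wedge products, both of which the lift preserves. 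Granting these, Proposition \ref{momentum form} yields $P([Y_1,Y_2])=-\zeta(s+t)\,[Y_1^\sharp,Y_2^\sharp]\hk\theta$.

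The bridge between the two expressions is the interior-product formula (\ref{interior equation}) of Proposition \ref{interior}, applied with $X=Y_1^\sharp$, $Y=Y_2^\sharp$ and $\tau=\theta$. The key simplification is that $d(Y_1^\sharp\hk\theta)=(-1)^sY_1^\sharp\hk d\theta$ and $d(Y_2^\sharp\hk\theta)=(-1)^tY_2^\sharp\hk d\theta$: by Lemma \ref{extended Cartan lemma} the correction terms vanish, since the induced action preserves $\theta$ (so $(\hk\,\L)_{Y_i^\sharp}\theta=0$) and each $Y_i$ lies in the Lie kernel (so $\pd Y_i^\sharp=0$). After substituting these, the two terms containing $Y_1^\sharp\hk Y_2^\sharp\hk d\theta$ cancel, and using $d\theta=-\omega$ I would obtain
\[[Y_1^\sharp,Y_2^\sharp]\hk\theta=(-1)^s\,Y_2^\sharp\hk Y_1^\sharp\hk\omega+(-1)^t\,d(Y_2^\sharp\hk Y_1^\sharp\hk\theta).\]
Solving this for $Y_2^\sharp\hk Y_1^\sharp\hk\omega$ and feeding it into the formula for the bracket expresses the left-hand side as a multiple of $[Y_1^\sharp,Y_2^\sharp]\hk\theta=-\zeta(s+t)^{-1}P([Y_1,Y_2])$ plus a multiple of the exact term $d(Y_2^\sharp\hk Y_1^\sharp\hk\theta)$.

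The remaining work, and the main obstacle, is purely the sign bookkeeping. For the non-exact part I would use $\zeta(s)\zeta(t)\zeta(s+t)=-(-1)^{st}$, which follows from Remark \ref{ugly signs}, and this reproduces the coefficient $-(-1)^{ts+s+t}$ of $P([Y_1,Y_2])$. For the exact part I would use $\zeta(s+1)\zeta(t+1)=(-1)^{s+t}\zeta(s)\zeta(t)$ together with the graded-commutativity relation $Y_1^\sharp\hk Y_2^\sharp\hk\theta=(-1)^{st}\,Y_2^\sharp\hk Y_1^\sharp\hk\theta$ to convert $d(Y_2^\sharp\hk Y_1^\sharp\hk\theta)$ into the stated $d(Y_1^\sharp\hk Y_2^\sharp\hk\theta)$. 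As a safeguard against sign slips, I would independently check that both sides share the same Hamiltonian multivector field, namely $\zeta(s)\zeta(t)[Y_1^\sharp,Y_2^\sharp]$ modulo the kernel of $\omega$ (via Proposition \ref{Poisson is Schouten}); this confirms a priori that the two sides differ only by a closed form, so that the full content of the identity is the correct identification of that exact discrepancy.
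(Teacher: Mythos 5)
Your proposal is, step for step, the paper's own proof: the same expression $\{P(Y_1),P(Y_2)\}=(-1)^{t+1}\zeta(s)\zeta(t)\,Y_2^\sharp\hk Y_1^\sharp\hk\omega$ obtained from Proposition \ref{momentum form 2}, the same reductions $d(Y_i^\sharp\hk\theta)=(-1)^{\deg Y_i+1}Y_i^\sharp\hk\omega$ (Lie kernel plus $\theta$-invariance via Lemma \ref{extended Cartan lemma}), the same application of equation (\ref{interior equation}) with $\tau=\theta$ yielding the bridge identity $[Y_1^\sharp,Y_2^\sharp]\hk\theta=(-1)^s\,Y_2^\sharp\hk Y_1^\sharp\hk\omega+(-1)^t\,d(Y_2^\sharp\hk Y_1^\sharp\hk\theta)$, and the same concluding sign bookkeeping. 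The one place you go beyond the paper is in justifying $[Y_1,Y_2]^\sharp=[Y_1^\sharp,Y_2^\sharp]$, which the paper uses silently when it writes $P([Y_1,Y_2])=-\zeta(s+t)[Y_1^\sharp,Y_2^\sharp]\hk\theta$; making that explicit is a genuine, if small, improvement. (The closure of the Lie kernel under the Schouten bracket, which you also invoke, is not actually needed: Proposition \ref{momentum form} applies to arbitrary multivector fields.)

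There is, however, a defect in the final step, and it is shared by the paper: the bookkeeping you describe does \emph{not} reproduce the stated coefficient of the exact term. Solving the bridge identity for $Y_2^\sharp\hk Y_1^\sharp\hk\omega$ and substituting, using precisely your two sign identities $\zeta(s)\zeta(t)\zeta(s+t)=-(-1)^{st}$ and $\zeta(s+1)\zeta(t+1)=(-1)^{s+t}\zeta(s)\zeta(t)$ together with $Y_1^\sharp\hk Y_2^\sharp\hk\theta=(-1)^{st}Y_2^\sharp\hk Y_1^\sharp\hk\theta$, one arrives at
\[\{P(Y_1),P(Y_2)\}=-(-1)^{st+s+t}P([Y_1,Y_2])+(-1)^{st+t}\zeta(s+1)\zeta(t+1)\,d(Y_1^\sharp\hk Y_2^\sharp\hk\theta),\]
whose exact term agrees with the stated $-\zeta(s+1)\zeta(t+1)\,d(Y_1^\sharp\hk Y_2^\sharp\hk\theta)$ only when $t(s+1)$ is odd. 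For $s=t=1$ the two differ by a sign, and a direct computation settles it: on $M=\Lambda^2(T^\ast\R^3)$, in the fibre coordinates of Lemma \ref{push zero}, take $Y_1=\frac{\pd}{\pd q^1}$ and $Y_2=q^1\frac{\pd}{\pd q^2}$; then $\{P(Y_1),P(Y_2)\}=-q^1dp_{12}-p_{23}dq^3$, which equals $P([Y_1,Y_2])+d(Y_1^\sharp\hk Y_2^\sharp\hk\theta)$ and not $P([Y_1,Y_2])-d(Y_1^\sharp\hk Y_2^\sharp\hk\theta)$. So the error lies in the proposition's statement (and in the paper's closing ``equating the two equations gives the result''), not in your method; but your proposal, by asserting that the signs match, inherits it. Note also that the safeguard you suggest---checking that both sides have the same Hamiltonian multivector field via Proposition \ref{Poisson is Schouten}---cannot catch this, since it only controls the identity modulo closed forms, and the discrepancy sits exactly in the exact term.
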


\begin{proof}
Using Proposition \ref{momentum form 2}, Remark \ref{ugly signs} and the definition of the bracket, we have
\begin{equation}\label{ugly Poisson}
\begin{aligned}\{P(Y_1),P(Y_2)\}&=(-1)^{t+1}\zeta(t)\zeta(s)Y_2^\sharp\hk Y_1^\sharp\hk\omega\\&=(-1)^{ts+t}\zeta(s+t)Y_2^\sharp\hk Y_1^\sharp\hk\omega.\end{aligned}\end{equation}On the other hand, by Proposition \ref{momentum form} and Remark \ref{ugly signs} we have \begin{align*}P([Y_1,Y_2])&=-\zeta(s+t)[Y_1^\sharp,Y_2^\sharp]\hk\theta.\end{align*} By Proposition \ref{momentum form 2} and Remark \ref{ugly signs} we have that \[d(Y_1^\sharp\hk\theta)=(-1)^{s+1}Y_1^\sharp\hk\omega\] and \[d(Y_2^\sharp\hk\theta)=(-1)^{t+1}Y_2^\sharp\hk\omega.\] Using these two equations and equation (\ref{interior equation}) we have that

\begin{align*}
[Y_1^\sharp,Y_2^\sharp]\hk\theta&=-Y_2^\sharp\hk(d(Y_1^\sharp\hk\theta))+(-1)^td(Y_2^\sharp\hk Y_1^\sharp\hk\theta)-(-1)^{st+s}Y_1^\sharp\hk Y_2^\sharp\hk\omega-(-1)^{st+s+t}Y_1^\sharp\hk(d(Y_2^\sharp\hk\theta))\\
&=(-1)^s(Y_2^\sharp\hk Y_1^\sharp\hk\omega)+(-1)^td(Y_2^\sharp\hk Y_1^\sharp\hk\theta)-(-1)^{st+s}Y_1^\sharp\hk Y_2^\sharp\hk\omega-(-1)^{st+s+t}(-1)^{t+1}Y_1^\sharp\hk Y_2^\sharp\hk\omega\\
&=(-1)^s(Y_2^\sharp\hk Y_1^\sharp\hk\omega)+(-1)^td(Y_2^\sharp\hk Y_1^\sharp\hk\theta)-(-1)^{st+s}Y_1^\sharp\hk Y_2^\sharp\hk\omega+(-1)^{st+s}Y_1^\sharp\hk Y_2^\sharp\hk\omega\\
&=(-1)^s(Y_2^\sharp\hk Y_1^\sharp\hk\omega)+(-1)^td(Y_2^\sharp\hk Y_1^\sharp\hk\theta).
\end{align*}Thus, \begin{equation}\label{ugly Poisson 2}P([Y_1^\sharp,Y_2^\sharp])=-\zeta(s+t)(-1)^s(Y_2^\sharp\hk Y_1^\sharp\hk\omega)-\zeta(s+t)(-1)^td(Y_2^\sharp\hk Y_1^\sharp\hk\theta).\end{equation}
Equating equations (\ref{ugly Poisson}) and (\ref{ugly Poisson 2}) and using Remark \ref{ugly signs} gives the result.
\end{proof}

To generalize (\ref{classical 2}) and (\ref{classical 3}) to the multisymplectic phase space, we need the following lemma:
\begin{lemma}\label{push zero}
Let $\alpha$ be an arbitrary $(k-l)$-form on $N$ and let $\pi^\ast\alpha$ be the corresponding classical position form in $\Omega^{k-l}(M)$. Then $\pi^\ast\alpha$ is Hamiltonian and $\pi_\ast(X_{\pi^\ast\alpha})=0$.
\end{lemma}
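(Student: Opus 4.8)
The plan is to reduce everything to a local computation on the vector bundle $\pi\colon M=\Lambda^k(T^\ast N)\to N$, exploiting the splitting of forms on $M$ into horizontal and vertical parts. Choosing local coordinates $x^1,\dots,x^m$ on $N$ and the induced fibre coordinates $p_I$ (indexed by increasing $k$-tuples $I$) on $M$, the tautological form is $\theta=\sum_I p_I\,dx^I$, so that $\omega=-d\theta=-\sum_I dp_I\wedge dx^I$, where $dx^I$ denotes the pullback $\pi^\ast(dx^{i_1}\wedge\cdots\wedge dx^{i_k})$. Since $d(\pi^\ast\alpha)=\pi^\ast(d\alpha)=\sum_{|L|=k-l+1}e_L(x)\,dx^L$ is a pullback form (it contains no $dp_I$ factors), the whole problem is to find an $l$-vector field $X$ with $X\hk\omega=-\pi^\ast(d\alpha)$ and to control its pushforward. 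The basic computation I will use repeatedly is $\partial_{p_I}\hk\omega=-dx^I$.

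First I would establish that $\pi^\ast\alpha$ is Hamiltonian by writing down an explicit vertical Hamiltonian multivector field. For an increasing $(l-1)$-tuple $J\subset I$ one checks that $(\partial_{p_I}\wedge\partial_{x^J})\hk\omega=\pm\,dx^{I\setminus J}$, and, assuming $m\ge k$ (otherwise $M$ is the zero bundle and the statement is vacuous), as $I$ and $J$ vary the forms $dx^{I\setminus J}$ run over all horizontal $(k-l+1)$-forms. Hence, by matching coefficients, there is an $l$-vector field $X$ each of whose terms contains a factor $\partial_{p_I}$ and which satisfies $X\hk\omega=-\pi^\ast(d\alpha)$; this shows $\pi^\ast\alpha\in\Omega^{k-l}_{\mathrm{Ham}}(M)$. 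Because every term of this $X$ carries a vertical factor $\partial_{p_I}$, we already get $\pi_\ast X=0$ for this particular choice.

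It then remains to show that \emph{every} Hamiltonian multivector field of $\pi^\ast\alpha$ pushes forward to zero, not merely the one just constructed. Here I would use the vertical bigrading: decompose an arbitrary $l$-vector field as $X=X_H+X'$, where $X_H$ is purely horizontal (a combination of the $\partial_{x^K}$ with $|K|=l$) and every term of $X'$ contains at least one $\partial_{p_I}$. Since $\omega$ has vertical degree exactly one, contracting with the horizontal $X_H$ preserves vertical degree one, giving $X_H\hk\omega=-\sum_K c_K\sum_{I\supset K}\pm\,dp_I\wedge dx^{I\setminus K}$, whereas $X'\hk\omega$ is horizontal (the single $dp_I$ is contracted away). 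As $-\pi^\ast(d\alpha)$ is horizontal, comparing vertical degrees in $X\hk\omega=-\pi^\ast(d\alpha)$ forces $X_H\hk\omega=0$. Reading off the coefficient of each $dp_I\wedge dx^{I\setminus K}$ shows the contraction map is injective on purely horizontal $l$-vectors, whence $X_H=0$ and therefore $\pi_\ast X=\pi_\ast X'=0$.

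The main obstacle is the bookkeeping in this last step: making the horizontal/vertical bigrading precise (it is canonical for the vertical subbundle $\ker\pi_\ast$, but some care is needed to phrase it without choosing a horizontal complement) and verifying cleanly that $\hk\omega$ is injective on purely horizontal multivector fields, since it is exactly this injectivity that kills the horizontal part of every Hamiltonian multivector field. Everything else is the routine identity $d\pi^\ast\alpha=\pi^\ast d\alpha$ together with the explicit shape of $\omega$.
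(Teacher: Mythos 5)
Your proposal is correct and takes essentially the same approach as the paper's proof: both work in the induced local coordinates, write $\omega=-\sum_I dp_I\wedge dx^I$, produce a Hamiltonian multivector field for $\pi^\ast\alpha$ whose every term carries a vertical factor $\partial_{p_I}$, and then kill the purely horizontal part of an arbitrary Hamiltonian multivector field by comparing vertical degrees in the equation $X\hk\omega=-\pi^\ast(d\alpha)$. In fact, your write-up supplies exactly the two steps the paper leaves implicit, namely the ``exercise in combinatorics'' for existence (your $(\partial_{p_I}\wedge\partial_{x^J})\hk\omega=\pm\,dx^{I\setminus J}$ construction, valid when $\dim N\geq k$) and the injectivity of contraction with $\omega$ on purely horizontal $l$-vectors, which is what justifies the paper's assertion that the coefficients $a^{i_1\cdots i_l}$ necessarily vanish.
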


\begin{proof}
Let $q^1,\cdots,q^n$ denote coordinates on $N$, and let $\{p_{i_1\cdots i_k}; 1\leq i_1<\cdots<i_k\leq n \}$ denote the induced fibre coordinates on $M$. In these coordinates we have that \[\theta=\sum_{1\leq i_1<\cdots<i_k\leq n}p_{i_1\cdots i_k}dq^{i_1}\wedge\cdots\wedge dq^{i_k}\] so that \[\omega=-d\theta=\sum_{1\leq i_1<\cdots<i_k\leq n}-dp_{i_1\cdots i_k}\wedge dq^{i_1}\wedge\cdots\wedge dq^{i_k}.\] An arbitrary $(k-l)$-form $\alpha$ on $N$ is given by \[\alpha=\alpha_{i_1\cdots i_{k-l}}dq^{i_1}\wedge\cdots\wedge dq^{i_{k-l}}.\]Abusing notation, it follows that \[\pi^\ast\alpha=\alpha_{i_1\cdots i_{k-l}}dq^{i_1}\wedge\cdots\wedge dq^{i_{k-l}}.\]Thus,\[d\pi^\ast\alpha=\frac{\pd\alpha_{i_1\cdots i_{k-l}}}{\pd q^j}dq^j\wedge dq^{i_1}\wedge\cdots\wedge dq^{i_{k-l}}.\] An arbitrary $l$-vector field on $M$ is of the form \[X=a^{i_1\cdots i_l}\frac{\pd}{\pd q^{i_1}}\cdots\frac{\pd}{\pd q^{i_{l}}}+a^{i_1\cdots i_{l-1}}_{J_1}\frac{\pd}{\pd q^{i_1}}\cdots\frac{\pd}{\pd q^{i_{l-1}}}\wedge\frac{\pd}{\pd p^{J_1}}+\cdots+a_{J_1\cdots J_{l}}\frac{\pd}{\pd p^{J_1}}\cdots\frac{\pd}{\pd p^{J_l}}.\] Now, the multivector field $X_{\pi^\ast\alpha}$ we are looking for satisfies $X_{\pi^\ast\alpha}\hk\omega= d\pi^\ast\alpha$. An exercise in combinatorics shows that there always exists an $l$-vector field $X$ satisfying $X\hk\omega=d\pi^\ast\alpha$, proving that $\pi^\ast\alpha$ is Hamiltonian. Note we can see directly from the equality $X\hk\omega=d\pi^\ast\alpha$ that necessarily  \[a^{i_1\cdots i_l}=0.\] Thus $\pi_\ast(X_{\pi^\ast\alpha})=0$ as desired.

\del{

  \[a^{i_1\cdots i_{l-2}}_{J_1J_2}= \cdots = a_{J_1J_2\cdots J_l}=0.\] 
By taking the interior product of this vector field with $\omega$, we see that
 We can see that for this to happen, necessarily \[a^{i_1\cdots i_l}=0.\] Moreover, one can show through an exercise in combinatorics that system of equations for modelling the equation of finding $a_{J_1}^{i_1\cdots i_{l-1}}$ such that $X\hk\omega=d\pi^\ast\alpha$ always exists showing that $\pi^\ast\alpha$ is always a Hamiltonian form. Moreover, since $a^{i_1\cdots i_l}=0,$ it follows that $\pi_\ast(X_{\pi^\ast\alpha})=0.$ \del{

 Hencesatisfying the equation  To simplify the notation, let $I_{l-1}$ denote a multi-index set of length $l-1$. It follows the vector field $X_{\pi^\ast\alpha}$ we are looking for is of the from $X_{\pi^\ast\alpha}=a^{I_{l-1}}_{J_k}\frac{\pd}{\pd a^{I_{l-1}}}\wedge\frac{\pd}{\pd p^J}$

 Then $X_{\pi^\ast}\alpha$ has $a^{i_1\cdots i_{l-1}}_{J_1}=\frac{\pd\alpha_{i_1\cdots i_{k-l}}}{\pd q^j}$ and all other coefficients equal to zero. In particular then, since $a^{i_1\cdots i_l}=0$, it follows that $\pi_\ast(X_{\pi^\ast\alpha})=0$.
 }
 
 }
\end{proof}

Our generalization of (\ref{classical 2}) is:
\begin{proposition}
For $\alpha\in\Omega^{k-i}(N)$ and $\beta\in\Omega^{k-j}(N)$ we have that \[\{\pi^\ast\alpha,\pi^\ast\beta\}=0.\]
\end{proposition}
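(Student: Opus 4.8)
The plan is to reduce the claim to the single structural fact, supplied by Lemma \ref{push zero}, that a position form is Hamiltonian with a Hamiltonian multivector field that is purely ``vertical'', and then to contract such a field against a pulled-back form. First I would unwind the definition of the generalized Poisson bracket. Writing $|\pi^\ast\beta| = j+1$ (since $\pi^\ast\beta\in\Omega^{k-j}_{\text{Ham}}(M)$ and $\omega$ is $(k+1)$-plectic, so $n=k$), and letting $X_{\pi^\ast\alpha}, X_{\pi^\ast\beta}$ be Hamiltonian multivector fields for the two position forms, which exist by Lemma \ref{push zero}, we have
\[
\{\pi^\ast\alpha, \pi^\ast\beta\} = (-1)^{j+1}\, X_{\pi^\ast\beta}\hk X_{\pi^\ast\alpha}\hk \omega .
\]
Since $d(\pi^\ast\alpha)=\pi^\ast(d\alpha)$ and $X_{\pi^\ast\alpha}\hk\omega=-d(\pi^\ast\alpha)$ by the defining equation of a Hamiltonian form, this becomes $\{\pi^\ast\alpha, \pi^\ast\beta\}=(-1)^{j}\,X_{\pi^\ast\beta}\hk \pi^\ast(d\alpha)$. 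As the bracket is well defined independently of the chosen Hamiltonian multivector fields (Proposition \ref{kernel}), it suffices to prove that $X_{\pi^\ast\beta}\hk\pi^\ast(d\alpha)=0$ for the particular field produced in Lemma \ref{push zero}.

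Next I would invoke the vertical structure of $X_{\pi^\ast\beta}$. In the fibred coordinates $q^i$, $p_{i_1\cdots i_k}$ of Lemma \ref{push zero}, the pulled-back form $\pi^\ast(d\alpha)$ involves only the $dq^i$ and has no $dp$-factors, while Lemma \ref{push zero} shows that the purely horizontal component of $X_{\pi^\ast\beta}$ (the coefficient of the all-$\tfrac{\pd}{\pd q}$ term) vanishes, equivalently $\pi_\ast(X_{\pi^\ast\beta})=0$. Concretely this means every decomposable summand of $X_{\pi^\ast\beta}$ carries at least one vertical factor $\tfrac{\pd}{\pd p_{J}}$.

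Finally I would contract term by term. Because interior products by different vector fields anticommute, in each decomposable summand $Y_1\wedge\cdots\wedge Y_j$ of $X_{\pi^\ast\beta}$ I can move the interior product by a vertical factor $\tfrac{\pd}{\pd p_{J}}$ so that it acts first on $\pi^\ast(d\alpha)$; since $\tfrac{\pd}{\pd p_{J}}\hk\pi^\ast(d\alpha)=0$, that summand contracts to zero. Summing over the summands gives $X_{\pi^\ast\beta}\hk\pi^\ast(d\alpha)=0$, hence $\{\pi^\ast\alpha,\pi^\ast\beta\}=0$. I expect the only delicate point to be the passage from $\pi_\ast(X_{\pi^\ast\beta})=0$ to the term-by-term vanishing of the contraction; the clean way to handle it is the vertical-factor/anticommutation observation above, which avoids having to carry along the combinatorial coordinate expression of $X_{\pi^\ast\beta}$.
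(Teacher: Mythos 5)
Your proof is correct and takes essentially the same approach as the paper's: both reduce the bracket to $(-1)^{j}\,X_{\pi^\ast\beta}\hk\pi^\ast(d\alpha)$ and then conclude via Lemma \ref{push zero} that this contraction vanishes. The paper phrases the last step invariantly, evaluating on vector fields and using $\pi_\ast(X_{\pi^\ast\beta})=0$ together with naturality of pullback, whereas you express the same fact in fibred coordinates (every summand of $X_{\pi^\ast\beta}$ carries a vertical factor, which annihilates $\pi^\ast(d\alpha)$); the two are interchangeable.
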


\begin{proof}
Let $Z_1,\cdots, Z_{k+1-i-j}\in\Gamma(TM)$ be arbitrary. Then,

\begin{align*}
\{\pi^\ast\alpha,\pi^\ast\beta\}(Z_1,\cdots, Z_{k+1-i-j})&=(-1)^{j+1}X_{\pi^\ast\beta}\hk X_{\pi^\ast\alpha}\hk\omega(Z_1,\cdots, Z_{k+1-i-j})\\
&=(-1)^jX_{\pi^\ast\beta}\hk(\pi^\ast d\alpha)(Z_1,\cdots, Z_{k+1-i-j})\\
&=(-1)^j\pi^\ast d\alpha(X_{\pi^\ast\beta},Z_1,\cdots, Z_{k+1-i-j})\\
&=(-1)^jd\alpha(0,\pi_\ast Z_1,\cdots, \pi_\ast X_{k+1-i-j})&\text{by Lemma \ref{push zero}}\\
&=0.
\end{align*}

\end{proof}

Our generalization of (\ref{classical 3}) is:
\begin{proposition}
For $\alpha\in\Omega^{k-i}(N)$ and $Y\in\Rho_j$, we have that \[\{\pi^\ast\alpha,P(Y)\}=-\zeta(j)\pi^\ast(Y\hk d\alpha).\]
\end{proposition}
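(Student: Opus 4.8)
The plan is to feed the two Hamiltonian multivector fields supplied by the earlier results directly into the definition of the generalized Poisson bracket, and then to push the resulting contraction down to $N$ using the $\pi$-relatedness of complete lifts.

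First I would record that both arguments are Hamiltonian, so that the bracket is even defined: Lemma \ref{push zero} gives that $\pi^\ast\alpha\in\Omega^{k-i}_{\mathrm{Ham}}(M)$ with $X_{\pi^\ast\alpha}\hk\omega=-d(\pi^\ast\alpha)=-\pi^\ast(d\alpha)$, while Proposition \ref{momentum form 2} gives that $P(Y)\in\Omega^{k-j}_{\mathrm{Ham}}(M)$ with $\zeta(j)Y^\sharp$ a Hamiltonian $j$-vector field. Since $|P(Y)|=j+1$, unwinding the definition of the bracket yields
\[\{\pi^\ast\alpha,P(Y)\}=(-1)^{j+1}X_{P(Y)}\hk X_{\pi^\ast\alpha}\hk\omega.\]
Because the bracket is independent of the choice of Hamiltonian multivector field (Proposition \ref{kernel}), I may substitute $X_{P(Y)}=\zeta(j)Y^\sharp$ together with $X_{\pi^\ast\alpha}\hk\omega=-\pi^\ast(d\alpha)$, reducing the whole computation to a scalar multiple of $Y^\sharp\hk\pi^\ast(d\alpha)$.

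The geometric heart of the argument is then the identity $Y^\sharp\hk\pi^\ast(d\alpha)=\pi^\ast(Y\hk d\alpha)$. I would establish this first for a single vector field: if $Z\in\Gamma(TM)$ is $\pi$-related to $\bar Z\in\Gamma(TN)$ and $\gamma$ is a form on $N$, then the pointwise evaluation $(\pi^\ast\gamma)(Z,\,\cdot\,)=\gamma(\pi_\ast Z,\pi_\ast(\,\cdot\,))$ gives $Z\hk\pi^\ast\gamma=\pi^\ast(\bar Z\hk\gamma)$. Writing $Y=Y_1\wedge\cdots\wedge Y_j$, each complete lift $Y_a^\sharp$ is $\pi$-related to $Y_a$ (the same fact underlying the equivariance $\pi_\ast(V_p^\sharp)=V_p\circ\pi$), so iterating the single-vector identity through the reverse-order contraction $Y^\sharp\hk(\,\cdot\,)=Y_j^\sharp\hk\cdots\hk Y_1^\sharp\hk(\,\cdot\,)$ collapses $Y^\sharp\hk\pi^\ast(d\alpha)$ to $\pi^\ast(Y\hk d\alpha)$ with no extra sign; linearity then extends this to non-decomposable $Y$.

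Substituting this identity and collecting the scalar factors leaves only sign bookkeeping, which I would carry out using the relations in Remark \ref{ugly signs} to match the asserted coefficient $-\zeta(j)$. The main obstacle I anticipate is exactly this bookkeeping: keeping the reverse-order contraction convention consistent between $M$ and $N$, and correctly combining the $(-1)^{j+1}$ coming from the grading $|P(Y)|=j+1$, the sign from $X_{\pi^\ast\alpha}\hk\omega=-\pi^\ast(d\alpha)$, and the $\zeta(j)$ from the Hamiltonian field of $P(Y)$ into the single factor claimed. No deeper input is needed beyond Lemma \ref{push zero}, Proposition \ref{momentum form 2}, and the $\pi$-relatedness of complete lifts.
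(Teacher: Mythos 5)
Your plan reproduces the paper's own proof step for step: unwind the definition of the bracket, substitute $X_{P(Y)}=\zeta(j)Y^\sharp$ from Proposition \ref{momentum form 2} and $X_{\pi^\ast\alpha}\hk\omega=-\pi^\ast(d\alpha)$ from Lemma \ref{push zero}, then push the contraction down to $N$ using $\pi$-relatedness of the complete lifts (the paper does this last step pointwise on vectors $Z_1,\ldots,Z_{k+1-i-j}$, which is the same argument). Each of these substitutions is correct, and your identity $Y^\sharp\hk\pi^\ast(d\alpha)=\pi^\ast(Y\hk d\alpha)$ holds exactly as you say, with no extra sign, under the reverse-order contraction convention.

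The gap is precisely in the step you defer. Carrying out your own substitutions gives
\[
\{\pi^\ast\alpha,P(Y)\}=(-1)^{j+1}\,\zeta(j)\,Y^\sharp\hk\bigl(-\pi^\ast d\alpha\bigr)=(-1)^{j}\zeta(j)\,\pi^\ast(Y\hk d\alpha)=-\zeta(j+1)\,\pi^\ast(Y\hk d\alpha),
\]
and no identity from Remark \ref{ugly signs} converts $(-1)^{j}\zeta(j)$ into the asserted $-\zeta(j)$: the two differ by $(-1)^{j+1}$, so they agree only when $j$ is odd. In other words, the sign bookkeeping you promise cannot be made to ``match the asserted coefficient''; done honestly it lands on $-\zeta(j+1)$. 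This is not a defect of your method but of the statement itself: in the paper's proof, the passage from the line $(-1)^{j+1}\zeta(j)Y^\sharp\hk X_{\pi^\ast\alpha}\hk\omega$ to the line $(-1)^{j}\zeta(j+1)Y^\sharp\hk\pi^\ast d\alpha$ silently replaces $\zeta(j)$ by $\zeta(j+1)$, even though substituting $X_{\pi^\ast\alpha}\hk\omega=-\pi^\ast d\alpha$ only contributes a factor of $-1$; that unjustified replacement is what produces $-\zeta(j)$ at the end. A concrete check: take $N=\R^4$, $M=\Lambda^3(T^\ast\R^4)$, $Y=\frac{\pd}{\pd q^1}\wedge\frac{\pd}{\pd q^2}$, $\alpha=h\,dq^2$. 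Then $P(Y)=p_{123}dq^3+p_{124}dq^4$, $X_{P(Y)}=Y^\sharp$ (here $\zeta(2)=1$), and the bracket evaluates to $+\frac{\pd h}{\pd q^1}=+\pi^\ast(Y\hk d\alpha)$, whereas the proposition's formula gives $-\zeta(2)\pi^\ast(Y\hk d\alpha)=-\frac{\pd h}{\pd q^1}$. So carry out the computation exactly as you set it up, but expect the coefficient $(-1)^{j}\zeta(j)=-\zeta(j+1)$, and report the discrepancy with the stated proposition rather than forcing the signs to agree.
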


\begin{proof}
Let $Z_1,\cdots, Z_{k+1-i-j}\in\Gamma(TM)$ be arbitrary. Then,
\begin{align*}
\{\pi^\ast\alpha,P(Y)\}(Z_1,\cdots, Z_{k+1-i-j})&=(-1)^{j+1}X_{P(Y)}\hk X_{\pi^\ast\alpha}\hk\omega(Z_1,\cdots, Z_{k+1-i-j})\\
&=(-1)^{j+1}\zeta(j)Y^\sharp\hk X_{\pi^\ast\alpha}\hk\omega(Z_1,\cdots,Z_{k+1-i-j})&\text{by Lemma \ref{momentum form 2}}\\
&=(-1)^j\zeta(j+1)Y^\sharp\hk \pi^\ast d\alpha(Z_1,\cdots,Z_{k+1-i-j})\\
&=-\zeta(j)Y^\sharp\hk \pi^\ast d\alpha(Z_1,\cdots,Z_{k+1-i-j})&\text{by Remark \ref{ugly signs}}\\
&=-\zeta(j)d\alpha(\pi_\ast Y^\sharp,\pi_\ast Z_1,\cdots, \pi_\ast Z_{k+1-i-j})\\
&=-\zeta(j)d\alpha(Y,\pi_\ast Z_1,\cdots,\pi_\ast Z_{k+1-i-j})\\
&=-\zeta(j)\pi^\ast(Y\hk d\alpha)(Z_1,\cdots, Z_{k+1-i-j}).
\end{align*}
\end{proof}
\subsection{Torsion-Free $G_2$ Manifolds}

We first recall the standard $G_2$ structure on $\R^7$.  More details for the material in this section can be found in \cite{j}.  Let $x^1,\cdots,x^7$ denote the standard coordinates on $\R^7$ and consider the three form $\varphi_0$ defined by
\[\varphi_0=dx^{123}+dx^1(dx^{45}-dx^{67})+dx^2(dx^{46}-dx^{75})-dx^3(dx^{47}-dx^{56})\] where we have ommited the wedge product signs. The stabilizer of this three form is given by the Lie group $G_2$. For an arbitrary $7$-manifold we define a $G_2$ structure to be a three form $\varphi$ which has around every point $p\in M$ local coordinates with $\varphi=\varphi_0$, at the point $p$. 

The three form induces a unique metric $g$ and volume form, vol,  determined by the equation \[(X\hk\varphi)\wedge(Y\hk\varphi)\wedge\varphi=-6g(X,Y)\text{vol}.\]From the volume form we get the Hodge star operator and hence a $4$-form $\psi:=\ast\varphi$. We will refer to the data $(M^7,\varphi,\psi,g)$ as a manifold with $G_2$ structure. We remark that the $G_2$ form $\varphi$ is more than just non-degenerate:

\begin{proposition}
The $G_2$ form $\varphi$ is fully nondegenerate. This means that $\varphi(X,Y,\cdot)$ is non-zero whenever $X$ and $Y$ are linearly independent.
\end{proposition}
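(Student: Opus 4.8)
The plan is to reduce the statement to a pointwise, linear-algebraic claim about the standard form $\varphi_0$ on $\R^7$, and then to exploit the transitivity of the $G_2$-action to put an arbitrary pair of independent vectors into a normal form in which the computation is trivial. Full nondegeneracy asserts that the one-form $\varphi(X,Y,\cdot)\in T_p^\ast M$ is nonzero whenever $X,Y\in T_pM$ are linearly independent; since this is a condition on $\varphi_p$ alone, and since by definition of a $G_2$-structure there are coordinates near $p$ in which $\varphi=\varphi_0$ at $p$, it suffices to prove the claim for $\varphi_0$ on $\R^7$ with its standard metric $g$ and orthonormal basis $e_1,\ldots,e_7$.

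So fix linearly independent $X,Y\in\R^7$. First I would scale so that $X$ is a unit vector; since $G_2$ acts transitively on the unit sphere $S^6\subset\R^7$ and preserves $\varphi_0$, I may assume $X=e_1$. The key observation is that $\varphi_0(e_1,e_1,\cdot)=0$ by skew-symmetry, so replacing $Y$ by its component $Y':=Y-g(Y,e_1)e_1$ orthogonal to $e_1$ changes neither the hypothesis nor the conclusion: $\varphi_0(e_1,Y,\cdot)=\varphi_0(e_1,Y',\cdot)$, and $Y'\neq 0$ precisely because $X=e_1$ and $Y$ are independent.

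Next I would use that the stabilizer of $e_1$ in $G_2$ is a copy of $SU(3)$ acting on the orthogonal complement $e_1^\perp\cong\C^3=\R^6$, and that $SU(3)$ acts transitively on each sphere in $\R^6$. Hence, after applying a suitable element of this stabilizer (which fixes $e_1$ and preserves $\varphi_0$), I may assume $Y'=|Y'|\,e_2$. It then remains only to check that $\varphi_0(e_1,e_2,\cdot)\neq 0$, and this is immediate from the explicit expression for $\varphi_0$: its leading term $dx^{123}$ gives $\varphi_0(e_1,e_2,e_3)=1$, so the one-form $\varphi_0(e_1,e_2,\cdot)$ is nonzero. Tracing the normalizations back, $\varphi_0(X,Y,\cdot)\neq 0$, as required.

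The main obstacle is justifying the two transitivity facts — that $G_2$ acts transitively on $S^6$ and that the isotropy subgroup is $SU(3)$ acting transitively on spheres of $\R^6$ — rather than any computation; these are standard structural features of $G_2$ (see \cite{j}), and I would simply cite them. An alternative, computation-heavy route avoids the group action entirely: introduce the cross product $X\times Y$ defined by $g(X\times Y,Z)=\varphi_0(X,Y,Z)$, prove the norm identity $|X\times Y|^2=|X|^2|Y|^2-g(X,Y)^2$, and observe that $\varphi_0(X,Y,\cdot)=g(X\times Y,\cdot)$ vanishes if and only if $X\times Y=0$, if and only if equality holds in Cauchy--Schwarz, if and only if $X,Y$ are dependent; here the obstacle shifts to establishing the norm identity via contraction identities for $\varphi_0$ and $\psi_0=\ast\varphi_0$.
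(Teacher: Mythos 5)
Your proposal is correct, but it is genuinely different from what the paper does: the paper offers no argument at all, simply citing Theorem 2.2 of \cite{MS}. Your main route is sound at every step: full nondegeneracy is a pointwise condition, so the definition of a $G_2$-structure reduces it to $\varphi_0$ on $\R^7$; scaling and the transitivity of $G_2$ on $S^6$ let you take $X=e_1$; replacing $Y$ by $Y'=Y-g(Y,e_1)e_1$ is harmless since $\varphi_0(e_1,e_1,\cdot)=0$, and $Y'\neq 0$ exactly by independence; the stabilizer $SU(3)$ of $e_1$, acting transitively on spheres in $e_1^\perp\cong\C^3$, moves $Y'$ to $|Y'|e_2$; and $\varphi_0(e_1,e_2,\cdot)=dx^3\neq 0$ is read off from the $dx^{123}$ term (the remaining terms of $\varphi_0$ all involve two indices from $\{4,5,6,7\}$, so they cannot contribute). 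What this buys is a self-contained proof inside the paper; what it costs is reliance on two structural facts about $G_2$ (transitivity on $S^6$ and the identification of the isotropy group with $SU(3)$) which are themselves nontrivial theorems of roughly the same depth as the claim, so the citation burden is shifted from \cite{MS} to \cite{j} rather than eliminated. Your alternative route via the identity $|X\times Y|^2=|X|^2|Y|^2-g(X,Y)^2$ is also correct and arguably meshes better with this paper, since the cross product $g(X\times Y,Z)=\varphi(X,Y,Z)$ is introduced later in the same section, and the identity converts full nondegeneracy precisely into the equality case of Cauchy--Schwarz; its cost is the contraction identities for $\varphi_0$ and $\psi_0$ needed to establish the norm formula.
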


\begin{proof}
See Theorem 2.2 of \cite{MS}.
\end{proof}

We will call a manifold with $G_2$ structure torsion-free if both $\varphi$ and $\psi$ are closed. A theorem of Fernandez and Gray shows that this happens precisely when $\varphi$ is parallel with respect to the induced metric $g$. Thus we see that a torsion-free $G_2$ structure is an example of a multisymplectic manifold. 

\begin{remark}
All of the results in this section will only use the fact that $\varphi$ is closed so that, in particular, all of our results holds if the $G_2$ structure is torsion-free.
\end{remark}

We now quickly recall some first order differential operators on a $G_2$ manifold, while refering the reader to section 4 of \cite{Spiro 2} for more details. Given $X\in\Gamma(TM)$ we will let $X^\flat$ denote the metric dual one form $X^\flat=X\hk g$. Conversely, given $\alpha\in\Omega^1(M)$, let $\alpha^\sharp$ denote the metric dual vector field. Recall that given $f\in C^\infty(M)$ its gradient is defined by \[\text{grad}(f)=(df)^\sharp.\] 

From the metric and the three form we can define the cross product of two vector fields.  Given $X,Y,Z\in\Gamma(TM)$ the cross product $X\times Y$ is defined by the equation \[\varphi(X,Y,Z)=g(X\times Y,Z).\] Equivalently, the cross product is defined by \[(X\times Y)=(Y\hk X\hk\varphi)^\sharp.\] In coordinates, this says that \begin{equation}\label{cross}(X\times Y)^l=X^iY^i\varphi_{ijk}g^{kl}.\end{equation}The last differential operator we will consider is the curl of a vector field. We first need to recall the following decomposition of two forms on a $G_2$ manifold. 

\begin{proposition}
The space of $2$-forms on a $G_2$ manifold has the $G_2$ irreducible decomposition \[\Omega^2(M)=\Omega^2_7(M)\oplus\Omega^2_{14}(M),\] where \[\Omega^2_7(M)=\{X\hk\varphi ; X\in\Gamma(TM)\}\] and \[\Omega^2_{14}(M)=\{\alpha\in\Omega^2(M); \psi\wedge\alpha=0\}.\] The projection maps: $\pi_7:\Omega^2(M)\to\Omega^2_7(M)$ and $\pi_{14}:\Omega^2(M)\to\Omega^2_{14}(M)$ are given by \begin{equation}\label{pi 7}\pi_7(\alpha)=\frac{\alpha-\ast(\varphi\wedge\alpha)}{3}\end{equation} and \begin{equation}\label{pi 14}\pi_{14}(\alpha)=\frac{2\alpha+\ast(\varphi\wedge\alpha)}{3}.\end{equation}
\end{proposition}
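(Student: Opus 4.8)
The plan is to reduce the entire statement to the study of a single pointwise-linear operator $T\colon\Omega^2(M)\to\Omega^2(M)$ defined by $T(\alpha)=\ast(\varphi\wedge\alpha)$, and to show that $\Omega^2_7(M)$ and $\Omega^2_{14}(M)$ are exactly its two eigenspaces. Because $\varphi$, the induced metric $g$ and the volume form are all $G_2$-invariant, $T$ commutes with the $G_2$-action at each point. The representation-theoretic input I would use is the standard fact (see \cite{j}, \cite{MS}) that the $G_2$-module $\Lambda^2(\R^7)^\ast\cong\mathfrak{so}(7)$ of dimension $21$ splits as the sum of the irreducible standard representation (dimension $7$) and the irreducible adjoint representation $\mathfrak g_2$ (dimension $14$), and that these summands are non-isomorphic. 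By Schur's lemma, $T$ must then act as a single scalar on each summand, so it has at most two eigenvalues and is diagonalizable; the whole proposition becomes a matter of matching the two geometric subspaces to these summands and computing the two scalars.

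First I would pin down the direct-sum decomposition by a dimension count, which is cleaner than identifying the abstract summands by hand. The map $X\mapsto X\hk\varphi$ is injective since $\varphi$ is (fully) non-degenerate, so $\Omega^2_7(M)$ has rank $7$ pointwise. The bundle map $\alpha\mapsto\psi\wedge\alpha$ sends the rank-$21$ bundle $\Lambda^2T^\ast M$ into the rank-$7$ bundle $\Lambda^6 T^\ast M$, so by rank--nullity its kernel $\Omega^2_{14}(M)$ has rank at least $14$ at each point. It then suffices to check that $\Omega^2_7(M)\cap\Omega^2_{14}(M)=0$, i.e.\ that $\psi\wedge(X\hk\varphi)\neq0$ for $X\neq0$; by $G_2$-equivariance $\ast\bigl(\psi\wedge(X\hk\varphi)\bigr)=c\,X^\flat$ for a universal constant $c$, and a direct computation with $\varphi_0$ (taking $X=\tfrac{\partial}{\partial x^1}$) gives $c=3\neq0$. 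Combining $\dim\Omega^2_7=7$, $\dim\Omega^2_{14}\ge14$, trivial intersection, and $\dim\Omega^2=21$ forces $\Omega^2(M)=\Omega^2_7(M)\oplus\Omega^2_{14}(M)$ with the second summand of rank exactly $14$, whence it is the irreducible $\mathfrak g_2$-summand.

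Next I would compute the two eigenvalues of $T$ and read off the projections. Since $T$ is a scalar on each irreducible summand, it is enough to evaluate it on one representative of each, working at a point where $\varphi=\varphi_0$ and $\psi=\ast\varphi_0$. Expanding
\[
\varphi_0=dx^{123}+dx^{145}-dx^{167}+dx^{246}+dx^{257}-dx^{347}+dx^{356},
\]
I would take $\alpha=\tfrac{\partial}{\partial x^1}\hk\varphi_0=dx^{23}+dx^{45}-dx^{67}\in\Omega^2_7$ and verify $\ast(\varphi_0\wedge\alpha)=-2\alpha$, and take $\beta=dx^{45}+dx^{67}$, which one checks satisfies $\psi_0\wedge\beta=0$ (so $\beta\in\Omega^2_{14}$), and verify $\ast(\varphi_0\wedge\beta)=\beta$. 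Thus $T=-2$ on $\Omega^2_7$ and $T=+1$ on $\Omega^2_{14}$. Writing $\mathrm{id}=\pi_7+\pi_{14}$ and $T=-2\,\pi_7+\pi_{14}$ and solving this $2\times2$ linear system yields $\pi_7=\tfrac13(\mathrm{id}-T)$ and $\pi_{14}=\tfrac13(2\,\mathrm{id}+T)$, which are precisely equations (\ref{pi 7}) and (\ref{pi 14}); that these are genuine idempotents with the stated images is then automatic from the eigenvalue decomposition.

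The main obstacle is the explicit eigenvalue computation: one must produce $\psi_0=\ast\varphi_0$ correctly and then evaluate the Hodge stars of the $5$-forms $\varphi_0\wedge\alpha$ and $\varphi_0\wedge\beta$, where orientation and sign conventions are easy to mishandle and are exactly what decides whether the eigenvalues emerge as $(-2,1)$ (matching the stated projections) rather than the opposite sign pattern. A secondary subtlety is the justification of reducing to a single representative per summand, which rests on the irreducibility of the standard and adjoint $G_2$-representations; if one wishes to avoid invoking representation theory, the same conclusion can be reached by verifying the operator identity $T^2+T-2=0$ directly in a $G_2$-adapted coframe and then checking that the defining conditions $\alpha=X\hk\varphi$ and $\psi\wedge\alpha=0$ single out the $(-2)$- and $(+1)$-eigenspaces respectively.
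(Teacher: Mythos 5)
Your proposal is correct, but it cannot be compared with the paper's argument in the usual sense, because the paper gives no argument: its entire proof of this proposition is the line ``See Section 2.2 of \cite{Spiro}.'' Your proof is therefore a self-contained replacement for a citation, and it checks out in the paper's own conventions: expanding the paper's three-form as $\varphi_0=dx^{123}+dx^{145}-dx^{167}+dx^{246}+dx^{257}-dx^{347}+dx^{356}$ (exactly the expansion you wrote), and taking the standard orientation with the induced Euclidean metric, one indeed finds $\ast\bigl(\varphi_0\wedge(\tfrac{\pd}{\pd x^1}\hk\varphi_0)\bigr)=-2\bigl(\tfrac{\pd}{\pd x^1}\hk\varphi_0\bigr)$, that $\beta=dx^{45}+dx^{67}$ satisfies $\psi_0\wedge\beta=0$ and $\ast(\varphi_0\wedge\beta)=\beta$, and that $\ast\bigl(\psi_0\wedge(\tfrac{\pd}{\pd x^1}\hk\varphi_0)\bigr)=3\,dx^1$; hence $T=\ast(\varphi\wedge\cdot)$ has eigenvalues $-2$ on $\Omega^2_7(M)$ and $+1$ on $\Omega^2_{14}(M)$, and solving $\mathrm{id}=\pi_7+\pi_{14}$, $T=-2\pi_7+\pi_{14}$ yields precisely (\ref{pi 7}) and (\ref{pi 14}). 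Your route also buys something the citation does not: these signs are convention-dependent (under the other common sign convention for $\varphi_0$ the eigenvalues come out as $(+2,-1)$, and the sign in front of $\ast(\varphi\wedge\alpha)$ flips in both projection formulas), so your direct verification establishes that the formulas as stated are the correct ones for this paper's $\varphi_0$ --- a point of internal consistency that deferring to a reference with its own conventions leaves unchecked.

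Two small refinements would tighten the argument. First, to conclude from Schur's lemma that $T$ acts as a \emph{scalar} on each summand, you should add that the $7$- and $14$-dimensional $G_2$-representations are absolutely irreducible (of real type); over $\R$, Schur's lemma alone only makes the algebra of equivariant endomorphisms a division algebra. (Alternatively, one can avoid this by observing that $T$ is self-adjoint for the induced metric on $\Lambda^2$, since $\beta\wedge\varphi\wedge\alpha=\alpha\wedge\varphi\wedge\beta$, so $T$ is diagonalizable with $G_2$-invariant eigenspaces.) Second, your identification of $\Omega^2_7(M)$ and $\ker(\psi\wedge\cdot)$ with the two irreducible summands tacitly uses that the only invariant subspaces of $\Lambda^2$ are $0$, the two summands, and $\Lambda^2$ itself; this follows from the non-isomorphism of the summands, which you do state, but the inference deserves an explicit sentence.
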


\begin{proof}
See Section 2.2 of \cite{Spiro}.
\end{proof}

We can now define the curl of a vector field. Given $X\in\Gamma(TM)$ its curl is defined by \begin{equation}\label{curl defn}(\text{curl}(X))^\flat=\ast(dX^\flat\wedge\psi).\end{equation}This is equivalent to saying that \begin{equation}\label{curl is 14}\pi_{7}(dX^\flat)=\text{curl}(X)\hk\varphi.\end{equation} In coordinates,

\begin{equation}\label{curl coord}\text{curl}(X)^l=(\nabla_aX_b)g^{ai}g^{bj}\varphi_{ijk}g^{kl},\end{equation} where $\nabla$ is the Levi-Civita connection corresponding to $g$. This is reminiscent of the fact that in $\R^3$ the curl is given by the cross product of $\nabla$ with $X$. Again, we refer the reader to Section 4.1 of \cite{Spiro 2} for more details.
\\

 We now translate our definition of Hamiltonian forms and vector fields into the language of $G_2$ geometry.  By definition, we see that a $1$-form is Hamiltonian if and only if its differential is in $\Omega^2_7(M)$. That is, \[\Omega^1_{\text{Ham}}(M)=\{\alpha\in\Omega^1(M) ; \pi_{14}(d\alpha)=0\}.\] Similarly,

\[\X_{\text{Ham}}^1(M)=\{X\in\Gamma(TM) ; X=\text{curl}(\alpha^\sharp) \text{ and } \pi_{14}(d\alpha)=0 \text{ for some } \alpha\in\Omega^1(M)\}.\]

Note that if $M$ is compact, then it follows from (\ref{pi 7}) and Hodge theory that there are no non-zero Hamiltonian $1$-forms.
\begin{proposition}
\label{Hamiltonian is curl}
If $\alpha$ is a Hamiltonian $1$-form then its corresponding Hamiltonian vector field is $\mathrm{curl}(\alpha^\sharp)$.
\end{proposition}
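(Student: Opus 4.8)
The plan is to unwind the two characterizations already recorded for the $G_2$-structure. On such a manifold the multisymplectic form is $\omega=\varphi\in\Omega^3(M)$ (so $n=2$), and a Hamiltonian $1$-form $\alpha$ is by definition one admitting $X_\alpha$ with $d\alpha=-X_\alpha\hk\varphi$. As noted just above, this is equivalent to $\pi_{14}(d\alpha)=0$, i.e. $d\alpha\in\Omega^2_7(M)$, so that $\pi_7(d\alpha)=d\alpha$. This reformulation of the Hamiltonian condition is what connects the statement to the curl.

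First I would set $\beta=\alpha^\sharp$, the metric dual vector field, so that $\beta^\flat=\alpha$ and hence $d\beta^\flat=d\alpha$. The key input is the reformulation of the curl in equation (\ref{curl is 14}), namely $\pi_7(dX^\flat)=\mathrm{curl}(X)\hk\varphi$; applying this with $X=\beta=\alpha^\sharp$ gives $\pi_7(d\alpha)=\mathrm{curl}(\alpha^\sharp)\hk\varphi$. Combining this with the previous paragraph yields $d\alpha=\pi_7(d\alpha)=\mathrm{curl}(\alpha^\sharp)\hk\varphi$.

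Finally, comparing this identity with the defining relation $d\alpha=-X_\alpha\hk\varphi$ and invoking the full non-degeneracy of $\varphi$ (the injectivity of $V\mapsto V\hk\varphi$) identifies the Hamiltonian vector field $X_\alpha$ with $\mathrm{curl}(\alpha^\sharp)$. Conceptually the proof is therefore just two substitutions followed by an appeal to injectivity of contraction with the fully non-degenerate form.

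The only delicate point is the consistent bookkeeping of signs across the three conventions in play: the contraction $\hk$, the definition of curl in (\ref{curl defn}) together with its reformulation (\ref{curl is 14}), and the sign in the Hamiltonian equation $d\alpha=-X_\alpha\hk\varphi$. I expect this, rather than any structural difficulty, to be the main thing to verify; once the signs are fixed consistently, the identification $X_\alpha=\mathrm{curl}(\alpha^\sharp)$ follows at once from injectivity of contraction with $\varphi$.
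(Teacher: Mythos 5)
Your argument is essentially the paper's own proof: the paper likewise combines the characterization $\Omega^1_{\mathrm{Ham}}(M)=\{\alpha\in\Omega^1(M)\,;\,\pi_{14}(d\alpha)=0\}$ with equation (\ref{curl is 14}) applied to $X=\alpha^\sharp$, and concludes by nondegeneracy of $\varphi$, so your proposal is correct and takes the same route. The sign caution you raise is warranted, though: with the paper's convention $d\alpha=-X_\alpha\hk\varphi$, the comparison $d\alpha=\pi_7(d\alpha)=\mathrm{curl}(\alpha^\sharp)\hk\varphi$ literally yields $X_\alpha=-\mathrm{curl}(\alpha^\sharp)$, a discrepancy that the paper's one-line proof silently passes over as well (it is an artifact of the paper's conventions, not a flaw in your argument's structure).
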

\begin{proof}
Since a Hamiltonian $1$-form satisfies $\pi_{14}(\alpha)=0$, this follows immediately from equation (\ref{curl is 14}). 
\end{proof}

From Proposition \ref{Hamiltonian is curl} and equation (\ref{cross}) we see that the generalized Poisson bracket is given by the cross product: \begin{equation}\label{bracket is cross} \{\alpha,\beta\}=\text{curl}(\alpha^\sharp)\times\text{curl}(\beta^\sharp),\end{equation}
for $\alpha,\beta\in\Omega^1_{\text{Ham}}(M)$.

Proposition \ref{preserve} showed that a Hamiltonian vector field preserves the $n$-plectic form. In the language of $G_2$ geometry this gives:

\begin{proposition}
Given $\alpha\in\Omega^1(M)$ with $\pi_{14}(d\alpha)=0$, the curl of $\alpha^\sharp$ preserves the $G_2$ structure. That is, \[\L_{\mathrm{curl}(\alpha^\sharp)}\varphi=0.\]
\end{proposition}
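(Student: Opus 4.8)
The plan is to reduce the statement to the closedness of $\varphi$ together with the Cartan-type formula (\ref{Lie}) for the Lie derivative, making essential use of the hypothesis $\pi_{14}(d\alpha)=0$ to identify the interior product $\mathrm{curl}(\alpha^\sharp)\hk\varphi$ with the exact form $d\alpha$.

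First I would observe that $\varphi$ is precisely the $2$-plectic form of the ambient multisymplectic structure: it is a closed, fully non-degenerate $3$-form on $M^7$, so the general theory of the preceding sections applies with $\omega=\varphi$ and $n=2$. The hypothesis $\pi_{14}(d\alpha)=0$ says exactly that $d\alpha\in\Omega^2_7(M)$, so by the identification $\Omega^1_{\mathrm{Ham}}(M)=\{\alpha\,;\,\pi_{14}(d\alpha)=0\}$ the form $\alpha$ is a Hamiltonian $1$-form, and by Proposition \ref{Hamiltonian is curl} its Hamiltonian vector field is $X:=\mathrm{curl}(\alpha^\sharp)$.

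The computation would then proceed via (\ref{Lie}): since $X$ is a single vector field ($k=1$),
\[\L_X\varphi = d(X\hk\varphi) + X\hk d\varphi.\]
Because $\varphi$ is closed the second term drops out. For the first term I would use (\ref{curl is 14}), which gives $X\hk\varphi=\pi_7(d\alpha)$; combined with $\pi_{14}(d\alpha)=0$ this yields $X\hk\varphi=d\alpha$. Hence $\L_X\varphi=d(d\alpha)=0$, as desired. Equivalently, and more economically, the result is an immediate instance of Proposition \ref{preserve} (any Hamiltonian multivector field preserves the multisymplectic form) applied to the Hamiltonian vector field $\mathrm{curl}(\alpha^\sharp)$ of $\alpha$.

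There is no serious obstacle here: the content is essentially bookkeeping, and the only point requiring care is to invoke the hypothesis correctly, namely that $\pi_{14}(d\alpha)=0$ is what collapses $\pi_7(d\alpha)$ to the full $d\alpha$, so that $\mathrm{curl}(\alpha^\sharp)\hk\varphi$ becomes an exact form and is therefore annihilated by $d$. The closedness of $\varphi$ (guaranteed by the torsion-free, or merely ``$\varphi$ closed'', assumption noted in the remark) is the other ingredient, and it is exactly what makes the Cartan formula reduce to $d(X\hk\varphi)$.
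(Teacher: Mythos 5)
Your proposal is correct and matches the paper's own proof, which simply cites Proposition \ref{Hamiltonian is curl} (identifying $\mathrm{curl}(\alpha^\sharp)$ as the Hamiltonian vector field of $\alpha$) together with Proposition \ref{preserve}; your explicit Cartan-formula computation is just that second proposition unwound, and your closing remark is exactly the paper's argument.
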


\begin{proof}
This follows immediately from Propositions \ref{Hamiltonian is curl} and \ref{preserve}.
\end{proof}

\del{

By proposition ?? of \cite{Spiro} we have that curl(curl($X$))$=.$. Hence we see that

\begin{proposition}
If $\alpha$ is Hamiltonian then so is curl($\alpha^\sharp$). In other words, if $\pi_{14}(d\alpha)=0$ then $\pi_{14}(\mathrm{curl}(\alpha^\sharp))=0$.
\end{proposition}

}
As a consequence of Proposition \ref{Poisson is Schouten}, we get the following:

\begin{proposition}
Let $\alpha$ and $\beta$ be in $\Omega^1(M)$ with $\pi_{14}(d\alpha)=0=\pi_{14}(d\beta)$. Then \[\pi_{14}(\mathrm{curl}(\alpha^\sharp)\times\mathrm{curl}(\beta^\sharp))=0.\] Moreover,
\[\mathrm{curl}(\mathrm{curl}(\alpha^\sharp)\times\mathrm{curl}(\beta^\sharp))=[\mathrm{curl}(\alpha^\sharp),\mathrm{curl}(\beta^\sharp)].\]

\end{proposition}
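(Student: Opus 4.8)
The plan is to read both assertions as direct translations of the general multisymplectic results of Section 4 into the $G_2$ dictionary set up in this subsection, exploiting the fact that here the multisymplectic form is $\omega=\varphi$, so $n=2$. Throughout I would fix $\alpha,\beta\in\Omega^1(M)$ with $\pi_{14}(d\alpha)=0=\pi_{14}(d\beta)$, which by definition means $\alpha,\beta\in\Omega^1_{\text{Ham}}(M)$, and by Proposition \ref{Hamiltonian is curl} their Hamiltonian vector fields are $X_\alpha=\mathrm{curl}(\alpha^\sharp)$ and $X_\beta=\mathrm{curl}(\beta^\sharp)$. Since in the $G_2$ case $n=2$ and $k=l=1$, Lemma \ref{Poisson is Schouten} applies and yields two facts at once: first, $\{\alpha,\beta\}\in\Omega^{\,n+1-k-l}_{\text{Ham}}(M)=\Omega^1_{\text{Ham}}(M)$, i.e. the bracket is again a Hamiltonian $1$-form; and second, $[X_\alpha,X_\beta]=[\mathrm{curl}(\alpha^\sharp),\mathrm{curl}(\beta^\sharp)]$ is a Hamiltonian vector field for $\{\alpha,\beta\}$.

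For the first claim I would invoke equation (\ref{bracket is cross}), which identifies $\{\alpha,\beta\}$ with the metric dual of the vector field $V:=\mathrm{curl}(\alpha^\sharp)\times\mathrm{curl}(\beta^\sharp)$, so that $\{\alpha,\beta\}=V^\flat$. Because $\{\alpha,\beta\}$ lies in $\Omega^1_{\text{Ham}}(M)$, its definition gives $\pi_{14}(d\{\alpha,\beta\})=\pi_{14}(dV^\flat)=0$. Under the same shorthand used in this subsection — where $\pi_{14}$ applied to a vector field $V$ abbreviates $\pi_{14}(dV^\flat)$ — this is precisely the asserted $\pi_{14}\big(\mathrm{curl}(\alpha^\sharp)\times\mathrm{curl}(\beta^\sharp)\big)=0$.

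For the second claim I would use that $\{\alpha,\beta\}=V^\flat$ is itself a Hamiltonian $1$-form, so by Proposition \ref{Hamiltonian is curl} its Hamiltonian vector field is $\mathrm{curl}\big((V^\flat)^\sharp\big)=\mathrm{curl}(V)=\mathrm{curl}\big(\mathrm{curl}(\alpha^\sharp)\times\mathrm{curl}(\beta^\sharp)\big)$. On the other hand Lemma \ref{Poisson is Schouten} already produced $[\mathrm{curl}(\alpha^\sharp),\mathrm{curl}(\beta^\sharp)]$ as a Hamiltonian vector field for the same form $\{\alpha,\beta\}$. Since the $G_2$ form $\varphi$ is non-degenerate, the map $V\mapsto V\hk\varphi$ is injective, the kernel of $\varphi$ is trivial, and hence the Hamiltonian vector field of a Hamiltonian form is unique. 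Equating the two descriptions of $X_{\{\alpha,\beta\}}$ then gives $\mathrm{curl}\big(\mathrm{curl}(\alpha^\sharp)\times\mathrm{curl}(\beta^\sharp)\big)=[\mathrm{curl}(\alpha^\sharp),\mathrm{curl}(\beta^\sharp)]$.

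The part demanding the most care is not any single estimate but the musical-isomorphism bookkeeping: I must check that $\{\alpha,\beta\}$, which a priori is a $1$-form, corresponds under $\sharp$ exactly to $\mathrm{curl}(\alpha^\sharp)\times\mathrm{curl}(\beta^\sharp)$ with the correct sign, tracing through the defining relation $(X\times Y)^\flat=Y\hk X\hk\varphi$ of (\ref{cross}) together with the sign $(-1)^{l+1}=1$ appearing in the Poisson bracket when $l=1$. The other point needing explicit justification is the uniqueness of the Hamiltonian vector field, so that the two a priori different expressions for $X_{\{\alpha,\beta\}}$ may legitimately be identified.
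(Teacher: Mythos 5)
Your proposal is correct and follows essentially the same route as the paper: both parts rest on Lemma \ref{Poisson is Schouten} (giving $[X_\alpha,X_\beta]$ as a Hamiltonian vector field for $\{\alpha,\beta\}$), equation (\ref{bracket is cross}), Proposition \ref{Hamiltonian is curl}, and the non-degeneracy of $\varphi$; your appeal to ``uniqueness of the Hamiltonian vector field'' is exactly the paper's concluding non-degeneracy argument, just phrased abstractly rather than as a chain of equalities of contractions with $\varphi$. Your explicit reading of $\pi_{14}$ applied to a vector field as $\pi_{14}(dV^\flat)$ is also the interpretation the paper's own proof implicitly uses.
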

\begin{proof}
By equation (\ref{bracket is cross}) and Lemma \ref{Poisson is Schouten} we see that $d(\mathrm{curl}(\alpha^\sharp)\times\mathrm{curl}(\beta^\sharp))=[X_\alpha,X_\beta]\hk\omega$. Thus, $\mathrm{curl}(\alpha^\sharp)\times\mathrm{curl}(\beta^\sharp)$ is in $\Omega_7^2(M)$, showing that $\pi_{14}(\mathrm{curl}(\alpha^\sharp)\times\mathrm{curl}(\beta^\sharp))=0.$ Moreover, we have that

\begin{align*}
\text{curl}(\text{curl}(\alpha^\sharp)\times\text{curl}(\beta^\sharp))\hk\varphi&=\text{curl}(\{\alpha,\beta\})\hk\varphi&\text{by (\ref{bracket is cross})}\\
&=d(\{\alpha,\beta\})\\
&=[X_\alpha,X_\beta]\hk\varphi&\text{by Lemma \ref{Poisson is Schouten}}\\
&=[\text{curl}(\alpha^\sharp),\text{curl}(\beta^\sharp)]\hk\varphi &\text{by Proposition \ref{Hamiltonian is curl}.}
\end{align*}

The proposition now follows since $\varphi$ is non-degenerate.
\del{
By proposition \ref{Poisson is Schouten}, $X_{\{\alpha,\beta\}}=[X_\alpha,X_\beta]$ up to an element in the kernel of the $n$-plectic form. On a $G_2$ manifold the kernel of $\varphi$ is trivial. Hence, by definition 
\begin{align*}\{\alpha,\beta\}^\sharp&=X_\alpha\times X_\beta\\
&=\text{curl}(\alpha^\sharp)\times\text{curl}(\beta^\sharp)\\
\end{align*}
Since $X_{\{\alpha,\beta\}}=\text{curl}(\text{curl}(\alpha^\sharp)\times\text{curl}(\beta^\sharp))$ we obtain the result.
}
\end{proof}

We now consider the definition of a homotopy co-momentum map in the setting of a $G_2$ manifold. 
The equations defining the components of a homotopy co-momentum map, i.e. (\ref{hcmm kernel}), reduce to finding functions $f_1:\g\to\Omega^1(M)$ and $f_2:\Rho_{\g,2}\to C^\infty(M)$ satisfying 

\begin{equation}\label{G2 moment map 1} \pi_{14}(d(f_1(\xi)))=0 \text{ and curl}((f_1(\xi))^\sharp)=V_\xi. \end{equation}
\begin{equation}\label{G2 moment map 2} V_\xi\times V_\eta= -(d(f_2(\xi\wedge\eta)))^\flat.\end{equation}

We finish this section by computing a homotopy co-momentum map in the following set up, extending Example 6.7 of \cite{ms}. 

Consider $\R^7=\R\oplus\C^3$ with standard $3$-form given by \[\varphi=\frac{1}{2}\left(dz^1\wedge dz^2\wedge dz^3+d\o z^1\wedge d\o z^2\wedge d\o z^3\right)-\frac{i}{2}(dz^1\wedge d\o z^1+dz^2\wedge d\o z^2+dz^2\wedge d\o z^3)\wedge dt.\] In terms of $t,x^1,x^2,x^3,y^1,y^2,y^3$ this is \[\varphi=dx^1dx^2dx^3-dx^1dy^2dy^3-dy^1dx^2dy^3-dy^1dy^2dx^3-dtdx^1dy^1-dtdx^2dy^2-dtdx^3dy^3,\]where we have ommited the wedge signs. Equivalently, \[\varphi=\Omega_3-dt\wedge \omega_3\] where $\Omega_3$ is the standard holomorphic volume and $\omega_3$ is the standard Kahler form on $\C^3$.  That is, $\Omega_3=dz^1\wedge dz^2\wedge dz^3$ and $\omega_3=\frac{i}{2}(dz^1\wedge d\o z^1+dz^2\wedge d\o z^2+dz^3\wedge d\o z^3)$.

 As in Examples \ref{complex moment map 1} and \ref{complex moment map 2} we consider the standard action by the diagonal maximal torus $T^2\subset SU(3)$ given by $(e^{i\theta},e^{i\eta})\cdot(t,z_1,z_2,z_3)=(t,e^{i\theta}z_1,e^{i\eta}z_2,e^{-i(\theta+\eta)}z_3)$. We have $\mathfrak{t}^2=\R^2$ and that the infinitesimal generators of $(1,0)$ and $(0,1)$ are \[A=\frac{i}{2}\left(z_1\frac{\pd}{\pd z_1}-z_3\frac{\pd}{\pd z_3}-\o z_1\frac{\pd}{\pd\o z_1}+\o z_3\frac{\pd}{\pd \o z_3}\right)\]and \[B=\frac{i}{2}\left(z_2\frac{\pd}{\pd z_21}-z_3\frac{\pd}{\pd z_3}-\o z_2\frac{\pd}{\pd\o z_2}+\o z_3\frac{\pd}{\pd \o z_3}\right)\] respectively.
 
By Example \ref{complex moment map 1} it follows that 
\begin{align*}
A\hk\varphi&=A\hk(\Omega_3-dt\wedge\omega_3)\\
&=\frac{1}{2}d(\text{Im}z_1z_3dz^2)-\frac{1}{4}dt\wedge d(|z_1|^2-|z_3|^2)\\
&=\frac{1}{2}d\left(\text{Im}(z_1z_3dz^2)-\frac{1}{2}(|z_1|^2-|z_3|^2)dt\right).
\end{align*}
Similarly,
\begin{align*}
B\hk\varphi=\frac{1}{2}d\left(\text{Im}(z_1z_2dz^3)-\frac{1}{2}(|z_1|^2-|z_2|^2)dt\right).
\end{align*}
It follows that \[f_1((1,0))=\frac{1}{2}\text{Im}(z_1z_3dz^2)-\frac{1}{4}(|z_1|^2-|z_3|^2)dt\] and \[f_1((0,1))=\frac{1}{2}\text{Im}(z_1z_2dz^3)-\frac{1}{4}(|z_1|^2-|z_2|^2)dt\] give the first component of a homotopy co-momentum map. Plugging in $f_1((1,0))$ and $f_1((0,1))$ into (\ref{pi 14}) shows that \[\pi_{14}(f_1((1,0)))=0=\pi_{14}(f_1((0,1))).\] Moreover, using (\ref{curl coord}) one can directly verify that \[\text{curl}(f_1((1,0)))^\sharp=A\] and \[\text{curl}(f_1((0,1)))^\sharp=B,\] confirming (\ref{G2 moment map 1}).

Using Example \ref{complex moment map 2} it follows that
\begin{align*}
B\hk A\hk\varphi&=B\hk A\hk(\Omega_3-dt\wedge \omega_3)\\
&=B\hk A\hk\Omega_3\\
&=\frac{1}{4}d(\text{Re}(z_1z_2z_3)).
\end{align*}

Thus the second component of the homotopy co-momentum map is given by  \[f_2(A\wedge B)=\frac{1}{4}\text{Re}(z_1z_2z_3),\]in accordance with Example 6.7 of \cite{ms}.
Written out in the coordinates $t$, $x^1$, $x^2$, $x^3$, $y^1$, $y^2$, $y^3,$ the infinitesimal vector fields coming from the torus action are 
\[A=\frac{1}{2}\left(-y^1\frac{\pd}{\pd x^1}+y^3\frac{\pd}{\pd x^3}+x^1\frac{\pd}{\pd y^1}-x^3\frac{\pd}{\pd y^3}\right),\]
\[B=\frac{1}{2}\left(-y^2\frac{\pd}{\pd x^2}+y^3\frac{\pd}{\pd x^3}+x^2\frac{\pd}{\pd y^2}-x^3\frac{\pd}{\pd y^3}\right).\]

Using the metric to identify $1$-forms and vector fields, equation  (\ref{cross}) gives the cross product of $A$ and $B$ to be 
\begin{align*}4(A\times B )&=(y^2y^3-x^2x^3)\frac{\pd}{\pd x^1}+(y^1y^3-x^1x^3)\frac{\pd}{\pd x^2}+(y^1y^2-x^1x^2)\frac{\pd}{\pd x^3}+\\
&+(x^2y^3+x^3y^2)\frac{\pd}{\pd y^1}+(x^3y^1+x^1y^3)\frac{\pd}{\pd y^2}+(x^1y^2+x^2y^1)\frac{\pd}{\pd y^3}\\
&=d(x^1x^2x^3-x^1y^2y^3-y^1x^2y^3-y^1y^2x^3)\\
&=d(\text{Re}(z_1z_2z_3))
\end{align*}
confirming equation (\ref{G2 moment map 2}).  We thus have extended Example 6.7 of \cite{ms} by obtaining a full homotopy co-momentum map for the diagonal torus action on $\R^7$ with the standard torsion-free $G_2$ structure.

\section{Concluding Remarks}

This work poses many natural questions for future research. The following are just a few ideas:

\begin{itemize}
\item The existence and uniqueness of homotopy co-momentum maps has been studied in \cite{existence 1} and \cite{existence 2}, for example. However, in this paper we were mostly concerned with co-momentum maps restricted to the Lie kernel, i.e. weak co-momentum maps. It would thus be desirable to have results on the existence and uniqueness of these restricted maps.

It is clear that a co-momentum map restricts to a co-momentum map on the Lie kernel. That is, if a collection of maps satisfies equation (\ref{hcmm}), then it satisfies equation (\ref{hcmm kernel}). However, are there examples of weak co-momentum maps which do not come from the restriction of a full co-momentum map?  As mentioned throughout the paper, this question is currently being investigated in \cite{future}.

\item In our work, we provided a few examples of multi-Hamiltonian systems. What are some examples of other physical or interesting multi-Hamiltonian systems to which this work could be applied?

\item In Section 6.1 we generalized the classical momentum and position functions on the phase space of a manifold to momentum and position forms on the multisymplectic phase space. Since, as discussed in \cite{Marsden}, the classical momentum and position functions play an important role in connecting classical and quantum mechanics, a natural question is if there is an analogous application of our more general theory to quantum mechanics?
\end{itemize}

\del{where in the last equality we have used the standard metric to identify vector fields and $1$-forms. Thus, by (\ref{G2 moment map 2}), we see that we should define the second component of the homotopy co momentum map to be \[f_2(A\wedge B)=x^1x^2x^3-x^1y^2y^3-y^1x^2y^3-y^1y^2x^3.\] A straightforward computation shows that \[x^1x^2x^3-x^1y^2y^3-y^1x^2y^3-y^1y^2x^3=-\frac{1}{2}\text{Re}(z_1z_2z_3)\] so that an alternative way of writing this component of the moment map is \[f_2(A\wedge B)=-\frac{1}{2}\text{Re}(z_1z_2z_3).\] This agrees with the computation done in example 6.7 of \cite{ms}. 
}
\del
{

We now try to find the first component of the homotopy co-momentum map. 

Taking the interior product of $\varphi$ by $B$ gives

\begin{align*}
\frac{2}{i}(B\hk\varphi)&=z_2dz^3dz^1+\frac{i}{2}z_2d\o z^2\wedge dt-z_3dz^1\wedge dz^2-\frac{i}{2}z_3d\o z^3\wedge dt-\\
&-\o z_2d\o z^3\wedge d\o z^1+\frac{i}{2}\o z_2dz_2\wedge dt+\o z_3d\o z^1\wedge d\o z^1-\frac{i}{2}\o z_3dz_3\wedge dt\\
&=-d\left(\frac{1}{4}(|z_2|^2-|z_3|^2)dt-\text{Im}(z_1z_3dz^2)\right)
\end{align*}

It follows that \[f_1(B)=-\frac{1}{4}(|z_2|^2-|z_3|^2)dt-\text{Im}(z_1z_3dz^2).\] Similarly, \[f_1(A)=-\frac{1}{4}(|z_1|^2-|z_3|^2)dt-\text{Im}(z_2z_3dz^1).\]
}
\del{

$A\hk\varphi$
\[\frac{i}{2}\left[z_2dz_3\wedge dz_1-\frac{i}{2}z_2d\o z^2\wedge dt - z_3dz^1\wedge dz^2-\frac{i}{2}z_3d\o z^3dt-\o z_2d\o z^3\wedge d\o z^1+\frac{i}{2}\o z_2dz_2\wedge dt+\o z_3d\o z^1\wedge d\o z^2-\frac{i}{2}\o z_3dz_3\wedge dt\right]\]

A computation then shows that \[A\hk\varphi=-d\left(\frac{1}{4}\left(|z_2|^2-|z_3|^2\right)dt-\text{Im}(z_1z_3dz^2)\right)\]and \[B\hk\varphi=-\left(\frac{1}{4}\left(|z_1|^2-|z_3|^2\right)dt-\text{Im}(z_2z_3dz^1)\right).\]Moreover,  \[B\hk A\hk\varphi=-\frac{1}{2}d(\text{Re}(z_1z_2z_3)).\]

We see that  \[f_1(A)=-\frac{1}{4}\left(|z_2|^2-|z_3|^2\right)dt-\text{Im}(z_1z_3dz^2),\]

\[f_1(B)=-\frac{1}{4}\left(|z_1|^2-|z_3|^2\right)dt-\text{Im}(z_2z_3dz^1),\] and \[f_2(A\wedge B)=-\frac{1}{2}\text{Re}(z_1z_2z_3).\]

We also can see directly that \[d(f_1(A))=A\hk\varphi \text{ and } d(f_1(B))=B\hk\varphi\] and \[A\hk B\hk\varphi=d(f_2(A\wedge B))\] as desired.

}

\del{

\subsection{Symmetries and Conserved Quantities on the Multisymplectic Tangent Bundle}

Given a multisymplectic manifold $(M,\omega)$ we can lift $\omega$ to a multisymplectic form on $TM$. I believe by lifting, we can take a moment map on $M$ to a moment map on $TM$ and symmetries and conserved quantities can also be lifted.

If the form is the volume form then each of the spaces $\Lambda^k(TM)$ have a canonical $n$-plectic structure, the complete lift, namely the pull back of the form on $\Lambda^k(T^\ast M)$. Hence in this case, all symmetries are Lagrangian submanifolds of $T(\Lambda^k(TM))$. 

From $(M,\omega)$ get $(TM,\omega^c)$...Can you take vertical (complete lift) of arbitrary vector field.. you can take vertical lift of any form. Then vertical lift of symmetry is symmetry.

\subsection{Symmetries and Conserved Quantities From Isotropy Subgroups} 
Fix a multi-Hamiltonian system $(M,\omega,H)$.  Let $G$ act on the manifold and fix $p\in\Rho_{\g,k}$. Note that by proposition \ref{Schouten is closed} the adjoint action on $\Lambda^k\g$ preserves $\Rho_{\g,k}$. Let $G_p$ denote the isotropy subgroup for the adjoint action of $G$ on $\Rho_{\g,k}$ and let $\g_p$ denote its Lie algebra. That is, $\g_p=\{\xi\in\g: [\xi,p]=0\}$. Let $G_p^0$ denote the connected component of the identity in $G_p$.

\begin{proposition}
We have that $\xi\wedge p$ is in $\Rho_{\g,k}$ if and only if $\xi$ is in $\g_p$.
\end{proposition}
\begin{proof}
This follows from proposition \ref{Schouten is closed}.
\end{proof}

We consider the multi-Hamiltonian system $(M,V_p\hk\omega, V_p\hk H)$. We summarize the result of section 3.2 of \cite{cq} in the following proposition.

\begin{proposition}
The form $V_p\hk\omega$ is a closed $(n+1-k)$-form on $M$ that is invariant under the action of $G_p$.  Moreover, $V_p\hk H$ is $G_p$ invariant and it is a a Hamiltonian form in $(M,V_p\hk \omega)$ with Hamiltonian vector field $X_H$.

\end{proposition}

Furthermore, in section 3.2 of \cite{cq}, it is shown how a co-momentum map $(f):\Lambda^\bullet\g\to L_\infty(M,\omega)$ induces a co-momentum map for $(M,V_p\hk\omega,V_p\hk X)$. In proposition 3.8 of \cite{cq} it is proven that 

\begin{proposition}
A co-momentum map for the $G_p^0$ action on $(M,V_p\hk\omega)$ is given by $(f^p):\Lambda^\bullet\g_p\to L_\infty(M,V_p\hk\omega)$ with components \[f_j^p:\Lambda^j\g_p\to \Omega^{n-k-j}(M) \ \ \ \ \ \ \ \ \ \ q\mapsto-(-1)^{k(k+1)}f_{j+k}(q\wedge p)\]
\end{proposition}

We now apply Noether's theorem to this setup.

We see that if $q$ is in $\Rho_{\g_p,l}$ then we get a conserved quantity in $(M,V_p\hk\omega,V_p\hk H)$. This symmetry is going to be an element of $\Gamma(\Lambda^{k+j-1}(TM))$ since $f_j(q)$ is in $\Omega^{n+1-k-j}(M)$. We claim that a symmetry is given by $V_q\wedge V_p$ and we prove this as a corollary to something more general.

\begin{proposition}
Let $Y\in\Gamma(\Lambda^l(TM))$ be a local symmetry in $(M,\omega,H)$. Then $Y\wedge V_p$ is a local symmetry in $(M,V_p\hk\omega, V_p\hk H)$. It is also a local symmetry in $(M,\omega,H)$.
\end{proposition}

\begin{proof}
By we need to show that 

\[[Y\wedge V_p,X_H]\hk(V_p\hk \omega)=0\]

We have that $[Y\wedge V_p, X_H]=Y\wedge[V_p,X_H] + [Y,X_H]\wedge V_p$ Hence

\begin{align*}
[Y\wedge V_p,X_H]\hk(V_p\hk \omega)&=Y\wedge[V_p,X_H]\hk(V_p\hk\omega) + [Y,X_H]\wedge V_p(V_p\hk\omega)\\
&=Y\hk V_p\hk[V_p\hk X_H]\hk\omega +V_p\hk V_p\hk[Y,X_H]\hk\omega\\
&=Y\hk V_p\hk[V_p\hk X_H]\hk\omega\\
&=0
\end{align*}
\end{proof}
This confirms the above proposition. We know that $V_p$ is a symmetry, and the infintesimal generator of $f^p_j(q)=f_{j+k}(q\wedge p)$ is $V_q\wedge V_p$.

\subsection{Multisymplectic Legendre Transform}

In \cite{Marsden} a statement of Noether's is given as Corollary 4.2.14. It states that

We demonstrate how this statement can be extended to multisymplectic geometry. In particular, we relate Hamiltonian system given in example ?? $(T^\ast M,G, H)$ to the induced multi-Hamiltonian system $(TM,\mathbb{FL}^\ast\theta_L)$.

\subsection{Transgression of Continuous Symmetries}

Let $M$ be a manifold and let $\Sigma$ be a compact, oriented manifold without boundary. Let $M^\Sigma=C^\infty(\Sigma,M)$, the space of smooth maps from $\Sigma$ to $M$. Given $V\in\Gamma(TM)$, get $V^l\in\Gamma(TM^\Sigma)$, where $V^l$ is the image under the transgression map. A multisymplectic structure on $M$ gives a multisymplectic structure on $M^\Sigma$, by the transgression map.  The transgression map also takes a moment map on $M$ to one on $M^\Sigma$. It's true that a transgression map sends a symmetry to a symmetry and preserves Noether's theorem.

}

\end{document}